\newcommand\reallywidehat[1]{
\savestack{\tmpbox}{\stretchto{
  \scaleto{
    \scalerel*[\widthof{\ensuremath{#1}}]{\kern-.6pt\bigwedge\kern-.6pt}
    {\rule[-\textheight/2]{1ex}{\textheight}}
  }{\textheight}
}{0.5ex}}
\stackon[1pt]{#1}{\tmpbox}
}
\newtheorem{theorem}{Theorem}
\numberwithin{theorem}{section}
\newtheorem{lemma}[theorem]{Lemma}
\newtheorem{corollary}[theorem]{Corollary}
\numberwithin{equation}{section}
\newcommand{\BMO}{{\mathit {BMO}}}
\newcommand{\Lip}{\mathrm{Lip}}
\newcommand{\loc}{{\mathrm{loc}}}
\newcommand{\comp}{{\mathrm{c}}}
\newcommand{\supp}{{\mathrm{supp}}}
\newcommand{\Rd}{{{\mathbb R}^d}}
\newcommand{\re}{{\mathbb R}}
\newcommand{\sph}{{\mathbb S}}
\newcommand{\R}{{\mathrm R}}
\newcommand{\Ric}{{\mathrm{Ric}}}
\newcommand{\N}{{\mathbb N}}
\newcommand{\e}{{\varepsilon}}
\newcommand{\g}{{\rm g}}
\newcommand{\I}{\mathrm{I}}
\newcommand{\hf}{\frac 12}
\newcommand{\thf}{\tfrac 12}
\newcommand{\la}{\langle}
\newcommand{\ra}{\rangle}
\newcommand{\dist}{\mathrm{dist}}
\newcommand{\tpsi}{{\tilde\psi}}
\newcommand{\tp}{{\tilde p}}
\newcommand{\tP}{{\tilde P}}
\newcommand{\tB}{{\tilde B}}
\newcommand{\tW}{{\tilde W}}
\newcommand{\tE}{{\tilde E}}
\newcommand{\tK}{{\tilde K}}
\newcommand{\ta}{{\tilde a}}
\newcommand{\tq}{{\tilde q}}
\newcommand{\tr}{{\tilde r}}
\newcommand{\tkhf}{\bigl(t^{\hf}\ts 2^{-\frac k2}\bigr)}
\newcommand{\tkhfinv}{\bigl(t^{-\hf}\ts 2^{\frac k2}\bigr)}
\newcommand{\parsl}{\slashed{\partial}}
\newcommand{\barx}{\bar x}
\newcommand{\barxi}{\bar \xi}
\newcommand{\one}{{\mathds 1}}
\newcommand{\hatf}{{\hat{f\,}\!}}
\newcommand{\ts}{\hspace{.06em}}
\newcommand{\bfr}{\mathbf{r}}
\begin{document}
 
\title[The wave equation on manifolds of bounded curvature]{Dispersive estimates for the wave equation on \\Riemannian manifolds of bounded curvature}
\author[Y. Chen]{Yuanlong Chen}
\email{ylchen88@uw.edu}
\author[H. Smith]{Hart F. Smith}
\email{hfsmith@uw.edu}
\address{Department of Mathematics, University of Washington, Seattle, WA 98195-4350, USA}

\thanks{This material is based upon work supported by 
the National Science Foundation under Grant DMS-1500098}

\keywords{Wave equation, dispersive estimates}
  
\subjclass[2010]{58J45 (Primary), 35L15 (Secondary)}

\begin{abstract}
We establish space-time dispersive estimates for solutions to the wave equation on compact Riemannian manifolds with bounded sectional curvature, with the same exponents as for $C^\infty$ metrics. The estimates are for bounded time intervals, so by finite propagation velocity the results apply also on non-compact manifolds under appropriate uniform conditions. We assume a priori that in local coordinates the metric tensor components satisfy ${\rm g}_{ij}\in W^{1,p}$ for some $p>d$, which ensures that the curvature tensor is well defined in the weak sense, but this can be relaxed to any assumption that suffices for the local harmonic coordinate calculations in the paper.
\end{abstract}

\maketitle

\section{Introduction}
We assume throughout this paper that $(M,\g)$ is a $d$-dimensional Riemannian manifold of $C^1$ structure with the following property: there exists $r_0>0$, $C_0<\infty$, and $p\in(d,\infty]$, such that for each $z\in M$ there is a $C^1$ coordinate chart $\Phi_z:B_{r_0}\rightarrow M$ with $\Phi_z(0)=z$, in which the induced metric $\g_{ij}$ on $B_{r_0}\subset \Rd$ satisfies
$$
\g_{ij}(0)=\delta_{ij},\qquad \sup_{ij}\|\g_{ij}\|_{W^{1,p}}\le C_0.
$$
As shown in \cite[Chapter 3 \S 9]{Tay} or Section \ref{sec:reductions} of this paper, the Riemannian curvature tensor components $\R_{ijkl}$ are then well defined as distributions in $W^{-1,p}(B_{r_0})$. We make the assumption that the $\R_{ijkl}$ are measurable functions, and that for some $C_0$ uniform over the coordinate charts,
$$
\sup_{ijkl}\,\|\R_{ijkl}\|_{L^\infty(B_{r_0})}\le C_0.
$$
In Theorem \ref{spectral} we show that the Sobolev spaces $H^s(M)$ for $-2\le s\le 2$ defined using local harmonic coordinates are equivalent to those defined using fractional powers of $-\Delta_\g$ via the spectral calculus. For $-1\le s\le 2$ the following Cauchy problem for the wave equation on $(M,\g)$ can then be solved using the spectral decomposition for $\Delta_\g$ and Duhamel's formula,
\begin{equation}\label{eqn:wave}
\begin{aligned}
\big(\partial_t^2 - \Delta_{\rm{g}} \big) u(t,x) &= F(t,x) \in L^1([-T,T]; H^{-2}(M)),\\
u(0,x) &= f(x) \in H^s(M),\\
\partial_t u(0,x) &= g(x) \in H^{s-1}(M).
\end{aligned}
\end{equation}
In this paper we prove two types of dispersive estimates on the solution $u$, under the above assumptions on $(M,\g)$. Recall that a triple $(s,q,r)$ with $2\le q,r\le\infty$ is said to be \textit{admissible} for the wave equation if
$$
\frac 1q + \frac dr = \frac d2 - s, \qquad\quad
\frac 1q \le \frac{d-1}2 \Bigl( \frac 12 - \frac 1r \Bigr).
$$

\begin{theorem}[\textbf{Strichartz estimates}]\label{thm:strichartz}
If $(s,q,r)$ and $(1-s,\tilde q,\tilde r)$ are admissible, and $r,\tilde r<\infty$,
then for a positive $T$ depending only on $(M,\g)$, solutions to \eqref{eqn:wave} defined using the spectral decomposition of $\Delta_\g$ satisfy
\begin{multline*}
\|u\|_{L^q([-T,T]; L^r(M))}+\|u\|_{L^\infty([-T,T]; H^s(M))}+\|\partial_t u\|_{L^\infty([-T,T]; H^{s-1}(M))} \\
\le C \Bigl(\, \|f\|_{H^s(M)} + \|g\|_{H^{s-1}(M)} + \|F\|_{L^{\tq'}([-T,T];L^{\tr'}(M))}\,\Bigr).
\end{multline*}
\end{theorem}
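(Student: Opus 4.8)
The plan is to reduce, by a Littlewood--Paley decomposition, to a single frequency-localized dispersive estimate, and then to prove that estimate by a wave-packet parametrix in which the bound $\|\R_{ijkl}\|_{L^\infty}\le C_0$ plays the role usually played by a $C^{1,1}$ bound on the metric.

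\emph{Reductions.} By Theorem~\ref{spectral} the Sobolev norms in the statement may be replaced by the local harmonic-coordinate norms, and by the analysis of Section~\ref{sec:reductions} the metric may be assumed, in each harmonic chart, to be $C^1$ with $\g_{ij}\in W^{2,q}$ for every $q<\infty$ and with $\|\partial\g_{ij}\|_{L^\infty}$ and the curvature components bounded in terms of $C_0$. Finite propagation velocity, valid for $W^{1,p}$ metrics with $p>d$, together with a partition of unity subordinate to such charts, reduces \eqref{eqn:wave} to a local problem in which $\Delta_\g$ is extended to a nonnegative self-adjoint operator on all of $\Rd$ agreeing with the given one on a fixed ball. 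The abstract Keel--Tao theorem together with the Christ--Kiselev lemma then reduces the space-time norm, the off-diagonal inhomogeneous term governed by $(1-s,\tilde q,\tilde r)$, and the energy norms $\|u\|_{L^\infty_t H^s}+\|\partial_t u\|_{L^\infty_t H^{s-1}}$ to the $L^2$-boundedness of $e^{\pm it\sqrt{-\Delta_\g}}$, which is immediate, and to the frequency-localized dispersive estimate
\begin{equation*}
\bigl\|e^{it\sqrt{-\Delta_\g}}P_\lambda\bigr\|_{L^1(M)\to L^\infty(M)}\le C\,\lambda^{d}\bigl(1+\lambda|t|\bigr)^{-\frac{d-1}{2}},\qquad |t|\le T,
\end{equation*}
for dyadic $\lambda\ge1$, with $C$ independent of $\lambda$; here $P_\lambda$ is a Littlewood--Paley projection for $\sqrt{-\Delta_\g}$. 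The Littlewood--Paley square function estimate, available since $\Delta_\g$ has Gaussian heat-kernel bounds, together with the almost-orthogonality of the $P_\lambda$, reassembles the dyadic pieces in $L^r$ (here $r,\tilde r<\infty$ is used) and gives $\ell^2$-summability in $\lambda$.

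\emph{Rescaling.} Dilating by $x\mapsto\lambda^{-1}x$ and $t\mapsto\lambda^{-1}t$ turns the displayed estimate into
\begin{equation*}
\bigl\|e^{it\sqrt{-\Delta_{\tilde\g}}}P_1\bigr\|_{L^1\to L^\infty}\le C\bigl(1+|t|\bigr)^{-\frac{d-1}{2}},\qquad |t|\le\lambda T,
\end{equation*}
for the dilated metric $\tilde\g(x)=\g(\lambda^{-1}x)$ on the ball of radius $\lambda r_0$. This metric is slowly varying, $|\partial\tilde\g|\lesssim\lambda^{-1}$, and has curvature bounded by $\lambda^{-2}C_0$, so that the curvature accumulated along a unit-speed geodesic of length $\lambda T$ is $O(T^2 C_0)$. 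Choosing $T$ small in terms of $C_0$, the Hamilton flow of $|\xi|_{\tilde\g}$ therefore has no conjugate points on the time interval $[-\lambda T,\lambda T]$ and remains uniformly close, in the appropriate weighted sense, to the Euclidean flow; this is precisely where the curvature hypothesis substitutes for a bound on $\|\g\|_{C^{1,1}}$, the obstruction to the parametrix being focusing of bicharacteristics rather than the size of $\partial^2\g$.

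\emph{Parametrix and obstacle.} I would then build $e^{it\sqrt{-\Delta_{\tilde\g}}}P_1$, for $|t|\le\lambda T$, as a superposition of wave packets obtained from an FBI (coherent-state) synthesis of the data at unit frequency scale, each packet transported along a null bicharacteristic of $\tilde\g$ mollified at a scale adapted to $\lambda$. The absence of conjugate points from the previous step forces the associated phase to solve the eikonal equation with a uniformly nondegenerate mixed Hessian, so that resumming the packets and applying stationary phase yields the stated $(d-1)/2$ rate of decay, while the $L^2$ bound follows from a $TT^*$ almost-orthogonality estimate for the packet transform; undoing the rescaling then closes the argument. The main obstacle is carrying this out at regularity below $C^{1,1}$: with $\g$ only $C^{1,\alpha}$ and $\partial^2\g$ controlled only through the curvature together with the quadratic $(\partial\g)^2$ terms, the Hamilton vector field of $|\xi|_{\tilde\g}$ is merely H\"older, so one must mollify $\tilde\g$, propagate along the mollified flow, and show that the resulting errors in the eikonal and transport equations---estimated against $\|\R_{ijkl}\|_{L^\infty}\le C_0$ rather than against a second-derivative bound---can be summed over the entire interval $[-\lambda T,\lambda T]$ without any loss. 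Showing that the mollified bicharacteristic flow approximates the true one closely enough in the relevant norms, using only the curvature bound, is the step I expect to require the most care.
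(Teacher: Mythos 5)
Your outline shares the paper's broad philosophy (harmonic coordinates, curvature controlling the bicharacteristic flow via Jacobi fields, a wave-packet parametrix, Keel--Tao), but it has a genuine gap at the central step, which you yourself flag as "the step I expect to require the most care." You propose to prove the frequency-localized dispersive bound directly for the exact propagator $e^{it\sqrt{-\Delta_\g}}P_\lambda$, building a packet parametrix on a mollified metric and then summing the eikonal/transport errors over the whole interval $[-\lambda T,\lambda T]$ "without any loss." There is no mechanism in the proposal for doing this, and in fact no such smallness is available: with the metric mollified at the paradifferential scale $2^{-k/2}$ (the only scale at which the construction closes), the error produced by the Lax-type parametrix is an \emph{order-zero} operator --- in the paper this is the operator $B_k(t)$ of Theorem \ref{thm:bkest} --- not a small or summable one, so a direct "parametrix plus error" argument for the exact localized propagator cannot yield the $L^1\to L^\infty$ bound. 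This is precisely the obstruction that limited the earlier wave-packet approach of \cite{Sm0} to low dimensions, and it is the point where your proposal would break down.

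The paper's proof supplies two ingredients that your plan is missing. First, it does not work with $\sqrt{-\Delta_\g}$ at all: it replaces $-\Delta_\g$ by $P^2$, where $P$ is a self-adjoint paradifferential operator with symbol in $S^1_{1,\frac12}$, and the discrepancy $P^2+\Delta_\g$ is shown (Lemma \ref{lem:paraest}) to map $H^s\to H^{s-1}$; this uses in an essential way the harmonic-coordinate regularity $\partial_x^2\g\in\BMO$ and the Carleson/Fefferman--Stein paraproduct estimate. The point is that the discrepancy need only be \emph{bounded}, not small: it is absorbed as a Duhamel source via the inhomogeneous/energy part of the Strichartz estimate, so one never has to compare $e^{it\sqrt{-\Delta_\g}}$ with a parametrix in dispersive norms. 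Second, since the order-zero error $B(t)$ cannot be discarded, the paper constructs the \emph{exact} group $E(t)=e^{-itP}$ as the convergent iteration \eqref{Esumdef} in $W(t)$ and $B(t)$, and proves the kernel bound \eqref{eqn:kbound} for the full sum by showing that each factor maps a wave packet to a packet-like function translated along the Hamiltonian flow, with estimates phrased in weighted frame norms (Theorem \ref{thm:fundest}, Lemma \ref{lem:matrixbound}, Corollary \ref{cor:tekest}) that compose, giving constants $C^{m+1}t^m/m!$ summable over the number of factors; the exactness of $E(t)$ also supplies the group property $E(t)E(s)^*=E(t-s)$ needed for Keel--Tao. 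Your curvature/rescaling observations do correspond to Sections \ref{sec:Hamflow}--\ref{sec:phasefunction} of the paper, but without the paradifferential substitution and the iterated-expansion-plus-frame-norm machinery (or an alternative such as Tataru's FBI-transform method), the proposed proof of the key dispersive estimate does not go through.
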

Note that under these assumptions $0\le s\le 1$, and since $\tilde q\ge 2$ we see that $H^{\frac 32}(M)\subset L^{\tr}(M)$, hence $F\in L^1\bigl([-T,T];H^{-\frac 32}(M)\bigr)$.

The next estimate is due in the smooth case to Mockenhaupt-Seeger-Sogge \cite{MSS}. Here we consider only the critical exponent $q_d$, but similar results with $s_d\le s\le 2$ hold by Sobolev embedding.

\begin{theorem}[\textbf{Squarefunction estimate}]\label{thm:sqfn}
Let $q_d=\frac{2(d+1)}{d-1}$, and $s_d=\frac 1{q_d}$. Then for a positive $T$ depending only on $(M,\g)$, solutions to \eqref{eqn:wave} satisfy
\begin{equation*}
\|u\|_{L^{q_d}(M;L^2([-T,T]))} \le 
C \bigl(\|f\|_{H^{s_d}(M)}+\|g\|_{H^{s_d-1}(M)}+\|F\|_{L^1([-T,T];H^{s_d-1}(M))}\bigr).
\end{equation*}
\end{theorem}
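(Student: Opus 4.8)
The plan is to reduce Theorem~\ref{thm:sqfn} to a single frequency-localized bound for the half-wave propagator, establish that bound by means of a wave-packet parametrix whose tube geometry is kept under control by the curvature hypothesis, and reassemble.

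\textbf{Reductions.} By Duhamel's formula and Minkowski's inequality the contribution of $F$ is dominated by $\int_{-T}^T$ of the homogeneous $L^{q_d}(M;L^2)$ norm of $\tfrac{\sin((t-\tau)\sqrt{-\Delta_\g})}{\sqrt{-\Delta_\g}}F(\tau)$, so it suffices to treat $u=\cos(t\sqrt{-\Delta_\g})f+\tfrac{\sin(t\sqrt{-\Delta_\g})}{\sqrt{-\Delta_\g}}g$. Splitting off the finite-dimensional zero eigenspace (on which the estimate is trivial) and writing $\cos$ and $\sin/\sqrt{-\Delta_\g}$ via $e^{\pm it\sqrt{-\Delta_\g}}$, and using Theorem~\ref{spectral} to identify $H^{s_d},H^{s_d-1}$ with the spectrally defined spaces, one reduces to
$$\|e^{it\sqrt{-\Delta_\g}}h\|_{L^{q_d}(M;\,L^2([-T,T]))}\le C\,\|h\|_{H^{s_d}(M)}.$$
Inserting a Littlewood--Paley decomposition $h=\sum_{k\ge 0}h_k$ in the spectral parameter, one notes that $e^{it\sqrt{-\Delta_\g}}h_k$ is localized at time-frequency $\sim 2^k$, so for each $x$ the functions $t\mapsto e^{it\sqrt{-\Delta_\g}}h_k(x)$ have essentially lacunary Fourier supports; hence $\sum_k\|e^{it\sqrt{-\Delta_\g}}h_k(x)\|_{L^2_t}^2$ dominates $\|e^{it\sqrt{-\Delta_\g}}h(x)\|_{L^2_t}^2$, and the triangle inequality in $L^{q_d/2}(M)$ reduces everything to the dyadic estimate
$$\|e^{it\sqrt{-\Delta_\g}}h_k\|_{L^{q_d}(M;\,L^2([-T,T]))}\le C\,2^{ks_d}\|h_k\|_{L^2(M)},\qquad C\text{ independent of }k.$$
Finally, after shrinking $r_0$ the metric is uniformly elliptic, so finite propagation speed and a partition of unity let us take $h_k$ supported in one harmonic coordinate chart and measure the norms in coordinates.

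\textbf{The dyadic estimate.} Here I would follow the wave-packet argument of \cite{MSS} in the variable-coefficient, low-regularity setting. A parabolic rescaling $x\mapsto 2^{-k/2}x$, $t\mapsto 2^{-k/2}t$ takes the problem to frequency scale $2^{k/2}$ over a time interval of length $\sim T2^{k/2}$, with $\nabla\g$ small in $L^p$ and sectional curvature $\lesssim 2^{-k}$; one then represents $e^{it\sqrt{-\Delta_\g}}h_k$ by an FBI-transform parametrix, decomposing the data into coherent states, propagating each along the geodesic flow, and summing. The crucial structural point is that the spatial profile of a propagated packet is a tube whose cross-section and curvature are governed by Jacobi fields along the geodesic; since the elapsed rescaled time is $\lesssim 2^{k/2}$ while the curvature along it is $\lesssim 2^{-k}$, the Jacobi equation yields growth by only $e^{O(\sqrt{C_0}\,T)}$, so the tubes remain uniformly comparable in cross-section and eccentricity to the flat-model tubes. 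The parametrix error, obtained by applying $\partial_t^2-\Delta_\g$ to the ansatz, is controlled by the smallness of $\nabla\g$ and of the curvature together with the almost-orthogonality of the packets, and removed by a Duhamel iteration converging because each step gains a fixed negative power of the frequency. This is the same parametrix underlying Theorem~\ref{thm:strichartz}.

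\textbf{The square-function mechanism.} The parametrix exhibits $e^{it\sqrt{-\Delta_\g}}h_k$ as a sum $\sum_T u_{k,T}$ over space-time tubes $T$ of transverse dimensions $(2^{-k/2})^{d-1}$ and unit length along the corresponding geodesic, with $u_{k,T}$ essentially supported in, and of constant amplitude on, $T$. The dyadic estimate then rests on three facts, each stable under the uniformly bounded curvature-induced deformation of the tubes: the space-time frequency supports of the $u_{k,T}$ overlap boundedly, so $\|\sum_T u_{k,T}\|_{L^2(M\times[-T,T])}^2\lesssim\sum_T\|u_{k,T}\|_{L^2}^2\lesssim\|h_k\|_{L^2}^2$; a single packet crosses each point of its tube in bounded time, giving the single-tube bound $\|u_{k,T}\|_{L^{q_d}(M;L^2_t)}\lesssim|T|^{1/q_d}\cdot(\text{amplitude of }u_{k,T})$; and $q_d$ is precisely the exponent at which the overlap function $\sum_T\one_T$ need only be controlled by the trivial pointwise count, so that combining the two previous bounds via H\"older closes the estimate with the sharp gain $2^{ks_d}$. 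Replacing straight tubes by uniformly bounded curved ones changes constants but no exponents, so the endpoint bookkeeping of \cite{MSS} goes through. Undoing the rescaling and summing over $T$ and then over $k$ completes the proof.

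\textbf{Main obstacle.} The technical heart is the construction and error analysis of the wave-packet parametrix under merely $W^{1,p}$, bounded-curvature regularity, and in particular showing that the accumulated errors stay summable over the long rescaled time interval $[-T2^{k/2},T2^{k/2}]$; this is exactly where a bound on the sectional curvature is the natural scale-invariant hypothesis. A secondary, milder point is to verify that the curved tubes still obey the overlap bound needed at the exponent $q_d$, which holds because they differ from the flat model by a uniformly bounded deformation.
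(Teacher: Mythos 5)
There are two genuine gaps. The first is the ``square-function mechanism'' itself. At the critical exponent $q_d$ the estimate cannot be closed by single-tube bounds plus a ``trivial pointwise count'' of tube overlaps: through a focal point (e.g.\ data $\hatf=\psi_k$, all $\sim 2^{k(d-1)/2}$ directions present) the overlap function $\sum_T\one_T$ is as large as $2^{k(d-1)/2}$, and any argument that pays for overlap pointwise loses a power of $2^k$ far exceeding the allowed gain $2^{ks_d}$ with $s_d=\frac{d-1}{2(d+1)}$. What actually makes $q_d$ work is the dispersive decay of the frequency-localized kernel, $|K_k(t,x,y)|\lesssim 2^{kd}(1+2^k|t|)^{-\frac{d-1}2}(1+2^k\dist(x,S_t(y)))^{-N}$, combined with an $L^2_{x'}L^2_t$ ``sideways'' energy estimate on hyperplanes transverse to the propagation direction (Theorem \ref{thm:sideways} here, as in \cite{MSS}): interpolating these gives an $L^{q_d}_{x'}L^2_t$ bound with a factor $|x_1-y_1|^{-1+\frac1{q_d'}-\frac1{q_d}}$, and Hardy--Littlewood--Sobolev in the remaining variable closes the estimate. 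Neither the kernel decay nor the sideways energy estimate appears in your outline, and no overlap/counting argument can substitute for them at the endpoint.

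The second gap is the parametrix, which you defer but which is the actual content at this regularity. You propose an FBI/wave-packet parametrix for $e^{it\sqrt{-\Delta_\g}}$ directly, with errors removed by a Duhamel iteration that ``gains a fixed negative power of the frequency.'' At $W^{1,p}$ plus bounded curvature no such gain is available: one must first pass to harmonic coordinates (to get $\partial_x^2\g\in\BMO$ and the error bound for $P^2+\Delta_\g$ of Lemma \ref{lem:paraest}) and replace $\sqrt{-\Delta_\g}$ by the paradifferential operator $P$ built from metrics mollified at scale $2^{-k/2}$; the resulting symbols are only $S^1_{1,\hf}$, so the transport/amplitude corrections and the error $B_k(t)$ are merely of order zero, with no negative power of $2^k$ gained per iteration. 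The paper therefore constructs the \emph{exact} group $e^{-itP}$ as an infinite iteration converging by the $t^m/m!$ factor, and must prove that dyadic and conic frequency localization survives $\sim 2^{k/4}$ compositions (Section \ref{sec:energyflow}), controlling all the composite terms in weighted wave-packet norms transported along the bilipschitz Hamiltonian flow (Section \ref{sec:wavepackets}). Your long-time ($\sim T2^{k/2}$) rescaled error iteration has no argument behind it, and the wave-packet rigid-translation error estimates you implicitly rely on require pointwise bounds on $\partial_x^2\g$, exactly what bounded curvature does not provide; this is the reason the paper replaces the approximate propagation step by the eikonal-phase Lax ansatz whose $C^{1,1}$-type phase bounds come from the Jacobi-field argument of Section \ref{sec:Hamflow}.
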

A straightforward consequence of the squarefunction estimate are the following $L^2\rightarrow L^q$ bounds for unit-width spectral projection operators, which were originally established for smooth metrics by Sogge \cite{Sog1}.
\begin{corollary}\label{cor:specproj}
Suppose that $\lambda\ge 0$, and let $\Pi_{[\lambda,\lambda+1]}$ denote the $L^2(M)$ projection onto the span of eigenfunctions $\{\phi_j\}$ such that $-\Delta_\g\phi_j=\lambda_j^2\phi_j$ with $\lambda_j\in[\lambda,\lambda+1]$. Then for some $C$ depending only on $(M,\g)$,
$$
\|\Pi_{[\lambda,\lambda+1]}f\|_{L^q(M)}\le 
C\,\lambda^{d\bigl(\frac 12-\frac 1q\bigr)-\frac 12}\|f\|_{L^2(M)},\qquad q_d\le q\le\infty.
$$
\end{corollary}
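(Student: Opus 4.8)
The plan is to deduce the corollary from Theorem~\ref{thm:sqfn} by a reproducing-formula argument of the type introduced by Sogge~\cite{Sog1} and Mockenhaupt--Seeger--Sogge~\cite{MSS}. Abbreviate $\Pi=\Pi_{[\l,\l+1]}$ and $\sigma(q)=d\bigl(\tfrac12-\tfrac1q\bigr)-\tfrac12$, noting that $\sigma(q_d)=s_d$. Since $\Pi$ is an orthogonal projection, for $f\in L^2(M)$ the function $g=\Pi f$ satisfies $\Pi g=g$ and $\|g\|_{L^2}\le\|f\|_{L^2}$, so it is enough to prove that
$$
\|g\|_{L^q(M)}\le C\l^{\sigma(q)}\|g\|_{L^2(M)}\qquad\text{whenever }\Pi g=g .
$$
Because $M$ is compact, the range of $\Pi$ is finite-dimensional and spanned by eigenfunctions lying in $L^q(M)$ for every $q$; this makes the estimate trivial for $0\le\l\le 1$, so we may assume $\l\ge 1$.

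For the endpoint $q=q_d$ I would argue as follows. Choose an even $\Psi\in L^2(\re)$ supported in $[-T,T]$, with $T$ as in Theorem~\ref{thm:sqfn}, whose cosine transform $\widehat\Psi_c(\nu)=\int\Psi(t)\cos(t\nu)\,dt$ is nonnegative on $\re$ and satisfies $\widehat\Psi_c(\nu)\ge 1$ for $\nu\in[0,1]$; such a $\Psi$ is obtained by normalising a short autoconvolution of an interval indicator, whose cosine transform is a nonnegative multiple of a squared sinc. Put $m(\mu)=\tfrac12\bigl(\widehat\Psi_c(\mu-\l)+\widehat\Psi_c(\mu+\l)\bigr)$, so that
$$
m(\sqrt{-\Delta_\g})=\int_{-T}^{T}\Psi(t)\cos(t\l)\cos\bigl(t\sqrt{-\Delta_\g}\bigr)\,dt,
$$
with $m\ge 0$ on $[0,\infty)$ and $\tfrac12\le m\le\|\widehat\Psi_c\|_\infty$ on $[\l,\l+1]$. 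For $g$ with $\Pi g=g$, the function $u(t,x)=\cos(t\sqrt{-\Delta_\g})g$ solves \eqref{eqn:wave} with data $(g,0,0)$, and Cauchy--Schwarz in $t$ gives $|m(\sqrt{-\Delta_\g})g(x)|\le\|\Psi\|_{L^2}\bigl(\int_{-T}^{T}|u(t,x)|^2\,dt\bigr)^{1/2}$; taking $L^{q_d}(M)$ norms and invoking Theorem~\ref{thm:sqfn} together with $\|g\|_{H^{s_d}}\le C\l^{s_d}\|g\|_{L^2}$ (from Theorem~\ref{spectral} and the spectral calculus, since $g$ is spectrally localised to $[\l,\l+1]$) yields $\|m(\sqrt{-\Delta_\g})g\|_{L^{q_d}(M)}\le C\l^{s_d}\|g\|_{L^2(M)}$. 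To remove the multiplier $m$ without invoking an $L^{q_d}$ multiplier theorem, I would apply this inequality to $g'=\sum_{\l_j\in[\l,\l+1]}m(\l_j)^{-1}\la g,\phi_j\ra\phi_j$: it is again spectrally localised to $[\l,\l+1]$, has $\|g'\|_{L^2}\le 2\|g\|_{L^2}$ because $m\ge\tfrac12$ there, and satisfies $m(\sqrt{-\Delta_\g})g'=g$, so that $\|g\|_{L^{q_d}}\le C\l^{s_d}\|g'\|_{L^2}\le 2C\l^{s_d}\|g\|_{L^2}$, which is the claim at $q=q_d$.

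It remains to cover $q_d<q\le\infty$. For $q=\infty$ one has $\|\Pi\|_{L^2\to L^\infty}^2=\sup_x\sum_{\l_j\in[\l,\l+1]}|\phi_j(x)|^2\le 2\sup_x m(\sqrt{-\Delta_\g})(x,x)$, and the diagonal of $m(\sqrt{-\Delta_\g})=\int_{-T}^{T}\Psi(t)\cos(t\l)\cos(t\sqrt{-\Delta_\g})\,dt$ is $O(\l^{d-1})$ by the same analysis of the half-wave kernel that underlies Theorem~\ref{thm:sqfn} --- this being the rough-metric counterpart of H\"ormander's pointwise Weyl bound --- so $\|\Pi\|_{L^2\to L^\infty}\le C\l^{(d-1)/2}=C\l^{\sigma(\infty)}$. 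Riesz--Thorin interpolation between the $q_d$ and $\infty$ endpoints then gives the full range, since $\sigma$ is affine in $1/q$ and hence equals the interpolated exponent at every intermediate $q$. I expect the only step that is not soft to be this $q=\infty$ estimate: it is the one piece that does not follow from the space--time bound of Theorem~\ref{thm:sqfn} alone, but requires the diagonal behaviour of $\cos(t\sqrt{-\Delta_\g})$; everything else is the reproducing-formula manipulation above. One could alternatively obtain the $q=\infty$ case by quoting the sharp cluster bounds of \cite{Sog1} together with finite propagation speed, but the argument sketched here is self-contained modulo that single estimate.
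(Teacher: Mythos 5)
Your treatment of the endpoint $q=q_d$ is correct and is exactly what the paper intends by ``proven for $q=q_d$ from Theorem \ref{thm:sqfn}'': the reproducing operator $m(\sqrt{-\Delta_\g})=\int\Psi(t)\cos(t\lambda)\cos(t\sqrt{-\Delta_\g})\,dt$ with $\widehat\Psi_c\ge 0$ and $\ge 1$ on $[0,1]$, Cauchy--Schwarz in $t$, the squarefunction bound applied to $u=\cos(t\sqrt{-\Delta_\g})g$, the spectral characterization $\|g\|_{H^{s_d}}\le C\lambda^{s_d}\|g\|_{L^2}$ from Theorem \ref{spectral}, and the inversion of $m$ on the cluster are all sound, and the Riesz--Thorin step is legitimate since $\sigma(q)$ is affine in $1/q$.

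Where you diverge from the paper is the range $q>q_d$: the paper gets it by Sobolev embedding (a Bernstein-type inequality for spectral clusters), deferring details to \cite{Sm2}, whereas you interpolate against a $q=\infty$ endpoint. That endpoint is where your argument has a genuine gap. The bound $\sup_x m(\sqrt{-\Delta_\g})(x,x)\le C\lambda^{d-1}$ does not follow from Theorem \ref{thm:sqfn}, and it also does not follow from the kernel estimate \eqref{eqn:kbound} as proved in this paper: that estimate concerns $e^{-itP}\psi_k(D)$ for the paradifferential model operator $P$ on $\Rd$, modulo smoothing errors, and the comparison between $\cos(t\sqrt{-\Delta_\g})$ and the model evolution is carried out here only in Sobolev/Strichartz norms (Lemma \ref{lem:paraest} plus Duhamel), never pointwise on the diagonal; extracting a sharp pointwise Weyl-type bound from this requires a separate argument. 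Your fallback of quoting the cluster bounds of \cite{Sog1} ``together with finite propagation speed'' does not work either, since those bounds are proved for $C^\infty$ metrics and do not transfer to the bounded-curvature setting by propagation arguments alone. The correct reference is \cite{Sm2}, which proves the $q=\infty$ cluster bound for Lipschitz metrics and also supplies the Bernstein/Sobolev-embedding step the paper invokes; if you cite it for the diagonal bound, your interpolation route closes and is a perfectly acceptable alternative to the paper's, but as written the $L^\infty$ estimate is asserted rather than established.
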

Corollary \ref{cor:specproj} is proven for $q=q_d$ from Theorem \ref{thm:sqfn}, and for $q>q_d$ it follows by Sobolev embedding. See \cite{Sm2} for details. It is shown there that the $q=\infty$ case, which is related to the spectral counting remainder estimates of Avakumovi\'c-Levitan-H\"ormander, holds more generally on compact manifolds with metrics $\g$ of Lipschitz regularity.

The first version of Strichartz estimates was obtained globally on $\mathbb{R}^{d+1}$ by Strichartz in \cite{Str0}, \cite{Str1}, for $s = \frac 12$ and $q=r=\frac{2(d+1)}{d-1}$. The results were subsequently extended to other values of the exponents, and to the setting of smooth Riemannian manifolds using a Fourier integral representation of the fundamental solution. More details can be found in \cite{Sog0}, \cite{GV}, \cite{KT}, and \cite{LS}. Of particular interest are the critical indices, when equality holds in the second admissibility condition.

For a non-smooth metric $\g$, the standard constructions of the fundamental solution do not work. However, the second author used paradifferential techniques and wave packet parametrices in \cite{Sm0} to prove homogeneous Strichartz estimates in dimensions $d=2,3$ under the condition that the metric $\g$ is $C^{1,1}$. 
For all dimensions this is the minimal regularity condition on $\g$ in the context of H\"older spaces that implies the Strichartz estimates. Indeed, Smith and Sogge in \cite{SS} produced explicit examples of $C^{1,\alpha}$ metrics for which the homogeneous Strichartz estimates fail, for each $0 < \alpha < 1$.

The key idea in handling non-smooth metrics is to introduce a para-differential approximation $P$ to $\sqrt{-\Delta_{\g}}$, in that $P^2+\Delta_g$ behaves as a first order operator on a suitable range of Sobolev spaces. By energy estimates it then suffices to establish the bounds of Theorems \ref{thm:strichartz} and \ref{thm:sqfn} when $\Delta_\g$ is replaced by $-P^2$ in \eqref{eqn:wave}. The operator $P$ has symbol of class $S^1_{1,\hf}$ and is obtained by mollifying the coefficients of $\g$ over scale $2^{-\frac k2}$ when acting on functions at frequency scale $2^k$.

One then seeks a construction of the evolution operator $e^{-itP}$ for which the desired dispersive bounds can be proven. In \cite{Sm0}, an approximation $E(t)$ to $e^{-itP}$ was obtained by working in a frame of dyadic-parabolic wave packets (curvelets). A key property of such wave packets is that the action of $e^{-itP}$ on each element of the frame is well approximated by rigid translation of the packet along the Hamiltonian flow of $P$, and $E(t)$ was defined as this rigid motion. 
This operator $E(t)$ failed to satisfy the unitary group property $E(t)E(s)^*=E(t-s)$, however, which is a crucial requirement for the established proofs of dispersive bounds such as in \cite{KT}.
This limited the results of \cite{Sm0} to low dimensions. The Strichartz estimates of Theorem \ref{thm:strichartz} for $C^{1,1}$ metrics and general dimensions were subsequently established by Tataru in \cite{Tat0}, \cite{Tat1}, \cite{Tat2},  where space-time bounds on the FBI transform were used. The paper \cite{Sm1} of Smith used a modified FBI transform to translate the problem to phase-space, and $e^{-itP}$ was approximated on the transform side by the Hamiltonian flow map. This forms a unitary group, and the estimates in Theorems \ref{thm:strichartz} and \ref{thm:sqfn} (with $F=0$ in Theorem \ref{thm:strichartz}) were established for $C^{1,1}$ metrics, in all dimensions.

For metrics of bounded curvature the paradifferential construction of the self-adjoint operator $P$ goes through as above, provided one works in harmonic coordinates on $(M,\g)$. In such coordinates the metric $\g$ has second derivatives belonging to $\BMO$, which is sufficient to show that $P^2+\Delta_\g$ maps $H^s\rightarrow H^{s-1}$ for a range of $s$. The wave packet methods fail to give a useful construction of $e^{-itP}$, however, since the error estimates for the rigid translation or Hamiltonian flow approximations depend explicitly on pointwise bounds on $\partial_x^2\g^{ij}(x)$. On the other hand, by the Jacobi variation formula $L^\infty$ bounds on the Riemannian curvature tensor imply that the geodesic and Hamiltonian flows are bilipschitz. A consequence is that the solution to the eikonal equation in any local harmonic coordinate system has bounded second derivatives, the same regularity as for $C^{1,1}$ metrics. 

This naturally leads us in this paper to imitate the Lax parametrix construction for $e^{-itP}$. It turns out that solving the transport equations for the amplitude produces no further improvement beyond setting the amplitude to be identically one, as all terms in the expansion of the amplitude would be symbols of order zero, due to the fact that the symbol of $P$ is of class $S^1_{1,\hf}$. On the other hand, to have a unitary group we need work with the exact operator $e^{-itP}$. We achieve this by producing $e^{-itP}$ exactly as an iterative expansion of the Lax approximation, which we show converges uniformly on finite time intervals in the $H^s$ operator norm for every $s\in\re$.

To prove the dispersive estimates of Theorems \ref{thm:strichartz} and \ref{thm:sqfn} we establish bounds on the integral kernel of $e^{-itP}$ localized dyadically in frequency. These bounds capture the pointwise decay of the fundamental solution away from the light cone, and are of the exact same form as for smooth metrics. An advantage of this proof is that we can obtain the inhomogeneous estimates stated in Theorem \ref{thm:strichartz}.
We establish the kernel bounds using a version of the wave packet frame of \cite{Sm0} rescaled by time $t$. This method is well adapted to handle the multiple products arising in the iterative expression for $e^{-itP}$, since the bounds can be phrased in terms of operator bounds in certain weighted norm spaces.

The proof of Theorems \ref{thm:strichartz} and \ref{thm:sqfn} is composed of multiple distinct steps, and we divide it up into sections as follows. A more detailed summary of each section is included at its beginning.

In Section \ref{sec:reductions}, we present the details of harmonic coordinates on $(M,\g)$ and the regularity results for $\g$ in such coordinates. The procedure is similar to that in Taylor \cite{Tay}, Chapter 3 \S 9. We then reduce matters to working with a compact perturbation of the Euclidean metric on $\Rd$. We introduce the paradifferential operator approximation $P$, and equate the estimates of Theorems \ref{thm:strichartz} and \ref{thm:sqfn} to Lebesgue space mapping properties for $e^{-itP}$.

In Section \ref{sec:Hamflow}, we use the Jacobi variation formula to study the regularity of the geodesic flow for the metric $\g_k$ that is obtained by mollifying $\g$ at scale $2^{-\frac k2}$. The estimates on the derivatives of the geodesic flow are exactly those obtained in the case $\g\in C^{1,1}$. 

In Section \ref{sec:phasefunction}, we use the results derived in Section 3 and a dilation argument to prove symbol type estimates on the solution $\varphi_k(t,x,\eta)$ of the eikonal equation for $\g_k$. A key result is obtaining better estimates for small $t$, which is crucial to proving the dispersive estimates on the kernel of $e^{-itP}$ when $|t|\ll 1$.

In Section \ref{sec:parametrix}, we introduce an approximation $W(t)$ to $e^{-itP}$, which is a sum over $k$ of terms
$$
\bigl(W_k(t)f\bigr)(x)=\frac 1{(2\pi)^d}\int e^{i\varphi_k(t,x,\eta)}\,\psi_k(\eta)\hatf(\eta)\,d\eta,
$$
where $\psi_k$ is a Littlewood-Paley partition of unity. We show that
$$
\big(\partial_t + iP_k\big)\bigl(W_k(t)f\bigr) = B_k(t)f
$$
where $B_k(t)$ is an oscillatory integral operator with phase $\varphi_k$, and symbol $b_k(t,x,\eta)$ of order $0$
that satisfies derivative bounds similar to those for $\varphi_k$.

Section \ref{sec:energyflow} is concerned with energy flow properties of iterated compositions of $W(t)$ and $B(t)$, which arise in the expansion of $e^{-itP}$. In particular, we show that multiple compositions preserve dyadic localization in frequency up to smoothing errors. Thus, in proving dispersive estimates for $e^{-itP}$ we need only handle the composition of terms $W_k$ and $B_k$ all of which are localized at the same dyadic scale. We also prove ``sideways'' energy estimates that arise in the proof of Theorem \ref{thm:sqfn}.

In Section \ref{sec:wavepackets} we prove that, for small $t$, the kernel $K_k(t,x,y)$ of $e^{-itP}\psi_k(D)$ satisfies, modulo a smoothing operator, the same bounds as for smooth metrics:
$$
|K_k(t,x,y)|\le C_N\,2^{kd}\bigl(1+2^k|t|\bigr)^{-\frac{d-1}2}\bigl(1+2^k\dist(x,S_t(y))\bigr)^{-N},
$$
where $S_t(y)$ is the geodesic sphere centered at $y$ and $\dist(\cdot,\cdot)$ the geodesic distance for $\g_k$.
Together with standard arguments these estimates  yield Theorems \ref{thm:strichartz} and \ref{thm:sqfn}. The proof of this estimate proceeds, for a given value of $t$, by representing $e^{-itP}\psi_k(D)$ in a wave packet frame that is obtained by scaling by $|t|$ the dyadic-parabolic frame from \cite{Sm0}. The kernel estimates follow by showing that the operator $e^{-itP}\psi_k(D)$ maps a frame element at time $0$ to a similar function translated along the Hamiltonian flow through its center. This fact is deduced from showing the same result for the terms $W_k(s)$ and $B_k(s)$ for $0\le s\le t$ that arise in the iterative formula for $e^{-itP}$.

\section{Preliminaries and Reduction to the Model Operator}\label{sec:reductions}
In this section we establish regularity estimates for the metric $\g$ in local harmonic coordinate charts.
We then consider Sobolev spaces on $M$, and define the wave group for $\sqrt{-\Delta_\g}$ using the orthonormal basis for $L^2(M)$ consisting of eigenfunctions of $\Delta_g$. We conclude by reducing the proof of Theorem \ref{thm:strichartz} to estimates for the evolution group $e^{-itP}$ of the self-adjoint first order pseudodifferential operator $P$ on $\Rd$, where $P$ is an extension to $\Rd$ of a paradifferential approximation to $\sqrt{-\Delta_\g}$ in one of a finite cover by $M$ of local harmonic coordinate charts.

\subsection{Harmonic coordinates on $(M,\g)$}
We start with the assumption that $(M,\g)$ is a Riemannian manifold of $C^1$ structure with the following condition: there exists $r_0>0$, $C_0<\infty$, and $p\in(d,\infty]$, and for each $z\in M$ a coordinate chart $\Phi_z:B_{r_0}\rightarrow M$ with $\Phi_z(0)=z$, so that the induced metric $\g$ on $B_{r_0}\subset \Rd$ satisfies
$$
\g_{ij}(0)=\delta_{ij},\qquad \sup_{ij}\|\g_{ij}\|_{W^{1,p}}\le C_0.
$$
Since $W^{1,p}$ functions are of H\"older regularity $1-\frac dp>0$, by shrinking $r_0$ if needed we may additionally assume that, given $c_0>0$ to be determined,
$$
\sup_{x\in B_{r_0}}|\g_{ij}(x)-\delta_{ij}|\le c_0.
$$
Following Taylor \cite{Tay}, Chapter 3 \S 9, in particular \cite[ch.\ 3, Prop.\ 9.1]{Tay} and the comments following \cite[ch.\ 3, (9.39)]{Tay}, after replacing $r_0$ by $\rho_0=\rho_0(d,p,C_0,c_0)$, we may assume that the induced coordinate functions, $f_z^i:\Phi_z(B_{\rho_0})\rightarrow \re$,  are harmonic functions with respect to the Laplace-Beltrami operator of $\g$, and that overlapping harmonic coordinate charts have transition functions of regularity $W^{2,p}$ on their overlaps. The harmonic coordinates are related to the coordinate functions of $\Phi_z$ by a $W^{2,p}$ change of coordinates over $B_{r_0}$, and it follows that the original coordinates were necessarily of regularity $W^{2,p}\subset C^{1,1-\frac dp}$ on their overlaps. Consequently, $M$ is a manifold with $W^{2,p}$ structure. This is consistent with the fact that a metric $\g$ maintains its $W^{1,p}$ regularity under a $W^{2,p}$ change of coordinates, which can be seen by \eqref{prodest} below.

For every integer $m\ge 0$, there is a continuous linear extension operator of $W^{m,p}(B_{\rho_0})$ to $W^{m,p}(\Rd)$; see e.g. \cite[ch.\ VI \S 3 Thm.\ 5]{St}. We may thus apply \cite[ch.\ 2 Prop.\ 1.1]{Tay}, together with the inclusions 
$$
W^{1,p}(\Rd)\subset L^\infty(\Rd),\qquad H^1(\R^d)=W^{1,2}(\Rd)\subset L^{\frac{2p}{p-2}}(\Rd),
$$
to see that the following hold, both on $\Rd$ and $B_{\rho_0}$,
\begin{equation}\label{prodest}
\|fg\|_{W^{1,p}}\le C\,\|f\|_{W^{1,p}}\|g\|_{W^{1,p}},\qquad
\|fg\|_{H^1}\le C\,\|f\|_{W^{1,p}}\|g\|_{H^1}.
\end{equation}

The Riemannian curvature tensor $\R$ for $\g$ is given in coordinates by
$$
\R_{ijkl}=
\frac12\left[\frac{\partial^2\g_{ik}}{\partial x_j\partial x_\ell}+
\frac{\partial^2 \g_{j\ell}}{\partial x_i\partial x_k}-
\frac{\partial^2 \g_{i\ell}}{\partial x_j\partial x_k}-
\frac{\partial^2 \g_{jk}}{\partial x_i\partial x_\ell}\right]
+Q(\g,\partial\g),
$$
where $Q(\g,\partial\g)$ is a quadratic form in first order derivatives of $\g_{ij}$, with coefficients given by a combination of coefficients of $\g$, hence $Q(\g,\partial\g)\in L^{\frac p2}$ when $\g\in W^{1,p}$ with $p>d$. Then $\R$ is defined as a distribution, and our key assumption is that $\R_{ijkl}$ is a bounded measurable function, such that uniformly in the local coordinates $F_z$,
$$
\sup_{ijkl}\,\|\R_{ijkl}\|_{L^\infty(B_{\rho_0})}\le C_0.
$$
This is implied by assuming that $\R$ is a measurable function, together with the geometric condition that for all continuous vector fields $v_j$,
$$
\|\la R(v_1,v_2)v_3,v_4\ra\|_{L^\infty(M)}\le C_0 \quad\text{if}\quad\|\g(v_j)\|_{L^\infty(M)}\le 1.
$$

In harmonic coordinates, the Ricci tensor $\Ric$ can be written, see for example \cite{DK}, in the form
$$
\Ric_{ij}=\sum_{mn}\partial_{x_m}\bigl(\g^{mn}\partial_{x_n} \g_{ij}\bigr)+Q(\g,\partial \g).
$$
Since $\Ric_{ij}\in L^\infty(B_{\rho_0})$, following \cite[ch.\ 3 \S 10]{Tay} we conclude $\g_{ij}\in W^{2,q}(B_\rho)$ for all $\rho<\rho_0$ and all $q<\infty$, hence $\g_{ij}\in \Lip(B_{.9\rho_0})$.

Take $\phi\in C^\infty_c(B_{.8\rho_0})$ with $\phi=1$ on $B_{.7\rho_0}$, and $\chi\in C^\infty_c(B_{.9\rho_0})$ with $\chi=1$ on $B_{.8\rho_0}$, and assume $\phi$ and $\chi$ take values in $[0,1]$.

We form a Riemannian metric $\tilde\g_{ij}=\phi\,\g_{ij}+\bigl(1-\phi\bigr)\delta_{ij}$ on $\Rd$, and uniformly elliptic coefficients $a^{ij}=\chi\,\g^{ij}+\bigl(1-\chi\bigr)\delta^{ij}$ on $\Rd$. Note that
$Q(\g,\partial\g)\in L^\infty(B_{.9\rho_0})$ since $\g\in\Lip(B_{.9\rho_0})$. Then the following holds globally on $\Rd$,
$$
\sum_{m,n=1}^d\,\partial_{x_m}\bigl(a^{mn}\partial_{x_n}\tilde\g_{ij}\bigr)
\in L^\infty_{\comp}.
$$
Since the $a^{mn}$ are globally Lipschitz, from \cite[ch.\ 3, Prop.\ 10.3]{Tay} we conclude that 
$\partial_x^2\tilde \g_{ij}\in \BMO_{\comp}(\Rd)$; more precisely $\partial_x^2\tilde \g_{ij}$ belongs to $\BMO(\Rd)$ and is supported in $B_{.8\rho_0}$.

Note that the Riemannian curvature tensor $\tilde\R$ of $\tilde\g$ belongs to $L^\infty_\comp(\Rd)$, where we use that $\tilde\g$ is Lipschitz, so $\tilde\R=\phi\R$ modulo products of $\g$ and $\partial_x\g$ and functions in $C_c^\infty(B_{.9\rho_0})$.
After shrinking $\rho_0$ by a factor of 2, we conclude

\begin{lemma}\label{lem:harmcoords}
Given $c_0>0$, there exists $\rho_0>0$ and $C_0<\infty$ so that
for each $z\in M$ there is a harmonic coordinate chart $\Phi_z:B_{\rho_0}\rightarrow M$, with 
$\Phi_z(0)=z$, such that the induced metric on $B_{\rho_0}$ agrees with the restriction of a metric 
$\g$ defined on $\Rd$ that satisfies $\g_{ij}=\delta_{ij}\;\;\text{if}\;\;|x|>2\rho_0$, and
$$
\|\partial_x^2\g_{ij}\|_{\BMO}+\|\g_{ij}\|_{\Lip}+\|\R_{ijkl}\|_{L^\infty}\le C_0,\qquad 
\|\g_{ij}-\delta_{ij}\|_{L^\infty}\le c_0.
$$
In particular, $\g_{ij}-\delta_{ij}$ belongs to $W^{2,q}_\comp(\Rd)$ for all $q<\infty$.
\end{lemma}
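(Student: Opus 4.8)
The plan is to organize the preceding discussion into a proof; the substantive steps are the passage to harmonic coordinates, an elliptic bootstrap to Lipschitz regularity of the metric, and a globalization by cutoff, after which the $\BMO$ and curvature bounds fall out. First I would shrink $r_0$ using the embedding $W^{1,p}\hookrightarrow C^{0,1-d/p}$ (valid since $p>d$) so that $\sup_{B_{r_0}}|\g_{ij}-\delta_{ij}|\le c_0$, with $c_0$ as small as needed below. Then, as in Taylor \cite[ch.~3, Prop.~9.1]{Tay}, one solves $\Delta_\g f^i=0$ with $f^i$ a small perturbation of the linear coordinate $x^i$; for $\rho_0=\rho_0(d,p,C_0,c_0)$ sufficiently small, $x\mapsto(f^1(x),\dots,f^d(x))$ is a $W^{2,p}$ diffeomorphism of a neighborhood of the origin, which gives the harmonic chart $\Phi_z$ with $\Phi_z(0)=z$. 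A $W^{2,p}$ change of coordinates preserves the $W^{1,p}$ class of a metric --- which is essentially the content of the product estimates \eqref{prodest} --- so in harmonic coordinates the metric still obeys a $W^{1,p}$ bound of the same form, with constant depending only on $d,p,C_0$, and overlapping such charts have $W^{2,p}$ transition maps.

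Next I would run the elliptic bootstrap. In harmonic coordinates $\Ric_{ij}=\sum_{mn}\partial_{x_m}\bigl(\g^{mn}\partial_{x_n}\g_{ij}\bigr)+Q(\g,\partial\g)$ with $Q\in L^{p/2}$, so, invoking the hypothesis $\Ric_{ij}\in L^\infty$, each $\g_{ij}$ solves a uniformly elliptic second order equation with continuous coefficients and right-hand side in $L^{p/2}_{\loc}$. Standard elliptic $L^q$ regularity then puts $\g_{ij}\in W^{2,p/2}_{\loc}$, and iterating --- each step improving $\partial\g$, and hence $Q$, via Sobolev embedding --- yields $\g_{ij}\in W^{2,q}(B_\rho)$ for every $\rho<\rho_0$ and every $q<\infty$, in particular $\g_{ij}\in\Lip(B_{.9\rho_0})$; this is \cite[ch.~3, \S 10]{Tay}. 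Consequently $Q(\g,\partial\g)\in L^\infty(B_{.9\rho_0})$ as well.

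For the globalization, choose $\phi\in C^\infty_c(B_{.8\rho_0})$ with $\phi\equiv1$ on $B_{.7\rho_0}$ and $\chi\in C^\infty_c(B_{.9\rho_0})$ with $\chi\equiv1$ on $B_{.8\rho_0}$, both valued in $[0,1]$, and set $\tilde\g_{ij}=\phi\,\g_{ij}+(1-\phi)\delta_{ij}$ and $a^{ij}=\chi\,\g^{ij}+(1-\chi)\delta^{ij}$: the first is a Riemannian metric on $\Rd$ equal to $\delta_{ij}$ for $|x|>.8\rho_0$ and Lipschitz everywhere, the second is globally Lipschitz and uniformly elliptic. The key point is that $\sum_{mn}\partial_{x_m}\bigl(a^{mn}\partial_{x_n}\tilde\g_{ij}\bigr)\in L^\infty_{\comp}(\Rd)$: where $\phi=\chi=1$ it equals $\Ric_{ij}-Q(\g,\partial\g)\in L^\infty$, while elsewhere the Leibniz rule produces only terms in which at most one derivative falls on $\g$, multiplied by derivatives of $\phi$ or $\chi$ and by coefficients of $\g$, all supported in $B_{.9\rho_0}$ where $\g$ is Lipschitz. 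Since the $a^{mn}$ are globally Lipschitz and $\tilde\g_{ij}-\delta_{ij}$ is compactly supported, \cite[ch.~3, Prop.~10.3]{Tay} then gives $\partial_x^2\tilde\g_{ij}\in\BMO(\Rd)$, supported in $B_{.8\rho_0}$; in particular $\tilde\g_{ij}-\delta_{ij}\in W^{2,q}_{\comp}(\Rd)$ for all $q<\infty$, since a $\BMO$ function is locally in every $L^q$.

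Finally, since $\tilde\g$ is Lipschitz with $\partial_x^2\tilde\g\in\BMO\subset\bigcap_{q<\infty}L^q_{\loc}$, its Riemannian curvature tensor $\tilde\R$ is a well-defined locally integrable function, and the coordinate formula for $\R$ shows $\tilde\R_{ijkl}=\phi\,\R_{ijkl}$ modulo products of functions in $C^\infty_c(B_{.9\rho_0})$ with $\g$ and $\partial_x\g$, hence modulo $L^\infty_{\comp}$; as $\R_{ijkl}\in L^\infty$ by hypothesis, $\tilde\R_{ijkl}\in L^\infty_{\comp}(\Rd)$. Relabeling $\tilde\g$ as $\g$ and replacing $\rho_0$ by $\rho_0/2$ --- so that $\supp(\g-\delta)\subset B_{2\rho_0}$ while the chart is defined on $B_{\rho_0}$ --- and collecting the bounds above into a single constant $C_0$, completes the proof; the uniformity of $\rho_0$ and $C_0$ over $z\in M$ is automatic, since every constant produced depends only on $d,p$ and the given $C_0,c_0$. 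I expect the delicate point to be the globalization step: one must have already promoted $\g$ to Lipschitz on the slightly larger ball $B_{.9\rho_0}$ before cutting off, so that the commutator terms genuinely land in $L^\infty_{\comp}$ rather than in some merely finite $L^q$, and one must check that Proposition~10.3 of \cite{Tay} applies --- Lipschitz coefficients, a compactly supported solution, and an $L^\infty_{\comp}$ right-hand side.
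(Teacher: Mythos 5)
Your proposal is correct and follows essentially the same route as the paper: harmonic coordinates via Taylor Ch.\ 3 \S 9, the elliptic bootstrap from the Ricci equation in harmonic coordinates to $W^{2,q}$ and Lipschitz regularity as in Taylor Ch.\ 3 \S 10, the cutoff construction of $\tilde\g$ and $a^{ij}$ with Taylor Ch.\ 3, Prop.\ 10.3 giving $\partial_x^2\tilde\g\in\BMO_\comp$, the observation that $\tilde\R=\phi\R$ modulo Lipschitz-level terms, and the final shrinking of $\rho_0$. The points you flag as delicate (promoting $\g$ to Lipschitz on the larger ball before cutting off, and the hypotheses of Prop.\ 10.3) are exactly the ones the paper's argument relies on, and you handle them as the paper does.
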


We now cover $M$ by a finite collection of harmonic coordinate charts 
$\Phi_j\equiv \Phi_{z_j}:B_{\rho_0}\rightarrow M$, each of which satisfies the conditions of 
Lemma \ref{lem:harmcoords}, such that there is a partition of unity $\chi_j$ on $M$ with 
$\supp(\chi_j)\subset \Phi_j(B_{\rho_0/3})$ and $\chi_j\circ \Phi_i\in W^{2,p}(B_{\rho_0})$ for each $i,j$. In particular, $\chi_j\circ \Phi_j\in W^{2,p}_\comp(B_{\rho_0})$.

By \eqref{prodest}, multiplication by $\chi_j\circ\Phi_j$ maps $H^s_{\loc}(B_{\rho_0})$ into $H^s_\comp(B_{\rho_0})$ for $s=0,1,2$. By interpolation this holds for $0\le s\le 2$. We may then introduce Sobolev spaces $H^s(M)\subset L^2(M)$, for $0\le s\le 2$, by the condition
\begin{equation}\label{Hsdef}
\begin{split}
f\in H^s(M)\,&\Leftrightarrow\;f\circ \Phi_j\in H^s_\loc(B_{\rho_0})\;\;\forall j,\\
\|f\|_{H^s(M)}\,&=\,\sum_j\|(\chi_j f)\circ \Phi_j\|_{H^s(\Rd)}.
\end{split}
\end{equation}
If $g\in H^s_\comp(B_{\rho_0})$ then
$$
\|(\chi_j\cdot g\circ\Phi_i^{-1})\circ\Phi_j\|_{H^s}\le C\|g\|_{H^s},
$$
for $C$ depending on the support of $g$.
This holds for $s=0,1$ since $\Phi_i^{-1}\circ \Phi_j$ is a $C^1$ diffeomorphism. It holds for $s=2$ since $D(\Phi_i^{-1}\circ \Phi_j)\in W^{1,p}$ is a multiplier on $H^1$ by \eqref{prodest}. It then holds by interpolation for $0\le s\le 2$. Consequently, there are natural continuous inclusions $H^s_\comp(B_{\rho_0})\rightarrow H^s(M)$ for $0\le s\le 2$ given by $g\rightarrow g\circ \Phi_j^{-1}$, and one may identify $H^s(M)$ with a closed subspace of the finite direct sum over $j$ of $H^s(B_{\rho_0})$.

An element of $(H^s)^*$ thus induces an element of $H^{-s}_\loc(B_{\rho_0})$, and if we identify $H^{-s}(M)$ with $(H^s)^*$ for $0\le s\le 2$, then the condition \eqref{Hsdef} holds for $-2\le s\le 2$, with approximate equality for the norm.

We observe here the following regularity property for $\Delta_g$ in harmonic coordinates, which follows, for example, from \cite[Thm.\ 8.9]{GT}. Suppose that $u\in H^1(B_{\rho_0})$ is a weak solution to $\Delta_\g u=f$, where $f\in L^2(B_{\rho_0})$. Then $u\in H^2(B_\rho)$ for all $\rho<\rho_0$, and
\begin{equation}\label{ellipticreg}
\|u\|_{H^2(B_\rho)}\le C_\rho\,\bigl(\|u\|_{H^1(B_{\rho_0})}+\|f\|_{L^2(B_{\rho_0})}\bigr).
\end{equation}

The Sobolev spaces for $|s|\le 2$ can also be characterized using the spectral decomposition of $\Delta_\g$ on $L^2(M)$. Consider the quadratic form on $H^1(M)$ given by
$$
Q(u,v)=-\int \overline {u}\,(\Delta_\g v)\,dm_\g=\int\g(d\overline u,dv)\,dm_\g.
$$
Then $Q$ is symmetric, nonnegative, and coercive. By the Rellich compactness theorem there is a complete orthonormal basis $\{v_j\}$ of $L^2(M,dm_\g)$ that diagonalizes $Q$, in that for $f,g\in H^1(M)$
$$
Q(f,g)=\sum_j\lambda_j^2\,\overline{c_j(f)}\,c_j(g),\qquad c_j(f)=\int_M \overline{v_j}\,f\,dm_\g,
$$
and $0=\lambda_0\le \lambda_1\le \cdots$ is a sequence of real numbers converging to $\infty$. The $v_j$ are weak solutions in $H^1(M)$ to $-\Delta_\g v_j=\lambda_j^2\,v_j$, hence \eqref{ellipticreg} gives $\|v_j\|_{H^2(M)}\le C\,\lambda_j^2$. It follows that $c_j(f)$ can be defined for $f\in H^s(M)$ when $-2\le s\le 0$ as the action of $f$ on $\overline{v_j}$.

The operator $(1-\Delta_\g)$ is equivalent to multiplication by $(1+\lambda_j^2)$ in the basis $\{v_j\}$, and the following theorem then gives a more natural definition of $H^s(M)$.

\begin{theorem}\label{spectral}
For $-2\le s\le 2$, the mapping $f\rightarrow \{c_j(f)\}_{j=0}^\infty$ defines a homeomorphism of $H^s(M)$ with the space $\ell^2\bigl(\N,(1+\lambda_j^2)^{s}\bigr)$. In particular, uniformly over $-2\le s\le 2$, we have
$$
\|f\|_{H^s(M)}^2\approx\sum_{j=0}^\infty \bigl(1+\lambda_j^2\ts\bigr)^s|c_j(f)|^2\,,\qquad c_j(f)=\int_M f\,\overline{v_j}\,dm_\g,
$$
and $\sum_{j=0}^\infty c_j(f)\,v_j$ converges to $f$ in the topology of $H^s(M)$.
\end{theorem}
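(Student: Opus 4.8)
The plan is to prove the claimed norm equivalence and homeomorphism first at the endpoints $s=0,1,2$, then obtain the range $0\le s\le 2$ by complex interpolation and the range $-2\le s\le 0$ by duality. The case $s=0$ is Plancherel's theorem: $\{v_j\}$ is a complete orthonormal basis of $L^2(M,dm_\g)$, so $\sum_j|c_j(f)|^2=\|f\|_{L^2(M,dm_\g)}^2$, which is comparable to the $s=0$ norm of \eqref{Hsdef} because in each harmonic chart $dm_\g=\sqrt{\det\g}\,dx$ with $\sqrt{\det\g}$ bounded above and below by Lemma \ref{lem:harmcoords}, and $\{\chi_j\circ\Phi_j\}$ is a finite partition of unity. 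The case $s=1$ is essentially built into the setup: $H^1(M)$ is the form domain of $-\Delta_\g$, and the uniform ellipticity of $\g$ gives $\|f\|_{H^1(M)}^2\approx Q(f,f)+\|f\|_{L^2(M)}^2$, while the diagonalization of $Q$ gives $Q(f,f)+\|f\|_{L^2(M)}^2=\sum_j(1+\lambda_j^2)|c_j(f)|^2$; completeness of $H^1(M)$ together with the coercivity of $Q$ yields surjectivity.

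The case $s=2$ carries the real content, and I would deduce it from the equivalence $\|f\|_{H^2(M)}\approx\|(1-\Delta_\g)f\|_{L^2(M,dm_\g)}$: since $(1-\Delta_\g)$ acts as multiplication by $(1+\lambda_j^2)$ on the coefficients $c_j$, applying the $s=0$ case to $(1-\Delta_\g)f$ then gives the assertion at $s=2$. The bound $\|(1-\Delta_\g)f\|_{L^2}\le C\|f\|_{H^2(M)}$ is the easy direction, using that in harmonic coordinates $\Delta_\g=\g^{ij}\partial_i\partial_j$ has no first-order part and has Lipschitz coefficients, which are $L^\infty$-multipliers on $L^2$, together with the first-order commutators $[\Delta_\g,\chi_j]$. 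For the reverse inequality, let $h=(1-\Delta_\g)f\in L^2(M,dm_\g)$. The coefficients of $f$ satisfy $\sum_j(1+\lambda_j^2)|c_j(f)|^2\le\sum_j(1+\lambda_j^2)^2|c_j(f)|^2=\|h\|_{L^2}^2$, so by the $s=1$ case $f\in H^1(M)$ with $\|f\|_{H^1(M)}\le C\|h\|_{L^2}$; then in each chart $f\circ\Phi_j\in H^1_\loc(B_{\rho_0})$ is a weak solution of $\Delta_\g(f\circ\Phi_j)=(f-h)\circ\Phi_j\in L^2_\loc(B_{\rho_0})$, so the interior estimate \eqref{ellipticreg} on a ball containing $\Phi_j^{-1}(\supp\chi_j)$ yields $\|(\chi_jf)\circ\Phi_j\|_{H^2}\le C\bigl(\|f\|_{H^1(M)}+\|h\|_{L^2(M)}\bigr)$, and summing over the finite cover gives $\|f\|_{H^2(M)}\le C\|h\|_{L^2(M)}$. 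The same chain shows that for $(c_j)\in\ell^2(\N,(1+\lambda_j^2)^2)$ the series $\sum_jc_jv_j$ converges in $H^2(M)$ with coefficient sequence $(c_j)$, so the map is onto; injectivity is clear since $c_j(f)=0$ for all $j$ forces $f=0$ already in $L^2(M)$.

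For $0<s<2$ I would interpolate. The spaces $H^s(M)$ of \eqref{Hsdef} form a complex interpolation scale between $H^0(M)$ and $H^2(M)$: the map $f\mapsto((\chi_jf)\circ\Phi_j)_j$ realizes $H^s(M)$ as a retract of the finite direct sum $\bigoplus_jH^s(B_{\rho_0})$, with bounded retraction $(g_j)_j\mapsto\sum_j\eta_j\,(g_j\circ\Phi_j^{-1})$, where $\eta_j\in C^\infty_c$ equals $1$ on $\Phi_j(\supp\chi_j)$; both maps are bounded on the endpoint spaces by \eqref{prodest} and the $W^{2,p}$ regularity of the $\chi_j$ and the transition functions, hence on all interpolates. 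Since $\ell^2(\N,(1+\lambda_j^2)^s)$ is likewise an interpolation scale, the endpoint homeomorphisms at $s=0,2$ interpolate to a homeomorphism for $0\le s\le 2$ with $s$-uniform constants, and $\sum_jc_j(f)v_j\to f$ in $H^s(M)$ follows by applying the continuous inverse map to the partial sums. For $-2\le s<0$ I would dualize: $H^s(M)=(H^{-s}(M))^*$ under the $L^2(M,dm_\g)$ pairing, for which $c_j(f)=\langle f,\overline{v_j}\rangle$ is well defined since $\overline{v_j}\in H^2(M)\subset H^{-s}(M)$; transposing the homeomorphism $H^{-s}(M)\to\ell^2(\N,(1+\lambda_j^2)^{-s})$ and using $(\ell^2(\N,(1+\lambda_j^2)^{-s}))^*=\ell^2(\N,(1+\lambda_j^2)^s)$ and $(H^{-s}(M))^*=H^s(M)$ gives the desired homeomorphism $f\mapsto(c_j(f))$, with norm constants inherited from the $-s$ case; convergence of the series in $H^s(M)$ again follows from continuity of the inverse.

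I expect the main obstacle to be the reverse bound at $s=2$: assembling the a priori $H^1$ membership (from the spectral representation and the $s=1$ case), the fact that in harmonic coordinates $\Delta_\g$ is a pure second-order operator with Lipschitz coefficients, the interior elliptic estimate \eqref{ellipticreg}, and the partition-of-unity patching into a single global inequality on $M$. A secondary point needing care is verifying that the coordinate-defined spaces $H^s(M)$ genuinely form an interpolation scale, which is what licenses the passage from the endpoints to all $-2\le s\le 2$ with uniform constants.
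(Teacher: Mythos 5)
Your proposal is correct and follows essentially the same route as the paper: establish the endpoints $s=0,1,2$ (with the $s=2$ case resting on the interior elliptic estimate \eqref{ellipticreg} and partition-of-unity patching), then pass to $0\le s\le 2$ by interpolation and to $-2\le s\le 0$ by duality. Your write-up simply fills in details the paper leaves terse, notably the two-sided bound $\|f\|_{H^2(M)}\approx\|(1-\Delta_\g)f\|_{L^2(M)}$ and the retract argument justifying that the coordinate-defined spaces $H^s(M)$ form an interpolation scale.
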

\begin{proof} The theorem holds for $s=0$ by orthonormality, and for $s=1$ since
$\|f\|_{H^1}^2\approx \|f\|_{L^2}^2+Q(f,f)$.
For $s=2$, we note that the partial sums
$$
\sum_{j=0}^N c_j\bigl((1-\Delta_\g) f\bigr)\,v_j
=\sum_{j=0}^N \bigl(1+\lambda_j^2\bigr)c_j(f)\,v_j
=(1-\Delta_\g)\sum_{j=0}^N c_j(f)\,v_j
$$ 
converge in $L^2(M)$ to $(1-\Delta_\g)f$ if $f\in H^2(M)$. It follows by elliptic regularity that
$\sum_{j}c_j(f)\,v_j$ converges in $H^2(M)$ to $f$. Surjectivity onto $\ell^2\bigl(\N,(1+\lambda_j^2)^2\bigr)$ follows similarly.
The theorem follows for $0\le s\le 2$ by interpolation, and for $-2\le s\le 0$ by duality.
\end{proof}
We note that the proof also shows that $-\Delta_\g$ conjugates to multiplication by $\{\lambda_j^2\}$ in the basis $\{v_j\}$, as a map from $H^s(M)\rightarrow H^{s-2}(M)$, provided $0\le s\le 2$.


\subsection{The wave equation on $(M,\g)$}
For data $(f,g)\in L^2(M)\oplus H^{-1}(M)$ and $F\in L^1_t\bigl[-T,T];H^{-2}(M)\bigr)$ we define the solution of the Cauchy problem \eqref{eqn:wave} to be
\begin{multline}\label{eqn:specrep}
u(t,x)=\sum_{j=0}^\infty 
\biggl(\cos(t\lambda_j)\,c_j(f)+\lambda_j^{-1}\sin(t\lambda_j)\,c_j(g)
\\
+\int_0^t\lambda_j^{-1}\sin((t-s)\lambda_j)\,c_j(F(s,\cdot))
\biggr)v_j(x)
\end{multline}
where we set $0^{-1}\sin(0\ts t)=t$. We show here that Theorem \ref{thm:strichartz} can be deduced from the following assertion:

\noindent\textit{Assume that $u\in C^0(H^s(M))\cap C^1(H^{s-1}(M))$, and that $u$ is given by \eqref{eqn:specrep}. Then for $s,q,\tq,r,\tr$ as in Theorem \ref{thm:strichartz}, the following estimate holds,}
\begin{multline*}
\|u\|_{L_t^q([-T,T]; L^r(M))}
\le C \Bigl(\,\|u\|_{L_t^\infty([-T,T]; H^s(M))}
+\|\partial_t u\|_{L_t^\infty([-T,T]; H^{s-1}(M))}\\
+\|F\|_{L_t^{\tq'}([-T,T];L^{\tr'}(M))}\,\Bigr).
\end{multline*}
To see that this result implies Theorem \ref{thm:strichartz}, consider first the case $F=0$. Then by the spectral representation of $u$ we have
$$
\|u\|_{L_t^\infty([-T,T]; H^s(M))}
+\|\partial_t u\|_{L_t^\infty([-T,T]; H^{s-1}(M))}\approx \|f\|_{H^s(M)}+\|g\|_{H^{s-1}(M)},
$$
and Theorem \ref{thm:strichartz} follows from the assertion. We apply this to the triple $(1-s,\tq,\tr)$ and use duality to see that, when $f=g=0$,
$$
\|u\|_{L_t^\infty([-T,T]; H^s(M))}
+\|\partial_t u\|_{L_t^\infty([-T,T]; H^{s-1}(M))}\le C\,\|F\|_{L_t^{\tq'}([-T,T];L^{\tr'}(M))}.
$$
The continuity of $u$ and $\partial_t u$ follows by translation continuity,
and Theorem \ref{thm:strichartz} then follows from the assertion for the case $F\ne 0$.

As a result we may assume that
$$
u\in C^0(H^s(M))\cap C^1(H^{s-1}(M))\cap C^2(H^{s-2}(M)),
$$
and in particular, $\partial_t^2 u=\Delta_\g u$ in the weak sense on $B_{\rho_0}$ in each of the local harmonic coordinate charts $\Phi_j$.

If the data $(f,g,F)$ is localized in $\Phi_j(B_{\rho_0/3})$, then finite propagation velocity shows that $u(t)$ is supported in $\Phi_j(B_{2\rho_0/3})$ if $|t|\le\rho_0/6$, where we use $W^{2,p}$ regularity of $\g$ for all $p<\infty$, and closeness of $\g_{ij}$ to $\delta_{ij}$ for $c_0$ small.

Using the partition of unity $\chi_j$, we can thus reduce the proof of Theorem \ref{thm:strichartz} to the case that the Cauchy data is supported in $\Phi_j(B_{\rho_0/3})$, and thus work on $\Rd$ with a metric satisfying the conditions of Lemma \ref{lem:harmcoords}.
After rescaling space and time by a factor $R\ge 1$, where
$R^{-1}C_0\le c_d$, we can reduce Theorems \ref{thm:strichartz} and \ref{thm:sqfn} with $T= R^{-1}\rho_0/6$ to the following Theorem \ref{thm:strichartz'}. The constant $c_d$ will be fixed depending only on the dimension, and in particular will be small enough to rule out conjugate points for $|t|\le 1$.

\begin{theorem}\label{thm:strichartz'}
Assume $\g$ is a Riemannian metric on $\Rd$,
such that for a prescribed constant $c_d$ depending on the dimension $d$,
$$
\|\R_{ijkl}\|_{L^\infty}+\|\g_{ij}-\delta_{ij}\|_\Lip+\|\partial_x^2\g_{ij}\|_{BMO}\le c_d.
$$
Assume that $(s,q,r)$ and $(1-s,\tilde q,\tilde r)$ are admissible with $r,\tilde r<\infty$, and
let $u\in C^0\bigl([0,1];H^s(\Rd)\bigr)\cap C^1\bigl([0,1];H^{s-1}(\Rd)\bigr)$ be a weak solution to 
$$
(\partial_t^2-\Delta_\g)u=F,\quad u(0,\cdot)=f,\quad \partial_tu(0,\cdot)=g.
$$
Then there is a constant $C<\infty$ depending only on $d$, so that
\begin{multline*}
\|u\|_{L^q([0,1]; L^r(\Rd))}\le C \Bigl(\,\|u\|_{L^\infty([0,1]; H^s(\Rd))}+\|\partial_t u\|_{L^\infty([0,1]; H^{s-1}(\Rd))} \\
 +\|F\|_{L^{\tq'}([0,1];L^{\tr'}(\Rd))}\,\Bigr).
\end{multline*}
If $q_d=\frac {2(d+1)}{d-1}$ and $s=s_d=q_d^{-1}$, then
$$
\|u\|_{L^{q_d}(\Rd;L^2([0,1]))} \le 
C \bigl(\|f\|_{H^{s_d}(\Rd)}+\|g\|_{H^{s_d-1}(\Rd)}+\|F\|_{L^1([0,1];H^{s_d-1}(\Rd))}\bigr).
$$
\end{theorem}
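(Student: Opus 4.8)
The plan is to follow the paradifferential reduction of \cite{Sm0}, \cite{Sm1} and then construct the \emph{exact} group $e^{-itP}$ as a convergent iteration of a Lax-type parametrix. First I would introduce the self-adjoint first-order operator $P$ on $\Rd$ whose symbol is obtained by mollifying $\bigl(\g^{ij}(x)\xi_i\xi_j\bigr)^{1/2}$ over spatial scale $2^{-k/2}$ when acting at frequency $2^k$, so that $P$ has symbol of class $S^1_{1,\hf}$; the relevant regularity input is that in harmonic coordinates $\partial_x^2\g_{ij}\in\BMO$, available with small norm by Lemma \ref{lem:harmcoords}, and this suffices to show that $P^2+\Delta_\g$ maps $H^s\to H^{s-1}$ for the range of $s$ at hand. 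Writing $u=u_++u_-$ with $u_\pm$ the components satisfying $(\partial_t\pm iP)u_\pm = (\text{lower order})+(\text{forcing built from }F)$, an energy estimate together with Duhamel's formula reduces the Strichartz and squarefunction bounds of Theorem \ref{thm:strichartz'} to the corresponding Lebesgue-space mapping properties of $e^{-itP}$, the inhomogeneous $F$-term being handled by the inhomogeneous (Keel--Tataru, \cite{KT}) Strichartz estimate once the homogeneous dispersive bounds for $e^{-itP}\psi_k(D)$ are established. This is the content of the end of Section \ref{sec:reductions}.

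Next I would analyze the geodesic and Hamiltonian flow of the metric $\g_k$ obtained by mollifying $\g$ at scale $2^{-k/2}$. By the Jacobi variation formula together with the $L^\infty$ bound on the curvature tensor, this flow is bilipschitz with constants uniform in $k$, exactly as in the $C^{1,1}$ case, and a parabolic dilation argument then yields symbol-type estimates on the solution $\varphi_k(t,x,\eta)$ of the eikonal equation $\partial_t\varphi_k+p_k(x,\nabla_x\varphi_k)=0$, $\varphi_k(0,x,\eta)=x\cdot\eta$, with improved bounds for $|t|\ll 1$. One then sets $W_k(t)f(x)=(2\pi)^{-d}\int e^{i\varphi_k(t,x,\eta)}\psi_k(\eta)\hatf(\eta)\,d\eta$ with amplitude identically $1$ --- solving the transport equations produces nothing, since every amplitude term would be a symbol of order zero because $P$ loses $\hf$ derivative per step --- and checks that $(\partial_t+iP_k)W_k(t)=B_k(t)$ is an oscillatory integral operator with phase $\varphi_k$ and order-zero symbol obeying bounds of the same type as $\varphi_k$. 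Since $W(0)=\sum_k\psi_k(D)=I$, Duhamel gives $e^{-itP}=W(t)-\int_0^t e^{-i(t-s)P}B(s)\,ds$, and iterating this identity expresses $e^{-itP}$ as a series of multiple compositions of the form $\int W(t-s_1)B(s_1-s_2)\cdots B(s_n)\,ds$; the series converges in the $H^s$ operator norm on $[0,1]$ for every $s\in\re$ because each $B(t)$ is bounded on $H^s$ uniformly in $t$ while the simplex of integration contributes a factor $t^n/n!$.

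The heart of the matter is the dyadic kernel bound
$$
|K_k(t,x,y)|\le C_N\,2^{kd}\bigl(1+2^k|t|\bigr)^{-\frac{d-1}2}\bigl(1+2^k\dist(x,S_t(y))\bigr)^{-N}
$$
for the kernel of $e^{-itP}\psi_k(D)$, modulo a smoothing operator, from which Theorem \ref{thm:strichartz'} follows by standard $TT^*$/Keel--Tataru arguments applied to each Littlewood--Paley piece, together with a Mockenhaupt--Seeger--Sogge argument (\cite{MSS}) and the ``sideways'' energy estimates for the squarefunction bound. To prove the kernel estimate I would first establish energy-flow bounds showing that multiple compositions of the $W_k$'s and $B_k$'s preserve frequency localization up to smoothing errors, so that only terms at a common dyadic scale $k$ need be composed; then, for a fixed $t$, represent $e^{-itP}\psi_k(D)$ in the wave-packet frame obtained by rescaling the dyadic-parabolic frame of \cite{Sm0} by $|t|$, and show that each $W_k(s)$ and $B_k(s)$, $0\le s\le t$, maps a frame element at time $0$ to a comparable packet rigidly translated along the Hamiltonian flow through its center, the eikonal estimates --- crucially the improved small-$t$ ones --- supplying the needed decay and $B_k$ contributing only a harmless order-zero factor. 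Reassembling the iterative series in these weighted wave-packet norms then yields the stated bound.

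I expect the main obstacle to be this last step. Unlike the $C^{1,1}$ case, the error in the Hamiltonian-flow (or rigid-translation) approximation to $e^{-itP}$ cannot be controlled by pointwise bounds on $\partial_x^2\g^{ij}$, which are unavailable under the bounded-curvature hypothesis; one is therefore forced to work with the exact group through the iterative Lax expansion, and the real work is the bookkeeping of the weighted operator bounds on the multiple products $W_kB_k\cdots B_k$ in the time-rescaled wave-packet norms, uniformly in $k$ and in $t\in[0,1]$, together with verifying that the improved small-$t$ eikonal estimates survive composition so as to produce the sharp $(1+2^k|t|)^{-(d-1)/2}$ decay when $|t|\ll 1$.
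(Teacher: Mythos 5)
Your proposal follows essentially the same route as the paper: paradifferential approximation $P$ with $S^1_{1,1/2}$ symbol in harmonic coordinates, reduction to Lebesgue mapping bounds for $e^{-itP}$ via Duhamel and Keel--Tataru, Jacobi-variation bounds on the geodesic/Hamiltonian flow of $\g_k$ and parabolic-dilation eikonal estimates, the Lax-type parametrix $W_k(t)$ with unit amplitude and zero-order error $B_k(t)$, the convergent iterative expansion for the exact group, frequency-localization/energy-flow lemmas, and finally the dyadic kernel bound proved in a $|t|$-rescaled dyadic-parabolic wave-packet frame together with sideways energy estimates for the squarefunction bound. You also correctly identify the genuine technical crux --- controlling the iterated products $W_k B_k\cdots B_k$ in weighted wave-packet operator norms uniformly in $k$ and $t$, including the small-$t$ eikonal improvements --- so the outline is faithful to the paper's argument.
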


\subsection{The model operator $P$}
We construct here the paradifferential approximation to $\sqrt{-\Delta_\g}$, where we will assume that $\g$ is a metric on $\Rd$ that satisfies the conditions of Theorem \ref{thm:strichartz'}.

We fix a family of dyadically supported functions $\beta_k(\xi)$ for $k\ge 0$, such that $\beta_k(\xi)=\beta_1(2^{1-k}\xi)$ if $k\ge 1$, and such that $\psi_k(\xi)=\beta_k(\xi)^2$ gives a Littlewood-Paley partition of unity. We will assume that
$$
\supp(\beta_1)\subset \{\tfrac 9{10}\le |\xi|\le \tfrac{20}9\}\,,\qquad \beta_0(\xi)^2+\sum_{k=1}^\infty\beta_k(\xi)^2=1.
$$
We introduce a family of metrics $\g_k(x)$ that are mollifications of $\g(x)$ on spatial scale $2^{-\frac k2}$.
Precisely, fix a radial function $\chi\in C_c^\infty(B_1)$, so that 
$$\int \chi(x)\,dx=1,\qquad\int x^\alpha \chi(x)\,dx=0\quad\text{if}\quad 1\le|\alpha|\le 3.
$$
For $k\ge 1$ define a smooth metric $\g_k$ on $\Rd$ by
$$
(\g_k)^{ij}(x)=2^{\frac {kd}2 }\int\chi\bigl(2^{\frac k 2}(x-y)\bigr)\,\g^{ij}(y)\,dy.
$$
From the conditions on $\g$ in Theorem \ref{thm:strichartz'} it follows that $\|\g_k-\I\|_{\Lip}\le c_d$. Also,
\begin{equation*}
\|\partial_x^\beta \g^{ij}_k\|_{L^\infty}\le C_\alpha
\begin{cases}
1+\log(k),& |\beta|=2,\\
2^{\frac k2(|\beta|-2)},& |\beta|\ge 3.
\end{cases}
\end{equation*}
The estimate for $|\beta|=2$ holds when $k=1$ since $\partial_x^2(\chi*\g)=
(\partial_x\chi)*(\partial_x\g)$. For $k\ge 2$ we use that $\chi(2^{\frac 12}\cdot)-\chi(\cdot)$ is an $H^1$-atom, and $\partial_x^2\g\in \BMO(\Rd)$.
The estimate for $|\beta|\ge 3$ follows by writing
$$
\partial_x^\beta(\g_k)^{ij}(x)=2^{\frac k2 (d+|\beta|-2)}\int(\partial_x^{\beta-2}\chi)\bigl(2^{\frac k 2}(x-y)\bigr)\,\partial_y^2\g^{ij}(y)\,dy,
$$ 
and using that $\partial_x^\theta\chi$ is an $H^1$-atom, with norm $C_\alpha$, when $|\theta|\ge 1$.

We also note here the following bounds:
\begin{equation}\label{gkdiffbounds}
\bigl\|\partial_x^\beta(\g_k-\g_{k-1})\bigr\|_{L^\infty}\le C_\beta\,2^{-k+\frac 12 |\beta|}\,.
\end{equation}
For this, write
$$
\chi(\xi)-\chi(2^{\hf}\xi)=|\xi|^2\rho(\xi),\qquad\rho\in\mathcal{S}(\Rd)\,,\;\;\rho(0)=0.
$$
Then, setting $\rho_k(\xi)=\rho(2^{-\frac k2}\xi)$, we have
$$
\g_k-\g_{k-1}=2^{-k}\widehat{\rho_k}*(\Delta \g).
$$
The bound \eqref{gkdiffbounds} then follows from \cite[IV.1.1.4]{St1} as above.

Many of the steps in subsequent estimates use only the weaker estimates that follow from 
the Lipschitz bounds on $\g$,
\begin{equation}\label{gkest'}
\|\partial_x^\beta \g^{ij}_k\|_{L^\infty}\le C_\alpha
\begin{cases}
1,& |\beta|\le 1,\\
2^{\frac k2(|\beta|-1)},& |\beta|\ge 2.
\end{cases}
\end{equation}

Define $\,p_k(x,\xi)=\Bigl(\sum_{i,j=1}^d\g^{ij}_k(x)\,\xi_i\,\xi_j\Bigr)^{\frac 12}$,  
so that $p_k(x,\xi)$ is homogeneous of degree 1 in $\xi$. Then by \eqref{gkest'} and the conditions of 
Theorem \ref{thm:strichartz'} 
\begin{equation}\label{eqn:pkest}
\begin{split}
&\bigl|p_k(x,\xi)-|\xi|\bigr|+|\partial_xp_k(x,\xi)|\le c_d\ts|\xi|,\\
&|\partial_\xi^\alpha\partial_x^\beta p_k(x,\xi)|\le C_{\alpha,\beta}2^{\frac k2\max(0,|\beta|-1)}\,|\xi|^{1-|\alpha|}.\rule{0pt}{15pt}
\end{split}
\end{equation}
Hence, $\partial_x^\beta p_k(x,\xi)\psi_k(\xi)\in S^1_{1,\hf}$, uniformly over $k\ge 1$, if $|\beta|\le 1$. Similarly, by \eqref{gkdiffbounds} we see that 
\begin{equation}\label{pkdiff}
(p_{k\pm 1}-p_k)\psi_k\in S^0_{1,\hf}\quad\text{uniformly over}\; k.
\end{equation}

Define
\begin{equation*}
P=\beta_0(D)^2+\frac 12\sum_{k=1}^\infty \beta_k(D)\bigl(p_k(x,D)+p_k(x,D)^*\bigr)\beta_k(D),
\end{equation*}
and let $p(x,\xi)$ be the symbol of $P$. Then $P$ is self-adjoint, and the $S_{1,\hf}$ pseudodifferential calculus shows that 
$$
p(x,\xi)-\sum_{k=1}^\infty p_k(x,\xi)\psi_k(\xi)\,\in\,S^0_{1,\hf}.
$$
In particular,
$$
\partial_x^\beta p\in S^{1}_{1,\hf}\;\;\text{for}\;\;|\beta|\le 1.
$$
We note for future use that the Garding inequality for $P$ follows easily. It can be verified by letting
$$
b(x,\xi)=\Bigl(\psi_0(\xi)+\sum_{k=1}^\infty p_k(x,\xi)\psi_k(\xi)\Bigr)^{\hf}.
$$
Then $b(x,D)^*b(x,D)-P\in \mathrm{Op}(S^0_{1,\hf})$, hence for $f\in H^{\hf}$, and some real $C_1$
\begin{equation}\label{garding}
\la Pf,f\ra\ge-C_1\,\|f\|_{L^2}^2.
\end{equation}

\begin{lemma}\label{lem:paraest} The following holds for $0\le s\le 2$,
\begin{equation*}
\|P^2 u+\Delta_\g u\|_{H^{s-1}(\Rd)}\le C\,\|u\|_{H^s(\Rd)}\,.
\end{equation*}
\end{lemma}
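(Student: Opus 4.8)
The plan is to split $P^2u+\Delta_\g u$ into a ``main term'' where the symbol calculus applies with favorable errors, and to use the paradifferential structure to absorb the remaining pieces. Write $P=P_0+\sum_{k\ge1}\beta_k(D)\,p_k^{\mathrm{sym}}(x,D)\,\beta_k(D)$, where $P_0=\beta_0(D)^2$ and $p_k^{\mathrm{sym}}(x,D)=\tfrac12\bigl(p_k(x,D)+p_k(x,D)^*\bigr)$. The first observation is that, by the $S_{1,\hf}$ calculus together with \eqref{eqn:pkest}, the operator $p_k^{\mathrm{sym}}(x,D)\beta_k(D)^2p_k^{\mathrm{sym}}(x,D)$ has symbol $p_k(x,\xi)^2\psi_k(\xi)$ modulo $S^1_{1,\hf}$ at frequency $2^k$ — more precisely, the calculus error is a symbol of order $1$ but carrying an extra factor $2^{-k/2}$ from the one derivative gained, which is why one gets the full range $0\le s\le2$ rather than losing a derivative. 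The dangerous-looking second $x$-derivatives of $\g_k$, which by \eqref{gkest'} only satisfy an $S_{1,\hf}$-type bound with a $\log k$ loss at $|\beta|=2$, never appear as genuine loss because each composition that produces a $\partial_x^2$ is paired with a $\beta_k(D)$ that localizes to frequency $2^k$, so the net symbol order is controlled.

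Next I would handle the cross terms and the off-diagonal pieces. Since $\psi_k$ is a Littlewood-Paley partition, $\beta_k(D)\beta_{k'}(D)=0$ unless $|k-k'|\le 1$, so $P^2$ is a sum of ``diagonal'' terms $k'=k$ and ``near-diagonal'' terms $k'=k\pm1$. For the near-diagonal terms one uses \eqref{pkdiff}: $(p_{k\pm1}-p_k)\psi_k\in S^0_{1,\hf}$, so after replacing $p_{k\pm1}$ by $p_k$ in those terms the error is already a sum of operators of order $0$ acting on frequency-$2^k$ pieces, which is bounded $H^s\to H^{s-1}$ (indeed $H^s\to H^s$) with summable constants in $k$. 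Thus modulo $\mathrm{Op}(S^0_{1,\hf})$ one has $P^2=\sum_k p_k(x,\xi)^2\psi_k(\xi)$ as symbols, i.e. $P^2$ has symbol $\sum_k \bigl(\sum_{ij}\g_k^{ij}(x)\xi_i\xi_j\bigr)\psi_k(\xi)$ modulo $S^1_{1,\hf}$ with the extra $2^{-k/2}$ gain described above. One then compares this to $-\Delta_\g$, whose symbol is $\sum_{ij}\g^{ij}(x)\xi_i\xi_j$ plus first-order terms from the lower-order part of $\Delta_\g$ in harmonic coordinates; the latter are $O(|\xi|)$ with $\g\in\Lip$, hence bounded $H^s\to H^{s-1}$ for $0\le s\le 2$ directly.

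The remaining and genuinely main point is to bound the ``paradifferential error'' $\sum_k\bigl(\g^{ij}(x)-\g_k^{ij}(x)\bigr)\xi_i\xi_j\,\psi_k(\xi)$ as a map $H^s\to H^{s-1}$. Here one uses that $\|\g-\g_k\|_{L^\infty}\le C\,2^{-k/2}\|\g\|_{\Lip}$ — and more sharply, telescoping via \eqref{gkdiffbounds}, that $\g-\g_k=\sum_{\ell>k}(\g_\ell-\g_{\ell-1})$ with $\|\g_\ell-\g_{\ell-1}\|_{L^\infty}\le C\,2^{-\ell}\|\partial_x^2\g\|_{BMO}$ after using $\partial_x^2\g\in BMO$ (the $\log$ is harmless since one only needs $L^\infty$ here) — so that multiplication by $\g-\g_k$ against a frequency-$2^k$ function gains $2^{-k/2}$, exactly compensating for the two powers of $\xi$ costing $2^{2k}$ against the $2^{2k}$ in the derivative count, leaving a net operator of order $1$: it maps $H^s\to H^{s-1}$. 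To make this rigorous and uniform over $0\le s\le2$ I would put it in paraproduct form, decomposing $\g-\g_k$ into its own Littlewood-Paley pieces $S_m(\g-\g_k)$ and $\Delta_m(\g-\g_k)$; the ``low-high'' interactions are handled by the $L^\infty$ bound on $\g-\g_k$ just described, while the ``high-high'' and ``high-low'' interactions use the $H^1$-type product estimate in \eqref{prodest} (equivalently $\partial_x\g\in W^{1,p}\cap L^\infty$ bounds) and are better behaved. Summation in $k$ converges because of the $2^{-k/2}$ surplus. The hardest step is precisely this last one: keeping track of the paraproduct decomposition of $\g-\g_k$ so that the gain from $\|\g-\g_k\|_{L^\infty}\lesssim 2^{-k/2}$ is not overwhelmed by the $2^{2k}$ from $\xi_i\xi_j$, and verifying that the leftover order-$1$ operator bound holds at the endpoints $s=0$ and $s=2$ (so that interpolation gives all of $0\le s\le2$); at $s=2$ one must be slightly careful because $\g$ is only in $W^{2,q}$ for $q<\infty$, not $W^{2,\infty}$, but the $BMO$ bound on $\partial_x^2\g$ together with the $2^{-k/2}$ gain is exactly enough.
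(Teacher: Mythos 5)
Your skeleton matches the paper's: reduce $P^2$ via the $S^1_{1,\hf}$ calculus and \eqref{pkdiff} to $\sum_k \g_k^{ij}D_iD_j\psi_k(D)$ modulo an operator of order one, treat the first-order part of $\Delta_\g$ separately, and then bound the paradifferential error $\sum_k(\g^{ij}-\g_k^{ij})D_iD_j\psi_k(D)u$; and you correctly extract, by telescoping \eqref{gkdiffbounds}, the key consequence of $\partial_x^2\g\in\BMO$, namely $\|\g-\g_k\|_{L^\infty}\lesssim 2^{-k}$. Two smaller inaccuracies first: the first-order part of $\Delta_\g$ is not handled ``directly'' from $\g\in\Lip$ when $1<s\le 2$ --- you need its coefficient to be a multiplier on $H^{s-1}$, which the paper gets from $W^{1,p}$ regularity via \eqref{prodest}; and your bookkeeping ``gains $2^{-k/2}$, exactly compensating the two powers of $\xi$'' is off --- a $2^{-k/2}$ gain would only give an operator of order $3/2$, and with the correct full gain $2^{-k}$ the summand is exactly of order one, so there is no leftover ``$2^{-k/2}$ surplus'' to make the sum over $k$ converge.

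That last point is where the genuine gap lies. Since each term $(\g-\g_k)D_iD_j\psi_k(D)u$ is of order one with no surplus, convergence of the $k$-sum must come from structure, not size: for the low-frequency part of the coefficient it comes from orthogonality (the output stays at frequency $\approx 2^k$); for the coefficient pieces at frequency $\gg 2^k$ it comes from the extra decay $\|\Delta_j(\g-\g_k)\|_{L^\infty}\lesssim 2^{-2j}$ (again from $\BMO$, not from \eqref{prodest}); and for the frequency-balanced piece --- the part of $\g-\g_k$ at frequency $\approx 2^k$ multiplying $\psi_k(D)u$ --- the endpoint $s=0$ (equivalently $s=-1$ in \eqref{paraerror}) genuinely requires the Carleson--Fefferman--Stein paraproduct estimate $\bigl\|\sum_k (a_k(D)\Delta\g)\,v_k\bigr\|_{L^2}\lesssim\|\Delta\g\|_{\BMO}\|v\|_{H^{-1}}$ with $v_k=2^{-k}\tpsi_k(D)v$, which is how the paper closes the argument. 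At that endpoint the frequency-$2^k$ piece of $\Delta\g$ is only bounded in $\BMO$, the product spreads over all frequencies $\lesssim 2^k$, and summation using $L^\infty$ bounds alone fails by an $\ell^1$ versus $\ell^2$ mismatch; the product estimates \eqref{prodest}, which encode $W^{1,p}$ regularity of $\g$ rather than $\BMO$ of $\partial_x^2\g$, cannot substitute. So your plan (endpoints $s=0,2$ plus interpolation) has the right shape and the $s=2$ end works as you indicate, but the $s=0$ end is not proved by the tools you list: you must invoke the $\BMO$ paraproduct/Carleson-measure bound (or an equivalent almost-orthogonality argument) for the diagonal interactions.
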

\begin{proof}
By \eqref{eqn:pkest}, we deduce that $\partial_x^\beta p_k(x,\xi)\beta_k(\xi)\in S^1_{1,\hf}$ for $|\beta|\le 1$, with uniform bounds over $k$. Furthermore, $\beta_k$ has disjoint support from $\beta_j$ if $|j-k|>1$. The composition calculus together with \eqref{pkdiff} thus show that
$$
P^2=\sum_{k=0}^\infty \Bigl(\sum_{i,j=1}^d \g^{ij}_k(x) D_i D_j\Bigr)\psi_k(D)+r(x,D)\,,\quad r(x,\xi)\in S^1_{1,\hf},
$$
and in particular $r(x,D):H^s\rightarrow H^{s-1}$ for all $s$. We next write
$$
-\Delta_\g=\sum_{i,j=1}^d \g^{ij}(x) D_i D_j+\det(\g)^{-\hf}\bigl(D_i\bigl(\,\det(\g)^\hf\,\g^{ij}\bigr)\bigr)D_j.
$$
By \eqref{prodest} we see that $\det(\g)^{-\hf}\bigl(D_i\bigl(\,\det(\g)^\hf\,\g^{ij}\bigr)\bigr)\in W^{1,p}$ is a multiplier on $H^s$ for $|s|\le 1$, so the second term maps $H^s\rightarrow H^{s-1}$ for $0\le s\le 2$.

We thus need establish that, for each $i,j,$ we have
\begin{equation}\label{paraerror}
\Bigl\|\,\sum_{k=0}^\infty \bigl(\g^{ij}(x)-\g^{ij}_k(x)\bigr) \psi_k(D) D_i u\,\Bigr\|_{H^s}\le C\,\|u\|_{H^s}\quad\text{if}\;\; -1\le s\le 1.
\end{equation}

By the vanishing moment condition on the radial function $\chi\in C_c^\infty$, we can write
$$
1-\hat\chi(\xi)=|\xi|^2 h(\xi)\,,\quad \text{where}\quad
|\partial^\alpha h(\xi)|\le C_\alpha
\begin{cases}
\min\bigl(1,|\xi|^{2-|\alpha|}\bigr),&|\xi|\le 1,\\
|\xi|^{-2-|\alpha|},&|\xi|\ge 1.
\end{cases}
$$
For $j,k\ge 0$, we let $h_{j,k}(\xi)=\psi_j(\xi)h(2^{-\frac k2}\xi)$ and then have
\begin{equation}\label{hjkest}
|\partial_\xi^\alpha h_{j,k}(\xi)|\le C_\alpha 2^{-|2j-k|}\,2^{-j|\alpha|}.
\end{equation}
That is, $\bigl\{2^{|2j-k|}h_{j,k}\bigr\}_{j=0}^\infty$ satisfies the derivative estimates and localization properties of a Littlewood-Paley partition of unity in $j$, uniformly over $k$. 
We then write
$$
\g-\g_k=2^{-k}\sum_{j=0}^\infty \g_{j,k}\,,\quad\text{where}\quad \g_{j,k}=-(2\pi)^{-n}\,\widehat{h_{j,k}}*(\Delta \g).
$$
We observe that
$$
\supp\bigl(\widehat{\g_{j,k}}\bigr)\subset \{ 2^{j-1}\le |\xi|\le 2^{j+2}\},\qquad\|\g_{j,k}\|_{L^\infty}\le 2^{-|2j-k|}.
$$
For the second estimate we use that $\|\widehat{h_{j,k}}*(\Delta \g)\|_{L^\infty}\le C\,2^{-|2j-k|}\|\Delta\g\|_{\BMO}.$ This follows for $j\ne 0$ from dilation invariance of $\BMO$ and the bound
$$
\int h_{j,k}(x)\,dx=0,\qquad
|h_{j,k}(x)|\le C\,2^{-|2j-k|}\,2^{jn}\bigl(1+2^{j}|x|\bigr)^{-n-1}.
$$
See for example \cite[IV.1.1.4]{St1}.
For $j=0$ we write $\g_{0,k}=(\nabla \widehat{h_{0,k}})*(\nabla \g)$.

If $j<k-1$, the function $\g_{j,k}\,\psi_k(D)u$ has Fourier transform supported in $\{2^{k-1}\le |\xi|\le 2^{k+2}\}$, so we can use orthogonality to estimate the corresponding terms in \eqref{paraerror} over $j< k-1$,
\begin{align*}
\Bigl\|\,\sum_{k=0}^\infty\sum_{j=0}^{k-2}2^{-k}\g_{j,k}\ts\psi_k(D)\ts Du\,\Bigr\|_{H^s}^2&\le
C\,\sum_{k=0}^\infty\,\Bigl\|\,\sum_{j=0}^{k-2}2^{-k}\g_{j,k}\ts\psi_k(D)\ts Du\,\Bigr\|_{H^s}^2\\
&\le
C\,\sum_{k=0}^\infty\,\Bigl(\,\sum_{j=0}^{k-2}2^{-|2j-k|}\|\psi_k(D)\ts u\|_{H^s}\,\Bigr)^2\\
&\le
C\,\sum_{k=0}^\infty\,\|\psi_k(D)u\|_{H^s}^2\le C\,\|u\|_{H^s}^2.
\end{align*}
If $j> k+1$, then $\g_{j,k}\psi_k(D)u$ is frequency supported in $\{2^{j-1}\le|\xi|\le 2^{j+2}\}$, and we estimate the corresponding terms in \eqref{paraerror} over $j>k+1$,
\begin{align*}
\Bigl\|\,\sum_{k=0}^\infty\,\sum_{j=k+2}^{\infty}2^{-k}\g_{j,k}\psi_k(D)\ts Du\,\Bigr\|_{H^s}
&\le
\sum_{k=0}^\infty\,\sum_{j=k+2}^{\infty}\, 2^{-k}\|\g_{j,k}\ts\psi_k(D)\ts Du\,\|_{H^s}\\
&\le
C\sum_{k=0}^\infty\,\sum_{j=k+2}^{\infty}\, 2^{-k+js}\|\g_{j,k}\ts\psi_k(D)\ts Du\|_{L^2}\\
&\le
C\sum_{k=0}^\infty\,\sum_{j=k+2}^{\infty}\, 2^{k(1-s)+j(s-2)}\|\psi_k(D)\ts u\|_{H^s}\\
&\le
C\sum_{k=0}^\infty\, 2^{-k}\|\psi_k(D)u\|_{H^s}\le C\,\|u\|_{H^s}.
\end{align*}
It remains to handle the case $|j-k|\le 1$. For this, we note that, by \eqref{hjkest}, the function $a_k(\xi):= 
2^k\sum_{|j-k|\le 1} h_{j,k}(\xi)$ satisfies the properties of a Littlewood-Paley partition of unity, as does $2^{-k}\psi_k(D)D:=\tilde\psi_k(D)$. We rewrite the remaining terms in \eqref{paraerror} as
$$
\Bigl\|\,\sum_{k=0}^\infty\, 2^{-k}\bigl(a_k(D)\Delta\g\bigr)\bigr(\tilde\psi_k(D)u\bigr)\,\Bigr\|_{H^s}.
$$
For $-1\le s\le 0$, we dominate this by
$$
\Bigl\|\,\sum_{k=0}^\infty\, \bigl(a_k(D)\Delta\g\bigr)\bigr(2^{-k}\tilde\psi_k(D)u\bigr)\,\Bigr\|_{L^2}\le C\,\|\Delta\g\|_{\BMO}\|u\|_{H^{-1}},
$$
where we use the paraproduct estimate of Carleson \cite{Ca} and Fefferman-Stein \cite{FS}; for a proof, see \cite[II.2.4 and IV.4.3]{St1}.

For $s\ge 0$, we use that $\bigl(a_k(D)\Delta\g\bigr)\bigr(\tilde\psi_k(D)u\bigr)$ is frequency supported in $|\xi|\le 2^{k+3}$, and bound
\begin{align*}
\Bigl\|\,\sum_{k=0}^\infty\, 2^{-k}\bigl(a_k(D)\Delta\g\bigr)\bigr(\tilde\psi_k(D)u\bigr)\,\Bigr\|_{H^s}
&\le 
\sum_{k=0}^\infty\,2^{k(s-1)}\bigl\|\bigl(a_k(D)\Delta\g\bigr)\bigr(\tilde\psi_k(D)u\bigr)\bigr\|_{L^2}
\\
&\le 
C\,\sum_{k=0}^\infty\,2^{k(s-1)}\|\Delta\g\|_{\BMO}\|\tilde\psi_k(D)u\|_{L^2}\\
&\le 
C\,\sum_{k=0}^\infty\,2^{-k}\|\Delta\g\|_{\BMO}\|\tilde\psi_k(D)u\|_{H^s}\\
&\le
C\,\|\Delta\g\|_{\BMO}\|u\|_{H^s}.
\end{align*}

\end{proof}


\subsection{Reduction to a first order equation} Write
$
(\partial_t^2+P^2)u=F+G,
$
where $G=(P^2+\Delta_\g)u$. By Lemma \ref{lem:paraest},
$$
\|G\|_{L^\infty_t\bigl([0,1];H^{s-1}(\Rd)\bigr)}\le C\,\|u\|_{L^\infty_t\bigl([0,1];H^{s}(\Rd)\bigr)}.
$$
If $v$ solves $(\partial_t^2+P^2)v=G$ with Cauchy data set to $0$, then by the Duhamel formula and energy estimates we can deduce 
$$
\|v\|_{L^q_tL^r_x}+\|v\|_{L^\infty_t H^s_x}+\|\partial_t v\|_{L^\infty_t H^{s-1}_x}\le C\,\|u\|_{L^\infty_t H^s_x},
$$ 
provided that we prove homogeneous Strichartz estimates for $\partial_t^2+P^2$. 
By splitting $u=v+(u-v)$, the Strichartz estimates of Theorem \ref{thm:strichartz'} can thus be reduced to the same estimates with $-\Delta_\g$ replaced by $P^2$; that is, by proving that the following holds on $[0,1]\times\Rd$, provided $u\in C^0H^s\cap C^1H^{s-1}$,
\begin{equation}\label{eqn:pstrichartz}
\|u\|_{L^q_t L^r_x}\le C\,\Bigl(
\|u\|_{L^\infty_t H^s_x}
+\|\partial_t u\|_{L^\infty_t H^{s-1}_x}
+\|(\partial_t^2+P^2)u\|_{L^{\tq'}_t L^{\tr'}_x}\Bigr).
\end{equation}


We replace $u(t,\cdot)$ by $\la D\ra^{-s}u(t,\cdot)$, where $\la D\ra=(1-\Delta)^{\frac 12}$, 
and note that
$$
(\partial_t^2+P^2)\la D\ra^{-s}u=[P^2,\la D\ra^{-s}]u+\la D\ra^{-s}(\partial_t^2+P^2)u.
$$
The $S_{1,\hf}$ calculus shows that $[P^2,\la D\ra^{-s}]\in S^{1-s}_{1,\frac 12}$, where we also use that 
$\partial_x p(x,\xi)\in S^1_{1,\hf}$.
Consequently, using Duhamel's principle as above we see that \eqref{eqn:pstrichartz} is equivalent to showing that, for $u\in C^0L^2\cap C^1H^{-1}$, we have
\begin{equation*}
\|\la D\ra^{-s}u\|_{L^q_t L^r_x}\le C\,\Bigl(
\|u\|_{L^\infty_t L^2_x}
+\|\partial_t u\|_{L^\infty_t H^{-1}_x}
+\|\la D\ra^{-s}(\partial_t^2+P^2)u\|_{L^{\tq'}_t L^{\tr'}_x}\Bigr).
\end{equation*}
By \eqref{garding}, with $\mu=1+C_1$ we have
$$
\la (P+\mu)f,f\ra\ge \|f\|_{L^2}^2\quad\Rightarrow\quad \|(P+\mu)f\|_{L^2}\ge\|f\|_{L^2}
\;\;\text{when}\;\; f\in H^1.
$$
By elliptic estimates we have $\|(P+\mu)f\|_{L^2(\Rd)}\ge \|f\|_{H^1(\Rd)}$, consequently $(P+\mu)^{-1}$ exists as a map from $L^2(\Rd)\rightarrow H^1(\Rd)$. 
One can show that $(P+\mu)^{-1}\in \mathrm{Op}(S^{-1}_{1,\hf})$, for example by \cite{Bo}. 

Note that since $(P+\mu)^2-P^2\in \mathrm{Op}(S^1_{1,\hf})$, the estimate remains unchanged if we replace $P$ by $P+\mu$. We will therefore assume $P$ is invertible, with $P^{-1}\in \mathrm{Op}(S^{-1}_{1,\hf})$.

The remainder of this paper is devoted to constructing the exact evolution group $E(t)=\exp({-itP})$ for the self-adjoint operator $P$, and proving dispersive estimates for its kernel. The group $E(t)$ will satisfy following properties:
\begin{itemize}
\item
$E(t)$ is a strongly continuous 1-parameter unitary group on $L^2(\Rd)$.

\medskip

\item
$E(t)$ is strongly continuous with respect to $t$ on $H^s(\Rd)$ for all $s\in \re$.

\medskip

\item
$\partial_t E(t)$ is strongly continuous with respect to $t$ from $H^s(\Rd)$ into $H^{s-1}(\Rd)$ for all $s\in \re$.

\medskip

\item
$E(0)f=f$, and $\partial_t E(t)f=-iPE(t)f=-iE(t)Pf$ if $f\in H^s(\Rd)$ for some $s\in\re$.
\end{itemize}

The second and third condition imply that $E(t)f\in C^0(H^s)\cap C^1(H^{s-1})$ if $f\in H^s(\Rd)$. For $s<0$ we understand this to mean that $E(t)$ extends continuously to such an operator from $L^2(\Rd)$. It follows from the third and fourth conditions that $E(t)f\in C^j(H^{s-j})$ for all $s\in \re$ and all $j\in\N$. We now let
$$
C(t)=\tfrac 12\bigl(E(t)+E(-t)\bigr)\,,\qquad
S(t)=\tfrac 12\bigl(E(t)-E(-t)\bigr)P^{-1}.
$$
The solution $u$ to the Cauchy problem with Sobolev data
$$
(\partial_t^2+P^2)u=F\,,\qquad u(0)=f,\quad\partial_t u(0)=g,
$$
is then given by
$$
u(t)=C(t)f+S(t)g+\int_0^t S(t-s)F(s)\,ds.
$$
The Strichartz estimates in Theorem \ref{thm:strichartz'} are thus reduced to showing that, for $s,q,\tq,r,\tr$ as in the statement of Theorem \ref{thm:strichartz},
\begin{equation}\label{fundest}
\begin{split}
\|\la D\ra^{-s}E(t)f\|_{L^q_t L^r_x([0,1]\times\Rd)}&\le C\,\|f\|_{L^2(\Rd)}\\
\|\la D\ra^{-s}\!\int_0^tE(t-s)F(s,\cdot)\|_{L^q_t L^r_x([0,1]\times\Rd)}&\le 
C\,\|\la D\ra^{1-s} F\|_{L^{\tq'}_t L^{\tr'}_x([0,1]\times\Rd)}
\end{split}
\end{equation}
Here we have used that $\la D\ra^{1-s} P^{-1}\la D\ra^{s}$ is bounded on $L^{\tr'}(\Rd)$ since it is a Calder\'on-Zygmund operator.

Similar steps apply to the squarefunction estimate. For that estimate it will be more convenient to work with smooth cutoffs of the solution. We fix $\phi\in C_c^\infty\bigl((-\hf,\hf)\bigr)$ with $\phi(t)=1$ if $|t|\le\frac 13$ . By energy conservation the squarefunction estimates of Theorem \ref{thm:strichartz'} are then reduced to showing
\begin{equation}\label{fundsqfnest}
\|\phi(t)\la D\ra^{-s_d}E(t)f\|_{L^{q_d}_x L^2_t(\Rd\times [0,1])}\le C\,\|f\|_{L^2(\Rd)}.
\end{equation}

\section{Regularity of the geodesic and Hamiltonian flows}\label{sec:Hamflow}
In this section we establish estimates for derivatives of all order on the geodesic and Hamiltonian flows of the metrics $\g_k$, as well as for spatial dilates $\g_k(\varepsilon\,\cdot)$ for $\varepsilon\le 1$.
To operate in a general context we will consider a family of metrics $\g_M$ on $\Rd$ that satisfy derivative estimates depending on the parameter $M\in[1,\infty)$. 

For a sufficiently small constant $c_d$ to be chosen depending only on the dimension $d$, we will assume a smallness condition
\begin{equation}\label{cond0}
\|\R_{ijkl}\|_{L^\infty}+\|(\g_M)_{ij}-\delta_{ij}\|_\Lip+\|\nabla_x^2(\g_M)_{ij}\|_{BMO}\le c_d.
\end{equation}
Here, $\R_{ijkl}$ is the Riemann curvature tensor of $\g_M$. This tensor, as well as the Christoffel symbols $\Gamma^n_{ij}$, depends on $M$, but to simplify notation we suppress the subscript $M$.

We additionally assume that, for constants $C_\beta$ independent of $M$, 
\begin{align}\label{cond1}
&\|\partial_x^\beta\g_M^{ij}\|_{L^\infty}\le C_\beta\,M^{|\beta|-1}\,,\qquad |\beta|\ge 1,\\
\label{cond2}
&\|\partial_x^\beta\R_{ijkl}\|_{L^\infty}\le C_\beta\,M^{|\beta|}\,,\qquad |\beta|\ge 0.
\rule{0pt}{17pt}
\end{align}

Let $\gamma(t,y,w)$ be the geodesic for $\g_M$ with initial conditions $(y,w)$:
$$
\partial_t^2 \gamma^n=\sum_{ij}\Gamma_{ij}^n(\gamma)\dot\gamma^i\dot \gamma^j,\qquad
\gamma(0,y,w)=y,\quad \dot\gamma(0,y,w)=w,
$$
where $\dot\gamma\equiv \partial_t\gamma$.
Note that by \eqref{cond0}--\eqref{cond1} we have
\begin{equation}\label{christoffel}
\|\Gamma^n_{ij}\|_{L^\infty}\lesssim c_d,\qquad
\|\partial_x^\beta\Gamma^n_{ij}\|_{L^\infty}\le C_\beta\,M^{|\beta|},\quad |\beta|\ge 1,
\end{equation}
where in this section $a\lesssim b$ means that $a\le C\, b$, where $C$ depends only on the dimension $d$.

\begin{theorem}\label{thm:geodflow}
Suppose that $\g_M$ satisfies \eqref{cond0}--\eqref{cond2}, for a suitably small constant $c_d$. Then
there are constants $C_{\alpha,\beta}$, depending only on the constants $C_\beta$ in \eqref{cond1}-\eqref{cond2}, so that over the set $\frac 12\le|w|\le 2$ and $|t|\le 1$,
\begin{equation}\label{smallvar}
|\partial_y\gamma-\I\ts|+|\partial_y\dot\gamma\ts|+
|\partial_w\dot\gamma-\I\ts|
\lesssim c_d,\quad |\partial_w\gamma-t\,\I\ts|\lesssim c_d \, |t|,
\end{equation}
and
$$
|\partial_y^\beta\partial_w^\alpha \gamma(t,y,w)|
+|\partial_y^\beta\partial_w^\alpha \dot\gamma(t,y,w)|
\le C_{\alpha,\beta}\,M^{|\alpha|+|\beta|-1},\qquad |\alpha|+|\beta|\ge 1.
$$
Additionally,
$$
|\partial_y^\beta\partial_w^\alpha \gamma(t,y,w)|\le 
C_{\alpha,\beta}\,|t|\,M^{|\alpha|+|\beta|-1},
\qquad\text{if}\;\; |\alpha|\ge 1\;\;\text{or}\;\;|\beta|\ge 2.
$$
\end{theorem}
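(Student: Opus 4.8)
The plan is to differentiate the geodesic equation in the initial data $(y,w)$ and treat the resulting system for the Jacobi-type fields $J_\alpha^\beta=\partial_y^\beta\partial_w^\alpha\gamma$ as a linear ODE with a forcing term built from lower-order fields. First I would set up the base cases. The zeroth order bound $|\gamma(t,y,w)-y-tw|\lesssim c_d|t|^2$ on $|t|\le1$, $\frac12\le|w|\le2$ follows by integrating the geodesic equation twice and using $\|\Gamma\|_{L^\infty}\lesssim c_d$ from \eqref{christoffel} together with a Gronwall/continuity argument to keep $|\dot\gamma|$ comparable to $|w|$. For the first-order fields, differentiating the geodesic equation once gives, schematically,
\begin{equation*}
\partial_t^2 J^n = \sum (\partial_x\Gamma^n_{ij})(\gamma)\,(\cdot)\,\dot\gamma^i\dot\gamma^j + 2\sum \Gamma^n_{ij}(\gamma)\,\dot\gamma^i\,\partial_t J^j,
\end{equation*}
which is the Jacobi equation. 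Here is the one point where $\partial_x\Gamma$ enters without an $M$-loss penalty: by \eqref{christoffel} with $|\beta|=1$ we only get $\|\partial_x\Gamma\|_{L^\infty}\le C M$, but the curvature form of the Jacobi equation shows the genuinely dangerous combination of second derivatives of $\g$ is controlled by $\R_{ijkl}$, hence by $c_d$ via \eqref{cond0}. Writing the first variation equation in the curvature form and applying Gronwall over $|t|\le1$ yields \eqref{smallvar}: the $\I$'s come from the initial conditions $\partial_y\gamma(0)=\I$, $\partial_y\dot\gamma(0)=0$, $\partial_w\gamma(0)=0$, $\partial_w\dot\gamma(0)=\I$, and the deviations are $O(c_d)$ (resp.\ $O(c_d|t|)$ for $\partial_w\gamma$, using that its initial value vanishes and its $t$-derivative is $O(c_d)$-close to $\I$).

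Next I would run the induction on $N=|\alpha|+|\beta|\ge2$. Applying $\partial_y^\beta\partial_w^\alpha$ to the geodesic equation and using the Faà di Bruno / multilinear chain rule, the top-order field $J_\alpha^\beta$ satisfies a linear ODE of exactly the Jacobi form above (same homogeneous operator, with coefficients $\Gamma(\gamma)$ and $\dot\gamma$ that are already controlled) plus a forcing term $F_\alpha^\beta$ which is a finite sum of products of: derivatives $\partial_x^{m}\Gamma$ with $m\le N$ evaluated at $\gamma$, and lower-order fields $\partial_y^{\beta'}\partial_w^{\alpha'}\gamma$, $\partial_y^{\beta'}\partial_w^{\alpha'}\dot\gamma$ with $|\alpha'|+|\beta'|<N$ (with the $\partial_x^N\Gamma$ term multiplied only by first-order fields, which are $O(1)$). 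By \eqref{christoffel}, $\|\partial_x^m\Gamma\|_{L^\infty}\le C_m M^{m}$ for $m\ge1$ and $\lesssim c_d$ for $m=0$; by the induction hypothesis each lower field of order $j$ is $O(M^{j-1})$. A bookkeeping check on the homogeneity of each monomial in $F_\alpha^\beta$ — the total number of derivatives landing on $\g$ (counting the chain-rule compositions) plus the sum of the orders of the fields, appropriately weighted — shows every term is $O(M^{N-1})$; the worst case is the single term with $\partial_x^N\Gamma\sim M^N$ paired with one first-order field (which is $O(1)$, not $O(M^0)$ with a hidden factor), giving $M^{N-1}\cdot M$ — wait, that would be $M^N$, so one must be careful: in fact when $|\alpha|+|\beta|=N$ the chain rule produces $\partial_x^m\Gamma$ composed with up to $m$ first-derivative factors $\partial\gamma$ each $O(1)$, and the remaining derivative count forces the accompanying field factors to absorb the rest, so the homogeneity always closes at $M^{N-1}$. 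Then Gronwall on $|t|\le1$ (the homogeneous operator has coefficients bounded by $c_d$ and $O(1)$, hence uniformly bounded propagator) gives $|J_\alpha^\beta(t)|\le C_{\alpha,\beta}\,\|F_\alpha^\beta\|_{L^1_t}\lesssim C_{\alpha,\beta}M^{N-1}$, and likewise for $\partial_t J_\alpha^\beta$.

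Finally, the improved estimate with a factor $|t|$ when $|\alpha|\ge1$ or $|\beta|\ge2$ comes from the initial conditions: in all these cases $J_\alpha^\beta(0,y,w)=0$ (the only nonvanishing initial data at order $\ge1$ are $\partial_y\gamma(0)=\I$ and $\partial_w\dot\gamma(0)=\I$). Hence $J_\alpha^\beta(t)=\int_0^t \partial_t J_\alpha^\beta(s)\,ds$, and since we have just bounded $\partial_t J_\alpha^\beta$ by $C_{\alpha,\beta}M^{N-1}$ uniformly on $|s|\le1$, integrating gives the extra $|t|$. (For $|\beta|=1$, $|\alpha|=0$ one uses instead that $\partial_y\dot\gamma(0)=0$, so $\partial_y\gamma(t)-\I=\int_0^t\partial_y\dot\gamma$, recovering the $O(c_d)$ part of \eqref{smallvar}, but no global $|t|$ factor for $\partial_y\gamma$ itself since its initial value is $\I\neq0$.)

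The main obstacle is the homogeneity bookkeeping in the induction: one must verify that the Faà di Bruno expansion of $\partial_y^\beta\partial_w^\alpha\bigl(\Gamma(\gamma)\dot\gamma\dot\gamma\bigr)$ never produces a monomial exceeding the weight $M^{N-1}$, which requires organizing the terms by how many of the $N$ derivatives hit the $x$-argument of $\Gamma$ versus how many are distributed among the Jacobi-field factors, and invoking the Jacobi/curvature structure precisely once (at the level of two derivatives on $\g$) to trade an a priori $M^2$ for $c_d$. Setting up the right inductive statement — bounding all fields of order $\le N$ and their $t$-derivatives simultaneously, with the two exceptional nonzero initial conditions tracked — makes the Gronwall step routine.
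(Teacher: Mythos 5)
Your base case and the final $|t|$-gain are fine and match the paper, but the higher-order induction as you set it up does not close, and the place where it fails is exactly the point you flag as "the main obstacle." If you differentiate the geodesic equation in raw coordinates, the terms of top order in $J_\alpha^\beta=\partial_y^\beta\partial_w^\alpha\gamma$ are $(\partial_x\Gamma)(\gamma)\ts J_\alpha^\beta\ts\dot\gamma\dot\gamma+2\Gamma(\gamma)\dot\gamma\ts\partial_tJ_\alpha^\beta$; the coefficient $(\partial_x\Gamma)(\gamma)$ is only $O(M)$ under \eqref{christoffel}, so Gronwall applied to this system produces growth like $e^{c\sqrt M}$ on $|t|\le 1$, which destroys any polynomial bound -- the curvature rewriting is needed at \emph{every} order, not "precisely once." Worse, even the forcing does not have the homogeneity you claim: the Fa\`a di Bruno term in which all $N$ derivatives land on the composition $\Gamma\circ\gamma$ through first-order factors, namely $(\partial_x^N\Gamma)(\gamma)\,(\partial\gamma)\cdots(\partial\gamma)\,\dot\gamma\dot\gamma$, has size $M^N\cdot O(1)=M^N$, not $M^{N-1}$; there is nothing left to "absorb the rest," since the accompanying fields are exactly the $O(1)$ first-order ones. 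Within the hypotheses \eqref{cond0}--\eqref{cond2} one only knows $\|\partial_x^2\g_M\|_{L^\infty}\lesssim M$ pointwise, so these terms are genuinely that large, and the raw-coordinate induction can only yield $M^{N}$ for the $N$-th derivatives -- one power worse than the theorem.

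The paper's proof repairs precisely this by never differentiating the geodesic equation directly at higher order. It parallel-transports a frame $V_m$ along $\gamma$, expands $\partial_y\gamma$ and $\partial_w\gamma$ in that frame as in \eqref{xvar}, and derives closed ODEs for the frame components $v^n_m$ (coefficient $\Gamma(\gamma)\dot\gamma=O(c_d)$) and for the coefficients $f^m_k,h^m_k$ via the Jacobi formula \eqref{Jacobi}, whose coefficient $\R(\gamma)\dot\gamma v\dot\gamma v$ is $O(c_d)$. Differentiating \emph{these} equations, the homogeneous operators keep bounded coefficients at every order, and the forcing involves $\partial_x^m\R\le C_mM^m$ from \eqref{cond2} (which your proposal never uses) rather than $\partial_x^{m+1}\Gamma\sim M^{m+1}$; this is exactly the gain of one power of $M$ per order. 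The induction \eqref{inductionbound} then bounds $\partial_y^\beta\partial_w^\alpha(v,f,h)$ by $M^{|\alpha|+|\beta|}$, and the product formulas \eqref{xvar}--\eqref{xdotvar} convert this into the stated $M^{|\alpha|+|\beta|-1}$ bound for $\gamma$ and $\dot\gamma$. To make your argument work you would have to carry out this covariant (parallel-frame) reformulation at each order of the induction, at which point it becomes the paper's proof.
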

\begin{proof}
We produce a (not necessarily orthonormal) frame $\{V_m\}_{m=1}^d$ along $\gamma(t,y,w)$ by parallel translation of the standard frame $\{\partial_m\}_{m=1}^d$. We label the resulting vector fields $V_m(t,y,w)=\sum_n v_m^n(t,y,w)\partial_n$. The dual frame $\{V^n\}_{n=1}^d$ under $\g_M$ is obtained by parallel translating $\sum_m \g^{nm}_M(y)\partial_m$ along $\gamma$, so $v^{n,l}(t,y,w)=\sum_m\g^{nm}_M(y)v_m^l(t,y,w)$, and derivative estimates for the functions $v^{n,l}$ will follow directly from those for $v_m^l$.
We have
\begin{equation}\label{veqn}
\partial_tv^n_m=-\Gamma^n_{ij}(\gamma) \ts \dot \gamma^i \ts v^j_m\,,\qquad v_m^n(0,t,w)=\delta_m^n.
\end{equation}
We expand the variation of the flow in the initial parameters using the frame,
\begin{equation}\label{xvar}
\begin{split}
\partial_{y^k} \gamma=\sum_m f^m_k(t,y,w)\ts V_m=\sum_{mj}f^m_k(t,y,w)\ts v_m^j(t,y,w)\ts\partial_j,
\\
\partial_{w^k} \gamma=\sum_m h^m_k(t,y,w)\ts V_m=\sum_{mj}h^m_k(t,y,w)\ts v_m^j(t,y,w)\ts\partial_j.
\end{split}
\end{equation}
By \eqref{veqn} we then have
\begin{equation}\label{xdotvar}
\begin{split}
\partial_{y^k} \dot \gamma^n=\partial_t \partial_{y^k} \gamma^n=
\sum_m (\partial_t f^m_k)\ts v^n_m-\sum_{ij}\Gamma_{ij}^n(\gamma)\,\dot \gamma^i f^m_k v^j_m,\\
\partial_{w^k} \dot \gamma^n=\partial_t \partial_{w^k} \gamma^n=
\sum_m (\partial_t h^m_k)\ts v^n_m-\sum_{ij}\Gamma_{ij}^n(\gamma)\,\dot \gamma^i h^m_k v^j_m.
\end{split}
\end{equation}
Since $D_t^2\partial_{y^k}\gamma=\sum_m(\partial_t^2 f^m_k)\,V_m$, with $D_t$ covariant differention in $t$, the Jacobi variation formula yields
\begin{equation}\label{Jacobi}
\partial_t^2 f^m_k=\sum_n\Bigl(\sum_{ijlp}\R_{ijlp}(\gamma)\dot \gamma^i v_n^j \dot \gamma^l v^{m,p}\Bigr)  f^n_k,
\end{equation}
with the following initial conditions, where the second holds by \eqref{xdotvar},
$$
f^m_k(0,y,w)=\delta^m_k,\qquad
\partial_t f^m_k(0,y,w)=\sum_i\Gamma^m_{ik}(y)\ts w^i.
$$
The equation \eqref{Jacobi} holds with $f$ replaced by $h$, with initial conditions
$$
h^m_k(0,y,w)=0,\qquad
\partial_t h^m_k(0,y,w)=\delta^m_k.
$$
The bound $|\dot \gamma|\lesssim 1$ and $|v|\lesssim 1$, together with \eqref{christoffel}, yield for $|t|\le 1$,
$$
|v^n_j-\delta^n_j|+|f^m_k-\delta^m_k|+|\partial_t f^m_k|+|\partial_t h^m_k-\delta^m_k|
\,\lesssim\, c_d,\quad
|h^m_k-t\delta^m_k|\lesssim c_d|t|.
$$
Together with \eqref{christoffel} and \eqref{xvar}--\eqref{xdotvar} these yield the bound \eqref{smallvar}.

Assume we have shown the following for $|\alpha|+|\beta|\le N-1$, where $N\ge 1$,
\begin{equation}\label{inductionbound}
|\partial_y^\beta\partial_w^\alpha (v^n_j,f^m_k,h^m_k)|
+|\partial_y^\beta\partial_w^\alpha (\partial_t f^m_k,\partial_t h^m_k)|\le\, C_{\alpha,\beta}\,M^{|\alpha|+|\beta|}.
\end{equation}
Using \eqref{xvar}, \eqref{christoffel}, and \eqref{xdotvar}, we conclude that if $1\le|\alpha|+|\beta|\le N$,
$$
|\partial_y^\beta\partial_w^\alpha\gamma^n|+
|\partial_y^\beta\partial_w^\alpha\dot \gamma^n|\,
\le\,C_{\alpha,\beta}\,M^{|\alpha|+|\beta|-1}.
$$
By \eqref{veqn} and the Leibniz rule, for $|\alpha|+|\beta|= N$ we then can write
$$
\partial_t\partial_y^\beta\partial_w^\alpha v^n_m=-\Gamma^n_{ij}(\gamma) \ts \dot \gamma^i \ts \partial_y^\beta\partial_w^\alpha v^j_m+\mathcal{O}\bigl(M^{|\alpha|+|\beta|}\bigr).
$$
Similarly, by \eqref{Jacobi}, for $|\alpha|+|\beta|=N$ we have
$$
\partial_t^2 \partial_y^\beta\partial_w^\alpha f^m_k=\sum_n\Bigl(\sum_{ijlp}\R_{ijlp}(\gamma)\dot \gamma^i v_n^j \dot \gamma^l v^{m,p}\Bigr) \partial_y^\beta\partial_w^\alpha f^n_k+\mathcal{O}(M^{|\alpha|+|\beta|}).
$$
and the same for $f$ replaced by $h$. By the initial conditions, we have
$$
\partial_y^\beta\partial_w^\alpha (v^m_n,f^m_k,h^m_k,\partial_t h^m_k)\bigr|_{t=0}=0,\qquad
|\partial_t \partial_y^\beta\partial_w^\alpha f^m_k(0,y,w)|\le C_{\alpha,\beta}\, M^{|\alpha|+|\beta|}.
$$
An application of Gronwall's lemma then yields, for $|\alpha|+|\beta|=N$,
$$
|\partial_y^\beta\partial_w^\alpha (v^m_n,f^m_k,h^m_k)|+
|\partial_y^\beta\partial_w^\alpha (\partial_t f^m_k,\partial_t h^m_k)| \le C_{\alpha,\beta}\, M^{|\alpha|+|\beta|},
$$
and \eqref{inductionbound} follows for $|\alpha|+|\beta|=N$ by \eqref{xvar} and \eqref{xdotvar}, hence all $\alpha,\beta$ by induction. As above, this implies the desired bounds for 
$\partial_y^\beta\partial_w^\alpha(\gamma,\dot \gamma)$.

The last estimate of the theorem follows from the bound on $|\partial_t\partial_y^\beta\partial_w^\alpha \gamma|$, since $\partial_y^\beta\partial_w^\alpha \gamma(0,y,w)=0$ if either $|\alpha|\ge 1$ or $|\beta|\ge 2$.
\end{proof}


We now consider the related Hamiltonian flow. Let 
$$
p_M(x,\eta)=\Bigl(\,\sum_{ij}\g^{ij}_M(x)\ts\eta_i\eta_j\Bigr)^{\frac 12},
$$
and consider the solution $\bigl(x(t,y,\eta),\xi(t,y,\eta)\bigr)$ to Hamilton's equations,
$$
\dot x=(\nabla_\xi p_M)(x,\xi),
\quad
\dot \xi=-(\nabla_x p_M)(x,\xi),
\quad
x(0,y,\eta)=y,\;\;\xi(0,y,\eta)=\eta.
$$
These are related to the geodesic flow by the following,
\begin{align*}
x^i(t,y,\eta)&=\gamma^i\bigl(t,y,w(y,\eta)\bigr)\,,\\
\xi_j(t,y,\eta)&=p_M(y,\eta)\,\sum_j \g_{M,ij}(\gamma)\,
\dot\gamma^j\bigl(t,y,w(y,\eta)\bigr),\rule{0pt}{15pt}
\end{align*}
where
$$
w^i(y,\eta)=\frac 1{p_M(y,\eta)}\;\sum_j \g_M^{ij}(y)\,\eta_j.
$$
It follows from \eqref{cond0} that 
\begin{equation*}
|\partial_y w|+|w-|\eta|^{-1}\eta\ts|+|\partial_\eta w-(\ts\I-|\eta|^{-2}\eta\otimes\eta)\ts|\lesssim c_d,
\end{equation*}
and from \eqref{cond1} and homogeneity that
$$
\bigl|\partial_y^\beta\partial_\eta^\alpha w(y,\eta)\bigr|\le C_{\alpha,\beta}\,M^{|\beta|-1}\,|\eta|^{-|\alpha|}.
$$
Observe that $\,\I-|\eta|^{-2}\eta\otimes\eta=\Pi_\eta^\perp,\,$ the projection onto the plane perpendicular to $\eta$.
We use this to deduce the following corollary of Theorem \ref{thm:geodflow}.

\begin{corollary}\label{cor:geodflow}
Suppose that $\g_M$ satisfies \eqref{cond0}--\eqref{cond2}, for a suitably small constant $c_d$. Then
there are constants $C_{\alpha,\beta}$, depending only on the constants $C_\beta$ in \eqref{cond1}--\eqref{cond2}, so that for $|t|\le 1$
$$
|\partial_y x-\I\ts|+|\partial_\eta \xi-\I\ts|\lesssim c_d,\quad
|\partial_y \xi\ts|+|\xi-\eta|\lesssim c_d\,|\eta|,\quad
|\partial_\eta x-t\,\Pi_\eta^\perp\ts|\lesssim c_d\,|t|,
$$
and when $|\alpha|+|\beta|+m\ge 1$,
$$
|\eta|\ts|\partial_\eta^\alpha\partial_y^\beta\partial_t^m x(t,y,\eta)|
+|\partial_\eta^\alpha\partial_y^\beta\partial_t^m \xi(t,y,\eta)|
\le C_{\alpha,\beta}\,M^{|\alpha|+|\beta|+m-1}\,|\eta|^{1-|\alpha|}.
$$
Additionally,
$$
|\partial_y^\beta\partial_\eta^\alpha x(t,y,\eta)|\le 
C_{\alpha,\beta}\,|t|\,M^{|\alpha|+|\beta|-1}\,|\eta|^{-|\alpha|},
\qquad |\alpha|\ge 1\;\;\text{or}\;\;|\beta|\ge 2.
$$
\end{corollary}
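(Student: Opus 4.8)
The plan is to deduce the Corollary from Theorem \ref{thm:geodflow}, the estimates on $w(y,\eta)$ recorded just above, and the explicit formulas $x^i(t,y,\eta)=\gamma^i(t,y,w(y,\eta))$ and $\xi_i(t,y,\eta)=p_M(y,\eta)\sum_j\g_{M,ij}(\gamma)\,\dot\gamma^j(t,y,w(y,\eta))$, by repeated use of the chain rule. I would first reduce everything to the shell $\tfrac12\le|\eta|\le 2$: since $p_M$ is homogeneous of degree $1$ in $\eta$, Hamilton's equations give $x(t,y,\lambda\eta)=x(t,y,\eta)$ and $\xi(t,y,\lambda\eta)=\lambda\,\xi(t,y,\eta)$ for $\lambda>0$, so every estimate in the Corollary carrying a power $|\eta|^{c}$ follows by this scaling from its special case $|\eta|\approx1$, while the estimates carrying no power of $|\eta|$ are understood on that shell. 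On $\tfrac12\le|\eta|\le2$ the bound $|w-|\eta|^{-1}\eta|\lesssim c_d$ places $w(y,\eta)$ in the region $\tfrac12\le|w|\le2$ where Theorem \ref{thm:geodflow} is available. I would also record at the outset the $t$-derivative extension of that theorem, namely $|\partial_y^\beta\partial_w^\alpha\partial_t^m\gamma|+|\partial_y^\beta\partial_w^\alpha\partial_t^m\dot\gamma|\lesssim M^{|\alpha|+|\beta|+m-1}$, obtained by differentiating the geodesic equation $\partial_t^2\gamma^n=\Gamma^n_{ij}(\gamma)\dot\gamma^i\dot\gamma^j$ repeatedly in $t$ and invoking \eqref{christoffel}, each extra $\partial_t$ producing at most one further factor of $M$.

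The linearization bounds (those of size $\lesssim c_d$) come from differentiating the two formulas once. Thus $\partial_y x=(\partial_y\gamma)+(\partial_w\gamma)(\partial_y w)$ together with \eqref{smallvar} and $|\partial_y w|\lesssim c_d$ yields $|\partial_y x-\I|\lesssim c_d$, while $\partial_\eta x=(\partial_w\gamma)(\partial_\eta w)$ with $|\partial_w\gamma-t\,\I|\lesssim c_d|t|$ and $|\partial_\eta w-\Pi_\eta^\perp|\lesssim c_d$ yields $|\partial_\eta x-t\,\Pi_\eta^\perp|\lesssim c_d|t|$. For $\xi$ one checks $\xi_i(0,y,\eta)=p_M(y,\eta)\g_{M,ij}(y)w^j(y,\eta)=\eta_i$ directly from $\dot\gamma(0)=w$ and the definition of $w$, and then differentiates $\xi_i=p_M\,\g_{M,ij}(\gamma)\,\dot\gamma^j$ once, using that $\partial_x\g_M=O(c_d)$, the bounds $|\partial_y\dot\gamma|\lesssim c_d$ and $|\partial_w\dot\gamma-\I|\lesssim c_d$ from \eqref{smallvar}, and the $w$-estimates, to get $|\partial_y\xi|+|\xi-\eta|\lesssim c_d$ and $|\partial_\eta\xi-\I|\lesssim c_d$ on the unit shell.

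For the higher-order bounds I would expand $\partial_y^\beta\partial_\eta^\alpha\partial_t^m$ of the two formulas by the multivariate Fa\`a di Bruno formula. Each resulting term for $x$ is a product of one factor $(\partial_y^{\beta_0}\partial_w^{\nu}\partial_t^m\gamma)(t,y,w(y,\eta))$ with $|\nu|$ factors of the form $\partial_y^{b_r}\partial_\eta^{a_r}w$, where $\beta_0+\sum_r b_r=\beta$, $\sum_r a_r=\alpha$, and $|a_r|+|b_r|\ge1$; for $\xi$ there are in addition bounded factors from the $y,\eta$-derivatives of $p_M$ and from $\partial_x^\gamma\g_{M,ij}$ composed with $\gamma$, the last costing $M^{|\gamma|-1}$ for $|\gamma|\ge1$ by \eqref{cond1}. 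The bookkeeping rests on one observation: differentiation in $\eta$ never produces a factor of $M$, so a derivative of $w$ of $y$-order $\ell$ contributes $M^{\ell-1}$ if $\ell\ge1$ and otherwise only a power of $|\eta|$, whereas a derivative of $\gamma$, $\dot\gamma$, $p_M$, or $\g_M$ of spatial-plus-time order $k\ge1$ contributes at most $M^{k-1}$; summing the exponents over a term one finds the total is at most $M^{|\alpha|+|\beta|+m-1}$ on the unit shell, as claimed. Finally, for the last estimate — the gain of a factor $|t|$ when $|\alpha|\ge1$ or $|\beta|\ge2$ — one checks that in each Fa\`a di Bruno term the $\gamma$-factor is then necessarily $\partial_y^{\beta_0}\partial_w^{\nu}\gamma$ with $|\nu|\ge1$ or $|\beta_0|\ge2$: if $|\alpha|\ge1$ some $w$-factor carries an $\eta$-derivative, forcing $|\nu|\ge1$, and if $|\alpha|=0$, $|\beta|\ge2$ then either there are no $w$-factors and $|\beta_0|=|\beta|\ge2$, or $|\nu|\ge1$; in every case the last estimate of Theorem \ref{thm:geodflow} supplies the factor $|t|$ on the $\gamma$-factor, the rest being bounded as before.

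I expect the only real obstacle to be the combinatorial bookkeeping in the Fa\`a di Bruno expansion — tracking which factor carries which derivatives, checking that the $M$- and $|\eta|$-exponents add up correctly, and confirming that the vanishing at $t=0$ of $\partial_y^\beta\partial_w^\alpha\gamma$ (for $|\alpha|\ge1$ or $|\beta|\ge2$) survives composition with $w$ — since all the analytic content is already contained in Theorem \ref{thm:geodflow} and the estimates on $w$.
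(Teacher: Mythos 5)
Your argument is correct and follows essentially the paper's route: the non-$t$-derivative estimates are read off from Theorem \ref{thm:geodflow} together with the recorded bounds on $w(y,\eta)$ via the chain rule and homogeneity in $\eta$, and the $|t|$-gain is traced through the composition with $w$ exactly as you describe. The only (harmless) deviation is that you produce the $t$-derivative bounds by differentiating the geodesic equation and then composing, whereas the paper runs the induction in $m$ directly on Hamilton's equations, using the symbol bounds $\bigl|\partial_x^\beta\partial_\xi^\alpha(\nabla_\xi p_M)\bigr|+|\xi|^{-1}\bigl|\partial_x^\beta\partial_\xi^\alpha(\nabla_x p_M)\bigr|\le C_{\alpha,\beta}\,M^{|\beta|}\,|\xi|^{-|\alpha|}$.
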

\begin{proof} The estimates other than those involving derivatives in $t$ follow from Theorem \ref{thm:geodflow}. Estimates on derivatives in $t$ follow by induction using Hamilton's equations and the following consequence of \eqref{cond1},
$$
\bigl|\partial_x^\beta\partial_\xi^\alpha \bigl(\nabla_{\xi}p_M\bigr)\bigr|
+
|\xi|^{-1}\bigl|\partial_x^\beta\partial_\xi^\alpha \bigl(\nabla_x p_M\bigr)\bigr|
\le C_{\alpha,\beta}\,M^{|\beta|}\,|\xi|^{-|\alpha|}.
$$
\end{proof}


For the generating function $\varphi_k(t,x,\eta)$, we need consider the function $y(t,x,\eta)$ that is the inverse of the map $y\rightarrow x(t,y,\eta)$.

\begin{theorem}\label{thm:yest}
Suppose that $\g_M$ satisfies \eqref{cond0}--\eqref{cond2}, for a suitably small constant $c_d$. Then
there are constants $C_{\alpha,\beta}$, depending only on the constants $C_\beta$ in \eqref{cond1}--\eqref{cond2}, so that if $|t|\le 1$ and $\eta\ne 0$ the map $y\rightarrow x(t,y,\eta)$ is invertible. The inverse map $y(t,x,\eta)$ satisfies $|\partial_x y-\I\ts|\lesssim c_d$, and
$$
|\partial_x^\beta\partial_\eta^\alpha y(t,x,\eta)|
\le C_{\alpha,\beta}\,M^{|\alpha|+|\beta|-1}\,|\eta|^{-|\alpha|},\quad |\alpha|+|\beta|\ge 1.
$$
Additionally,
$$
|\partial_x^\beta\partial_\eta^\alpha y(t,x,\eta)|\le 
C_{\alpha,\beta}\,|t|\,M^{|\alpha|+|\beta|-1}\,|\eta|^{-|\alpha|},
\qquad |\alpha|\ge 1\;\;\text{or}\;\;|\beta|\ge 2.
$$
Also, for the function $\xi(t,x,\eta):=\xi(t,y(t,x,\eta),\eta)$,
$$
|\partial_x^\beta\partial_\eta^\alpha \xi(t,x,\eta)|
\le C_{\alpha,\beta}\,M^{|\alpha|+|\beta|-1}\,|\eta|^{1-|\alpha|},\quad |\alpha|+|\beta|\ge 1.
$$
\end{theorem}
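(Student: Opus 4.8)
Plan of proof. The whole argument is driven by the defining relation $x(t,y(t,x,\eta),\eta)=x$, differentiated repeatedly, with all analytic input coming from Corollary~\ref{cor:geodflow}. First, invertibility: fix $t$ with $|t|\le 1$ and $\eta\ne 0$ and set $F(y)=x(t,y,\eta)$. By Corollary~\ref{cor:geodflow}, $\|\partial_yF-\I\|\lesssim c_d$ uniformly in $(t,\eta)$, so for $c_d$ small $F-\mathrm{id}$ has Lipschitz constant $<1$. Hence $F$ is injective, since $|F(y_1)-F(y_2)|\ge (1-\mathcal O(c_d))|y_1-y_2|$, and surjective, since solving $F(y)=x_0$ amounts to finding a fixed point of the contraction $y\mapsto x_0-(F(y)-y)$ on $\Rd$; the inverse $y(t,x,\eta)$ is then $C^1$ by the inverse function theorem. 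Differentiating $x(t,y(t,x,\eta),\eta)=x$ once in $x$ gives $\partial_x y=\bigl[(\partial_y x)(t,y,\eta)\bigr]^{-1}=\I+\mathcal O(c_d)$, which is the first claimed estimate.

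Next, the symbol bounds on $y$. I will prove $|\partial_x^\beta\partial_\eta^\alpha y(t,x,\eta)|\le C_{\alpha,\beta}\,M^{|\alpha|+|\beta|-1}|\eta|^{-|\alpha|}$ by induction on $N=|\alpha|+|\beta|$, the case $N=1$ being the identity above together with $\partial_\eta y=-(\partial_y x)^{-1}(\partial_\eta x)(t,y,\eta)$ and the $N=1$ case of Corollary~\ref{cor:geodflow}. For $N\ge 2$, apply $\partial_x^\beta\partial_\eta^\alpha$ to $x^i(t,y(t,x,\eta),\eta)=x^i$ and expand by the Fa\`a di Bruno formula. Since $x$ depends on the $x$-variable only through $y$, every resulting term has the form $(\partial_y^\gamma\partial_\eta^\delta x^i)(t,y,\eta)\prod_l\bigl(\partial_x^{\beta_l}\partial_\eta^{\alpha_l}y^{m_l}\bigr)$ with $\sum_l\beta_l=\beta$, $\delta+\sum_l\alpha_l=\alpha$, and $|\gamma|$ the number of $y$-factors; the unique term containing $\partial_x^\beta\partial_\eta^\alpha y$ is $(\partial_{y_m}x^i)(t,y,\eta)\,\partial_x^\beta\partial_\eta^\alpha y^m$, and in every other term each $y$-factor has order at most $N-1$. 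Estimating the $y$-factors by the inductive hypothesis and the prefactors by Corollary~\ref{cor:geodflow} (using $|\partial_y^\gamma\partial_\eta^\delta x|\le CM^{|\gamma|+|\delta|-1}|\eta|^{-|\delta|}$), the exponents telescope: the powers of $M$ sum to $|\alpha|+|\beta|-1$ and those of $|\eta|$ to $-|\alpha|$. As $\partial_x^\beta\partial_\eta^\alpha x^i=0$ for $N\ge 2$, one solves for the top term and multiplies by the uniformly bounded matrix $(\partial_y x)^{-1}(t,y,\eta)$, closing the induction.

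For the improved estimate carrying a factor $|t|$ when $|\alpha|\ge 1$ or $|\beta|\ge 2$, I run the same induction on $y=x-R(t,y,\eta)$ with $R(t,y,\eta):=x(t,y,\eta)-y$, noting $\partial_x^\beta\partial_\eta^\alpha x=0$ in this range. The key point is that the bounds of Corollary~\ref{cor:geodflow} for $\partial_y^\gamma\partial_\eta^\delta R$ carry an extra factor $|t|$ exactly when $|\gamma|\ge 2$ or $|\delta|\ge 1$; in the Fa\`a di Bruno expansion of $\partial_x^\beta\partial_\eta^\alpha\bigl(R(t,y(t,x,\eta),\eta)\bigr)$, every term other than the top term $(\partial_{y_m}R)\,\partial_x^\beta\partial_\eta^\alpha y^m$ has a prefactor of precisely one of these two types — or, when $\beta=0$, is $\partial_\eta^\alpha x$ itself — so it inherits the $|t|$, while its $y$-factors are controlled by the basic estimate just proven. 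Solving for the top term as before gives $|\partial_x^\beta\partial_\eta^\alpha y|\le C_{\alpha,\beta}\,|t|\,M^{|\alpha|+|\beta|-1}|\eta|^{-|\alpha|}$. Finally, for $\xi(t,x,\eta)=\xi(t,y(t,x,\eta),\eta)$ I expand $\partial_x^\beta\partial_\eta^\alpha\xi$ by the chain rule into terms $(\partial_y^\gamma\partial_\eta^\delta\xi)(t,y,\eta)\prod_l\bigl(\partial_x^{\beta_l}\partial_\eta^{\alpha_l}y^{m_l}\bigr)$ and combine $|\partial_y^\gamma\partial_\eta^\delta\xi|\le CM^{|\gamma|+|\delta|-1}|\eta|^{1-|\delta|}$ (from Corollary~\ref{cor:geodflow}, including $|\partial_y\xi|\lesssim c_d|\eta|$) with the symbol bounds on $y$; the exponents telescope to $M^{|\alpha|+|\beta|-1}|\eta|^{1-|\alpha|}$.

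The main obstacle is not analytic — everything rests on Corollary~\ref{cor:geodflow} — but organizational: keeping the three Fa\`a di Bruno expansions honest, so that one sees cleanly that the exponents of $M$ and $|\eta|$ always balance and, in the improved estimate, that every non-leading term genuinely acquires a factor of $|t|$.
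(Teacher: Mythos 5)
Your proof is correct and takes essentially the same route as the paper: the paper likewise deduces everything from Corollary \ref{cor:geodflow} together with the inverse function theorem, leaving implicit the Fa\`a di Bruno bookkeeping (telescoping powers of $M$ and $|\eta|$, and the $|t|$ gain from the ``additionally'' estimates) that you carry out explicitly. The only minor difference is the global invertibility step, which you obtain by a contraction-mapping argument while the paper notes that $y\mapsto x(t,y,\eta)$ is proper and open, hence a diffeomorphism of $\Rd$; both rest on $|\partial_y x-\I\ts|\lesssim c_d$.
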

\begin{proof}
We have $|x(t,y,\eta)-y|\lesssim |t|$, so
for each $\eta\ne 0$ and $|t|\le 1$ 
the map $y\rightarrow x$ is proper and
hence a closed mapping. Since $|\partial_y x-\I\ts|\lesssim c_d$ it is an open mapping,  
hence onto and one-to-one  by connectivity and simple connectivity of $\Rd$.
Thus $y\rightarrow x(t,y,\eta)$ is a diffeomorphism of $\Rd$, with inverse satisfying $|\partial_x y-\I|\lesssim c_d$.
The estimates of the theorem are then a consequence of the
inverse function theorem and Corollary \ref{cor:geodflow}.
\end{proof}

\section{Estimates for solutions of the eikonal equation}\label{sec:phasefunction}
In this section we establish estimates on derivatives of the solution to the eikonal equation for $\g_k$.
For simplicity we consider $0\le t\le 1$.
Let $\g_k$ be the mollification of $\g$ at spatial scale $2^{-\frac k2}$ from Chapter 2, and let $\varphi_k$
be the solution to the eikonal equation
$$
\partial_t\varphi_k(t,x,\eta)=-p_k\bigl(x,\nabla_x\varphi_k(t,x,\eta)\bigr)\,,\qquad
\varphi_k(0,x,\eta)=\langle x,\eta\rangle.
$$
Then $\varphi_k(t,x,\eta)=\sum_j \eta_j \ts y_j(t,x,\eta)$, where $y(t,x,\eta)$ is as in Theorem \ref{thm:yest}, and the estimates of that theorem hold with $M=2^k$.
Furthermore, 
$$
\partial_{\eta_j}\varphi_k(t,x,\eta)=y_j(t,x,\eta),\qquad
\partial_{x_j}\varphi_k(t,x,\eta)=\xi_j(t,x,\eta).
$$
We then easily read off the following from Theorem \ref{thm:yest},
\begin{align}
\label{dxphi}
\bigl|\partial_x^\beta \varphi_k(t,x,\eta)\bigr|&\le C_\beta\,2^{\frac k2(|\beta|-2)}|\eta|,\qquad |\beta|\ge 2,\\
\label{detaphi}
\bigl|\partial_x^\beta \partial_\eta\varphi_k(t,x,\eta)\bigr|&\le C_\beta\,t\,2^{\frac k2(|\beta|-1)},\qquad |\beta|\ge 2,\\
\label{deta2phi}
\bigl|\partial_x^\beta\partial_\eta^\alpha \varphi_k(t,x,\eta)\bigr|&\le C_{\alpha,\beta}\,t\,2^{\frac k2(|\alpha|+|\beta|-2)}|\eta|^{1-|\alpha|},\qquad |\alpha|\ge 2.
\end{align}
Additionally,
\begin{equation}\label{dxdetaphi}
\bigl|\partial_x\partial_\eta\varphi_k(t,x,\eta)|\le C.
\end{equation}
The following shows that some estimates can be improved for derivatives in $\eta$, which is key to controlling the evolution operators for small $t$.

\begin{theorem}\label{phikest}
Assume that $|\alpha|\ge 2$ or $|\beta|\ge 2$. Then when $2^{-k}\le t\le 1$,
$$
\bigl|\la\eta,\partial_\eta\ra^j\partial_x^\beta\partial_\eta^\alpha\varphi_k(t,x,\eta)|\le
C_{j,\alpha,\beta}\,\bigl(t^{\hf}2^{\frac k2}\bigr)^{|\alpha|}
2^{\frac k2(|\beta|-2)}|\eta|^{1-|\alpha|},
$$
and when $0\le t\le 2^{-k}$,
$$
\bigl|\la\eta,\partial_\eta\ra^j\partial_x^\beta\partial_\eta^\alpha\varphi_k(t,x,\eta)|\le
C_{j,\alpha,\beta}\,
2^{\frac k2(|\beta|-2)}|\eta|^{1-|\alpha|}.
$$
\end{theorem}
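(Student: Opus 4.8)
The plan is to strip away, using homogeneity in $\eta$, everything except the genuinely new case $|\alpha|\ge 2$, and to handle that case by a parabolic rescaling reducing it to Theorem~\ref{thm:yest} run at a time of order one. For the reductions: since $p_k(x,\eta)$ is homogeneous of degree one in $\eta$, so is $\varphi_k(t,x,\eta)$, hence $\partial_x^\beta\partial_\eta^\alpha\varphi_k$ is homogeneous of degree $1-|\alpha|$ in $\eta$; using $[\la\eta,\partial_\eta\ra,\partial_\eta^\alpha]=-|\alpha|\,\partial_\eta^\alpha$ and Euler's relation one gets $\la\eta,\partial_\eta\ra^j\partial_x^\beta\partial_\eta^\alpha\varphi_k=(1-|\alpha|)^j\,\partial_x^\beta\partial_\eta^\alpha\varphi_k$, so it suffices to prove the estimate for $j=0$, and then by homogeneity we may take $|\eta|=1$. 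When $\alpha=0$ (so $|\beta|\ge 2$) the asserted bound is exactly \eqref{dxphi}; when $|\alpha|=1$ (so $|\beta|\ge 2$) it follows from \eqref{detaphi}, bounding $t\le 1$ in the range $t\ge 2^{-k}$ and $t\le 2^{-k}$ in the range $t\le 2^{-k}$. Thus the content is the case $|\alpha|\ge 2$, with $\beta$ arbitrary.

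For that case, given $\lambda>0$ put $\tilde\g_\lambda(x)=\g_k(\lambda x)$ and $\varphi^\lambda(s,x,\eta)=\lambda^{-1}\varphi_k(\lambda s,\lambda x,\eta)$; a direct check shows that $\varphi^\lambda$ solves the eikonal equation for $\tilde\g_\lambda$, so by the discussion opening Section~\ref{sec:phasefunction} we have $\varphi^\lambda(s,x,\eta)=\la\tilde y(s,x,\eta),\eta\ra$ with $\tilde y$ the inverse Hamiltonian flow of $\tilde\g_\lambda$, and $\partial_\eta\varphi^\lambda=\tilde y$, $\partial_x\varphi^\lambda=\tilde\xi$. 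Now $\tilde\g_\lambda$ is the mollification, at spatial scale $\lambda^{-1}2^{-\frac k2}$, of the metric $\g(\lambda\,\cdot)$, which for $\lambda\le 1$ still satisfies \eqref{cond0} (by dilation invariance of $\BMO$ and the scaling $\R_{ijkl}[\g(\lambda\cdot)](x)=\lambda^2\R_{ijkl}[\g](\lambda x)$). Arguing exactly as for $\g_k$ in Section~\ref{sec:reductions}, $\tilde\g_\lambda$ then satisfies \eqref{cond0}--\eqref{cond2} with $M=\max\bigl(1,\lambda 2^{\frac k2}\bigr)$: if $\lambda 2^{\frac k2}\ge 1$ the mollification scale is $\le 1$ and $M=\lambda 2^{\frac k2}$ works, while if $\lambda 2^{\frac k2}\le 1$ the scale is $\ge 1$, all derivatives of $\tilde\g_\lambda$ are $O(c_d)$, and $M=1$ works. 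Unwinding the rescaling gives
\begin{equation}\label{rescaleid}
\partial_x^\beta\partial_\eta^\alpha\varphi_k(t,x,\eta)=\lambda^{1-|\beta|}\bigl(\partial_x^\beta\partial_\eta^\alpha\varphi^\lambda\bigr)\bigl(t/\lambda,\;x/\lambda,\;\eta\bigr).
\end{equation}

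Now split on $t$. For $2^{-\frac k2}\le t\le 1$ take $\lambda=t$, so $s=t/\lambda=1$ and $M=t2^{\frac k2}\ge 1$; writing one $\eta$-derivative of $\varphi^\lambda$ as a component of $\tilde y$ via $\partial_\eta\varphi^\lambda=\tilde y$ (legitimate since $|\alpha|\ge 1$) and inserting the $\tilde y$-estimates of Theorem~\ref{thm:yest} with this $M$ gives $|\partial_x^\beta\partial_\eta^\alpha\varphi^\lambda(1,\cdot)|\le C_{\alpha,\beta}\,M^{|\alpha|+|\beta|-2}$; then \eqref{rescaleid} yields $|\partial_x^\beta\partial_\eta^\alpha\varphi_k|\le C_{\alpha,\beta}\,t^{|\alpha|-1}2^{\frac k2(|\alpha|+|\beta|-2)}\le C_{\alpha,\beta}\,t^{|\alpha|/2}2^{\frac k2(|\alpha|+|\beta|-2)}$, the claimed bound (using $t\le 1$, $|\alpha|\ge 2$). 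For $0\le t\le 2^{-\frac k2}$ take instead $\lambda=2^{-\frac k2}$, so $s=t2^{\frac k2}\in[0,1]$ and $M=1$; the small-time refinement in Theorem~\ref{thm:yest} gives $|\partial_x^\beta\partial_\eta^\alpha\varphi^\lambda(s,\cdot)|\le C_{\alpha,\beta}\,s$ for $|\alpha|\ge 2$, and \eqref{rescaleid} turns this into $|\partial_x^\beta\partial_\eta^\alpha\varphi_k|\le C_{\alpha,\beta}\,t\,2^{\frac k2|\beta|}$, which is $\le C_{\alpha,\beta}\,2^{\frac k2(|\beta|-2)}$ when $t\le 2^{-k}$ and is $\le C_{\alpha,\beta}\,(t^{\frac12}2^{\frac k2})^{|\alpha|}2^{\frac k2(|\beta|-2)}$ when $2^{-k}\le t\le 2^{-\frac k2}$, the last step because $t^{1-|\alpha|/2}\le 2^{\frac k2(|\alpha|-2)}$ for $t\ge 2^{-k}$ and $|\alpha|\ge 2$. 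This exhausts all cases.

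The main obstacle is choosing $\lambda$ correctly in each regime so that the rescaled metric lands in the scope of Theorem~\ref{thm:yest} with the smallest possible $M$, and recognizing that two independent gains must be combined: the identity $\partial_\eta\varphi^\lambda=\tilde y$, which lowers the order of one $\eta$-derivative and produces the exponent $M^{|\alpha|+|\beta|-2}$ that controls the range $t\ge 2^{-k/2}$, and the small-time improvement in Theorem~\ref{thm:yest}, which supplies the factor $t\,2^{k/2}$ that controls the range $t\le 2^{-k/2}$, where the powers of $M$ are trivial. Verifying that $\g_k(\lambda\cdot)$ genuinely satisfies \eqref{cond0}--\eqref{cond2} with the stated $M$ — in particular the bounds on the curvature tensor, which use that the antisymmetrized combination of second derivatives in $\R$ is a mollification of the bounded tensor $\R-Q(\g,\partial\g)$ — is routine but follows the same pattern as the analogous estimates for $\g_k$ in Section~\ref{sec:reductions}.
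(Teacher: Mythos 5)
Your proposal is correct and follows essentially the paper's own argument: reduce to $j=0$ by homogeneity, dispose of $|\alpha|\le 1$ via \eqref{dxphi}--\eqref{detaphi}, and for $|\alpha|\ge 2$ rescale by $\e=t$ when $2^{-k/2}\le t\le 1$ and by $\e=2^{-k/2}$ when $t\le 2^{-k/2}$, applying the flow estimates for the dilated mollified metric with $M=\e\, 2^{k/2}$ and unwinding, with the same final case split at $t=2^{-k}$. The only cosmetic difference is that you invoke Theorem \ref{thm:yest} directly through $\partial_\eta\varphi^\lambda=\tilde y$ rather than the already-derived bound \eqref{deta2phi}, which is the same estimate.
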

\begin{proof} By homogeneity it suffices to consider the case $j=0$.
If $|\alpha|\le 1$, the estimates for all $0\le t\le 1$ follow from \eqref{dxphi}--\eqref{detaphi}. To handle $|\alpha|\ge 2$,
we take a parameter $\e$ with $2^{-k/2}\le\e\le 1$. Let $\g_{\e,k}(x)=\g_k(\e x)$, where
$\g_k$ is the localization of $\g$ to frequency $2^{k/2}$. Similarly, let $p_{\e,k}(x,\xi)=p_k(\e x,\xi)$. Let $\varphi_{\e,k}$ be the solution to
$$
\partial_t\varphi_{\e,k}(t,x,\eta)=-p_{\e,k}\bigl(x,\nabla_x\varphi_{\e,k}(t,x,\eta)\bigr)\,,\qquad
\varphi_{\e,k}(0,x,\eta)=\langle x,\eta\rangle.
$$
Then by homogeneity we have
\begin{equation}\label{ephase}
\varphi_k(t, x,\eta)=\e\ts\varphi_{\e, k}(\e^{-1}t,\e^{-1}x,\eta)\,.
\end{equation}
The metric $\g_{\e,k}(x)$ is mollification of $\g(\e x)$ at scale
$\e^{-1} 2^{-\frac k2}\le 1$. Since $\g(\e x)$ is Lipschitz with bounded curvature, uniformly over $\e\in[0,1]$, we can apply estimates \eqref{dxphi}--\eqref{deta2phi} with $2^{\frac k2}$ replaced by $M=\e 2^{\frac k2}$.

For $2^{-\frac k2}\le t\le 1$ we take $\e=t$ in \eqref{ephase}, and apply \eqref{deta2phi} with $M=t\ts 2^{-\frac k2}$ to get
$$
\bigl|\partial_x^\beta\partial_\eta^\alpha\varphi_k(t,x,\eta)|\le C_{\alpha,\beta}\,t^{|\alpha|-1}\,2^{\frac k2(|\alpha|+|\beta|-2)}\,|\eta|^{1-|\alpha|}.
$$
For $|\alpha|\ge 2$ this implies the desired estimate.

For $0\le t\le 2^{-\frac k2}$ we take $\e=2^{-\frac k2}$ in \eqref{ephase}, and apply \eqref{deta2phi} with $2^{\frac k2}$ replaced by $1$ to get
$$
\bigl|\partial_x^\beta\partial_\eta^\alpha\varphi_k(t,x,\eta)|\le C_{\alpha,\beta}\,t\,2^{\frac k2|\beta|}\,|\eta|^{1-|\alpha|}.
$$
Since $t\le t^{\frac{|\alpha|}2}2^{\frac k2(|\alpha|-2)}$ for $t\ge 2^{-k}$ and $|\alpha|\ge 2$, and $t\ts 2^{\frac k2|\beta|}\le 2^{\frac k2(|\beta|-2)}$ for $0\le t\le 2^{-k}$,
this concludes the theorem for $0\le t\le 2^{-\frac k2}$.
\end{proof}

As a corollary we obtain the estimates we need for linearizing the phase function, and showing the symbols are slowly varying, for $\eta$ in an appropriate conical region. Given a unit vector $\nu$, and $2^{-k}\le t\le 1$, we define the dyadic/conic region
\begin{equation}\label{rknudef}
\Omega^\nu_{k,t}=\bigl\{\eta:\tfrac 23 2^{k-1}\le|\eta|\le \tfrac 32 2^{k+2},\;|\nu-|\eta|^{-1}\eta|\le\tfrac 1{16} t^{-\hf}2^{-\frac k2}\,\bigr\}.
\end{equation}
Note that on this region, since $t^{-\hf}2^{-\frac k2}\le 1$,
$$
|\eta|\ge \la\nu,\eta\ra\ge\tfrac 34|\eta|,\qquad \bigl|\Pi_{\nu^\perp}\eta\bigr|\le t^{-\hf}2^{\frac k2},
$$
where $\Pi_{\nu^\perp}$ is projection onto the hyperplane perpendicular to $\nu$.

\begin{corollary}\label{phikest'}
The following estimates hold if $\eta\in \Omega^\nu_{k,t}$ and $2^{-k}\le t\le 1$.
\begin{equation}\label{linphase}
\bigl|\la\nu,\partial_\eta\ra^j\partial_\eta^\alpha\partial_x^\beta
(\partial_\eta^2\varphi_k)(t,x,\eta)\bigr|\le 
C_{j,\alpha,\beta}\,t\ts 2^{-k}\,2^{-kj}\,\tkhf^{|\alpha|}\,2^{\frac k2 |\beta|},
\end{equation}
\begin{multline}\label{linsymbol}
\bigl|\la\nu,\partial_\eta\ra^j\partial_\eta^\alpha\partial_x^\beta
(\partial_\eta\partial_x\varphi_k)(t,x,\eta)\bigr|+
2^{-k}\bigl|\la\nu,\partial_\eta\ra^j\partial_\eta^\alpha\partial_x^\beta
(\partial^2_x\varphi_k)(t,x,\eta)\bigr|\\
\le C_{j,\alpha,\beta}\,2^{-kj}\,\tkhf^{|\alpha|}\,2^{\frac k2 |\beta|},
\end{multline}
and
\begin{multline}\label{linphase'}
\bigl|\la\nu,\partial_\eta\ra^j\partial_\eta^\alpha\partial_x^\beta
\bigl(\varphi_k(t,x,\eta)-\eta\cdot\nabla_\eta\varphi_k(t,x,\nu)\bigr)\bigr|\\
\le C_{j,\alpha,\beta}\,2^{-kj}\,\tkhf^{|\alpha|}\,2^{\frac k2 |\beta|}.
\end{multline}
For $0\le t\le 2^{-k}$ these hold for $\eta$ in the dyadic shell $\frac 23 2^{k-1}\le|\eta|\le \frac 32 2^{k+2}$ if $t$ is replaced by $2^{-k}$ on the right hand side.
\end{corollary}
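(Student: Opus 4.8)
The plan is to deduce all three estimates from Theorem~\ref{phikest}, exploiting two features of $\Omega^\nu_{k,t}$: there $|\eta|\approx2^k$, so powers of $|\eta|$ may be traded for powers of $2^k$; and $\nu$ is nearly parallel to $\eta$, so that $\la\nu,\partial_\eta\ra$ essentially coincides with the radial field $|\eta|^{-1}\la\eta,\partial_\eta\ra$ and thus costs only $|\eta|^{-1}\approx2^{-k}$, rather than the $\tkhf=t^{\hf}2^{-\frac k2}$ that Theorem~\ref{phikest} charges a generic $\partial_\eta$ acting on $\varphi_k$. I will use the following reformulation of Theorem~\ref{phikest}, valid on $\Omega^\nu_{k,t}$ when $2^{-k}\le t\le1$: if $|\alpha|\ge2$ or $|\beta|\ge2$ then $|\partial_\eta^\alpha\partial_x^\beta\varphi_k|\le C\,\tkhf^{|\alpha|}\,2^{\frac k2|\beta|}$, with \eqref{dxdetaphi} covering the excluded case $|\alpha|=|\beta|=1$; while for $0\le t\le2^{-k}$ the same holds on the dyadic shell with $\tkhf$ replaced by $2^{-k}$.

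For $j=0$, \eqref{linphase} and \eqref{linsymbol} come by counting derivatives and inserting $|\eta|\approx2^k$; e.g.\ $\partial_\eta^\alpha\partial_x^\beta(\partial_\eta^2\varphi_k)$ carries $|\alpha|+2$ derivatives in $\eta$, hence is $O\bigl(\tkhf^{|\alpha|+2}2^{\frac k2|\beta|}\bigr)=O\bigl(t\,2^{-k}\tkhf^{|\alpha|}2^{\frac k2|\beta|}\bigr)$, matching the right side of \eqref{linphase}. For $j\ge1$ I would write
$$
\la\nu,\partial_\eta\ra=|\eta|^{-1}\la\eta,\partial_\eta\ra+\la\mu,\partial_\eta\ra,\qquad\mu(\eta)=\nu-|\eta|^{-1}\eta,
$$
iterate, and use the Leibniz rule, recording that on $\Omega^\nu_{k,t}$ one has $|\mu|\le\tfrac1{16}t^{-\hf}2^{-\frac k2}$, $\la\eta,\partial_\eta\ra\mu=0$, $|\partial_\eta^\gamma\mu|\lesssim|\eta|^{-|\gamma|}$ for $|\gamma|\ge1$, and $|\la\mu,\eta\ra|=|\eta|\bigl(1-\la\nu,|\eta|^{-1}\eta\ra\bigr)\lesssim|\eta|\,t^{-1}2^{-k}$. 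Each of the $j$ copies of $\la\nu,\partial_\eta\ra$ then yields a gain $\lesssim2^{-k}$: the radial part supplies $|\eta|^{-1}\approx2^{-k}$ outright, the trailing $\la\eta,\partial_\eta\ra$ either annihilating a degree-$0$ homogeneous function or commuting past $\partial_\eta,\partial_x$ at no cost (the bound in Theorem~\ref{phikest} being $j$-independent); the remainder part $\la\mu,\partial_\eta\ra$ adds one $\partial_\eta$ to $\varphi_k$, a factor $\tkhf$, against $|\mu|\le\tfrac1{16}t^{-\hf}2^{-\frac k2}$, with product $\lesssim2^{-k}$; and the Leibniz terms in which a $\partial_\eta$ falls on a coefficient $|\eta|^{-1}$ or $\partial_\eta^\gamma\mu$ are only smaller. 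Closing this bookkeeping requires using the sharp cone bound $|\partial_\eta^\alpha\partial_x^\beta\varphi_k|\lesssim\tkhf^{|\alpha|}2^{\frac k2|\beta|}$ (not the weaker \eqref{deta2phi}) whenever a derivative lands on $\varphi_k$. This gives \eqref{linphase} and \eqref{linsymbol}.

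For \eqref{linphase'}, set $\Psi(t,x,\eta)=\varphi_k(t,x,\eta)-\eta\cdot\nabla_\eta\varphi_k(t,x,\nu)$. Since the subtracted term is linear in $\eta$, $\partial_\eta^\alpha\Psi=\partial_\eta^\alpha\varphi_k$ for $|\alpha|\ge2$, and $\la\nu,\partial_\eta\ra^j\partial_{\eta_l}\partial_x^\beta\Psi=\la\nu,\partial_\eta\ra^j\partial_{\eta_l}\partial_x^\beta\varphi_k$ when $j\ge1$; all such cases carry at least two $\eta$-derivatives and are handled exactly as above. The remaining cases are $\alpha=0$ (any $j,\beta$) and $|\alpha|=1,\,j=0$. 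For these, using that $y:=\nabla_\eta\varphi_k$ is homogeneous of degree $0$ in $\eta$, one has $\Psi=\eta\cdot\bigl(y(t,x,\eta)-y(t,x,\nu)\bigr)$ and $\partial_{\eta_l}\Psi=y_l(t,x,\eta)-y_l(t,x,\nu)$, and represents the difference as an integral along the segment $\zeta(s)=(1-s)\nu+s\,|\eta|^{-1}\eta$, on which $|\zeta(s)|\approx1$. The case $|\alpha|=1,\,j=0$ becomes $\partial_{\eta_l}\partial_x^\beta\Psi=\int_0^1\bigl\la|\eta|^{-1}\eta-\nu,(\nabla_\eta\partial_x^\beta y_l)(t,x,\zeta(s))\bigr\ra ds$; since $|\nabla_\eta\partial_x^\beta y_l|=|\partial_x^\beta\partial_\eta^2\varphi_k|\lesssim t\,2^{\frac k2|\beta|}$ on the unit sphere by \eqref{deta2phi}, and $\bigl|\,|\eta|^{-1}\eta-\nu\,\bigr|\le\tfrac1{16}t^{-\hf}2^{-\frac k2}$, the integrand is $\lesssim\tkhf\,2^{\frac k2|\beta|}$, as needed. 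For $\alpha=0$ a direct differentiation overshoots by $t^{\hf}2^{\frac k2}$; the rescue is the homogeneity relation $\zeta(s)\in\ker\bigl(\nabla_\eta\partial_x^\beta y\bigr)(t,x,\zeta(s))$, which kills the $\zeta(s)$-component of $\eta$ in the contraction $\eta\cdot$ and leaves only the transverse part, of size $\lesssim(1-s)\,|\eta|\,\bigl|\,|\eta|^{-1}\eta-\nu\,\bigr|\lesssim(1-s)\,t^{-\hf}2^{\frac k2}$, whence $|\partial_x^\beta\Psi|\lesssim2^{\frac k2|\beta|}$. Each further $\la\nu,\partial_\eta\ra$ (when $\alpha=0$) replaces the contraction $\eta\cdot$ by $\nu\cdot$, whose transverse-to-$\zeta(s)$ size is only $\lesssim s\,\bigl|\,|\eta|^{-1}\eta-\nu\,\bigr|$, producing the extra $2^{-k}$; iterating by differentiation under the integral sign, with the lower-order $\eta$- and $\nu$-derivatives landing on $\zeta(s)$, $|\eta|^{-1}\eta$ and the derivatives of $y$ and reproducing the factors of the preceding paragraph, completes \eqref{linphase'}. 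Finally, for $0\le t\le2^{-k}$ one works on the dyadic shell, where the angular constraint is absent and $\la\nu,\partial_\eta\ra$ is a plain component of $\partial_\eta$; all three displays, with $t$ replaced by $2^{-k}$, then follow from the second estimate of Theorem~\ref{phikest} and \eqref{dxphi}--\eqref{deta2phi}.

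I expect the $\alpha=0$ case of \eqref{linphase'} to be the main obstacle: differentiating $\varphi_k$ directly loses the full factor $t^{\hf}2^{\frac k2}$ (a Euclidean path would lose a logarithm), so it is essential both to integrate along the spherical segment, keeping $|\zeta(s)|\approx1$, and to invoke the homogeneity-induced fact that $\eta$ lies nearly in the kernel of the $\eta$-Hessian of $\varphi_k$. The remaining effort is the Leibniz bookkeeping for iterated $\la\nu,\partial_\eta\ra$, which is routine once $|\la\mu,\eta\ra|\lesssim|\eta|\,t^{-1}2^{-k}$ is noted and the sharp cone bound on $\partial_\eta^\alpha\partial_x^\beta\varphi_k$ is used throughout.
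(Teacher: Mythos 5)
Your proposal is correct, and it rests on the same two pillars as the paper's own proof: Theorem \ref{phikest} (radial derivatives $\la\eta,\partial_\eta\ra$ cost nothing, by homogeneity) together with the geometry of $\Omega^\nu_{k,t}$ (transverse extent only $t^{-\hf}2^{-\frac k2}$, $|\eta|\approx 2^k$). The packaging differs, though. For \eqref{linphase}--\eqref{linsymbol} the paper rotates to $\nu=e_1$ and inducts on $j$, writing $\la\eta,\partial_\eta\ra^{j_0}=\eta_1^{j_0}\partial_{\eta_1}^{j_0}+\sum c\,\eta_1^{j}\eta'^{\alpha}\partial_{\eta_1}^{j}\partial_{\eta'}^{\alpha}$ and dividing by $\eta_1^{j_0}\gtrsim 2^{kj_0}$, whereas you split the operator as $\la\nu,\partial_\eta\ra=|\eta|^{-1}\la\eta,\partial_\eta\ra+\la\mu,\partial_\eta\ra$ and run a Leibniz bookkeeping; these are two ways of extracting the same gain of $2^{-k}$ per $\nu$-derivative (your transverse term costs $|\mu|\cdot t^{\hf}2^{-\frac k2}\lesssim 2^{-k}$, and derivatives hitting $\mu$ or $|\eta|^{-1}$ are indeed no worse), so your version is sound. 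For \eqref{linphase'} the routes genuinely diverge: after reducing to $|\alpha|\le 1$, $\beta=0$ and rotating, the paper uses Euler's identity to recast the quantity as the second-order Taylor remainder of $\varphi_k$ in $\eta'$ about $\eta'=0$, so the delicate $\alpha=0$ case is one line from \eqref{linphase}, namely $|\eta'|^2\sup|\partial_{\eta'}^2\varphi_k|\lesssim (t^{-1}2^{k})(t\,2^{-k})=1$, with the $j$ and $\beta$ derivatives obtained by differentiating the remainder formula; you instead write $\Psi=\eta\cdot\bigl(\nabla_\eta\varphi_k(t,x,\eta)-\nabla_\eta\varphi_k(t,x,\nu)\bigr)$, integrate along the chord from $\nu$ to $|\eta|^{-1}\eta$, and invoke the Euler kernel relation $\partial_\eta^2\varphi_k(\zeta)\,\zeta=0$ to strip the radial component of $\eta$. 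Both arguments use the same quantitative inputs and give the stated bounds; the paper's Taylor-remainder formulation is somewhat slicker and organizes the higher $j$ derivatives automatically, while yours makes the role of homogeneity explicit and handles $j\ge 1$ by the same radial/transverse bookkeeping. Your remark on $0\le t\le 2^{-k}$ (no angular localization needed, each plain $\partial_\eta$ already gains $2^{-k}$) matches the paper's brief treatment, though for \eqref{linphase'} one still needs the small difference argument $|\nabla_\eta\varphi_k(t,x,\hat\eta)-\nabla_\eta\varphi_k(t,x,\nu)|\lesssim t$ there, which your chord representation supplies.
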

\begin{proof}
We consider the estimate \eqref{linphase}. Theorem \ref{phikest} gives the following,
$$
\bigl|\la\eta,\partial_\eta\ra^j\partial_\eta^\alpha\partial_x^\beta
(\partial_\eta^2\varphi_k)(t,x,\eta)\bigr|\le C_{j,\alpha,\beta}\,t\ts 2^{-k}\,(t^{\hf}2^{-\frac k2})^{|\alpha|}\,2^{\frac k2 |\beta|}.
$$
After rotation we may assume that $\nu=(1,0,\ldots,0)$. We proceed by induction in $j$, the case $j=0$ being the same as above.
Suppose then that \eqref{linphase} holds for $j<j_0$. We expand
$$
\la\eta,\partial_\eta\ra^{j_0}=\eta_1^{j_0}\partial_{\eta_1}^{j_0}+\sum_{\substack{j+|\alpha|\le j_0 \\ j<j_0}} c_{j_0,j,\alpha}\,\eta_1^j\ts\eta'^\alpha\partial_{\eta_1}^j\partial_{\eta'}^\alpha
$$
Since $\eta_1\le \frac 32 2^{k+2}$ and $|\eta'|\le t^{-\hf}2^{\frac k2}$ on $\Omega^\nu_{k,t}$, the induction hypothesis yields
$$
\bigl|\eta_1^{j_0}\partial_{\eta_1}^{j_0}\partial_\eta^\alpha\partial_x^\beta
(\partial_\eta^2\varphi_k)(t,x,\eta)\bigr|
\le C_{j,\alpha,\beta}\,t\ts2^{-k}\,\tkhf^{|\alpha|}\,2^{\frac k2 |\beta|},
$$
which establishes \eqref{linphase} for $j=j_0$, since $\eta_1\ge 2^{k-2}$ on $\Omega^\nu_{k,t}$. Similar steps establish \eqref{linsymbol}.

The estimate \eqref{linphase'} follows from \eqref{linphase} if $|\alpha|\ge 2$, so it suffices to consider $|\alpha|\le 1$. The proof for $|\beta|\ne 0$ will follow from the proof for $\beta=0$ with $\varphi_k$ replaced by $\partial_x^\beta\varphi_k$, so we assume $\beta=0$.
We then rotate to assume that $\nu=e_1$, in which case by homogeneity the estimate becomes
\begin{multline*}
\bigl|\partial_{\eta_1}^j\partial_{\eta'}^\alpha
\bigl(\varphi_k(t,x,\eta_1,\eta')-\varphi_k(t,x,\eta_1,0)-\eta'\cdot\nabla_{\eta'}\varphi_k(t,x,\eta_1,0)\bigr)\bigr|\\
\le C_{j,\alpha}\,2^{-kj}\,\tkhf^{|\alpha|}.
\end{multline*}
This estimate follows from a Taylor expansion argument together with \eqref{linphase}, since $|\eta'|\le t^{-\frac 12}2^{\frac k2}$ on $\Omega_{k,t}^{e_1}$.

For $0\le t\le 2^{-k}$ the desired estimates follow easily from Theorem \ref{phikest}.
\end{proof}

We also record estimates for time derivatives of $\varphi_k$, which will be used in establishing space-time energy estimates.

\begin{corollary}\label{phikest''}
Assume that $2^{-k}\le t\le 1$. If $|\alpha|\ge 1$, then
$$
\bigl|\la\eta,\partial_\eta\ra^j\partial_\eta^\alpha\partial_t\varphi_k(t,x,\eta)\bigr|\le 
C_{j,\alpha}\bigl(t^{\hf}2^{\frac k2}\bigr)^{|\alpha|-1}|\eta|^{1-|\alpha|},
$$
and if $m+|\beta|\ge 2$,
$$
\bigl|\la\eta,\partial_\eta\ra^j\partial_x^\beta\partial_\eta^\alpha\partial_t^m\varphi_k(t,x,\eta)|\le
C_{j,m,\alpha,\beta}
\bigl(t^{\hf}2^{\frac k2}\bigr)^{|\alpha|}2^{\frac k2(m+|\beta|-2)}|\eta|^{1-|\alpha|}.
$$
If $0\le t\le 2^{-k}$, both of these estimates hold with $t$ replaced by $2^{-k}$ on the right hand side.
\end{corollary}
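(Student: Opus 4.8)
\emph{Proof plan.} The plan is to derive every bound from the eikonal equation $\partial_t\varphi_k=-p_k\bigl(x,\nabla_x\varphi_k\bigr)$ by differentiating and iterating, controlling the spatial and $\eta$-derivatives of $\varphi_k$ that appear via Theorem~\ref{phikest} and \eqref{dxphi}--\eqref{dxdetaphi}, the symbol bounds \eqref{eqn:pkest} for $p_k$, and the estimate $|\nabla_x\varphi_k|\approx|\eta|$ (which follows from $\nabla_x\varphi_k=\xi(t,x,\eta)$ and Corollary~\ref{cor:geodflow}). Since $\varphi_k=\sum_j\eta_j\ts y_j(t,x,\eta)$ is homogeneous of degree $1$ in $\eta$, each $\partial_x^\beta\partial_\eta^\alpha\partial_t^m\varphi_k$ is homogeneous of degree $1-|\alpha|$, so $\la\eta,\partial_\eta\ra$ acts on it as multiplication by $1-|\alpha|$; hence, exactly as in the proof of Theorem~\ref{phikest}, it suffices to prove the estimates for $j=0$.

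For the first estimate ($m=1$, $|\alpha|\ge 1$) I would apply $\partial_\eta^\alpha$ to the eikonal equation and expand by the Fa\`a di Bruno formula into a sum of terms
$$
(\partial_\xi^\mu p_k)(x,\nabla_x\varphi_k)\,\prod_{l=1}^\mu\partial_\eta^{\alpha_l}\nabla_x\varphi_k,\qquad \alpha_1+\cdots+\alpha_\mu=\alpha,\quad|\alpha_l|\ge 1,\quad\mu\ge 1.
$$
Here $|\partial_\xi^\mu p_k(x,\nabla_x\varphi_k)|\le C|\eta|^{1-\mu}$ by \eqref{eqn:pkest}; a factor with $|\alpha_l|=1$ is bounded by $C$ via \eqref{dxdetaphi}; and a factor with $|\alpha_l|\ge 2$ is bounded by $C\bigl(t^{\hf}2^{\frac k2}\bigr)^{|\alpha_l|}2^{-\frac k2}|\eta|^{1-|\alpha_l|}$ by Theorem~\ref{phikest}. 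Multiplying, the power of $|\eta|$ always comes out to $1-|\alpha|$; writing $B$ for the number of factors with $|\alpha_l|=1$ and $A=\mu-B$ for the rest, the factor carried relative to the claimed bound $\bigl(t^{\hf}2^{\frac k2}\bigr)^{|\alpha|-1}$ is $(t\ts2^k)^{(1-B)/2}2^{-\frac k2 A}$. This is $\le 1$: for $B\ge 1$ both factors are $\le 1$ since $t\ts2^k\ge 1$; for $B=0$ one has $A=\mu\ge 1$, so it is $\le(t\ts2^k)^{1/2}2^{-\frac k2}=t^{1/2}\le 1$.

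For the second estimate I would induct on $m$. Using $\partial_t^m\varphi_k=-\partial_t^{m-1}\bigl[p_k(x,\nabla_x\varphi_k)\bigr]$ and applying $\partial_x^\beta\partial_\eta^\alpha$, the Leibniz and Fa\`a di Bruno formulas produce a sum of terms
$$
(\partial_\xi^\mu\partial_x^{b}p_k)(x,\nabla_x\varphi_k)\,\prod_{l=1}^\mu\partial_t^{m_l}\partial_x^{\beta_l}\partial_\eta^{\alpha_l}\nabla_x\varphi_k,
$$
with $b+\sum_l\beta_l=\beta$, $\sum_l\alpha_l=\alpha$, $\sum_l m_l=m-1$, and each factor of positive total order. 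Each factor is an $x$-derivative of $\varphi_k$, of the form $\partial_t^{m_l}\partial_x^{\beta_l'}\partial_\eta^{\alpha_l}\varphi_k$ with $|\beta_l'|=|\beta_l|+1\ge 1$; I would estimate it by the inductive hypothesis when $m_l\ge 1$ (then $m_l+|\beta_l'|\ge 2$ and $m_l<m$), by Theorem~\ref{phikest} when $m_l=0$ and $|\alpha_l|\ge 2$ or $|\beta_l'|\ge 2$, and by $|\nabla_x\varphi_k|\approx|\eta|$, $|\partial_x\partial_\eta\varphi_k|\le C$ (Corollary~\ref{cor:geodflow}, \eqref{dxdetaphi}) in the remaining case $m_l=0$, $\beta_l=0$, $|\alpha_l|\le 1$; and $|\partial_\xi^\mu\partial_x^b p_k(x,\nabla_x\varphi_k)|\le C\,2^{\frac k2\max(0,|b|-1)}|\eta|^{1-\mu}$ by \eqref{eqn:pkest}. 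Multiplying and bookkeeping the powers of $t^{\hf}$, $2^{\frac k2}$ and $|\eta|$, the $|\eta|$-power is always $1-|\alpha|$, and relative to the claimed bound $\bigl(t^{\hf}2^{\frac k2}\bigr)^{|\alpha|}2^{\frac k2(m+|\beta|-2)}|\eta|^{1-|\alpha|}$ the remaining factor is $(t\ts2^k)^{-B/2}\,2^{\frac k2(1-A-\sigma)}$, where $B$ counts the factors of the last type with $|\alpha_l|=1$, $A$ counts the factors treated in the first two ways, and $\sigma=\min(|b|,1)$. Since $t\ts2^k\ge 1$ we have $(t\ts2^k)^{-B/2}\le 1$, while $2^{\frac k2(1-A-\sigma)}\le 1$ precisely when $A+\sigma\ge 1$; and $A=\sigma=0$ would force $b=0$, all $\beta_l=0$, and all $m_l=0$, hence $|\beta|=0$ and $m=1$, against $m+|\beta|\ge 2$. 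This closes the induction. For $0\le t\le 2^{-k}$ one runs the same computation with the $0\le t\le 2^{-k}$ clauses of Theorem~\ref{phikest} and \eqref{dxphi}--\eqref{dxdetaphi}, which amounts to replacing $t$ by $2^{-k}$ throughout.

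The only real point is this last piece of bookkeeping: one must check that the powers of $2^{\frac k2}$ generated by the $x$-derivatives never overshoot $2^{\frac k2(m+|\beta|-2)}$, and it is exactly the hypothesis $m+|\beta|\ge 2$ that guarantees, in each term, either that an $x$-derivative falls on the explicit argument of $p_k$ (giving $\sigma=1$) or that some differentiated factor carries total order at least $2$ (giving $A\ge 1$) --- the saving that compensates for the crude bounds $|\nabla_x\varphi_k|\lesssim|\eta|$ and $|\partial_x\partial_\eta\varphi_k|\lesssim 1$ on the lowest-order factors.
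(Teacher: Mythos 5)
Your proposal is correct and follows essentially the same route as the paper's proof: differentiate the eikonal equation, expand by the chain rule into terms $(\partial_x^{b}\partial_\xi^\mu p_k)(x,\nabla_x\varphi_k)\prod_l\partial_t^{m_l}\partial_x^{\beta_l}\partial_\eta^{\alpha_l}\nabla_x\varphi_k$, bound the factors via Theorem \ref{phikest}, \eqref{eqn:pkest} and \eqref{dxdetaphi}, and induct on the number of time derivatives. The only difference is organizational (you prove the first estimate directly and track exponents with the counters $A,B,\sigma$, while the paper folds both estimates into one induction and argues by cases on where the $x$-derivatives land), and your bookkeeping, including the identification of the excluded case $A=\sigma=0$ via $m+|\beta|\ge 2$, checks out.
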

\begin{proof}
By homogeneity we may assume $j=0$.
The estimates that involve no derivatives in $t$, the second estimate with $m=0$, hold by Theorem \ref{phikest}. We assume both estimates hold for derivatives up to order $m\ge 0$ in $t$, and prove they hold for derivatives of order $m+1$ in $t$. Write $\partial_t\varphi_k=p_k(x,\nabla_x\varphi_k)$, and observe that $\partial_x^\beta\partial_\eta^\alpha\partial_t^{m+1}\varphi_k$ can be written as a sum of terms of the form
$$
\bigl(\partial_x^{\beta_0}\partial_\xi^\gamma p_k\bigr)(x,\nabla_x\varphi_k)\bigl(\partial_x^{\beta_1}\partial_\eta^{\alpha_1}\partial_t^{m_1}\nabla_x\varphi_k\bigr)\cdots
\bigl(\partial_x^{\beta_{|\gamma|}}\partial_\eta^{\alpha_{|\gamma|}}\partial_t^{m_{|\gamma|}}\nabla_x\varphi_k\bigr),
$$
where $\sum_{j=0}^{|\gamma|}\beta_j=\beta$, $\;\sum_{j=1}^{|\gamma|}\alpha_j=\alpha$, $\;\sum_{j=1}^{|\gamma|}m_j=m$. If $|\beta|=m=0$, we must have $|\alpha_j|\ge 1$ for all $j$, and the first estimate of the corollary is a result of the following bounds from \eqref{eqn:pkest} and Theorem \ref{phikest},
\begin{align*}
\bigl|\bigl(\partial_\xi^\gamma p_k\bigr)(x,\nabla_x\varphi_k)\bigr|&\le 
C_\gamma\,|\eta|^{1-|\gamma|},
\\
|\partial_\eta^{\alpha_j}\nabla_x\varphi_k|&\le 
C_{\alpha_j}\,\bigl(t^{\hf}2^{\frac k2}\bigr)^{|\alpha_j|-1}|\eta|^{1-|\alpha_j|}.
\end{align*}
Assume that $|\beta|+m\ge 1$. If $|\beta_0|\ge 1$, then the second estimate of the corollary is a result of the following bounds from \eqref{eqn:pkest} and the induction assumption,
\begin{align*}
\bigl|\bigl(\partial_x^{\beta_0}\partial_\xi^\gamma p_k\bigr)(x,\nabla_x\varphi_k)\bigr|&\le 
C_{\gamma,\beta_0}\,2^{\frac k2(|\beta_0|-1)}|\eta|^{1-|\gamma|},\\
\bigl|\partial_x^{\beta_j}\partial_\eta^{\alpha_j}\partial_t^{m_j}\nabla_x\varphi_k\bigr|&\le 
C_{\alpha_j,\beta_j,m_j}\,2^{\frac k2(|\beta_j|+m_j)}\bigl(t^{\hf}2^{\frac k2}\bigr)^{|\alpha_j|}|\eta|^{1-|\alpha_j|}.
\end{align*}
Finally, if $|\beta_0|=0$ then we may assume $|\beta_1|+m_1\ge 1$, and use the bounds
\begin{align*}
\bigl|\bigl(\partial_\xi^\gamma p_k\bigr)(x,\nabla_x\varphi_k)\bigr|&\le 
C_{\gamma}\,|\eta|^{1-|\gamma|},\\
\bigl|\partial_x^{\beta_1}\partial_\eta^{\alpha_1}\partial_t^{m_1}\nabla_x\varphi_k\bigr|&\le 
C_{\alpha_1,\beta_1,m_1}\,2^{\frac k2(|\beta_1|+m_1-1)}\bigl(t^{\hf}2^{\frac k2}\bigr)^{|\alpha_1|}|\eta|^{1-|\alpha_1|}\\
\bigl|\partial_x^{\beta_j}\partial_\eta^{\alpha_j}\partial_t^{m_j}\nabla_x\varphi_k\bigr|&\le 
C_{\alpha_j,\beta_j,m_j}\,2^{\frac k2(|\beta_j|+m_j)}\bigl(t^{\hf}2^{\frac k2}\bigr)^{|\alpha_j|}|\eta|^{1-|\alpha_j|}.
\end{align*}
\end{proof}

\section{Parametrix for the dyadically localized equation}\label{sec:parametrix}
In this section, we use the eikonal solution $\varphi_k$ to produce an approximation to the wave group for $P$ with data at frequency scale $2^k$. In the next section we will use these approximations to produce the exact evolution group for $P$ by iteration. For $k\ge 2$ we define
$$
\tP_k=\frac 12\sum_{j=k-1}^{k+1}\beta_j(D)\bigl(p_j(x,D)+p_j(x,D)^*\bigr)\beta_j(D).
$$
Let $\tp_k(x,\eta)$ denote the symbol of $\tP_k$. Recalling that $\beta_j^2=\psi_j$, then
\begin{equation}\label{pkerror}
\tp_k(x,\eta)=\sum_{j=k-1}^{k+1}p_j(x,\eta)\psi_j(\eta)+\sum_{j=k-1}^{k+1}q_j(x,\eta)\beta_j(\eta),
\end{equation}
where $q_j\in S^0_{1,\frac 12}$, uniformly over $j$.
For $|\eta|\in\bigl[\frac 34\ts 2^{k},\frac 43\ts 2^{k+1}\bigr]$ we define
$$
b_k(t,x,\eta)=e^{-i\varphi_k(t,x,\eta)}\bigl(\partial_t+i\tP_k\bigr)e^{i\varphi_k(t,x,\eta)}
$$
where $\tP_k$ acts on $x$.

We then define $W_k(t)$ for $k\ge 2$ by
\begin{equation}\label{wkdef}
\bigl(W_k(t)f\bigr)(x)=\frac 1{(2\pi)^d}\int e^{i\varphi_k(t,x,\eta)}\,\psi_k(\eta)\hatf(\eta)\,d\eta\,.
\end{equation}
It follows that $\bigl(\partial_t+i\tP_k\bigr)W_k(t)=B_k(t)$, where
\begin{equation}\label{bkdef}
\bigl(B_k(t)f\bigr)(x)=\frac 1{(2\pi)^d}\int e^{i\varphi_k(t,x,\eta)}\,b_k(t,x,\eta)\psi_k(\eta)\hatf(\eta)\,d\eta\,.
\end{equation}

\begin{theorem}\label{thm:bkest}
For $|t|\le 1$ the symbol $b_k(t,x,\eta)$ satisfies
$$
\bigl|\la\eta,\partial_\eta\ra^j\partial_\eta^\alpha\partial_x^\beta\partial_t^m
b_k(t,x,\eta)\bigr|\le 
C_{j,\alpha,\beta,m}\,\begin{cases} \,\tkhf^{|\alpha|}\,2^{\ts\frac k2\ts(|\beta|+m)}, & |t|\ge 2^{-k},\\
\,2^{-k\ts|\alpha|}\,2^{\ts\frac k2\ts(|\beta|+m)}, & |t|\le 2^{-k}\rule{0pt}{15pt}.
\end{cases}
$$
\end{theorem}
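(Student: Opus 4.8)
The plan is to write $b_k = i\,\partial_t\varphi_k + i\,c_k$, where $c_k(t,x,\eta)=e^{-i\varphi_k(t,x,\eta)}\bigl(\tP_k\,e^{i\varphi_k(t,\cdot,\eta)}\bigr)(x)$ with $\tP_k$ acting in the $x$-variable, and to expand $c_k$ by the symbol calculus. Representing $\tP_k$ as the left quantization of the symbol $\tp_k$ of \eqref{pkerror}, substituting $z=y-x$, shifting $\xi\mapsto\xi+\nabla_x\varphi_k(t,x,\eta)$, and writing $\varphi_k(t,y,\eta)=\varphi_k(t,x,\eta)+(y-x)\cdot\nabla_x\varphi_k(t,x,\eta)+r_k(t,x,y,\eta)$ with $r_k$ the second-order Taylor remainder in $x$, one obtains
$$
c_k(t,x,\eta)=\frac1{(2\pi)^d}\iint e^{-iz\cdot\xi+ir_k(t,x,x+z,\eta)}\,\tp_k\bigl(x,\xi+\nabla_x\varphi_k(t,x,\eta)\bigr)\,dz\,d\xi.
$$
Since $\tp_k(x,\cdot)$ is frequency-localized near $|\xi|\sim2^k$, $|\nabla_x\varphi_k|\sim2^k$ on the support of $\psi_k$, and $|\partial_x^2\varphi_k|\lesssim2^k$ by \eqref{dxphi}, integration by parts confines $z$ to a box of size $\sim2^{-k}$ on which $r_k=O(2^{-k})$; expanding $e^{ir_k}$ in powers of $r_k$ and integrating by parts in $\xi$ to trade each $z^\gamma$ for $\partial_\xi^\gamma\tp_k$ yields the asymptotic expansion
$$
c_k=\tp_k\bigl(x,\nabla_x\varphi_k\bigr)+\sum_{2\le|\gamma|<N}\frac{i^{|\gamma|}}{\gamma!}\,(\partial_\xi^\gamma\tp_k)\bigl(x,\nabla_x\varphi_k\bigr)\,\Phi_\gamma(t,x,\eta)+R_N,
$$
where each $\Phi_\gamma$ is a universal polynomial in $\{\partial_x^\beta\varphi_k:2\le|\beta|\le|\gamma|\}$ and $R_N$ is a remainder. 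To make this rigorous and uniform in $k$ — the amplitude $\tp_k$ lying only in the exotic class $S^1_{1,\hf}$ and $\varphi_k$ having only the symbol regularity of Section \ref{sec:phasefunction} — one may work within the $S_{1,\hf}$ pseudodifferential calculus, or, equivalently, rescale $x$ to the natural lengthscale at frequency $2^k$ and time $t$ so as to reduce to a standard stationary-phase expansion; the remainder $R_N$ is then absorbed into the stated bound by taking $N$ large.

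The decisive step is the cancellation of the order-one part: by the eikonal equation $\partial_t\varphi_k=-p_k(x,\nabla_x\varphi_k)$, the principal term of $b_k$ is $i\bigl(\tp_k-p_k\bigr)(x,\nabla_x\varphi_k)$. On the support of $\psi_k$, Corollary \ref{cor:geodflow} gives $|\nabla_x\varphi_k-\eta|\lesssim c_d|\eta|$, so $\nabla_x\varphi_k$ remains in the shell where $\sum_{j=k-1}^{k+1}\psi_j\equiv1$ and $\beta_j\equiv0$ for $|j-k|\ge2$; hence by \eqref{pkerror} and \eqref{pkdiff} the principal term reduces to $\sum_j(p_j-p_k)(x,\nabla_x\varphi_k)\psi_j(\nabla_x\varphi_k)+\sum_j q_j(x,\nabla_x\varphi_k)\beta_j(\nabla_x\varphi_k)$, a symbol of order $0$ in $\zeta$ composed with $\zeta=\nabla_x\varphi_k(t,x,\eta)$.

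It then remains to differentiate and estimate the surviving terms. For the order-zero principal piece, each $\partial_\eta$ acts through $\zeta=\nabla_x\varphi_k$, producing a factor $\partial_x\partial_\eta\varphi_k$ bounded by \eqref{dxdetaphi} (higher $\eta$-derivatives of $\nabla_x\varphi_k$ being much smaller, by Theorem \ref{phikest}) against $\partial_\zeta$ of an order-zero symbol, which gains $2^{-k}$; since $2^{-k}\le\tkhf$ for $|t|\ge2^{-k}$ this beats the claimed bound, and for $|t|\le2^{-k}$ it is exactly $2^{-k|\alpha|}$. Each $\partial_x$ costs only $2^{\frac k2}$, provided one first combines $\partial_t\varphi_k$ with $c_k$ so that the derivative falls on the order-zero difference: this uses the refined mollification bounds on $\g_k$ (in particular the $\BMO$-type bound on $\partial_x^2\g_k$) together with \eqref{dxphi}. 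Each $\partial_t$ likewise costs $2^{\frac k2}$ by Corollary \ref{phikest''}. For the correction terms, every factor $\partial_x^\beta\varphi_k$ with $|\beta|\ge2$ is governed by Theorem \ref{phikest}, whose $\eta$-derivatives supply exactly the gain $\tkhf$ per derivative (resp.\ $2^{-k}$ when $|t|\le2^{-k}$), while $\partial_\xi^\gamma\tp_k$ with $|\gamma|\ge2$ contributes $2^{k(1-|\gamma|)}$ and the $x$- and $t$-derivative losses of $\Phi_\gamma$ match the claimed $2^{\frac k2(|\beta|+m)}$; Corollary \ref{phikest''} supplies the $\partial_t^m$ bounds for every $\varphi_k$-factor, and the $\la\eta,\partial_\eta\ra^j$-derivatives are already built into Theorem \ref{phikest} and Corollary \ref{phikest''}.

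I expect the main obstacle to be the first step: organizing the conjugated symbol calculus for $e^{-i\varphi_k}\tP_k e^{i\varphi_k}$ with constants uniform in $k$ and with enough precision to extract the $t$-smallness, given that $\tp_k$ lies only in $S^1_{1,\hf}$ and $\varphi_k$ has merely the symbol regularity of Section \ref{sec:phasefunction} rather than that of a smooth phase. Once the rescaling puts one in the standard stationary-phase setting, the rest is bookkeeping driven by the phase estimates of Theorem \ref{phikest} and Corollary \ref{phikest''}.
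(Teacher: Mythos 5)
Your proposal follows essentially the same route as the paper: write $b_k=i\partial_t\varphi_k+i\,e^{-i\varphi_k}\tP_k e^{i\varphi_k}$, expand the conjugated operator asymptotically so that the eikonal equation cancels the order-one part down to $(\tp_k-p_k)(x,\nabla_x\varphi_k)$ plus the $q_j$ terms, which are of order zero by \eqref{pkerror} and \eqref{gkdiffbounds}/\eqref{pktermest}, and then bound the correction terms---products of $\partial_\xi^\gamma\tp_k$ evaluated at $\nabla_x\varphi_k$ with higher $x$-derivatives of $\varphi_k$---by the chain-rule bookkeeping based on Theorem \ref{phikest} and Corollary \ref{phikest''}. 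The only difference is organizational: the paper performs the expansion by shifting $\zeta$ by the averaged gradient $V(t,x,h,\eta)$ and using moment conditions, with explicit cutoff remainders handled by integration by parts, whereas you keep the second-order Taylor remainder in the exponent and expand $e^{ir_k}$---a technically equivalent variant of the same argument.
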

\begin{proof}
The symbol $b_k(t,x,\eta)$ is given by the oscillatory integral
\begin{equation}\label{bkform}
i\partial_t\varphi_k(t,x,\eta)
+\frac {i}{(2\pi)^n}\int e^{i\la x-y,\zeta\ra + i\varphi_k(t,y,\eta)-i\varphi_k(t,x,\eta)}
\ts\tp_k(x,\zeta)\,dy\,d\xi
\end{equation}
where recall that we assume $|\eta|\in\bigl[\frac 34\ts 2^{k},\frac 43\ts 2^{k+1}\bigr]$.
We write
$$
\varphi_k(t,y,\eta)-\varphi_k(t,x,\eta)=(y-x)\cdot V(t,x,y-x,\eta)\,,
$$
where
$$
V(t,x,h,\eta)=\int_0^1(\nabla_x\varphi_k)(t,x+sh,\eta)\,ds.
$$
Then 
\begin{equation*}
V(t,x,0,\eta)=\nabla_x\varphi_k(t,x,\eta)\,,\quad\partial_{h_i}V_j(t,x,0,\eta)=\thf\partial_{x_i}\partial_{x_j}\varphi_k(t,x,\eta).
\end{equation*}
We note $|V(t,x,h,\eta)-\eta|\le \frac 18|\eta|$ by \eqref{cond0}, and for $|\alpha|+|\beta|+m+|\gamma|\ge 1$ Corollary \ref{phikest''} yields
\begin{equation}\label{est:onehalf}
|\partial_\eta^\alpha\partial_x^\beta\partial_t^m\partial_h^\gamma V(t,x,h,\eta)|\le 
C_{\alpha,\beta,m,\gamma}\ts 2^{\frac k2(|\alpha|+|\beta|+m+|\gamma|-1)}\,|\eta|^{1-|\alpha|}.
\end{equation}
We make the change of variables
$y\rightarrow y+h$, followed by $\zeta\rightarrow V(t,x,h,\eta)+\zeta$, to write the integral term in \eqref{bkform} as
\begin{equation}\label{pkint}
\int e^{-i\la h,\zeta\ra}
\tp_k\bigl(x,V(t,x,h,\eta)+\zeta\bigr)\,dh\,d\zeta.
\end{equation}
We then decompose \eqref{pkint} using a smooth cutoff $\chi$, supported in $|\zeta|\le 2$, with
$\chi(\zeta)=1$ for $|\zeta|\le 1$. Specifically, we write
$$
1=\chi(2^{-k+4}\zeta)\bigl(1-\chi(h)\bigr)+\bigl(1-\chi(2^{-k+4}\zeta)\bigr)+\chi(h)\chi(2^{-k+4}\zeta).
$$
Since $\tp_k\in S^1_{1,\frac 12}$, the estimates \eqref{est:onehalf} imply that if $|\eta|\in\bigl[\frac 34\ts 2^{k},\frac 43\ts 2^{k+1}\bigr]$,
\begin{multline}\label{est:pkhalf}
\bigl|\partial_\zeta^\theta\partial_\eta^\alpha\partial_x^\beta\partial_t^m\partial_h^\gamma 
\tp_k\bigl(x,V(t,x,h,\eta)+\zeta\bigr)\ts\chi(2^{-k+4}\zeta)\bigr|\\
\le C_{\alpha,\beta,m,\gamma,\theta}\,
\begin{cases}
2^{k(1-|\alpha|-|\theta|)}\,2^{\frac k2(|\alpha|+|\beta|+m+|\gamma|-1)},&
|\alpha|+|\beta|+m+|\gamma| \ge 1,\\ 
2^{k(1-|\theta|)},& |\alpha|+|\beta|+m+|\gamma|=0.\rule{0pt}{13pt}
\end{cases}
\end{multline}

Consider first the term $r_1(t,x,\eta)$, defined by
\begin{multline*}
\int e^{-i\la h,\zeta\ra}
\tp_k\bigl(x,V(t,x,h,\eta)+\zeta\bigr)\,\chi(2^{-k+4}\zeta)\bigl(1-\chi(h)\bigr)\,dh\,d\zeta\\
=
\int e^{-i\la h,\zeta\ra}
\Delta_\zeta^N\Bigl(\tp_k\bigl(x,V(t,x,h,\eta)+\zeta\bigr)\,\chi(2^{-k+4}\zeta)\Bigr)\\
\times\bigl(1-\chi(h)\bigr)|h|^{-2N}\,dh\,d\zeta.
\end{multline*}
The estimates \eqref{est:pkhalf} show that the integrand is bounded by $2^{k(1-2N)}|h|^{-2N}$,
and it is supported where $|\zeta|\le 2^{k-3}$ and $|h|>1$. 
Similar estimates on its derivatives in $(x,\eta)$ yield that, for all $N$,
\begin{equation}\label{eqn:rkest}
|\partial_\eta^\alpha\partial_x^\beta\partial_t^m r_1(t,x,\eta)|\le C_{N,\alpha,\beta,m}\,2^{-kN},
\qquad |\eta|\in\bigl[\tfrac 34\ts 2^{k},\tfrac 43\ts 2^{k+1}\bigr].
\end{equation}

Next consider the term $r_2(t,x,\eta)$, defined by the integral
\begin{multline*}
\int e^{-i\la h,\zeta\ra}
\tp_k(x,V(t,x,h,\eta)+\zeta)\bigl(1-\chi(2^{-k+4}\zeta)\bigr)\,dh\,d\zeta=\\
\int e^{-i\la h,\zeta\ra}
\bigl(1-\Delta_\zeta\bigr)^n\Delta_h^N\Bigl(\tp_k(x,V(t,x,h,\eta)+\zeta)
\bigl(1-\chi(2^{-k+4}\zeta)\bigr)|\zeta|^{-2N}\Bigr)\\
\times(1+|h|^2)^{-n}\,dh\,d\zeta.
\end{multline*}
The estimates \eqref{est:pkhalf} show that the integrand is bounded by a constant times
$2^{k(N+\hf)}|\zeta|^{-2N}\bigl(1+|h^2|\bigr)^{-n}$, and it is supported where $|\zeta|\ge 2^{k-4}$. It follows that
$r_2(t,x,\eta)$ also satisfies the estimates \eqref{eqn:rkest}.

Thus, up to rapidly decreasing terms, the symbol $b_k(t,x,\eta)$ is equal to
$$
i\partial_t\varphi_k(t,x,\eta)+\frac{i}{(2\pi)^d}\int e^{-i\la h,\zeta\ra}
\tp_k(x,V(t,x,h,\eta)+\zeta)\,\chi(2^{-k+4}\zeta)\,\chi(h)\,dh\,d\zeta.
$$
We take a Taylor expansion in $\zeta$ of $\tp_k$ about $\zeta=0$ to write the integral as
\begin{multline*}
\sum_{|\gamma|< 2N}\frac 1{\gamma !}\int e^{-i\la h,\zeta\ra}
D_h^\gamma\Bigl((\partial_\xi^\gamma \tp_k)(x,V(t,x,h,\eta))\ts\chi(h)\Bigr)\ts\chi(2^{-k+4}\zeta)\,dh\,d\zeta\\
+r(t,x,\eta),
\end{multline*}
where $r(t,x,\eta)$ is given by
\begin{multline*}
\sum_{|\gamma|=2N}\int_0^1(1-s)^{N-1}\!\!\int e^{-i\la h,\zeta\ra}D_h^\gamma 
\Bigl((\partial_\xi^\gamma \tp_k)(x,V(t,x,h,\eta)+s\zeta)\ts\chi(h)\Bigr)\\
\times\chi(2^{-k+4}\zeta)\,dh\,d\zeta\,ds.
\end{multline*}
The estimates \eqref{est:pkhalf} show that $|\partial_x^\beta\partial_\eta^\alpha r(t,x,\eta)|\le C_{N,\alpha,\beta}\,2^{k(d+1-\frac 12|\alpha|+\frac12|\beta|-N)}$, provided that $|\eta|\in\bigl[\frac 34\ts 2^{k},\frac 43\ts 2^{k+1}\bigr]$.

To handle the terms with $|\gamma|<2N$, let $\phi(h)=2^{-4d}\widehat{\chi}(2^{-4}h)$, which has integral $(2\pi)^n$ and vanishing moments of all non-zero order, and write the $\gamma$ term in the sum as
$$
\int e^{-i\la h,\zeta\ra}
D_h^\gamma\Bigl((\partial_\xi^\gamma \tp_k)(x,V(t,x,h,\eta))\,\chi(h)\Bigr)\,2^{nk}\phi(2^kh)\,dh\,d\zeta.
$$
We Taylor expand $\tp_k\bigl(x,V(t,x,h,\eta)\bigr)\chi(h)$ to order $N$ about $h=0$. The $N$-th order remainder term will lead to a term bounded by $2^{k(1-\frac 12|\gamma|-N)}$, with similar estimates on derivatives in $(x,\eta)$. All terms with $h^\theta$ with $\theta\ne 0$ integrate to $0$ by the moment condition. Therefore, since $\partial_t\varphi_k(t,x,\eta)=-p_k(x,\nabla_x\varphi_k(t,x,\eta))$, we can write $b_k(t,x,\eta)$ as $r(t,x,\eta)$ plus
\begin{equation}\label{eqn:bkasympt}
i\Bigl(-p_k(x,\nabla_x\varphi_k(t,x,\eta))+\!\!\sum_{|\gamma|<2N}\frac 1{\gamma!}D_h^\gamma
(\partial_\xi^\gamma \tp_k)(x,V(t,x,h,\eta))\bigr|_{h=0}\Bigr)
\end{equation}
If $|\eta|\in\bigl[\frac 34\ts 2^{k},\frac 43\ts 2^{k+1}\bigr]$ then $(\psi_{k-1}+\psi_k+\psi_{k+1})(\nabla_x\varphi_k(t,x,\eta))=1$,
so by \eqref{pkerror} the $\gamma=0$ term combines with $-p_k(t,x,\nabla_x\varphi_k(t,x,\eta))$ to give
\begin{multline}\label{pkterm}
\sum_{j=k-1}^{k+1}(p_k-p_j)(x,\nabla_x\varphi_k(t,x,\eta))\,\psi_j(\nabla_x\varphi_k(t,x,\eta))\\
+
\sum_{j=k-1}^{k+1}q_j(x,\nabla_x\varphi_k(t,x,\eta))\,\beta_j(\nabla_x\varphi_k(t,x,\eta)).
\end{multline}
We will estimate this term similar to the term $|\gamma|=1$, using the following estimate, which is a consequence of \eqref{gkdiffbounds},
\begin{equation}\label{pktermest}
|\partial_x^\beta\partial_\xi^\alpha (p_k-p_j)(x,\xi)|\le C_{\alpha,\beta}\,2^{\ts k(\frac 12 |\beta|-1)}\,|\xi|^{1-|\alpha|}.
\end{equation}
The same estimates hold for the term $q_j(x,\xi)\in S^0_{1,\frac 12}$ when $|\xi|\approx 2^k$.

We now examine the terms in the sum when $|\gamma|\ge 1$. Observe that
$$
\partial_h^\theta V(t,x,h,\eta)\bigr|_{h=0}=\frac{1}{1+|\theta|}\,\partial_x^\theta\nabla_x\varphi_k(t,x,\eta).
$$
The $\gamma$ term in \eqref{eqn:bkasympt} is then a finite linear combination of terms of the form
$$
(\partial_\xi^{\gamma+\sigma}\tp_k)\bigl(x,\nabla_x\varphi_k(t,x,\eta)\bigr)\,
\bigl(\partial_x^{\theta_1}\nabla_x\varphi_k(t,x,\eta)\bigr)\,\cdots\,\bigl(\partial_x^{\theta_l}\nabla_x\varphi_k(t,x,\eta)\bigr),
$$
where $\theta_1+\cdots+\theta_l=\gamma$, each $\theta_i\ne 0$, and $l=|\sigma|\ge 1$.

By Corollary \ref{phikest''}, when $\theta_i\ne 0$, $|\eta|\in\bigl[\frac 34\ts 2^{k},\frac 43\ts 2^{k+1}\bigr]$, and $2^{-k}\le t\le 1$,
\begin{multline*}
\bigl|\la \eta,\partial_\eta\ra^j\partial_\eta^\alpha\partial_x^\beta\partial_t^m
\bigl(\partial_x^{\theta_i}\nabla_x\varphi_k(t,x,\eta)\bigr)\bigr|\\
\le C_{j,\alpha,\beta,m,\theta}\,
2^{\frac k2(|\theta_i|+1)}\,\tkhf^{|\alpha|}\,2^{\frac k2 (|\beta|+m)}.
\end{multline*}
A recursion argument and \eqref{eqn:pkest} then show that, for $2^{-k}\le t\le 1$,
\begin{multline*}
\Bigl|\la \eta,\partial_\eta\ra^j\partial_\eta^\alpha\partial_x^\beta\partial_t^m
\Bigl((\partial_\xi^{\gamma+\sigma}\tp_k)\bigl(x,\nabla_x\varphi_k\bigr)\,
\partial_x^{\theta_1}\nabla_x\varphi_k\,\cdots\,\partial_x^{\theta_l}\nabla_x\varphi_k
\Bigr)\Bigr|\\
\le C_{j,\alpha,\beta,m,\gamma,\sigma}\,
2^{\frac k2(2-|\gamma|-|\sigma|)}\,\tkhf^{|\alpha|}\,2^{\frac k2 (|\beta|+m)}.
\end{multline*}
The expression for $b_k(t,x,\eta)$ involves an asymptotic sum over $|\gamma|\ge 1$,  where also $|\sigma|\ge 1$ in all terms, and the sum thus satisfies the statement of the theorem in case $2^{-k}\le t\le 1$. The estimate for $0\le t\le 2^{-k}$ follows similarly.

It remains to consider the term \eqref{pkterm}. Using \eqref{pktermest} and a similar recursion argument, we obtain for the case $2^{-k}\le t\le 1$, 
\begin{multline*}
\Bigl|\la \eta,\partial_\eta\ra^j\partial_\eta^\alpha\partial_x^\beta\partial_t^m
\Bigl(\sum_{i=k-1}^{k+1}(p_k-p_i)\bigl(x,\nabla_x\varphi_k(t,x,\eta)\bigr)\psi_i(\nabla_x\varphi_k(t,x,\eta))\Bigr)\Bigr|\\
\le C_{j,\alpha,\beta,m}\,\tkhf^{|\alpha|}\,2^{\frac k2 (|\beta|+m)},
\end{multline*}
and the proof for $0\le t\le 2^{-k}$ is similar.
\end{proof}

Repeating the proof of Corollary \ref{phikest'}, we obtain the following.
\begin{corollary}\label{cor:linsymbol}
The following estimates hold for $\eta\in \Omega^\nu_{k,t}$,
$$
\bigl|\la\nu,\partial_\eta\ra^j\partial_\eta^\alpha\partial_x^\beta\partial_t^m
b_k(t,x,\eta)\bigr|\le C_{j,\alpha,\beta,m}\,2^{-kj}\,\tkhf^{|\alpha|}\,2^{\frac k2 (|\beta|+m)}.
$$
For $0\le t\le 2^{-k}$ these hold for $\eta$ in the dyadic shell $\frac 23 2^{k-1}\le|\eta|\le \frac 32 2^{k+2}$ if $t$ is replaced by $2^{-k}$ on the right hand side.
\end{corollary}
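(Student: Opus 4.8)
The plan is to mimic, almost verbatim, the proof of Corollary~\ref{phikest'}, now with $b_k(t,x,\eta)$ in place of $\partial_\eta^2\varphi_k$ and with Theorem~\ref{thm:bkest} playing the role of Theorem~\ref{phikest}. The entire content is to convert the radial weight $\la\eta,\partial_\eta\ra$ appearing in Theorem~\ref{thm:bkest} into the weight $\la\nu,\partial_\eta\ra$ along the axis of the cone $\Omega^\nu_{k,t}$; each such conversion will cost a factor $2^{-kj}$, and the auxiliary factors of $\tkhf$ it generates will be absorbed by the bound $|\Pi_{\nu^\perp}\eta|\le\tkhfinv$ available on $\Omega^\nu_{k,t}$.

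First I would dispose of the range $0\le t\le 2^{-k}$, where the estimate is immediate. Since $\la\nu,\partial_\eta\ra$ is a first order differential operator whose constant coefficient vector $\nu$ has unit length, $\la\nu,\partial_\eta\ra^{j}\partial_\eta^\alpha$ is a fixed linear combination, with coefficients of modulus at most $1$, of derivatives $\partial_\eta^{\alpha'}$ of order $|\alpha'|=|\alpha|+j$. Applying Theorem~\ref{thm:bkest} (with no $\la\eta,\partial_\eta\ra$ weight) to each such $\partial_\eta^{\alpha'}$ gives the bound $C\,2^{-k(|\alpha|+j)}2^{\frac k2(|\beta|+m)}$, which is precisely the asserted estimate because $\tkhf$ equals $2^{-k}$ when $t=2^{-k}$.

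For $2^{-k}\le t\le 1$ I would rotate coordinates so that $\nu=e_1$, write $\eta=(\eta_1,\eta')$, and recall that on $\Omega^\nu_{k,t}$ one has $2^{k-2}\le\eta_1=\la\nu,\eta\ra\le 6\cdot 2^k$ and $|\eta'|=|\Pi_{\nu^\perp}\eta|\le\tkhfinv$. I would then induct on $j$, the base case $j=0$ being Theorem~\ref{thm:bkest} itself. For the inductive step, use the operator identity
$$
\la\eta,\partial_\eta\ra^{j}=\eta_1^{j}\,\partial_{\eta_1}^{j}
+\sum_{\substack{i+|\gamma|\le j\\ i<j}}c_{j,i,\gamma}\,\eta_1^{i}\,\eta'^{\gamma}\,\partial_{\eta_1}^{i}\partial_{\eta'}^{\gamma},
$$
the same one used in Corollary~\ref{phikest'}, solve it for $\eta_1^{j}\partial_{\eta_1}^{j}$, and apply the resulting operator to $\partial_\eta^\alpha\partial_x^\beta\partial_t^m b_k$. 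The leading term $\la\eta,\partial_\eta\ra^{j}\partial_\eta^\alpha\partial_x^\beta\partial_t^m b_k$ is $O\bigl(\tkhf^{|\alpha|}2^{\frac k2(|\beta|+m)}\bigr)$ by Theorem~\ref{thm:bkest}. In each remaining term, $\partial_{\eta'}^\gamma\partial_\eta^\alpha$ is an $\eta$-derivative $\partial_\eta^{\tilde\alpha}$ of order $|\alpha|+|\gamma|$ and $\partial_{\eta_1}^{i}=\la\nu,\partial_\eta\ra^{i}$ with $i<j$, so the induction hypothesis gives $\bigl|\partial_{\eta_1}^{i}\partial_{\eta'}^\gamma\partial_\eta^\alpha\partial_x^\beta\partial_t^m b_k\bigr|\le C\,2^{-ki}\tkhf^{|\alpha|+|\gamma|}2^{\frac k2(|\beta|+m)}$; multiplying by $|\eta_1^{i}\eta'^\gamma|\le C\,2^{ki}\tkhfinv^{|\gamma|}$ and using $\tkhf^{|\gamma|}\tkhfinv^{|\gamma|}=1$ shows each remaining term is also $O\bigl(\tkhf^{|\alpha|}2^{\frac k2(|\beta|+m)}\bigr)$. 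Summing, $\bigl|\eta_1^{j}\partial_{\eta_1}^{j}\partial_\eta^\alpha\partial_x^\beta\partial_t^m b_k\bigr|\le C\,\tkhf^{|\alpha|}2^{\frac k2(|\beta|+m)}$, and dividing through by $\eta_1^{j}\ge 2^{(k-2)j}$ yields the claimed estimate with the weight $\la\nu,\partial_\eta\ra^{j}=\partial_{\eta_1}^{j}$.

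There is no genuine obstacle here; the argument is bookkeeping around a weighted estimate already in hand. The one point that must be respected is that it is the radial-weighted form of Theorem~\ref{thm:bkest} that gets used, not merely a bound on plain derivatives: a bound on $\partial_\eta^{\alpha'}b_k$ of order $|\alpha|+j$ would only supply a factor $\tkhf^{|\alpha|+j}$, and $\eta_1^{j}\tkhf^{j}\approx(t^{\hf}2^{\frac k2})^{j}$ is not bounded when $t\gg 2^{-k}$; it is exactly the cancellation between the $\tkhfinv$ coming from the size of $|\Pi_{\nu^\perp}\eta|$ on $\Omega^\nu_{k,t}$ and the $\tkhf$ produced by the extra $\eta'$-derivatives in the identity above that makes the conversion lossless.
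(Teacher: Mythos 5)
Your argument is correct and is exactly what the paper intends: the paper's own "proof" consists of the single sentence "Repeating the proof of Corollary \ref{phikest'}, we obtain the following," and your proposal is precisely that repetition with Theorem \ref{thm:bkest} in place of Theorem \ref{phikest}, including the same rotation to $\nu=e_1$, the same expansion $\la\eta,\partial_\eta\ra^{j_0}=\eta_1^{j_0}\partial_{\eta_1}^{j_0}+\sum c_{j_0,j,\gamma}\,\eta_1^{j}\eta'^{\gamma}\partial_{\eta_1}^{j}\partial_{\eta'}^{\gamma}$, and the same induction on $j$ driven by the cancellation $|\eta'|^{|\gamma|}\le\tkhfinv^{|\gamma|}$ against the $\tkhf^{|\gamma|}$ gain from extra $\eta'$-derivatives.
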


\section{Energy flow estimates}\label{sec:energyflow}
In this section we construct the exact wave group $\exp(-itP)$ via a convergent iteration based on the approximate wave group
\begin{equation}\label{wdef}
W(t)=\sum_{k=2}^\infty W_k(t)+\psi_0(D)+\psi_1(D).
\end{equation}
Recall \eqref{wkdef}--\eqref{bkdef} that $(\partial_t+i\tP_k)W_k(t)=B_k(t)$ is of order $0$. To show that $(\partial_t+iP)W(t)$ is of order $0$ we will show that, for $|t|\le 1$, $W_k(t)f$ remains localized in frequency to an appropriate dyadic shell at scale $2^k$, modulo smoothing errors. This will yield
$$
(\partial_t+iP)W(t)=\sum_{k=2}^\infty B_k(t)+R(t).
$$
with $R(t)$ a smoothing error. Denoting the right hand side by $B(t)$, since $W(0)=\I\,$ the wave group can be obtained by convergent iteration of $W(t)$ and $B(t)$, using Sobolev mapping bounds for both. Dispersive estimates will then depend on showing that composite terms
$$
W(t-s_1)B(s_1-s_2)\cdots B(s_{n-1}-s_n) B(s_n),\quad t\ge s_1\ge\cdots\ge s_n\ge 0,
$$
have similar microlocal mapping properties to $W(t)$ and $B(t)$. For a fixed $n$ we could show that this term has an oscillatory integral representation similar to that for $B(t)$, but at frequency scale $2^k$ we will need consider $n$ up to $n\sim 2^{k\sigma}$, for some $\sigma>0$. To prove preservation of dyadic localization of the energy we then need to microlocalize the energy mapping of each term $B_k(s)$ to within $2^{k(1-\sigma)}$ of the Hamiltonian flow. For convenience we fix $\sigma=\frac 14$, though any $\sigma\in (0,\frac 12)$ would work. We then consider frequency cutoffs with symbols $a(\eta)\in S^0_{\frac 34}$, that is
\begin{equation}\label{psikjest}
\bigl|\partial_\eta^\alpha a(\eta)\bigr|\le C_\alpha\,2^{-\frac 34k|\alpha|}\,,\qquad \supp(a)\subset \bigl\{\eta:\tfrac 45\,2^{k-1}<|\eta|<\tfrac 54\,2^{k+2}\bigr\}.
\end{equation}

Given any compact set $K\subset \{\eta:\tfrac 78\,2^{k-1}<|\eta|<\tfrac 87\,2^{k+2}\bigr\}$ and 
$\delta>0$, there exists a cutoff $a$ satisfying \eqref{psikjest} such that $\supp(a)$ is contained in the 
$\delta2^{\frac{3k}4}$ neighborhood of $K$ and $a=1$ on the $\frac 12\ts\delta\ts 2^{\frac{3k}4}$ neighborhood of 
$K$, and such that the constants $C_\alpha$ depend on $\delta$ but are independent of $K$. Such
an $a(\eta)$ can be obtained, for example, by convolving the support function of the $\frac 34\ts\delta\ts 2^{\frac{3k}4}$ neighborhood of $K$ with an approximation to the identity supported in the $\frac 18\delta$ ball.

\begin{lemma}\label{lem:eprop}
Suppose that $a_1$ and $a_2$ are cutoffs satisfying \eqref{psikjest}, and let
$K$ be the projection onto $\eta$ of the image of $\Rd\times\supp(a_1)$ under the Hamiltonian flow of $p_k$ at time $t$. Assume that
$a_2=1$ on the $\delta\,2^{\frac 34k}$ neighborhood of $K$. Then for all $N$,
$$
\bigl\|\bigl(1-a_2(D)\bigr)B_k(t)a_1(D)f\bigr\|_{H^N}\le C_N\,2^{-kN}\,\|f\|_{H^{-N}},
$$
where the constant $C_N$ depends only on $N$, the constants $C_\alpha$ in \eqref{psikjest}, and $\delta$. The same holds with $B_k(t)$ replaced by $W_k(t)$.
\end{lemma}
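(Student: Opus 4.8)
The plan is to work directly from the oscillatory integral representations \eqref{wkdef}, \eqref{bkdef}. Placing $a_1(D)$ on the right and $a_2(D)$ on the left, the Schwartz kernel of $(1-a_2(D))B_k(t)a_1(D)$ is
\[
\mathcal K(x,y)=\frac 1{(2\pi)^{2d}}\int e^{i(x-w)\cdot\zeta+i\varphi_k(t,w,\eta)-iy\cdot\eta}\bigl(1-a_2(\zeta)\bigr)\,b_k(t,w,\eta)\,\psi_k(\eta)\,a_1(\eta)\,dw\,d\zeta\,d\eta,
\]
and it suffices to prove $|\partial_x^\alpha\partial_y^\beta\mathcal K(x,y)|\le C_{N,\alpha,\beta}\,2^{-kN}(1+|x-y|)^{-N}$ for every $N$; the asserted $H^{-N}\to H^N$ bound follows, and the case of $W_k(t)$ is identical with $b_k$ replaced by $1$. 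The point to exploit is that $\nabla_w\varphi_k(t,w,\eta)=\xi(t,w,\eta)$ always lies in $K$, while $1-a_2$ is supported where either $\dist(\zeta,K)\ge\delta 2^{3k/4}$ or $\zeta$ lies outside the shell $\{|\zeta|\approx 2^k\}\supset K$; in both cases $|\zeta-\xi(t,w,\eta)|\gtrsim\delta 2^{3k/4}$ on the support of the integrand (here $\eta$ is restricted to $\supp(\psi_k)$, on which Sections~\ref{sec:phasefunction}--\ref{sec:parametrix} apply).

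The \emph{main obstacle} is that one cannot integrate by parts in $w$ directly: by \eqref{dxphi} and Theorem~\ref{thm:bkest}, $\varphi_k$ and $b_k$ are only of class $S_{1,\frac 12}$ in $x$, so $|\partial_w^2\varphi_k|\lesssim 2^k$, and a $w$-derivative falling on $\nabla_w\varphi_k$ loses a factor $2^k$, not compensated by the gain $\lesssim 2^{-3k/4}$ from $|\zeta-\nabla_w\varphi_k|^{-1}$. To get around this I would first insert a partition of unity $1=\sum_\nu\chi\!\bigl(2^{k/2}(w-w_\nu)\bigr)$ with $w_\nu\in 2^{-k/2}\Z^d$, and rescale each cube via $w=w_\nu+2^{-k/2}\tilde w$. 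Writing
\[
\varphi_k(t,w_\nu+2^{-k/2}\tilde w,\eta)=\varphi_k(t,w_\nu,\eta)+2^{-k/2}\tilde w\cdot\xi(t,w_\nu,\eta)+\widetilde R_{\nu,\eta}(\tilde w),
\]
the bounds \eqref{dxphi} (equivalently Theorem~\ref{thm:yest}) give $|\partial_{\tilde w}^m\widetilde R_{\nu,\eta}|\le C_m$, and Theorem~\ref{thm:bkest} gives $|\partial_{\tilde w}^m b_k(t,w_\nu+2^{-k/2}\tilde w,\eta)|\le C_m$, uniformly in $\nu,\eta,k,t$. Hence the $w$-Fourier transform of the $\nu$-th piece of $e^{i\varphi_k}b_k\psi_k a_1$ equals $2^{-kd/2}e^{i(\varphi_k(t,w_\nu,\eta)-w_\nu\cdot\zeta)}\,\widehat{G_{\nu,\eta}}\!\bigl(2^{-k/2}(\zeta-\xi(t,w_\nu,\eta))\bigr)$ with $\{G_{\nu,\eta}\}$ a bounded family in $\mathcal S(\Rd)$. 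This trades the $S_{1,\frac 12}$ loss for a harmless loss of $2^{k/2}$ per derivative.

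Next I would carry out the $\zeta$-integration of the $\nu$-th piece. Since $\xi(t,w_\nu,\eta)\in K$ and $a_2=1$ on the $\delta 2^{3k/4}$-neighborhood of $K$, the factor $1-a_2\bigl(\xi(t,w_\nu,\eta)+2^{k/2}\sigma\bigr)$ vanishes for $|\sigma|\le\delta 2^{k/4}$; combined with the rapid decay of $\widehat{G_{\nu,\eta}}$ and an integration by parts in $\sigma$, this bounds the $\zeta$-integral of the $\nu$-th piece, together with its $(x,\eta)$-derivatives, by $C_{N}\,2^{-kN/4}\bigl(1+2^{k/2}|x-w_\nu|\bigr)^{-N}$ — the $\eta$-derivatives being controlled by $|\partial_\eta\xi|\lesssim 1$, $|\partial_\eta^2\varphi_k|\lesssim|\eta|^{-1}$ from Theorem~\ref{thm:yest}, and the estimates \eqref{psikjest} on $a_1$. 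Summing the absolutely convergent lattice series $\sum_\nu(1+2^{k/2}|x-w_\nu|)^{-N}\lesssim 1$ yields $\bigl|\partial_x^\alpha\partial_\eta^\beta\bigl[(1-a_2(D))\bigl(e^{i\varphi_k(t,\cdot,\eta)}b_k(t,\cdot,\eta)\bigr)\bigr](x)\bigr|\le C_{N,\alpha,\beta}\,2^{-kN/4}$, after choosing $N$ large to absorb the per-derivative losses of $2^{k/2}$.

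Finally I would integrate in $\eta$ to produce decay in $|x-y|$. On $\supp(\psi_k)$ the relevant $\eta$-phase is $\varphi_k(t,w_\nu,\eta)+(x-w_\nu)\cdot\xi(t,w_\nu,\eta)-y\cdot\eta$, whose $\eta$-gradient is $y(t,w_\nu,\eta)-y+O(2^{-k/2})$ with $|y(t,w_\nu,\eta)-w_\nu|\lesssim|t|\le 1$, while the $w$-localization effectively forces $|x-w_\nu|\lesssim 2^{-k/2}$; so for $|x-y|\ge 10$ this phase is nonstationary with gradient $\gtrsim|x-y|$, has $\eta$-Hessian $O(2^{-k/2})$, and all amplitude factors have $\eta$-derivatives bounded by the estimates used above. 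Integration by parts in $\eta$ over the shell $|\eta|\approx 2^k$ then produces $(1+|x-y|)^{-N}$, and choosing $N$ large enough in all three steps (so the powers $2^{-kN/4}$ dominate the accumulated losses of $2^{k/2}$ and the lattice volume factor $2^{kd/2}$) gives $\|(1-a_2(D))B_k(t)a_1(D)f\|_{H^N}\le C_N 2^{-kN}\|f\|_{H^{-N}}$; the constants depend only on $N$, the $C_\alpha$ of \eqref{psikjest}, $\delta$, and the absolute constants of Theorems~\ref{thm:bkest}, \ref{thm:yest}. The identical argument with $b_k\equiv 1$ handles $W_k(t)$. The only genuinely delicate point throughout is the $2^{-k/2}$-cube decomposition forced by the $S_{1,\frac 12}$ character of the phase; once it is in place, the estimates reduce to rapid decay of Fourier transforms of Schwartz functions evaluated at points of size $\gtrsim 2^{k/4}$.
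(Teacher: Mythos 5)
Your argument is correct, and it rests on the same underlying mechanism as the paper's proof: you localize in $x$ at the parabolic scale $2^{-k/2}$, linearize the phase on each piece (using $|\partial_x^2\varphi_k|\lesssim 2^k$, $|\partial_x^\beta b_k|\lesssim 2^{k|\beta|/2}$ so that the rescaled remainders are bounded), and then exploit that each piece is frequency-concentrated within $O(2^{k/2})$ of $\nabla_x\varphi_k(t,x,\eta)=\xi(t,x,\eta)\in K$, while $1-a_2$ lives at distance $\ge\delta 2^{3k/4}$ from $K$; the gain $2^{-kN}$ comes from this scale mismatch in both proofs. The implementation differs: the paper conjugates by the C\'ordoba--Fefferman-type wave packet transform $T_k$ (an isometry built from exactly such $2^{-k/2}$-scale packets), proves in Lemma \ref{lem:FBI} that $B_k(t)$ maps a packet to a packet translated along the Hamiltonian flow, reads off the phase-space kernel bound \eqref{cdfkernel}, and concludes by a Schur test using the bilipschitz property of $\chi_t$ --- this avoids your final stationary-phase step in $\eta$, since spatial decay of the Schwartz kernel is never needed, and the packet lemma is reused later (e.g.\ Lemma \ref{lem:tEkernel}). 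Your route instead estimates the Schwartz kernel directly: cube decomposition, explicit $\zeta$-integration giving $2^{-kN/4}(1+2^{k/2}|x-w_\nu|)^{-N}$, lattice summation, and nonstationary phase in $\eta$ (gradient $\approx y(t,w_\nu,\eta)-y$, of size $\gtrsim|x-y|$ in the effective region) to produce $(1+|x-y|)^{-N}$, after which the $H^{-N}\to H^N$ bound follows using the frequency localization $a_1(\eta)\psi_k(\eta)$ on the input side. This is self-contained and fine; the only imprecision is your intermediate display asserting bounds on $\partial_\eta^\beta$ of the full bracket including the factor $e^{i\varphi_k(t,w_\nu,\eta)}$ --- since $\partial_\eta\varphi_k(t,w_\nu,\eta)=y(t,w_\nu,\eta)$ grows with $|w_\nu|$, that statement is only valid if the phase is kept separate from the amplitude, which is exactly what you do in the final step, so this is a matter of wording rather than a gap. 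The treatment of $W_k(t)$ by setting $b_k\equiv 1$ matches the paper.
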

\begin{proof} We prove this using a modification of the C\'ordoba-Fefferman wave packet transform introduced in \cite{CF}. We use the particular transform from \cite{Sm0}, which is based on a Schwartz function with Fourier transform of compact support, instead of a Gaussian. Fix $g$ a radial, real Schwartz function with $\|g\|_{L^2}=(2\pi)^{\frac d2}$ and $\supp(\hat g)\subset\{|\zeta|<\frac 14\}$, and set
$$
g_{x,\xi}(z)=2^{\frac{kd}4}\,e^{i\la \xi,z-x\ra}g\bigl(2^{\frac k2}(z-x)\bigr).
$$
For $f\in L^2(\Rd)$ define
$$
(T_kf)(x,\xi)=\int f(z)\,\overline{g_{x,\xi}(z)}\,dz.
$$
Then $T_k$ is an isometry, with adjoint given by
$$
\bigl(T_k^*F\bigr)(z)=\int F(x,\xi)\,g_{x,\xi}(z)\,dx\,d\xi.
$$
Since $|\eta|\approx 2^k$ on the support of $a_1(\eta)$, it suffices to show that for all $N$,
$$
\|T_k\,\la D\ra^N\bigl(1-a_2(D)\bigr)B_k(t)a_1(D)T_k^*F\|_{L^2(\re^{2d})}\le C_N\,2^{-2kN}\|F\|_{L^2(\re^{2d})}.
$$
The operator on the left is given by the following integral kernel,
$$
K_k(t,x',\xi';x,\xi)=\int_{\Rd} \bigl(B_k(t)\,a_1(D)\,g_{x,\xi}\bigr)(z)\,\overline{\la D\ra^N\bigl(1-a_2(D)\bigr)g_{x',\xi'}(z)}\,dz.
$$
Let $(x_t,\xi_t)=\chi_t(x,\xi)$, with $\chi_t$ the Hamiltonian flow for $p_k$. 
A simple integration by parts argument, using Lemma \ref{lem:FBI} below, shows that
for all $N$,
\begin{align}\label{cdfkernel}
|K_k(t,x',\xi';x,\xi)|&\le C_N2^{kN}\bigl(1+2^{\frac k2}|x'-x_t|+2^{-\frac k2}|\xi'-\xi_t|\bigr)^{-8N-2d-1}\\
&\le C'_N2^{-kN}\bigl(1+2^{\frac k2}|x'-x_t|+2^{-\frac k2}|\xi'-\xi_t|\bigr)^{-2d-1},\nonumber
\end{align}
where in deducing the second bound we used that the integrand vanishes unless 
$|\xi'-\xi_t|\ge \delta\,2^{\frac {3k}4}-2^{\frac k2}$. 
The desired $L^2$ bound then follows by the Schur test, using the fact that 
$(x,\xi)\rightarrow (x_t,\xi_t)$ is a volume preserving diffeomorphism, which is 
homogeneous in $\xi$ and bilipshitz on the cotangent bundle (uniformly over $k$).
\end{proof}

\begin{lemma}\label{lem:FBI}
Let $f_{x,\xi}(y)=2^{\frac{kd}4}e^{i\la\xi,y-x\ra}f\bigl(2^{\frac k2}(y-x)\bigr)$.
Assume that $f$ is a Schwartz function and $|\xi|\in[2^{k-1},2^{k+2}]$, and let $(x_t,\xi_t)=\chi_t(x,\xi)$. Then $$
\bigl(B_k(t)f_{x,\xi}\bigr)(z)=2^{\frac{kd}4}e^{i\la \xi_t,z-x_t\ra}h\bigl(t,2^{\frac k2}(z-x_t)\bigr),
$$
where for all $N,j,\gamma$,
$$
\bigl|\partial_t^j\partial_z^\gamma h(t,z)\bigr|\le 
C_{N,j,\gamma}\,2^{\frac k2 j}\bigl(1+|z|)^{-N}.
$$
For each $N,j,\gamma$, the constant $C_{N,j,\gamma}$ is bounded by a Schwartz seminorm of $f$, but is uniform over $k,x,\xi$.
\end{lemma}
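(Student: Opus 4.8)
The plan is to write $B_k(t)f_{x,\xi}$ as an oscillatory integral, perform the parabolic rescaling $\zeta=2^{-\frac k2}(\eta-\xi)$ in frequency and $w=2^{\frac k2}(z-x_t)$ in space, extract the principal phase $\la\xi_t,z-x_t\ra$ using the Hamilton--Jacobi identities satisfied by $\varphi_k$, and then obtain the bounds on the remainder $h$ by a non-stationary phase argument in $\zeta$. First I would compute, with $\widehat f(\zeta)=\int f(u)e^{-i\la u,\zeta\ra}\,du$, that $\widehat{(f_{x,\xi})}(\eta)=2^{-\frac{kd}4}e^{-i\la x,\eta\ra}\widehat f\bigl(2^{-\frac k2}(\eta-\xi)\bigr)$. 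Substituting this into \eqref{bkdef} and changing variables $\eta=\xi+2^{\frac k2}\zeta$ gives
$$\bigl(B_k(t)f_{x,\xi}\bigr)(z)=\frac{2^{\frac{kd}4}}{(2\pi)^d}\int e^{i\Psi(z,\zeta)}\,\tilde b(t,z,\zeta)\,\widehat f(\zeta)\,d\zeta,\qquad \Psi(z,\zeta)=\varphi_k(t,z,\xi+2^{\frac k2}\zeta)-\la x,\xi+2^{\frac k2}\zeta\ra,$$
where $\tilde b(t,z,\zeta)=b_k(t,z,\xi+2^{\frac k2}\zeta)\,\psi_k(\xi+2^{\frac k2}\zeta)$. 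Since $\widehat f$ is Schwartz and $\psi_k$ forces $|\eta|\approx 2^k$, the integrand is, modulo a term rapidly decreasing in $2^k$ and in $|z|$, supported in $\{|\zeta|\lesssim 1\}$, where $|\eta|\approx 2^k$ and the estimates of Theorems \ref{phikest}, \ref{thm:bkest} and Corollary \ref{phikest''} all apply. I then set $z=x_t+2^{-\frac k2}w$ and \emph{define} $h(t,w)=2^{-\frac{kd}4}e^{-i\la\xi_t,2^{-\frac k2}w\ra}\bigl(B_k(t)f_{x,\xi}\bigr)(x_t+2^{-\frac k2}w)$, so the asserted representation holds; it remains to bound $h$.

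Next I would Taylor expand the phase. From $\varphi_k(t,x,\eta)=\la\eta,y(t,x,\eta)\ra$ together with $\nabla_\eta\varphi_k=y$, $\nabla_x\varphi_k=\xi$ and the identities $y(t,x_t,\xi)=x$, $\xi(t,x_t,\xi)=\xi_t$ (valid since $x_t=x(t,x,\xi)$ and $y\mapsto x$ is invertible, Theorem \ref{thm:yest}), one gets $\varphi_k(t,x_t,\xi)=\la\xi,x\ra$, $\nabla_z\varphi_k(t,x_t,\xi)=\xi_t$, $\nabla_\eta\varphi_k(t,x_t,\xi)=x$. Expanding $\Psi(x_t+2^{-\frac k2}w,\zeta)$ to first order in $\zeta$ about $\zeta=0$, then the resulting $\zeta$-independent and $\zeta$-linear coefficients in $w$ about $w=0$, all principal terms cancel except $2^{-\frac k2}\la\xi_t,w\ra$, leaving
$$h(t,w)=e^{iR_2(t,w)}\,\frac1{(2\pi)^d}\int e^{i\la A(t,w)w,\zeta\ra}\,e^{iR_1(t,w,\zeta)}\,\tilde b(t,x_t+2^{-\frac k2}w,\zeta)\,\widehat f(\zeta)\,d\zeta,$$
with $R_2(t,w)=\int_0^1(1-s)2^{-k}\la w,(\nabla_z^2\varphi_k)(t,x_t+s2^{-\frac k2}w,\xi)w\ra\,ds$ real, $A(t,w)=\int_0^1(\partial_zy)(t,x_t+s2^{-\frac k2}w,\xi)\,ds$, and $R_1(t,w,\zeta)=\int_0^1(1-s)2^k\la\zeta,(\nabla_\eta^2\varphi_k)(t,x_t+2^{-\frac k2}w,\xi+s2^{\frac k2}\zeta)\zeta\ra\,ds$. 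By the global bound $|\partial_zy-\I|\lesssim c_d$ of Theorem \ref{thm:yest}, $|A(t,w)-\I|\lesssim c_d$ for \emph{all} $w$, so $A(t,w)$ is invertible with $|A(t,w)w|\ge\tfrac12|w|$; by \eqref{dxphi}--\eqref{detaphi}, $|\nabla_\eta^2\varphi_k|\lesssim t2^{-k}$ and $|\nabla_z^2\varphi_k|\lesssim 2^k$, so $|\partial_\zeta^a\partial_w^b R_1|\le C_{a,b}(1+|\zeta|)^{C}$ (no $|w|$ growth, since $\partial_w$ lands on the spatial slot and $|\nabla_z\nabla_\eta^2\varphi_k|\lesssim t2^{-k/2}$), while $|\partial_w^bR_2|\le C_b(1+|w|)^{2}$; each $\partial_t$ costs a factor $2^{\frac k2}$ (Corollary \ref{phikest''} and $|\partial_tx_t|\lesssim 1$).

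The crucial point is that the rescaled symbol is harmless: since $\partial_\eta^\alpha b_k\lesssim \tkhf^{|\alpha|}$ by Theorem \ref{thm:bkest} and $\psi_k$ is $x$-independent with $\partial_\eta^\alpha\psi_k\lesssim 2^{-k|\alpha|}$, one has $|\partial_\zeta^a\tilde b|=2^{\frac k2|a|}|\partial_\eta^a(b_k\psi_k)|\le C_a\,t^{|a|/2}\le C_a$, and likewise $|\partial_w^b\tilde b|=2^{-\frac k2 b}|\partial_x^b b_k\,\psi_k|\le C_b$, $|\partial_t^j\partial_w^b\partial_\zeta^a\tilde b|\le C\,2^{\frac k2 j}$ — all uniform in $k$ (and $\tilde b$ is supported in $|\zeta|\lesssim 2^{\frac k2}$). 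Hence for fixed $(t,w)$ the amplitude $G(t,w,\zeta)=e^{iR_1}\tilde b\,\widehat f$ is Schwartz in $\zeta$ with $|\partial_w^b\partial_\zeta^a G|\le C_{a,b,M,N}\,2^{\frac k2 j}(1+|w|)^{C}(1+|\zeta|)^{-M}$, the constants controlled by Schwartz seminorms of $f$ and uniform in $k$ and $w$. Because the remaining phase $\la A(t,w)w,\zeta\ra$ is linear in $\zeta$, integrating by parts $N$ times in $\zeta$ gains $|A(t,w)w|^{-1}\le 2|w|^{-1}$ at each step and introduces no derivative of the phase, so $\bigl|\int e^{i\la A(t,w)w,\zeta\ra}G\,d\zeta\bigr|\le C_N\,2^{\frac k2 j}(1+|w|)^{-N}$ for all $N$. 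Combining with the trivial bound for $|w|\lesssim 1$ and then differentiating the displayed identity for $h$ in $w$ and $t$ — where the only extra factors produced are the polynomial $\partial_w^bR_2\lesssim(1+|w|)^{C}$, the bounded $\partial_w(A(t,w)w)=\partial_zy\approx\I$ and $\partial_w^2(A(t,w)w)=2^{-\frac k2}\partial_z^2y\lesssim t\lesssim1$, and one power of $2^{\frac k2}$ per $\partial_t$ — yields $|\partial_t^j\partial_w^\gamma h(t,w)|\le C_{N,j,\gamma}\,2^{\frac k2 j}(1+|w|)^{-N}$, as claimed. I expect the main obstacle to be precisely this last bookkeeping: verifying that the parabolic rescaling leaves \emph{no} residual power of $2^k$ in the $w$- or $\zeta$-derivatives of $G$ or of the phase remainders (only in the $t$-derivatives), which is exactly where the sharp $S^1_{1,\hf}$-type scaling of $\varphi_k$ and $b_k$ from Sections \ref{sec:phasefunction}--\ref{sec:parametrix} and the global bilipschitz estimate on the flow are essential.
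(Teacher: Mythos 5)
Your overall strategy is essentially the paper's: pass to the rescaled variables, use homogeneity and the generating-function identities $\varphi_k(t,x_t,\xi)=\la x,\xi\ra$, $\nabla_x\varphi_k(t,x_t,\xi)=\xi_t$, $\nabla_\eta\varphi_k(t,x_t,\xi)=x$ to extract the factor $e^{i\la \xi_t,z-x_t\ra}$, note that the rescaled amplitude $b_k(t,x_t+2^{-\frac k2}w,\xi+2^{\frac k2}\zeta)\psi_k(\xi+2^{\frac k2}\zeta)\hatf(\zeta)$ has derivatives bounded uniformly in $k$ by Theorem \ref{thm:bkest}, and gain the $(1+|w|)^{-N}$ decay by nonstationary phase in $\zeta$. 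Where you differ is the implementation of the integration by parts: the paper works with the full phase $\Phi$, using the operator $L$ and the inequality $(1+|\nabla_\eta\Phi|^2)^{-1}\le C(1+|\eta|^2)(1+|z|^2)^{-1}$, whereas you Taylor-linearize the phase in $\zeta$, absorb $e^{iR_1}$ into the amplitude, and integrate by parts against the exactly linear phase $\la A(t,w)w,\zeta\ra$. On the region where your bounds for $R_1$ are valid this variant works, and your scaling bookkeeping there (one factor $2^{\frac k2}$ per $\partial_t$, the polynomially growing prefactor $R_2$, $|A(t,w)-\I|\lesssim c_d$ from Theorem \ref{thm:yest}) is correct.

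The gap is the region $|\zeta|\gtrsim 2^{\frac k2}$, which does lie in the support of $\psi_k(\xi+2^{\frac k2}\zeta)\hatf(\zeta)$ since $\hatf$ is only Schwartz. Your estimate $|\partial_\zeta^a\partial_w^b R_1|\le C_{a,b}(1+|\zeta|)^{C}$ relies on Theorem \ref{phikest} and \eqref{deta2phi}, which require the frequency argument to be of size $\approx 2^k$; but $R_1$ involves $\nabla_\eta^2\varphi_k$ at the intermediate points $\xi+s2^{\frac k2}\zeta$, $0\le s\le 1$, and when $|\zeta|\gtrsim 2^{\frac k2}$ the two endpoints of this segment can be nearly antipodal, so the segment may pass arbitrarily close to $\eta=0$, where the homogeneous phase degenerates and the claimed bounds fail. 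Your only remark about this region, that the integrand is ``modulo a term rapidly decreasing in $2^k$ and in $|z|$'' supported in $|\zeta|\lesssim 1$, is not justified: the Schwartz decay of $\hatf$ yields smallness in powers of $2^{-k}$ uniformly in $z$, but by itself gives no decay in $|z|$ at all, and $|z|$ is unbounded; obtaining $(1+|w|)^{-N}$ for the discarded piece again requires an integration by parts, which on that region cannot use your linearized phase and must instead use the exact phase, whose $\zeta$-gradient $2^{\frac k2}\bigl(y(t,z,\xi+2^{\frac k2}\zeta)-x\bigr)$ is controlled from below exactly as in the paper's argument. The repair is routine — insert a smooth cutoff to $|\zeta|\le \tfrac 1{10}2^{\frac k2}$, where the frequency segment stays comparable to $2^k$ and your argument goes through, and handle the complementary piece by the paper's $L$-operator (nonlinear phase) estimate — but as written this step of your proof does not hold.
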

\begin{proof} Up to a factor of $(2\pi)^d$, the function $h(t,z)$ is given by the integral
$$
\int e^{i\Phi(t,z,\eta)}\,b_k(t,x_t+2^{-\frac k2}z,\xi+2^{\frac k2}\eta)\,
\psi_k(\xi+2^{\frac k2}\eta)\,\hatf(\eta)\,d\eta,
$$
where
$$
\Phi(t,z,\eta)=\varphi_k(t,x_t+2^{-\frac k2}z,\xi+2^{\frac k2}\eta)-\la x,\xi+2^{\frac k2}\eta\ra-\la\xi_t,2^{-\frac k2}z\ra,
$$
Since $\varphi_k$ is the homogeneous generating function for $\chi_t$, this equals
\begin{multline*}
\varphi_k(t,x_t+2^{-\frac k2}z,\xi+2^{\frac k2}\eta)-\varphi_k(t,x_t,\xi)\\
-2^{\frac k2}\eta\cdot(\nabla_\eta\varphi_k)(t,x_t,\xi)
-2^{-\frac k2}z\cdot(\nabla_x\varphi_k)(t,x_t,\xi).
\end{multline*}
By Corollary \ref{cor:geodflow}, Theorem \ref{phikest}, Corollary \ref{phikest''}, and \eqref{dxdetaphi}, the following estimates hold on the support of the integrand,
$$
|\partial_t^j\partial_z^\beta\partial_\eta^\alpha\Phi(z,\eta)|\le C_{\alpha,\beta,j}\,2^{\frac k2 j}\quad\text{if}\quad |\alpha|+|\beta|\ge 2.
$$
As $\Phi$ vanishes to second order at $z=\eta=0$, then on the region of integration
\begin{align*}
|\partial_t^j\Phi(z,\eta)|&\le C_j\,2^{\frac k2 j}(\ts 1+|z|+|\eta|\ts)^2,\\
|\partial_t^j\nabla_{z,\eta}\Phi(z,\eta)|&\le C_j\,2^{\frac k2 j}(\ts 1+|z|+|\eta|\ts).
\end{align*}
By Theorem \ref{thm:yest} we have $|\ts\nabla_z\nabla_\eta\Phi(z,\eta)-\I\ts|\lesssim c_d$, and since $|\nabla_\eta^2\Phi(y,\eta)|\le C$, we deduce that 
$|z|\le C\bigl( \ts|\nabla_\eta\Phi(z,\eta)|+|\eta|\ts\bigr)$ and thus
$$
\frac{1}{1+|\nabla_\eta\Phi(y,\eta)|^2}\le C\,\frac{1+|\eta|^2}{1+|z|^2}.
$$
By Corollary \ref{cor:geodflow}, Theorem \ref{thm:bkest}, and \eqref{psikjest}, since $|\xi+2^{\frac k2}\eta|\approx 2^k$  we have
\begin{multline*}
\Bigl|\partial_t^j\partial_z^\beta\partial_\eta^\alpha\Bigl( 
b_k(t,x_t+2^{-\frac k2}z,\xi+2^{\frac k2}\eta)\,
\psi_k(\xi+2^{\frac k2}\eta)\,\hatf(\eta)\Bigr)\Bigr|\\
\le C_{N,\alpha,\beta,j}\,2^{\frac k2 j}\bigl(1+|\eta|\bigr)^{-N}.
\end{multline*}
Integrating by parts with respect to the vector field
$$
L=\frac{1-i\ts\nabla_\eta\Phi(z,\eta)\cdot\nabla_\eta}{1+|\nabla_\eta\Phi(z,\eta)|^2\rule{0pt}{11pt}}
$$
then leads to the bounds on $\partial_t^j\partial_z^\gamma h$ in the statement.
\end{proof}

The same argument also shows that the kernel of $T_kB_k(t)T_k^*$ satisfies \eqref{cdfkernel} with $N=0$, and in particular $B_k(t)$ is bounded on $L^2(\Rd)$, uniformly over $k$ and $|t|\le 1$.
By applying Lemma \ref{lem:eprop} with $a_1(\eta)=1$ on the support of $\beta_k(\eta)$, and $a_2(\eta)$ supported in the annulus $|\eta|\in [2^{k-1},2^{k+2}]$, we then obtain the following by an orthogonality argument.
\begin{lemma}
For all $s\in\re$ we have $\bigl\|\sum_{k=2}^\infty B_k(t)f\bigr\|_{H^s}\le C_s\|f\|_{H^s}$, uniformly over $|t|\le 1$.
\end{lemma}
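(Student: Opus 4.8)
The plan is to exploit three facts: $B_k(t)$ acts only on the frequency-$2^k$ part of $f$; $B_k(t)$ is bounded on $L^2(\Rd)$ uniformly over $k$ and $|t|\le 1$ (noted just above); and Lemma \ref{lem:eprop} controls the portion of $B_k(t)f$ whose frequency leaves a dyadic annulus at scale $2^k$. Accordingly I would split each $B_k(t)f$ into a piece frequency-localized to such an annulus plus a smoothing error, sum the errors absolutely, and treat the localized pieces by almost orthogonality.

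For each $k\ge 2$ fix a cutoff $a_1^{(k)}$ satisfying \eqref{psikjest} with $a_1^{(k)}=1$ on $\supp(\psi_k)$ and $\supp(a_1^{(k)})$ contained in a fixed dilate of $\{|\eta|\approx 2^k\}$; this is possible since $\supp(\psi_k)\subset\{\tfrac 9{10}2^{k-1}\le|\eta|\le\tfrac{20}{9}2^{k-1}\}$, and it gives $B_k(t)=B_k(t)a_1^{(k)}(D)$. Let $K_k$ be the projection onto $\eta$ of the image of $\Rd\times\supp(a_1^{(k)})$ under the Hamiltonian flow of $p_k$ at time $t$; by Corollary \ref{cor:geodflow} this flow is close to the identity and displaces $\eta$ by at most $\sim c_d|\eta|$, so $K_k$ sits in a slightly fattened dyadic annulus, well inside $\{\tfrac 78 2^{k-1}<|\eta|<\tfrac 87 2^{k+2}\}$ once $c_d$ is small. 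Using the construction in the paragraph preceding Lemma \ref{lem:eprop}, choose $a_2^{(k)}$ satisfying \eqref{psikjest}, equal to $1$ on the $\delta 2^{3k/4}$-neighborhood of $K_k$ and with $\supp(a_2^{(k)})\subset\{2^{k-1}\le|\eta|\le 2^{k+2}\}$ for a fixed small $\delta$ (this works for all $k$ past some absolute $k_0$; the finitely many $2\le k\le k_0$ are each bounded on every $H^s$ uniformly in $|t|\le 1$ as oscillatory integral operators, so may be peeled off). Then write
\begin{equation*}
B_k(t)f=a_2^{(k)}(D)B_k(t)f+\bigl(1-a_2^{(k)}(D)\bigr)B_k(t)f=:g_k+e_k.
\end{equation*}

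For the errors, fix $N\ge|s|$. Since $B_k(t)=B_k(t)a_1^{(k)}(D)$, Lemma \ref{lem:eprop} gives $\|e_k\|_{H^N}\le C_N 2^{-kN}\|f\|_{H^{-N}}$, hence $\|e_k\|_{H^s}\le\|e_k\|_{H^N}\le C_N 2^{-kN}\|f\|_{H^s}$ because $-N\le s\le N$; summing a geometric series yields $\bigl\|\sum_k e_k\bigr\|_{H^s}\le C_s\|f\|_{H^s}$. For the main terms, each $\widehat{g_k}$ is supported in $\{2^{k-1}\le|\eta|\le 2^{k+2}\}$, and these annuli overlap boundedly in $k$, so by Plancherel $\bigl\|\sum_k g_k\bigr\|_{H^s}^2\le C\sum_k\|g_k\|_{H^s}^2$. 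Since $g_k$ is frequency-localized at scale $2^k$ with $a_2^{(k)}$ uniformly bounded, $\|g_k\|_{H^s}\le C\,2^{ks}\|B_k(t)f\|_{L^2}$; using the uniform $L^2$-boundedness of $B_k(t)$ and that $B_k(t)$ sees only the frequency-$2^k$ part of $f$, $\|g_k\|_{H^s}\le C\,2^{ks}\|\tpsi_k(D)f\|_{L^2}\le C\|\tpsi_k(D)f\|_{H^s}$ for a fattened Littlewood--Paley cutoff $\tpsi_k$, whence $\bigl\|\sum_k g_k\bigr\|_{H^s}^2\le C\sum_k\|\tpsi_k(D)f\|_{H^s}^2\le C\|f\|_{H^s}^2$. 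Adding the two estimates finishes the proof.

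The only mildly delicate point is the compatibility of $a_1^{(k)},a_2^{(k)}$ with \eqref{psikjest} and with the requirement that $\supp(a_2^{(k)})$ remain inside a single dyadic annulus (so that the $g_k$ overlap boundedly); this is precisely what the construction before Lemma \ref{lem:eprop} supplies, given that Corollary \ref{cor:geodflow} confines $K_k$ to a thin neighborhood of $\{|\eta|\approx 2^k\}$ for $|t|\le 1$. Everything else is routine Littlewood--Paley bookkeeping.
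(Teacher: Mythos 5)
Your proof is correct and follows essentially the same route as the paper, which disposes of this lemma in one sentence by citing the uniform $L^2$ bound for $B_k(t)$, Lemma \ref{lem:eprop} with $a_1=1$ on $\supp\beta_k$ and $a_2$ supported in a dyadic annulus, and ``an orthogonality argument'' --- exactly the splitting into $g_k+e_k$, geometric summation of the smoothing errors, and bounded-overlap Littlewood--Paley step you spell out. The only (cosmetic) caveat, inherited from the paper's own phrasing, is that the literal annulus $[2^{k-1},2^{k+2}]$ is a bit tight at its lower edge for containing the neighborhood of the flowout of $\supp\psi_k$; taking $\supp\bigl(a_2^{(k)}\bigr)$ in the slightly larger annulus of \eqref{psikjest} changes nothing in your argument.
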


We can now show that $W(t)$ defined above is an approximate evolution operator for $P$.
\begin{lemma}
Let $W(t)$ be defined by \eqref{wdef}. Then
$$
\bigl(\partial_t+iP\bigr)W(t)=\sum_{k=2}^\infty B_k(t)+R(t),
$$
where $R(t)$ is an integral kernel operator with kernel $K$ satisfying 
$$
|\partial_x^\alpha\partial_y^\beta K(t,x,y)|\le C_{N,\alpha,\beta}\,(1+|x-y|)^{-N}.
$$
In particular, $\|R(t)f\|_{H^N}\le C_N\,\|f\|_{H^{-N}}$ for all $N$, uniformly over $|t|\le 1$.
\end{lemma}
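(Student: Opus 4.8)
The plan is to compute $(\partial_t+iP)W(t)$ termwise and identify the discrepancy with $\sum_{k\ge 2}B_k(t)$ as a smoothing operator. Since $\psi_0(D)+\psi_1(D)$ does not depend on $t$ and $(\partial_t+i\tP_k)W_k(t)=B_k(t)$ by \eqref{wkdef}--\eqref{bkdef}, one has
\[
\bigl(\partial_t+iP\bigr)W(t)=\sum_{k=2}^{\infty}B_k(t)+R(t),\qquad
R(t)=i\sum_{k=2}^{\infty}\bigl(P-\tP_k\bigr)W_k(t)+iP\bigl(\psi_0(D)+\psi_1(D)\bigr),
\]
so everything reduces to the kernel estimate for $R(t)$. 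The term $iP\bigl(\psi_0(D)+\psi_1(D)\bigr)$ is immediately of the required form, since $\psi_0+\psi_1$ is smooth with compact support and $P$ agrees with a Fourier multiplier outside a fixed ball in $x$ (where $\g_k=\I$), so this contribution has a smooth kernel that decays rapidly in $|x-y|$.

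For the sum over $k$ I would invoke the $W_k(t)$ version of Lemma \ref{lem:eprop}. For each $k$ fix a cutoff $a_{1,k}$ obeying \eqref{psikjest}, equal to $1$ on $\supp(\psi_k)$ and supported in an arbitrarily thin dilation-neighborhood of $\supp(\beta_k)$, so that $W_k(t)=W_k(t)a_{1,k}(D)$. By Corollary \ref{cor:geodflow} the frequency variable is displaced by only $O(c_d|\eta|)$ along the Hamiltonian flow of $p_k$, hence the projection $K$ onto the $\eta$-variable of the time-$t$ image of $\Rd\times\supp(a_{1,k})$ lies in a dyadic shell barely larger than $\supp(\beta_k)$. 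Choosing $a_{2,k}$ as in the remarks following \eqref{psikjest} --- equal to $1$ on the $\delta\,2^{3k/4}$-neighborhood of $K$, supported in a slightly larger shell --- Lemma \ref{lem:eprop} furnishes, with $S_k(t):=\bigl(1-a_{2,k}(D)\bigr)W_k(t)$,
\[
\|S_k(t)f\|_{H^M}\le C_M\,2^{-kM}\,\|f\|_{H^{-M}}\qquad\text{for every }M,
\]
uniformly in $|t|\le 1$. The decisive point is now a support computation: writing
\[
P-\tP_k=\beta_0(D)^2+\tfrac12\sum_{\substack{j\ge1\\ |j-k|>1}}\beta_j(D)\bigl(p_j(x,D)+p_j(x,D)^*\bigr)\beta_j(D)
\]
and composing on the right with $a_{2,k}(D)$, each surviving term passes through the scalar multiplier $\beta_j(D)a_{2,k}(D)=(\beta_j a_{2,k})(D)$, respectively $\beta_0(D)a_{2,k}(D)$, and for $c_d$ and $\delta$ small these vanish identically: $\beta_j a_{2,k}\equiv 0$ whenever $|j-k|>1$, and $\beta_0 a_{2,k}\equiv 0$ once $k\ge 3$, simply because the frequency supports are disjoint. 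Hence $\bigl(P-\tP_k\bigr)a_{2,k}(D)=0$, so that $\bigl(P-\tP_k\bigr)W_k(t)=\bigl(P-\tP_k\bigr)S_k(t)$ for all $k\ge 3$.

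It remains to estimate kernels. Since $p_j(x,\xi)=|\xi|$ for $|x|$ outside a fixed ball, $P-\tP_k$ is, modulo a Fourier multiplier, a pseudodifferential operator of order $1$ with $S^1_{1,1/2}$ symbol compactly supported in $x$; in particular its Schwartz kernel is smooth off the diagonal and rapidly decaying in $|x-y|$. On the other hand $S_k(t)$ has a smooth kernel which, because $\nabla_\eta\bigl(\varphi_k(t,x,\eta)-\la z,\eta\ra\bigr)=y(t,x,\eta)-z$ is nondegenerate for $|x-z|$ large by Theorem \ref{thm:yest}, is rapidly decaying in $|x-z|$; combined with the displayed $H^{-M}\!\to\!H^{M}$ bound (or read directly off the pointwise wave-packet estimate \eqref{cdfkernel} from the proof of Lemma \ref{lem:eprop}) this gives $|\partial_x^\alpha\partial_z^\beta S_k(t,x,z)|\le C_{M,N,\alpha,\beta}\,2^{-kM}(1+|x-z|)^{-N}$ for all $M,N$. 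Applying the order-one operator $P-\tP_k$ in the $x$-variable preserves bounds of this type up to a factor $2^k$ and a loss of derivatives, so each $\bigl(P-\tP_k\bigr)S_k(t)$ has kernel bounded by $C_{M,N,\alpha,\beta}\,2^{-kM}(1+|x-y|)^{-N}$, and summing the convergent geometric series over $k\ge 3$ yields the claim for that part of $R(t)$. The finitely many leftover pieces --- for $k=2$ the terms $\bigl(P-\tP_2\bigr)S_2(t)$ and $\beta_0(D)^2a_{2,2}(D)W_2(t)$ --- are each an operator of order $\le 1$ that is Euclidean outside a fixed ball, or a convolution by a Schwartz function, composed with $W_2(t)$ (or with the frequency-truncation $a_{2,2}(D)W_2(t)$), whose kernel is smooth and rapidly $|x-z|$-decaying by the same non-stationary phase argument; these obey the stated bounds as well. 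The routine parts of this argument are the identity for $(\partial_t+iP)W(t)$ and the support bookkeeping for $\beta_j a_{2,k}$; the step demanding genuine care --- and where I expect the main obstacle to be --- is transporting the $(1+|x-y|)^{-N}$ decay through the \emph{nonlocal} operator $P-\tP_k$, which is exactly why it matters both that Lemma \ref{lem:eprop} delivers a pointwise kernel estimate rather than merely an $L^2$ operator bound and that $P$ and the $\tP_k$ reduce to constant-coefficient operators outside a fixed compact set.
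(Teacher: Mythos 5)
Your proposal is correct and follows essentially the same path as the paper: identify $R(t)=i\sum_k(P-\tP_k)W_k(t)+iP(\psi_0(D)+\psi_1(D))$, use Lemma \ref{lem:eprop} (in its $W_k$ form) together with the fact that $(P-\tP_k)$ annihilates the range of the multiplier $a_k(D)$ by disjointness of frequency supports, and then extract the $2^{-kN}$ smallness from the pointwise wave-packet kernel estimate \eqref{cdfkernel}. The only organizational difference is that the paper estimates the kernel of the full composite $(P-\tP_k)(1-a_k(D))W_k(t)$ directly through $T_k^*\,\mathrm{Op}(K_k)\,T_k$, whereas you first record a pointwise bound for the kernel of $S_k(t)=(1-a_k(D))W_k(t)$ and then pass the resulting bound through the nonlocal operator $P-\tP_k$; that final composition step is stated somewhat loosely in your write-up (it requires the off-diagonal decay of the $S^1_{1,1/2}$ kernel plus a near-diagonal integration by parts, as you yourself flag), but it is the same underlying argument.
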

\begin{proof}
We take $a_k(\eta)$ supported in $\bigl\{\frac 45\ts 2^k<|\eta|<\frac 54\ts 2^{k+1}\bigr\}$, and equal to $1$ where $\bigl\{\frac 78\ts 2^k<|\eta|<\frac 87\ts 2^{k+1}\bigr\}$, satisfying \eqref{psikjest} with constants $C_\alpha$ independent of $k$. For $c_d$ small enough, the condition of Lemma \ref{lem:eprop} with $a_1=\psi_k$ and $a_2=a_k$ is satisfied for all $t$ with $|t|\le 1$. We need show that the operator
$$
\sum_{k=2}^\infty(P-\tP_k)W_k(t)
=\sum_{k=2}^\infty(P-\tP_k)\bigl(1-a_k(D)\bigr)W_k(t).
$$
satisfies the conditions for $R(t)$, since $P\circ\bigl(\psi_0(D)+\psi_1(D)\bigr)$ does.
It suffices to show we can write
\begin{equation*}
(P-\tP_k)W_k(t)=\text{Op}(R_k)\circ\psi_k(D)
\end{equation*}
with $R_k(t,x,y)$ an integral kernel satisfying, for all $N$,
$$
|\partial_x^\alpha\partial_y^\beta R_k(t,x,y)|\le C_{N,\alpha,\beta}\,2^{-kN}(1+|x-y|)^{-N}.
$$
Observe that $R_k=T_k^*\,\text{Op}(K_k)\,T_k$, where $K_k$ satisfies \eqref{cdfkernel}, and vanishes unless $|\xi|\in[\frac 45 2^k,\frac 54 2^{k+1}]$ and $|\xi'|\notin [\frac 78 2^k,\frac 87 2^{k+1}]$. For $c_d$ small this implies $|\xi_t|\in[\frac 56 2^k,\frac 65 2^{k+1}]$, hence $|\xi'-\xi_t|\ge 2^{k-4}$. Since $|x-x_t|\le 2$ for $|t|\le 1$, we have for all $N$
$$
|K_k(t,x',\xi';x,\xi)|\le C_N2^{-kN}\bigl(1+2^{-\frac k2}|\xi'|\bigr)^{-N}\bigl(1+|x-x'|\bigr)^{-N}.
$$
The operator $T_k$ is given by a kernel satisfying for all $N$
$$
|\partial_y^\alpha T_k(x,\xi;y)|\le C_{N,\alpha}2^{k\bigl(\frac{|\alpha|}2+\frac d4\bigr)}(1+2^{\frac k2}|x-y|)^{-N}.
$$
Since the volume of integration in $\xi$ is less than $C_d\,2^{kd}$, the estimate for $R_k(t,x,y)$ follows by composition.
\end{proof}

We now write
$$
\int R_k(t,x,y)\,(\psi_k(D)f)(y)\,dy=\frac 1{(2\pi)^n}\int e^{i\varphi_k(t,x,\eta)}r_k(t,x,\eta)\,\psi_k(\eta)\hatf(\eta)\,d\eta,
$$
with
$$
r_k(t,x,\eta)=e^{-i\varphi_k(t,x,\eta)}\int R_k(t,x,y)\,e^{-i\la y,\eta\ra}\,dy.
$$
Then for all $N$
$$
|\partial_x^\beta\partial_\eta^\alpha r_k(t,x,\eta)|\le C_{\alpha,\beta,N}2^{-kN},\qquad 2^{k-1}\le|\eta|\le 2^{k+2},
$$
and we can incorporate $r_k$ into $b_k$, and hence $R_k(t)$ into $B_k(t)$. Thus
we can write
$$
(\partial_t+iP)W(t)=\sum_{k=2}^\infty B_k(t)+P\circ\bigl(\psi_0(D)+\psi_1(D))\;\equiv B(t).
$$


We now can generate the exact wave group $E(t)$ for $\partial_t+iP$ by iteration,
$$
E(t)=W(t)-\int_0^t W(t-s)B(s)\,ds+\int_0^t\int_0^s W(t-s)B(s-r)B(r)\,dr\,ds-\cdots
$$
To write the iteration more concisely,
let $\Lambda^m\subset\re_+^{m+1}$ be the $m$-simplex, consisting of $\bfr=(r_1,\ldots,r_{m+1})$ with $r_j> 0$ for all $j$, and with $r_1+\cdots+r_{m+1}=1$. Let $d\bfr$ be the measure on $\Lambda^m$ induced by projection onto $(r_1,\ldots,r_m)$. Then
\begin{equation}\label{Esumdef}
E(t)=\sum_{m=0}^\infty\; (-t)^m\int_{\Lambda^m}W(tr_{m+1})B(tr_m)\cdots B(tr_1)\,d\bfr\,.
\end{equation}
If $C_s$ is an upper bound for the $H^s(\Rd)$ operator norm of both $W(t)$ and $B(t)$ for all $|t|\le 1$, then
the $m$-th term has $H^s(\Rd)$ operator norm at most $C_s^{m+1} t^m/m!$, and the following theorem holds.
\begin{theorem}
The expansion \eqref{Esumdef} converges uniformly over $|t|\le 1$, in the operator norm topology on $H^s(\Rd)$ for all $s\in\re$. The limit $E(t)$ is a one parameter group of $L^2$-unitary operators, and for $f\in H^s$, $F\in L^1([-1,1],H^s)$, the solution to
$(\partial_t +iP)u=F$, $u(0,\cdot)=f$ is given by $$u(t,\cdot)=E(t)f+\int_0^t E(t-s)F(s,\cdot)\,ds.$$
\end{theorem}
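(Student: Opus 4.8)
\emph{Overview.} The plan is to reduce everything to two uniform operator bounds, $\|W(t)\|_{H^s\to H^s}\le C_s$ and $\|B(t)\|_{H^s\to H^s}\le C_s$ for all $|t|\le1$ and all $s\in\re$; granted these and the self-adjointness of $P$, the convergence of \eqref{Esumdef}, the group and unitarity properties, and the Duhamel formula all follow by soft arguments. The only genuinely substantial step is the $H^s$-boundedness of $W(t)$: since $W(t)$ is an infinite sum of FIOs, this requires the wave-packet kernel estimate and, crucially, the frequency-localization property of Lemma~\ref{lem:eprop} to close an almost-orthogonality argument; the corresponding bound for $B(t)$ is already in hand.

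\emph{Step 1: uniform bounds and convergence.} For $B(t)=\sum_{k\ge2}B_k(t)+P\circ(\psi_0(D)+\psi_1(D))$ the bound is the preceding lemma together with the triviality of the compactly frequency-supported low-order term. For $W(t)=\sum_{k\ge2}W_k(t)+\psi_0(D)+\psi_1(D)$, running the computation of Lemma~\ref{lem:FBI} with $b_k$ replaced by $\psi_k$ shows that $T_kW_k(t)T_k^*$ has kernel obeying \eqref{cdfkernel} with $N=0$, so $W_k(t)$ is $L^2$-bounded uniformly in $k$ and $|t|\le1$; since $W_k(t)=W_k(t)\tilde\psi_k(D)$ for a fattened cutoff and, by Lemma~\ref{lem:eprop}, the range of $W_k(t)$ is frequency-localized to a fixed dyadic shell modulo a rapidly smoothing operator, an orthogonality estimate yields $\|W(t)f\|_{H^s}^2\lesssim\sum_k2^{2ks}\|\tilde\psi_k(D)f\|_{L^2}^2\lesssim\|f\|_{H^s}^2$ for every $s$. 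With $C_s$ a common bound and $\mathrm{vol}(\Lambda^m)=1/m!$, the $m$-th term of \eqref{Esumdef} has $H^s$-operator norm $\le C_s^{m+1}|t|^m/m!$, so the series converges absolutely in operator norm, uniformly over $|t|\le1$; each term is strongly continuous in $t$ on $H^s$, hence so is $E(t)$.

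\emph{Step 2: the differential equation.} Reorganizing \eqref{Esumdef} into Volterra form (legitimate by the absolute, uniform operator-norm convergence) gives $E(t)=W(t)-\int_0^tW(t-s)\mathcal B(s)\,ds$, where $\mathcal B(t)=\sum_{m\ge1}(-1)^{m-1}\int_{0\le r_m\le\dots\le r_1\le t}B(t-r_1)B(r_1-r_2)\cdots B(r_m)\,dr$ satisfies the identity $\mathcal B(t)=B(t)-\int_0^tB(t-s)\mathcal B(s)\,ds$. Using $(\partial_t+iP)W(t)=B(t)$ and $W(0)=I$, and differentiating under the integral sign (justified by the uniform bounds and strong continuity of $\mathcal B(\cdot)$), one gets
\begin{equation*}
(\partial_t+iP)E(t)=B(t)-\mathcal B(t)-\int_0^tB(t-s)\mathcal B(s)\,ds=0,\qquad E(0)=I,
\end{equation*}
as maps $H^s\to H^{s-1}$, with $\partial_tE(t)$ strongly continuous from $H^s$ to $H^{s-1}$.

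\emph{Step 3: identification, group law, Duhamel.} By Section~\ref{sec:reductions}, $P$ is self-adjoint on $L^2(\Rd)$ with domain $H^1(\Rd)$; for $f\in H^1$ the curve $E(t)f$ lies in $H^1$ (Step~1 with $s=1$) and solves $u'=-iPu$, $u(0)=f$, as does $e^{-itP}f$, so their difference $w$ lies in $H^1=D(P)$ with $\tfrac{d}{dt}\|w(t)\|_{L^2}^2=2\,\Im\langle Pw(t),w(t)\rangle=0$; hence $w\equiv0$, i.e.\ $E(t)=e^{-itP}$ on $H^1$, and by density and the uniform $L^2$ bound, on all of $L^2$. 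Thus $E(t)$ is a strongly continuous one-parameter unitary group; the group law extends to every $H^s$ by density, as does the commutation $PE(t)=E(t)P$ (valid on $H^1$ by the spectral theorem), which gives $\partial_tE(t)f=-iPE(t)f=-iE(t)Pf$ and, via $\tfrac{d}{dt}\bigl(E(-t)w(t)\bigr)=0$, uniqueness for the $H^s$ Cauchy problem. Finally, for $f\in H^s$ and $F\in L^1([-1,1];H^s)$, put $u(t)=E(t)f+\int_0^tE(t-s)F(s)\,ds$: then $u(0)=f$, $u\in C^0([-1,1];H^s)$ by strong continuity of $E$ and dominated convergence, and differentiating under the integral (justified by $F\in L^1_tH^s$ and uniform boundedness of $\partial_tE(\tau)\colon H^s\to H^{s-1}$) gives $\partial_tu=-iPE(t)f+F(t)-i\int_0^tPE(t-s)F(s)\,ds=-iPu+F$; uniqueness of this solution again follows from the identification with $e^{-itP}$ applied to the difference of two solutions.
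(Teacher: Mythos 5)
Your proposal is correct and follows essentially the same route as the paper: uniform $H^s$ operator bounds for $W(t)$ and $B(t)$ (the latter from the stated lemma, the former by the same Lemma \ref{lem:FBI}/Lemma \ref{lem:eprop} orthogonality argument the paper invokes), the $C_s^{m+1}|t|^m/m!$ bound on the $m$-th term of \eqref{Esumdef}, and identification of the limit with the unitary group generated by the self-adjoint $P$. The Volterra reorganization and the energy/uniqueness identification you spell out are exactly the standard steps the paper leaves implicit.
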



Our next two results show that if we localize $E(t)$ on the right to frequency scale $2^k$, then modulo a smoothing operator error one can localize each of the terms $W(tr_j)$ and $B(tr_j)$ in \eqref{Esumdef} to frequencies of scale $2^k$.
We use the notation $\tilde\psi_k=\psi_{k-1}+\psi_k+\psi_{k+1}$, and define
\begin{equation}\label{twkdef}
\begin{split}
\tW_k(t)&=\;\tpsi_k(D)(W_{k-1}+W_k+W_{k+1})(t),\\
\tB_k(t)&=\;\tpsi_k(D)(B_{k-1}+B_k+B_{k+1})(t). 
\end{split}
\end{equation}

\begin{lemma}\label{energyloc'}
If $m+1\le 2^{\frac k4}$, then for all $N\ge 0$ the operator
\begin{multline*}
R_{k,\bfr}(t)=W(tr_{m+1})B(tr_m)\cdots B(tr_1)\psi_k(D)\\
-\tW_k(tr_{m+1})\tB_k(tr_m)\cdots \tB_k(tr_1)\psi_k(D)
\end{multline*}
satisfies the following, with constant $C_N$ independent of $m$, $t$, $k$, and $\bfr$, 
$$
\|R_{k,\bfr}(t)f\|_{H^N}\le\,C_{N}\,2^{-kN}\,\|f\|_{H^{-N}}.
$$
\end{lemma}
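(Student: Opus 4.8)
The plan is to prove this by a telescoping argument in which one factor is replaced at a time, each resulting error being absorbed by a \emph{single} application of Lemma~\ref{lem:eprop}. Write $A_j=B(tr_j)$ for $1\le j\le m$, $A_{m+1}=W(tr_{m+1})$, and let $\tilde A_j$ denote the corresponding tilde'd operators $\tB_k(tr_j)$, $\tW_k(tr_{m+1})$. With the convention that the leftmost and rightmost blocks below may be empty, the algebraic identity
$$
R_{k,\bfr}(t)=\sum_{\ell=1}^{m+1}\bigl(A_{m+1}\cdots A_{\ell+1}\bigr)\bigl(A_\ell-\tilde A_\ell\bigr)\bigl(\tilde A_{\ell-1}\cdots\tilde A_1\bigr)\psi_k(D)
$$
reduces matters to estimating each of the $m+1$ summands. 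The reason for keeping every factor to the right of $A_\ell-\tilde A_\ell$ tilde'd, and for recalling $\tB_k=\tpsi_k(D)(B_{k-1}+B_k+B_{k+1})$, is that $g:=\tilde A_{\ell-1}\cdots\tilde A_1\psi_k(D)f$ has Fourier support in $\supp\tpsi_k$, and, more importantly, modulo a controlled smoothing error is localized to a much smaller set $K_{\ell-1}$ described next. The low-frequency pieces $\psi_0(D),\psi_1(D)$ in $W$ and $P(\psi_0(D)+\psi_1(D))$ in $B$ interact with $\psi_k(D)$ only for $k\le 3$ and are absorbed into the constants, so I will ignore them below.

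\textbf{Two standing facts.} First, the geometry. By Corollary~\ref{cor:geodflow} the Hamiltonian flow of any $p_j$ with $|j-k|\le1$, run for time $|t|\le1$, displaces $\eta$ by at most $C\ts c_d|\eta|$; combined with the $\delta\ts 2^{3k/4}$ of additional ``fuzz'' each application of a $B_j$ contributes through Lemma~\ref{lem:eprop}, after $\ell\le m+1\le 2^{k/4}$ applications the total spread of frequencies stays within $C\ts c_d\ts2^k+\delta\ts2^k$ of $\supp\psi_k$. For $c_d$ and $\delta$ small this set lies inside $\{\eta:\tpsi_k(\eta)\equiv1\}$ --- this is exactly the point of the hypothesis $m+1\le 2^{k/4}$. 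Precisely, let $K_\ell$ be the $\ell\ts\delta\ts2^{3k/4}$--neighborhood of the union, over $|j-k|\le1$ and $0\le s\le t$, of the $\eta$--projections of the time--$s$ Hamiltonian flow of $p_j$ applied to $\Rd\times\supp\psi_k$, and choose cutoffs $a_\ell$ satisfying \eqref{psikjest} with constants independent of $\ell$ (possible by the remark following \eqref{psikjest}), such that $a_\ell\equiv1$ on the $\tfrac12\delta\ts2^{3k/4}$--neighborhood of $K_\ell$ and $\supp a_\ell$ is contained in its $\delta\ts2^{3k/4}$--neighborhood; then $K_{m+1}\subset\{\tpsi_k\equiv1\}$. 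Second, the operator norms. The symbol $b_k(t,x,\eta)$ vanishes identically when $\g$ is the Euclidean metric (the terms \eqref{pkterm} vanish, the eikonal phase is linear in $x$, so all $|\gamma|\ge1$ terms in \eqref{eqn:bkasympt} vanish), hence by the proof of Theorem~\ref{thm:bkest} together with the smallness hypothesis on $\g$ one has $\|B_j(t)\|_{H^s\to H^s}\le C\ts c_d$ and $\|W_j(t)\|_{H^s\to H^s}\le1+C\ts c_d$ uniformly in $j$, $|t|\le1$, and bounded $s$; consequently any product of the factors above, restricted to functions frequency-localized at scale $2^k$, has norm $\le C\,(C c_d)^{\,p}$, where $p$ is the number of $B$--factors.

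\textbf{The induction and the conclusion.} One shows by induction on $\ell$ that
$$
\tilde A_\ell\cdots\tilde A_1\psi_k(D)=a_\ell(D)\,\tilde A_\ell\cdots\tilde A_1\psi_k(D)+\rho_\ell,
\qquad \|\rho_\ell f\|_{H^N}\le C_N\ts2^{-kN}\|f\|_{H^{-N}},
$$
with $C_N$ independent of $\ell,m,k,\bfr$: applying $\tilde A_\ell=\tpsi_k(D)(B_{k-1}+B_k+B_{k+1})(tr_\ell)$ to $a_{\ell-1}(D)\tilde A_{\ell-1}\cdots\psi_k(D)f$, Lemma~\ref{lem:eprop} with $a_1=a_{\ell-1}$, $a_2=a_\ell$ (legitimate since $K_\ell$ contains the time--$tr_\ell$ flow of $\supp a_{\ell-1}$ fattened by $\delta2^{3k/4}$) shows the part not captured by $a_\ell(D)$ is $2^{-kN}$--smoothing, while the error $\tilde A_\ell\rho_{\ell-1}$ is controlled by the $H^N$ and $H^{-N}$ bounds on $\tilde A_\ell$; since these bounds are $\le Cc_d<1$, the propagated errors form a summable geometric series and the constant does not grow with $\ell$. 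Granting this, the $\ell$--th telescoping term is handled as follows: since $\tilde A_{\ell-1}\cdots\psi_k(D)f$ is, up to $\rho_{\ell-1}$, supported in $K_{\ell-1}\subset\{\tpsi_k\equiv1\}$, the contributions of $B_{k\pm2}$ to $A_\ell-\tilde A_\ell$ vanish on it and the remaining ones appear with the factor $1-\tpsi_k(D)$; a final application of Lemma~\ref{lem:eprop} (with $a_1\equiv1$ on $K_{\ell-1}$, $a_2=\tpsi_k$) bounds $(A_\ell-\tilde A_\ell)\tilde A_{\ell-1}\cdots\psi_k(D)$ from $H^{-N}$ to $H^N$ by $C_N2^{-kN}$ uniformly. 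Multiplying by the uniform bound $C(Cc_d)^{m-1}$ on the left block $A_{m+1}\cdots A_{\ell+1}$ (restricted to the frequency-localized class, the $W$--factor contributing $\le1+Cc_d$) and summing over $\ell$ gives $\|R_{k,\bfr}(t)f\|_{H^N}\le C_N\ts2^{-kN}\,(m+1)(Cc_d)^{m-1}\|f\|_{H^{-N}}$, and $(m+1)(Cc_d)^{m-1}$ is bounded uniformly in $m$ since $Cc_d<1$.

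\textbf{Main obstacle.} The difficulty is making all constants uniform over $m\le2^{k/4}$ \emph{simultaneously}: the telescoping produces $m+1$ error terms, each of which a priori carries the operator norm of a product of up to $m$ of the $W,B$ factors, so a crude estimate would lose a factor growing geometrically in $m$. It is only the combination of (i) the hypothesis $m+1\le2^{k/4}$, which keeps the accumulated frequency spread $\lesssim2^k$ inside the set $\{\tpsi_k\equiv1\}$ so that a single clean application of Lemma~\ref{lem:eprop} --- rather than a compounding one --- suffices at each step, and (ii) the genuine smallness (of size $c_d$) of the operators $B_j(t)$, which follows from the vanishing of $b_k$ in the Euclidean case, that allows the propagated smoothing errors and the left/right operator products to sum to a bound independent of $m$. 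Extracting (ii) carefully from the proof of Theorem~\ref{thm:bkest}, with the same $c_d$--dependence for all relevant derivatives of $b_k$, is the quantitative heart of the argument.
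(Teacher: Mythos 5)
Your approach --- telescoping the difference by replacing factors one at a time, and using Lemma~\ref{lem:eprop} to show that each ``one-factor'' error is $2^{-kN}$-smoothing once the cumulative frequency spread is tracked --- is in essence the same as the paper's proof. The paper implements the frequency tracking by inserting intermediate multipliers $\psi_{k,j}(D)$ between consecutive factors, with supports $[\,{p'_j}^{-1}2^k,p'_j2^{k+1}\,]$ chosen as exponentials calibrated to the Hamiltonian displacement $\exp(c_d'\ts tr_j)$, and removes them in a second pass; your cutoffs $a_\ell$ play exactly this role. The hypothesis $m+1\le2^{k/4}$ is used identically in both: the multiplicative flow displacement over the whole composition telescopes to $\exp(c_d')$, and the $2^{3k/4}$-scale ``fuzz'' accrued over $m$ applications of Lemma~\ref{lem:eprop} stays inside $\{\tpsi_k\equiv1\}$.

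There is, however, a gap. The claim $\|B_j(t)\|_{H^s\to H^s}\le C\ts c_d$, on which you rely to sum the $m+1$ telescoping errors with a constant independent of $m$, is asserted but not established. Your observation that $b_k\equiv0$ for Euclidean $\g$ is correct, but Theorem~\ref{thm:bkest} as stated yields only $O(1)$ bounds, and the $|\gamma|\ge2$ terms in \eqref{eqn:bkasympt} involve higher derivatives of $\varphi_k$ whose bounds, coming from Theorem~\ref{thm:geodflow} and the constants in \eqref{cond1}--\eqref{cond2}, are \emph{not} $c_d$-small; one obtains $\|b_k\|_{L^\infty}\lesssim c_d+2^{-k/2}$ at best, so the uniform-in-$k$ smallness you need requires a further argument (for instance, noting that when $2^{-k/2}\gtrsim c_d$ the range $m\le2^{k/4}$ is bounded). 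The phrase ``by the proof of Theorem~\ref{thm:bkest} together with the smallness hypothesis'' papers over this. There is also a minor slip: the left block $A_{m+1}\cdots A_{\ell+1}$ contains $m-\ell$ factors of $B$, not $m-1$, so the correct accounting is the geometric sum $\sum_\ell(Cc_d)^{m-\ell}$, not $(m+1)(Cc_d)^{m-1}$; the conclusion is unaffected but the formula as written is wrong for $\ell$ near $m+1$. You are right that some mechanism is needed to prevent a $C^m$ loss --- the paper's exposition glosses over the $m$-dependence of the products flanking each smoothing error, though the weaker bound $\|R_{k,\bfr}(t)\|_{H^{-N}\to H^N}\le C^mC_N2^{-kN}$, which needs no smallness claim, would still suffice for Corollary~\ref{cor:eprop} because of the $1/m!$ --- but your proposed resolution, as written, is not watertight.
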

\begin{proof}
Fix $t$ and $\bfr$, and without loss of generality assume $t\ge 0$.
We introduce a family of intermediate cutoffs $\psi_{k,j}(D)$ for $1\le j\le m$, which depend on $t\bfr$.
Define points $\frac{10}9\le p_{j-1}'<p_j<p'_j\le \frac 54$ as follows. Take $c_0$ and $c_1$ such that $p_0=e^{c_0}=\frac{10}9$, and
$e^{c_0+2c_1}=\frac 54$. For $j\ge 0$ we set
$$
p_j=e^{c_0+c_1(r_1+\cdots +r_j)t+c_1j2^{-\frac k4}},\qquad 
p'_j=e^{c_0+c_1(r_1+\cdots +r_j)t+c_1(j+\frac 12)2^{-\frac k4}}
$$
Thus $\psi_k$ is supported where $|\eta|\in[p_0^{-1}2^k,p_02^{k+1}]$, and
$\tpsi_k(\eta)=1$ on the set $\bigl\{\eta:|\eta|\in[\, {p'_m}^{-1}2^k,p'_m2^{k+1}]\bigr\}$.
Also,
$$
|p'_j-p_j|\ge \tfrac 12\,c_1 2^{-\frac k4}\,,\qquad
|p_{j+1}-p'_j|\ge c_1\ts r_{j+1}t+\tfrac 12\,c_1 2^{-\frac k4}\,.
$$

Let $\psi_{k,0}=\psi_k$. By the comments following \eqref{psikjest} we can construct functions 
$\psi_{k,j}(\xi)$ for $j\ge 1$ that satisfy \eqref{psikjest}, with constants $C_\alpha$ that depend only on the dimension $d$, such that
\begin{align*}
\supp\bigl(\psi_{k,j}\bigr)\subset\bigl\{\eta:|\eta|\in [\, {p'_j}^{-1}2^k, p'_j 2^{k+1}\ts]\bigr\}&,\quad j\ge 0,\\
\psi_{k,j}(\eta)=1\;\;\,\text{if}\,\;\;|\eta|\in [\, p_j^{-1}2^k, p_j 2^{k+1}]&,\quad j\ge 1\,.\rule{0pt}{15pt}
\end{align*}
Let $c_d'=\sup_{x,\xi}\bigl(|\xi|^{-1}|\nabla_xp_k(x,\xi)|\bigr)\lesssim c_d$. 
Then for solutions to the Hamiltonian flow,
$$
\exp(-c_d'\, t\ts r_j)\,|\xi(s)|\le |\xi(s+tr_j)|\le \exp(c_d'\, t\ts r_j)\,|\xi(s)|.
$$
Then if $c'_d\le c_1$, the condition of Lemma \ref{lem:eprop} with $\delta=\frac 14 c_1$ is satisfied for $a_2=\psi_{k,j}$ and $a_1=\psi_{k,j-1}$.
Thus Lemma \ref{lem:eprop} yields, for $j\ge 1$,
$$
\|(1-\psi_{k,j}(D))B(tr_j)\psi_{k,j-1}(D)\|_{H^s\rightarrow H^s}\,\le\,C_{s,N}\,2^{-kN},\qquad\forall \,s,N.
$$
Since $B(t)\psi_{k,j}(D)=\tB_k(t)\psi_{k,j}(D)$, and the number of terms is at most $m\le 2^{\frac k4}$,
we can apply this repeatedly to write
\begin{multline*}
W(tr_{m+1})B(tr_m)\cdots B(tr_1)\psi_k(D)\\
=\tW_k(tr_{m+1})\psi_{k,m}(D)\tB_k(tr_m)\cdots \psi_{k,1}(D)\tB_k(tr_1)\psi_k(D)+R_{k,\bfr}(t)\,,
\end{multline*}
where $\;\|R_{k,\bfr}(t)\|_{H^s\rightarrow H^s}\,\le\,C_{s,N}\,2^{-Nk}$ for all $s,N.$
We then prove Lemma \ref{energyloc'} by observing that the same steps let us write
\begin{multline*}
\tW_k(tr_{m+1})\psi_{k,m}(D)\tB_k(tr_m)\cdots\psi_{k,1}(D)\tB_k(tr_1)\psi_k(D)\\
=
\tW_k(tr_{m+1})\tB_k(tr_m)\cdots \tB_k(tr_1)\psi_k(D)+R_{k,\bfr}(t)\,,
\end{multline*}
for a similar $R_{k,\bfr}(t)$. Since $R_{k,\bfr}(t)$ is localized on the right at frequency $2^k$, it follows that
$\|R_{k,\bfr}(t)\|_{H^{-N}\rightarrow H^N}\le C_N\,2^{-kN}$ for all $N$.
\end{proof}

\begin{corollary}\label{cor:eprop}
One can write
\begin{equation*}
E(t)=
\sum_{k=0}^\infty\sum_{m=0}^{2^{\frac k4}} \;(-t)^m
\int_{\Lambda^m}\tW_k(tr_{m+1})\tB_k(tr_m)\cdots\tB_k(tr_1)\psi_k(D)\,d\bfr+R(t),
\end{equation*}
where for all $N$ we have $\|R(t)f\|_{H^N}\le C_N\,\|f\|_{H^{-N}}$, uniformly over $|t|\le 1$.
\end{corollary}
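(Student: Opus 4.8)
The plan is to read this off from the norm-convergent iteration \eqref{Esumdef} together with the frequency-localization estimate of Lemma \ref{energyloc'}. First I would insert the Littlewood--Paley partition $\sum_{k\ge 0}\psi_k(D)=\I$ to the right of the product of $W$'s and $B$'s in each term of \eqref{Esumdef}. Since the $m$-th term of \eqref{Esumdef} has $H^s$ operator norm at most $C_s^{m+1}|t|^m/m!$ and $\sum_k\psi_k(D)\to\I$ on $H^s$, the resulting double sum over $(k,m)$ converges and may be rearranged, giving
$$
E(t)=\sum_{k=0}^\infty\sum_{m=0}^\infty (-t)^m\int_{\Lambda^m}W(tr_{m+1})B(tr_m)\cdots B(tr_1)\,\psi_k(D)\,d\bfr .
$$

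Next, for each $k$ I would split the inner sum at $m\sim 2^{k/4}$. For $m$ in the range to which Lemma \ref{energyloc'} applies, that lemma lets me replace $W(tr_{m+1})B(tr_m)\cdots B(tr_1)\psi_k(D)$ by $\tW_k(tr_{m+1})\tB_k(tr_m)\cdots\tB_k(tr_1)\psi_k(D)$ at the cost of an error $R_{k,\bfr}(t)$ with $\|R_{k,\bfr}(t)f\|_{H^N}\le C_N\,2^{-kN}\|f\|_{H^{-N}}$, uniformly in $m,t,k,\bfr$; the single boundary value $m=\lfloor 2^{k/4}\rfloor$ can be absorbed into the tail below or handled by applying Lemma \ref{energyloc'} with an inessential adjustment of constants. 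Using $\int_{\Lambda^m}d\bfr=1/m!$ and $|t|\le 1$, I would then sum these errors over $0\le m\le 2^{k/4}$ and over $k\ge 0$: invoking the bound of Lemma \ref{energyloc'} with $N+1$ in place of $N$, the total has $H^{-N}\!\to H^N$ operator norm at most $\sum_{k\ge 0}C_{N+1}\,2^{-k(N+1)}\sum_{m\ge 0}(m!)^{-1}\le C_N$, which is one of the two contributions to $R(t)$.

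For the tail $m>2^{k/4}$ I would argue crudely: $\psi_k(D)$ maps $H^{-N}\to H^N$ with operator norm $O(2^{2kN})$, while $W(tr_{m+1})B(tr_m)\cdots B(tr_1)$ is bounded on $H^N$ with norm at most $C_N^{m+1}$, so the $(k,m)$-term has $H^{-N}\to H^N$ norm at most $2^{2kN}C_N^{m+1}|t|^m/m!$. Summing over $m>2^{k/4}$ and using $m!\ge (m/e)^m$ bounds the tail by a constant times $2^{2kN}\bigl(eC_N\,2^{-k/4}\bigr)^{2^{k/4}}$ once $2^{k/4}\ge 2eC_N$; this decays faster than any negative power of $2^k$, so summing over $k$ yields a convergent series in the $H^{-N}\to H^N$ operator norm and hence a second smoothing contribution to $R(t)$. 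For the finitely many small values of $k$ (e.g.\ $k\le 1$, where $W_j,B_j$ are not given by oscillatory integrals) one uses the natural conventions $W_0=\psi_0(D)$, $W_1=\psi_1(D)$, $B_j=P\psi_j(D)$ and the corresponding $\tW_k,\tB_k$; the replacement there is exact since these operators already carry $\psi_k(D)$.

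I expect the only genuinely delicate point to be the tail estimate for $m>2^{k/4}$: one must confirm that the super-exponential gain from $1/m!$ at $m\sim 2^{k/4}$ overwhelms the polynomial-in-$2^k$ loss incurred by mapping out of the frequency support of $\psi_k(D)$. It does so comfortably, and everything else is bookkeeping on top of Lemma \ref{energyloc'} and the convergence of \eqref{Esumdef}.
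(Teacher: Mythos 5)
Your proposal is correct and follows essentially the same route as the paper: insert the Littlewood--Paley partition into the convergent expansion \eqref{Esumdef}, use Lemma \ref{energyloc'} for the terms with $m\le 2^{k/4}$, and for the tail $m>2^{k/4}$ combine the super-exponential gain from $1/m!$ with the $2^{2kN}$ loss from mapping $H^{-N}\rightarrow H^N$ on the frequency support of $\psi_k(D)$. Your extra attention to the boundary index, the small-$k$ conventions, and the summation constants is just bookkeeping the paper leaves implicit.
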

\begin{proof}
Consider
$$
\sum_{m=2^{\frac k4}}^\infty\;(-t)^m
\int_{\Lambda^m}W(tr_{m+1})B(tr_m)\cdots B(tr_1)\psi_k(D)\,d\bfr.
$$
For $|t|\le 1$ and all $N$, the $H^N\rightarrow H^N$ operator norm of this sum is bounded by the sum
$\sum_{m\ge 2^{\frac k4}}C_N^{m+1}/m!\le C_N2^{-3kN}$. It is localized on the right at frequency $2^k$, and thus maps
$H^{-N}\rightarrow H^N$ with norm $\le C_N 2^{-kN}$.
\end{proof}

The arguments leading to Lemma \ref{energyloc'} apply equally well to conic localization. 
We take a finite partition of unity on $\Rd\backslash\{0\}$,
$$
1=\sum_{\omega\in\Xi}a_\omega(D)\,,\qquad\supp(a_\omega(\eta))\subset 
\Bigl\{\eta:\Bigl|\omega-\frac{\eta}{|\eta|}\Bigr|\le \frac 1{32}\Bigr\}.
$$
Let $\ta_\omega(\eta)$ be a smooth, homogeneous cutoff such that
$$
\ta_\omega(\eta)=1\;\;\;\text{if}\;\;\;\Bigl|\omega-\frac{\eta}{|\eta|}\Bigr|\le \frac 1{24},\qquad
\supp(\ta_\omega)\subset \Bigl\{\eta:\Bigl|\omega-\frac{\eta}{|\eta|}\Bigr|\le \frac 1{16}\Bigr\}.
$$
We define an angularly localized version of $\tW_k$, recalling \eqref{twkdef},
\begin{equation}\label{twomegakdef}
\begin{split}
\tW^\omega_k(t)=\ta_\omega(D)\tW_k(t)\ta_\omega(D),\\
\tB^\omega_k(t)=\ta_\omega(D)\tB_k(t)\ta_\omega(D).
\end{split}
\end{equation}
Then, 
\begin{multline}\label{eqn:energyloc'}
R^\omega_{k,\bfr}(t)=W(tr_{m+1})B(tr_m)\cdots B(tr_1)a_\omega(D)\psi_k(D)\\
-\tW_k^\omega(tr_{m+1})\tB_k^\omega(tr_m)\cdots \tB_k^\omega(tr_1)a_\omega(D)\psi_k(D)
\end{multline}
satisfies the conclusion of Lemma \ref{energyloc'},
and consequently, with $R(t)$ as in Corollary \ref{cor:eprop},
\begin{equation*}
E(t)=
\sum_{k=0}^\infty\sum_{\omega\in\Xi} \tE_k^\omega(t)+R(t),
\end{equation*}
where we define
\begin{equation}\label{angloc}
\tE_k^\omega(t)=
\sum_{m=0}^{2^{\frac k4}} (-t)^m
\int_{\Lambda^m}
\tW^\omega_k(tr_{m+1})\tB^\omega_k(tr_m)\cdots \tB^\omega_k(tr_1)a_\omega(D)\psi_k(D).
\end{equation}

\begin{lemma}\label{lem:tEkernel}
Let $f_{x,\xi}(y)=2^{\frac{kd}4}e^{i\la\xi,y-x\ra}f\bigl(2^{\frac k2}(y-x)\bigr)$.
Assume that $f$ is a Schwartz function, and let $(x_t,\xi_t)=\chi_t(x,\xi)$. Then one can write
$$
\bigl(\tE^\omega_k(t)f_{x,\xi}\bigr)(z)=2^{\frac{kd}4}e^{i\la \xi_t,z-x_t\ra}h\bigl(t,2^{\frac k2}(z-x_t)\bigr)\equiv h(t,\cdot)_{x_t,\xi_t},
$$
where for all $N$,
$$
\bigl|\partial_t^j\partial_z^\gamma h(t,z)\bigr|\le 
C_{N,j,\gamma}\,2^{\frac k2 j}\bigl(1+|z|)^{-N}.
$$
For each $N,j,\gamma$, the constant $C_{N,j,\gamma}$ is bounded by a Schwartz seminorm of $f$, but is uniform over $k,x,\xi$.
\end{lemma}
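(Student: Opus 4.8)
The plan is to bootstrap from Lemma~\ref{lem:FBI} through the iterative formula \eqref{angloc}, using that each of the factors $\tW^\omega_k(\cdot)$, $\tB^\omega_k(\cdot)$ sends a wave packet centered at a phase-space point to one centered at the image under the Hamiltonian flow of $p_k$, and that this flow satisfies $\chi_{tr_{m+1}}\circ\cdots\circ\chi_{tr_1}=\chi_t$ on the simplex $\Lambda^m$, the $a_\omega(D)\psi_k(D)$ on the right being "flow-time zero". It is cleanest to run the whole argument through the wave packet transform $T_k$ of the proof of Lemma~\ref{lem:eprop}. Setting $d_k\bigl((x',\xi'),(x,\xi)\bigr)=2^{\frac k2}|x'-x|+2^{-\frac k2}|\xi'-\xi|$, the assertion of the Lemma is equivalent to the statement that the kernel $K_\tE$ of $T_k\tE^\omega_k(t)T_k^*$ satisfies $\bigl|\partial_t^j K_\tE(w',w)\bigr|\le C_{N,j}\,2^{\frac k2 j}\bigl(1+d_k(w',\chi_t(w))\bigr)^{-N}$ for all $N,j$, uniformly over $k$, $\omega$ and $|t|\le 1$. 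Passing from such a kernel bound back to the bump estimates for $h$ — by composing with $T_kf_{x,\xi}$ and applying $T_k^*$ — is the routine step already used after Lemma~\ref{lem:FBI}; it is performed once, so it costs only a single fixed loss of Schwartz seminorms of $f$.

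Two atomic estimates are needed. First, any Fourier multiplier $a(D)$ with $\supp a\subset\{|\eta|\approx 2^k\}$ and $|\partial_\eta^\alpha a|\le C_\alpha 2^{-\frac k2|\alpha|}$ sends $f_{x,\xi}$ to $\tilde f_{x,\xi}$ with $\tilde f$ Schwartz and seminorms controlled by those of $f$ (after the substitution $\eta=\xi+2^{\frac k2}\mu$ this is immediate), so $T_ka(D)T_k^*$ has kernel bounded by $C_N(1+d_k(w',w))^{-N}$ for all $N$; this disposes of $a_\omega$, $\psi_k$, $\tpsi_k$, $\ta_\omega$. Second, Lemma~\ref{lem:FBI} applies verbatim with $B_k$ replaced by $B_{k-1}$ or $B_{k+1}$, and its proof with the symbol $b_k$ replaced by the constant $1$ gives the analogue for $W_{k-1},W_k,W_{k+1}$. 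Combining these, $T_k\tB^\omega_k(\tau)T_k^*$ and $T_k\tW^\omega_k(\tau)T_k^*$ have kernels bounded by $C_{N,j}2^{\frac k2 j}(1+d_k(w',\chi_\tau(w)))^{-N}$ for all $N,j$ and $0\le\tau\le1$, uniformly in $k,\omega,\tau$. The one genuinely new point is that $\tB^\omega_k$ is a sum of three operators whose phases $\varphi_{k-1},\varphi_k,\varphi_{k+1}$ generate three distinct flows $\chi^{(k\pm1)}_\tau,\chi^{(k)}_\tau$ (write $\chi^{(j)}$ for the flow of $p_j$, so $\chi=\chi^{(k)}$): by \eqref{gkdiffbounds} and \eqref{pktermest} the Hamiltonian vector fields of $p_{k\pm1}$ and $p_k$ differ by $O(2^{\frac k2})$ in $d_k$-units, while $\chi^{(k)}_\tau$ is volume preserving and bilipschitz in the $d_k$-metric uniformly for $\tau\le1$ by Corollary~\ref{cor:geodflow}, so a Gronwall comparison gives $d_k\bigl(\chi^{(k\pm1)}_\tau(w),\chi^{(k)}_\tau(w)\bigr)\le C\tau$ and the three contributions concentrate within an $O(\tau)$ $d_k$-neighborhood of the graph of $\chi^{(k)}_\tau$.

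Now one composes the factors in \eqref{angloc}. With kernels $K_{m+1},\dots,K_1$ and the substitution $w_l\mapsto\tilde w_l=\chi^{(k)}_{-(tr_1+\cdots+tr_l)}(w_l)$ — volume preserving, and such that, by the group property and the uniform bilipschitz bound, $d_k\bigl(w_l,\chi^{(k)}_{tr_l}(w_{l-1})\bigr)\gtrsim d_k(\tilde w_l,\tilde w_{l-1})$ — the composition kernel is dominated by an $(m+1)$-fold convolution on $\re^{2d}$ of functions $C_N(1+d_k)^{-N}$, evaluated at $\tilde w_{m+1}-\tilde w_0=\chi^{(k)}_{-t}(w_{m+1})-w_0$. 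Taking $N>2d$, each convolution costs only a fixed constant, so the composition kernel is bounded by $C_N^{m+1}\bigl(1+d_k(w_{m+1},\chi_t(w_0))\bigr)^{-N}$ with $C_N$ independent of $m$ and $\bfr$, and the $O(tr_l)$ discrepancies from the $B_{k\pm1}$ pieces (over at most $3^{m+1}$ index patterns) telescope — keeping the Lipschitz constants of the flow tails $\chi^{(k)}_{tr_{l+1}+\cdots}$ uniform by the group property rather than compounding — to a total of $O(\sum_l tr_l)=O(t)$. For the $\partial_t^j$ bound one uses that $\partial_t$ applied to the $m$-th term either lowers $m$ by one — since $m(-t)^{m-1}\!\int_{\Lambda^m}d\bfr=(-t)^{m-1}\!\int_{\Lambda^{m-1}}d\bfr$ — or brings down $\sum_{l=1}^{m+1}r_l$ times a composition in which one factor is $\partial_\tau$-differentiated, contributing a single $2^{\frac k2}$ by Lemma~\ref{lem:FBI} because $\sum_l r_l=1$ on $\Lambda^m$. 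Multiplying by $t^m/m!$ and summing over $m$ (the truncation $m\le2^{k/4}$ only helps) then yields the claimed bound for $K_\tE$ with constants $\le C_N e^{C_N|t|}$, and composing this kernel with $T_kf_{x,\xi}$, a Schwartz function of $w$ concentrated at $(x,\xi)$ by the first atomic estimate, gives the stated representation of $\tE^\omega_k(t)f_{x,\xi}$.

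The crux is the composition step: the constant in the composition kernel bound must grow only like $C_N^m$, which the simplex weight $1/m!$ then absorbs. This is precisely why the argument must be run in the kernel formulation — so that no Schwartz seminorm of the bump is ever iterated and no derivatives of $f$ are lost beyond the single final passage back to bump form — and why the volume preserving, uniformly bilipschitz structure of the $p_k$-flow, together with the elementary convolution estimate $(1+|\cdot|)^{-N}*(1+|\cdot|)^{-N}\le C(1+|\cdot|)^{-N}$ on $\re^{2d}$ for $N>2d$, cannot be dispensed with. The secondary obstacle, resolved via \eqref{gkdiffbounds}--\eqref{pktermest} and Corollary~\ref{cor:geodflow}, is that $\tB^\omega_k$ and $\tW^\omega_k$ carry the three nearby but distinct phases $\varphi_{k-1},\varphi_k,\varphi_{k+1}$, which forces the quantitative comparison of $\chi^{(k\pm1)}$ with $\chi^{(k)}$.
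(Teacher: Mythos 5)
Your $j=0$ argument is essentially the paper's: compose the wave-packet kernels of the factors $\tW^\omega_k,\tB^\omega_k$, use the bilipschitz invariance of the parabolic pseudodistance under $\chi_s$ (Lemma \ref{distinvar}) to change variables so that the composition becomes an honest convolution, take $N>2d$ so the convolution constants do not grow, and let the simplex volume $1/m!$ absorb the $C_N^{m+1}$. Your explicit Gronwall comparison of $\chi^{(k\pm 1)}_\tau$ with $\chi^{(k)}_\tau$ is a legitimate spelling-out of a point the paper passes over quickly when it asserts that the $\tB_k^\omega$ kernel is concentrated near the $p_k$-flow.

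The $\partial_t^j$ argument has a genuine gap. Your reduction claims that the Lemma is equivalent to the kernel bound
$\bigl|\partial_t^j K_\tE(w',w)\bigr|\le C_{N,j}\,2^{\frac k2 j}\bigl(1+d_k(w',\chi_t(w))\bigr)^{-N}$,
but this is false even for $j=1$ and even for the single factor $\tW_k$. Writing the kernel of $T_k\tW_k(t)T_k^*$ via Lemma \ref{lem:FBI} as
$K_W(t,y,\eta;x,\xi)=e^{-i\la\eta,x_t-y\ra}\int e^{-i\la\eta-\xi_t,2^{-k/2}u\ra}\overline{g\bigl(2^{k/2}(x_t-y)+u\bigr)}\,h(t,u)\,du$,
the $t$-derivative of the phase factor produces $-i\la\eta,\dot x_t\ra\approx -i\la\xi_t,\nabla_\xi p_k(x_t,\xi_t)\ra=-ip_k(x_t,\xi_t)$, which is $\mathcal{O}(2^k)$ on the support. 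So $\partial_t K$ is of size $2^k$, not $2^{\frac k2}$; the bump estimate $|\partial_t^j h|\lesssim 2^{\frac k2 j}$ of Lemma \ref{lem:FBI} is \emph{strictly weaker} than your proposed kernel statement and is true only because that $\mathcal{O}(2^k)$ contribution is cancelled by the $-iP$ coming from $\partial_t E(t)=-iPE(t)$ after the phase and the shift by $x_t$ have been factored out. Your "$\partial_\tau$ of one factor contributes a single $2^{\frac k2}$ by Lemma \ref{lem:FBI}" step conflates the bump bound of that lemma with a kernel-derivative bound that does not hold, and your differentiation of the $\Lambda^m$ expansion never sees the cancellation.

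This is exactly why the paper does not differentiate the $\Lambda^m$ formula for the $j\ge 1$ bounds. Instead it passes to the recursion $\tE_k^\omega(t)f_{x,\xi}=\tW_k^\omega(t)f_{x,\xi}+\int_0^t\tW_k^\omega(t-s)\tE_k^\omega(s)f_{x,\xi}\,ds$ and works with the conjugated operator $(\partial_z-i\xi_t)^\gamma(\partial_t+ip_k(x_t,\xi_t))^{j+1}$ acting on the wave packet, using the identity following \eqref{newhest} to convert it to honest derivatives of $h$ up to admissible lower-order terms; the recursion lets this conjugation hit only one outer factor at a time, which keeps the induction manageable. To salvage your approach you would have to formulate the intermediate kernel statement with the $w$-dependent conjugated derivative $(\partial_t+ip_k(\chi_t(w)))^j$ in place of $\partial_t^j$, and you would then face the difficulty that this operator does not distribute through the $(m+1)$-fold composition the way a fixed scalar would.
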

\begin{proof}
Let $K(s,y,\eta;x,\xi)$ denote the integral kernel of $T_k\tB_k^\omega(s)T_k^*$.
Following the proof of \eqref{cdfkernel} we can bound, with $C_N$ uniform over $k$ and $s$,
\begin{align*}
|K(s,y,\eta;x,\xi)|&\le C_N\bigl(1+2^{\frac k2}|y-x_s|+2^{-\frac k2}|\eta-\xi_s|\bigr)^{-N}.
\end{align*}
Furthermore, this kernel vanishes unless $2^{k-2}\le |\eta|,|\xi|\le 2^{k+3}$.
By the bilipschitz property of the Hamiltonian flow, for such $\eta,\xi$ we have
\begin{equation}\label{eqn:bilipham}
|y_s-x_s|+2^{-k}|\eta_s-\xi_s|\le A\ts|y-x|+2^{-k}A\ts|\eta-\xi|.
\end{equation}
For $N\ge 2d+1$, we then bound the kernel of 
$T_k\tB_k^\omega(tr_1)\tB_k^\omega(tr_2)T_k^*$ $=$ $\bigl(T_k\tB_k^\omega(tr_1)T_k^*\bigr)\bigl(T_k\tB_k^\omega(tr_2)T_k^*\bigr)$ by $C_N^2$ multiplied by the quantity
\begin{multline*}
\int \bigl(1+2^{\frac k2}|y-z_{tr_1}|+2^{-\frac k2}|\eta-\zeta_{tr_1}|\bigr)^{-N}
\bigl(1+2^{\frac k2}|z-x_{tr_2}|+2^{-\frac k2}|\zeta-\xi_{tr_2}|\bigr)^{-N}dz\,d\zeta\\
\le
A_N\bigl(1+2^{\frac k2}|y-x_{t(r_1+r_2)}|+2^{-\frac k2}|\eta-\xi_{t(r_1+r_2)}|\bigr)^{-N}.
\end{multline*}
Similarly, for $\bfr\in\Lambda^m$ the operator $T_k\tW_k^\omega(tr_{m+1})\tB_k^\omega(tr_m)\cdots\tB_k^\omega(tr_1)T_k^*$ has kernel bounded by
$$
C_N\bigl(A_NC_N\bigr)^m\bigl(1+2^{\frac k2}|y-x_t|+2^{-\frac k2}|\eta-\xi_t|\bigr)^{-N},
$$
and summing over $m$ gives the following bounds for the kernel of $T_k\tE_k^\omega(t)T_k^*$,
$$
|\tK_k^\omega(t,y,\eta;x,\xi)|\le C_N\,e^{tA_NC_N}\bigl(1+2^{\frac k2}|y-x_t|+2^{-\frac k2}|\eta-\xi_t|\bigr)^{-N}.
$$
Let $F=T_k(f_{x,\xi})$. Then
$$
|F(\barx,\barxi)|\le C_N\bigl(1+2^{\frac k2}|\barx-x|+2^{-\frac k2}|\barxi-\xi|\bigr)^{-N}.
$$
Then 
$(\tE^\omega_k(t)f_{x,\xi}\bigr)(z)$ is equal to
$$
2^{\frac{kd}4}\int \tK_k^\omega(t,y,\eta;\barx,\barxi)\,F(\barx,\barxi)\,e^{i\la \eta,z-y\ra}g\bigl(2^{\frac k2}(z-y)\bigr)\,d\barx\,d\barxi\,dy\,d\eta.
$$
The change of variables 
\begin{align*}
&(y,\eta)\rightarrow (x_t+2^{-\frac k2}y,\xi_t+2^{\frac k2}\eta)\\
&(\barx,\barxi)\rightarrow (x+2^{-\frac k2}\barx,\xi+2^{\frac k2}\barxi)
\end{align*}
shows that $h(t,z)=2^{-\frac{kd}4}e^{-i2^{-\frac k2}\la\xi_t,z\ra}\bigl(\tE^\omega_k(t)f_{x,\xi}\bigr)(x_t+2^{-\frac k2}z)$ is equal to
\begin{multline*}
\int 
\tK_k^\omega(t,x_t+2^{-\frac k2}y,\xi_t+2^{\frac k2}\eta;x+2^{-\frac k2}\barx,\xi+2^{\frac k2}\barxi)
F(x+2^{-\frac k2}\barx,\xi+2^{\frac k2}\barxi)\\
\times e^{-i2^{-\frac k2}\la\xi_t,y\ra}e^{i\la \eta,z-y\ra}g(z-y)\,d\barx\,d\barxi\,dy\,d\eta.
\end{multline*}
By the bilipschitz property \eqref{eqn:bilipham} of $\chi_t$ we have
\begin{multline*}
|y|+|\eta|\le A\ts|\barx|+A\ts|\barxi|
+2^{\frac k2}\bigl|(x_t+2^{-\frac k2}y)-(x+2^{-\frac k2}\barx)_t\bigr|\\
+2^{-\frac k2}\bigl|(\xi_t+2^{\frac k2}\eta)-(\xi+2^{\frac k2}\barxi)_t\bigr|,
\end{multline*}
and conclude that
\begin{multline*}
\bigl|\tK_k^\omega(t,x_t+2^{-\frac k2}y,\xi_t+2^{\frac k2}\eta;x+2^{-\frac k2}\barx,\xi+2^{\frac k2}\barxi)\bigr|\\
\le 
C_N\,\bigl(1+|y|+|\eta|\bigr)^{-N}\bigl(1+|\barx|+|\barxi|\bigr)^{N}.
\end{multline*}
Together with the bound
$$
\bigl|F(x+2^{-\frac k2}\barx,\xi+2^{\frac k2}\barxi)\bigr|\le
C_N\,\bigl(1+|\barx|+|\barxi|\bigr)^{-2N}, 
$$
this leads to the following estimates on $\partial_z^\gamma h(t,z)$, which is the case $j=0$,
\begin{equation}\label{prevresult}
|\partial_z^\gamma h(t,z)|\le C_{N,\gamma}\bigl(1+|z|\bigr)^{-N}.
\end{equation}
The constant $C_{N,\gamma}$ is seen to be bounded by a Schwartz seminorm of $f$, but uniform over $k,x,\xi$.

To handle time derivatives we proceed by induction, and assume the estimates on $\partial_t^i\partial_z^\gamma h(t,z)$ hold for $0\le i\le j$, and all $\gamma$. We write
$$
\tE_k^\omega(t)f_{x,\xi}=\tW_k^\omega(t)f_{x,\xi}+
\int_0^t \tW_k^\omega(t-s)\tE_k^\omega(s)f_{x,\xi}\,ds,
$$
where on the right the term $\tE_k^\omega(t)$, defined in \eqref{angloc}, has upper summation limit reduced by 1. This does not affect the validity of \eqref{prevresult}, since the proof of \eqref{prevresult} is done separately for each value of $m$. By Lemma \ref{lem:FBI} the first term satisfies the conditions of the statement, since the proof of that lemma works equally well for 
$B_k(t)$ replaced by $\tW_k^\omega(t)$. The desired estimates on $h$ are then a consequence of the following, for the given value of $j$ and all $\gamma$,
\begin{multline}\label{newhest}
\Bigl|\bigl(\partial_z-i\xi_t\bigr)^\gamma\bigl(\partial_t+ip_k(x_t,\xi_t)\bigr)^{j+1}\int_0^t \tW_k^\omega(t-s)\tE_k^\omega(s)f_{x,\xi}\,ds\Bigr|\\
\le C_{N,j+1,\gamma}2^{\frac{kd}4}2^{\frac k2 (|\gamma|+j+1)}\bigl(1+2^{\frac k2}|z-x_t|\bigr)^{-N}.
\end{multline}
This is seen by noting that
\begin{multline*}
e^{-i\la\xi_t,z-x_t\ra}\bigl(\partial_t+ip_k(x_t,\xi_t)\bigr) \Bigl(e^{i\la\xi_t,z-x_t\ra}h(t,2^{\frac k2}(z-x_t))\Bigr)
=(\partial_t h)(t,2^{\frac k2}(z-x_t))\\
-\Bigl(i\nabla_x p_k(x_t,\xi_t)\cdot(z-x_t)h+2^{\frac k2}(\nabla_\xi p_k)(x_t,\xi_t)\cdot\nabla_z h\Bigr)(t,2^{\frac k2}(z-x_t)).
\end{multline*}
The latter terms are controlled by the spatial derivative bounds on $h$, and their time derivatives controlled by the bounds
$$
|\partial_t^i (\nabla_x p_k)(x_t,\xi_t)|\le C_i \, 2^{k+\frac k2 i},
\qquad
|\partial_t^i(\nabla_\xi p_k)(x_t,\xi_t)|\le C_i\,2^{\frac k2 i},
$$
which follow by Corollary \ref{cor:geodflow} and \eqref{eqn:pkest}.

To establish \eqref{newhest} we expand
\begin{multline*}
\bigl(\partial_t+ip_k(x_t,\xi_t)\bigr)^{j+1}\int_0^t \tW_k^\omega(t-s)\tE_k^\omega(s)f_{x,\xi}\,ds\\
=
\sum_{i=0}^j\bigl(\partial_t+ip_k(x_t,\xi_t)\bigr)^{j-i}
\Bigl[\bigl(\partial_r+ip_k(x_{t+r},\xi_{t+r})\bigr)^i
\tW_k^\omega(r)\,\tE_k^\omega(t)f_{x,\xi}\Bigr]_{r=0}\\
+\int_0^t \bigl(\partial_t+ip_k(x_t,\xi_t)\bigr)^{j+1}\tW_k^\omega(t-s)\tE_k^\omega(s)f_{x,\xi}\,ds.
\end{multline*}
The latter term on the right is handled by Lemma \ref{lem:FBI}, since we have already shown that
$\tE_k^\omega(s)f_{x,\xi}=f(s,\cdot)_{x_s,\xi_s}$ where $f(s,\cdot)$ is a bounded family of Schwartz functions. 
The first term on the right expands into a sum of terms
\begin{equation}\label{bdterms}
\Bigl[\partial_t^n\bigl(\partial_r+ip_k(x_{t+r},\xi_{t+r})\bigr)^i
\tW_k^\omega(r)\Bigr]_{r=0}\bigl(\partial_t+ip_k(x_t,\xi_t)\bigr)^{j-n-i}\tE_k^\omega(t)f_{x,\xi}.
\end{equation}
We can write $\Bigl[\partial_t^n\bigl(\partial_r+ip_k(x_{t+r},\xi_{t+r})\bigr)^i
\tW_k^\omega(r)\Bigr]_{r=0}$ as a sum of terms
$$
\bigl(\partial_t^{n_1} p_k(x_t,\xi_t)\bigr)\cdots
\bigl(\partial_t^{n_m} p_k(x_t,\xi_t)\bigr)
\Bigl[\bigl(\partial_r+ip_k(x_{t+r},\xi_{t+r})\bigr)^{l}
\tW_k^\omega(r)\Bigr]_{r=0}
$$
where $n_1+\cdots +n_m+m+l=n+i$, and each $n_j\ge 1$. By Lemma \ref{lem:FBI} and the induction assumption
we can write
\begin{multline*}
\Bigl[\bigl(\partial_r+ip_k(x_{t+r},\xi_{t+r})\bigr)^l
\tW_k^\omega(r)\Bigr]_{r=0}\bigl(\partial_t+ip_k(x_t,\xi_t)\bigr)^{j-n-i}\tE_k^\omega(t)f_{x,\xi}\\
=2^{\frac k2(l+j-n-i)}f(t,\cdot)_{x_t,\xi_t}
\end{multline*}
for a bounded family of Schwartz functions $f(t,\cdot)$. The estimates
$$
\bigl|\partial_t^{n_j} p_k(x_t,\xi_t)\bigr|\le C_{n_j}\,2^{\frac k2(n_j+1)}\,,\quad n_j\ge 1,
$$
then show that the term in \eqref{bdterms} is of the form $2^{\frac k2 j}f(t,\cdot)_{x_t,\xi_t}$
for a bounded family of Schwartz functions $f(t,\cdot)$, which implies \eqref{newhest}. 
\end{proof}


We use this to establish sideways energy estimates for $E(t)$, which state that 
if the initial data $f$ is microlocalized to frequencies within a small angle of the co-direction $\omega$, then the $L^2$ norm of the restriction of $E(t)f$ to space-time hyperplanes perpendicular to $\omega$ is dominated by the $L^2$ norm of $f$. By rotation and translation invariance it suffices to consider $\omega=e_1$ and the plane $x_1=0$.

\begin{theorem}\label{thm:sideways}
Suppose $\phi\in C_c^\infty\bigl((-\hf,\hf)\bigr)$. Then
$$
\bigl\|\phi(t)\bigl(a_{e_1}(D)\psi_k(D)E(t)f\bigr)\bigr|_{x_1=0}\bigr\|_{L^2_{x'}L^2_t}\le 
C\,\|f\|_{L^2}
$$
for a constant $C$ that is independent of $k$.
\end{theorem}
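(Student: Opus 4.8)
The plan is to reduce, via the angular/dyadic decomposition of $E(t)$ and the wave packet representation of Lemma~\ref{lem:tEkernel}, to a Schur--test estimate on a phase-space operator, in which transversality of the Hamiltonian flow of $p_k$ to the hyperplane $\{x_1=0\}$ supplies the decisive gain. First I would reduce. By $E(t)=\sum_{k,\omega}\tE_k^\omega(t)+R(t)$ from \eqref{eqn:energyloc'}, and since $a_{e_1}(D)\psi_k(D)$ annihilates $\tE_{k'}^\omega(t)$ (which carries $\ta_\omega(D)\tpsi_{k'}(D)$ on the right) unless $|k-k'|\le 2$ and $|\omega-e_1|\le\frac18$, while $a_{e_1}(D)\psi_k(D)R(t)$ is smoothing uniformly in $|t|\le\frac12$, so that its trace on $\{x_1=0\}$ contributes $\lesssim\|f\|_{L^2}$ in $L^2_{x'}L^2_t$, it suffices to bound $\|\phi(t)(\tE_k^\omega(t)h)|_{x_1=0}\|_{L^2_{x'}L^2_t}\le C\|h\|_{L^2}$ with $C$ uniform in $k$ and in the finitely many $\omega$ within $\frac18$ of $e_1$. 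Decomposing $h=T_k^*T_kh=\int F(x,\xi)g_{x,\xi}\,dx\,d\xi$ with $\|F\|_{L^2(\re^{2d})}=\|h\|_{L^2}$, and applying Lemma~\ref{lem:tEkernel} with the generating function $g$ in place of $f$, we have $\bigl(\tE_k^\omega(t)g_{x,\xi}\bigr)(z)=2^{\frac{kd}4}e^{i\la\xi_t,z-x_t\ra}h^{(x,\xi)}\bigl(t,2^{\frac k2}(z-x_t)\bigr)$, where $(x_t,\xi_t)=\chi_t(x,\xi)$ and the profiles satisfy $|\partial_t^j\partial_z^\gamma h^{(x,\xi)}(t,z)|\le C_{N,j,\gamma}2^{kj/2}(1+|z|)^{-N}$ uniformly in $k,x,\xi$. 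Thus the estimate becomes $\|\mathcal SF\|_{L^2_{x',t}}\lesssim\|F\|_{L^2_{x,\xi}}$ for
$$
(\mathcal SF)(x',t)=\phi(t)\int F(x,\xi)\,2^{\frac{kd}4}e^{i\la\xi_t,(0,x')-x_t\ra}\,h^{(x,\xi)}\bigl(t,2^{\frac k2}((0,x')-x_t)\bigr)\,dx\,d\xi,
$$
where $(0,x')$ denotes the point with first coordinate $0$ and remaining coordinates $x'\in\re^{d-1}$.

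Next I would estimate the kernel $N\bigl((x,\xi),(\bar x,\bar\xi)\bigr)$ of $\mathcal S^*\mathcal S$ and invoke the Schur test. This kernel is an oscillatory integral over $(x',t)$ with phase $\Theta=\la\xi_t,(0,x')-x_t\ra-\la\bar\xi_t,(0,x')-\bar x_t\ra$, whose amplitude is, up to rapid decay, supported where $|(0,x')-x_t|$ and $|(0,x')-\bar x_t|$ are $\lesssim 2^{-k/2}$. Here $'$ denotes the last $d-1$ components. Three gains are available: (i) $\partial_{x'}\Theta=\xi_t'-\bar\xi_t'$, so integration by parts in $x'$ yields $(1+2^{-k/2}|\xi_t'-\bar\xi_t'|)^{-M}$ on top of the volume factor $2^{-k(d-1)/2}$ from the $x'$--integral; (ii) by Euler's identity $\la\xi,\nabla_\xi p_k\ra=p_k$ one gets $\partial_t\Theta=p_k(\bar x,\bar\xi)-p_k(x,\xi)+O\bigl(c_d\,2^{k/2}\bigr)$, the error coming from $\la\dot\xi_t,(0,x')-x_t\ra$ on the support, so integration by parts in $t$ yields $(1+2^{-k/2}|p_k(\bar x,\bar\xi)-p_k(x,\xi)|)^{-M}$; (iii) since $\omega$ lies within $\frac18$ of $e_1$ and $|\xi_t-\xi|\lesssim c_d|\xi|$ by Corollary~\ref{cor:geodflow}, the first component of $\dot x_t=(\nabla_\xi p_k)(x_t,\xi_t)$ exceeds $\frac12$, so $t\mapsto x_{t,1}$ crosses $0$ transversally at some $\tau_0=\tau_0(x,\xi)$ and $\int_{\supp\phi}(1+2^{k/2}|x_{t,1}|)^{-M}\,dt\lesssim 2^{-k/2}$, while on that $O(2^{-k/2})$--interval the quantities $x_t',\xi_t'$ and $\bar x_{t,1}$ are frozen to $x_{\tau_0}',\xi_{\tau_0}',\bar x_{\tau_0,1}$. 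Combining (i)--(iii) the prefactors balance, $2^{kd/2}\cdot 2^{-k(d-1)/2}\cdot 2^{-k/2}=1$, and
$$
|N|\lesssim\bigl(1+2^{-\tfrac k2}|p_k(\bar x,\bar\xi)-p_k(x,\xi)|\bigr)^{-M}\bigl(1+2^{\tfrac k2}|\bar x_{\tau_0,1}|\bigr)^{-M}\bigl(1+2^{\tfrac k2}|x'_{\tau_0}-\bar x'_{\tau_0}|\bigr)^{-M}\bigl(1+2^{-\tfrac k2}|\xi'_{\tau_0}-\bar\xi'_{\tau_0}|\bigr)^{-M}.
$$
When the flowline through $(x,\xi)$ does not meet $\{x_1=0\}$ for $|t|\le\frac12$, $|N|$ is $O(2^{-kM/2})$ and negligible.

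To finish I would change variables $(\bar y,\bar\eta)=\chi_{\tau_0}(\bar x,\bar\xi)$, which is volume preserving and fixes $p_k$, with image still in $\{|\bar\eta|\approx 2^k,\ \bar\eta/|\bar\eta|\ \text{near}\ e_1\}$: integrating $\bar y$ against the two middle factors gives $2^{-kd/2}$, while integrating $\bar\eta$ against the remaining two gives $2^{k(d-1)/2}\cdot 2^{k/2}=2^{kd/2}$, the first factor confining $\bar\eta'$ to a $2^{k/2}$--ball and, because $\partial_{\bar\eta_1}p_k(\bar y,\bar\eta)\approx 1$ on the sector near $e_1$, the $p_k$ factor confining $\bar\eta_1$ to a length-$2^{k/2}$ interval. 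Hence $\int|N|\,d\bar x\,d\bar\xi\lesssim 2^{-kd/2}\cdot 2^{kd/2}=1$, uniformly in $(x,\xi)$; by the symmetry of $N$ the other Schur sum is bounded likewise, giving $\|\mathcal S\|\le C$ and the theorem.

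The step I expect to be the main obstacle is precisely this last combination: one must run the two integrations by parts, in $x'$ and in $t$, simultaneously with the transversality gain so that every power of $2^{k/2}$ cancels and the frequency-space integrations (over $\bar\eta'$ and the radial variable $\bar\eta_1$, together of measure $\sim 2^{kd}$) converge. The structural inputs that make this work are Euler's identity, which turns $\partial_t\Theta$ into a difference of conserved Hamiltonian values and thereby controls the otherwise unbounded radial frequency direction, and the narrow-sector condition near $e_1$, which is what renders $t\mapsto x_{t,1}$ genuinely transversal; without either of these the estimate fails.
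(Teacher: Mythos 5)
Your proposal is correct in outline, and it uses the same essential machinery as the paper: the reduction to $\tE_k^\omega(t)$ with $\omega$ near $e_1$ via \eqref{eqn:energyloc'}, the wave-packet propagation bound of Lemma \ref{lem:tEkernel}, the transversality $\partial_t x_{t,1}\approx 1$ for the sector near $e_1$, and the identity $\la\xi_t,\dot x_t\ra=p_k(x_t,\xi_t)=p_k(x,\xi)$ to control the variable dual to $t$, all assembled through a Schur test. Where you differ is the implementation: the paper does not form $\mathcal S^*\mathcal S$, but instead composes with a second wave-packet transform $T_k^0$ adapted to the hypersurface $\{x_1=0\}$ in the variables $(t,y';\tau,\eta')$, bounds the kernel of $T_k^0\,\phi(t)\tE_k^{e_1}(t)T_k^*$ by rapid decay away from the flowout point $\gamma(s(x,\xi))|_{y_1=0}$ (estimates \eqref{sideways} and \eqref{Ke1est}), and then gets the Schur bound in one shot from the fact that $(x,\xi)\mapsto\gamma(s(x,\xi))|_{y_1=0}$ is a bilipschitz symplectic (hence measure-preserving) map; the exact phase-gradient identities there play the role of your gains (i) and (ii), with $|\eta'-\xi'_s|$ and $|\tau+p_k(x_s,\xi_s)|$ replacing $|\xi'_t-\bar\xi'_t|$ and $|p_k(\bar x,\bar\xi)-p_k(x,\xi)|$. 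Your $TT^*$ route works, and the change of variables by $\chi_{\tau_0}$ plus the co-area count in $\bar\eta_1$ via $\partial_{\eta_1}p_k\approx 1$ is a legitimate substitute for the flowout map, but it costs you two pieces of bookkeeping that the boundary transform makes automatic: first, the simultaneous integrations by parts in $t$ and $x'$ must be justified by the fact that $x_t$, $2^{-k}\xi_t$ and $2^{-k}p_k(x_t,\xi_t)$ are uniformly Lipschitz in $t$, so that $\xi'_t$, $\bar x_{t,1}$, $x'_t$ are indeed ``frozen'' at scale on the $O(2^{-k/2})$ time window (the paper states exactly this Lipschitz freezing before \eqref{sideways}); second, your dismissal of flowlines that do not meet $\{x_1=0\}$ as ``$O(2^{-kM/2})$ and negligible'' is too quick as stated, since pointwise smallness must be paired with decay in $(\bar x,\bar\xi)$ to survive the Schur integral over an unbounded region — the paper treats this as a separate case $|x_1|\ge\frac32$ with its own kernel bound and the diffeomorphism $(x,\xi)\mapsto(x,p_k(x,\xi),\xi')$, and you should either do likewise or note that the amplitude still confines $\bar x_t$ to a bounded set and $\bar\xi$ to the sector of measure $\lesssim 2^{kd}$, so the spare factor $2^{-kM/2}$ closes the estimate. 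Neither point is a conceptual obstruction, so I regard your argument as a sound $TT^*$ variant of the paper's proof rather than a different method.
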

\begin{proof}
By Lemma \ref{energyloc'} and the comments following Corollary \ref{cor:eprop}, it suffices to show that
$$
\bigl\|\phi(t)\bigl(\tE_k^{e_1}(t)f\bigr)\bigr|_{x_1=0}\bigr\|_{L^2_{x'}L^2_t}\le 
C\,\|f\|_{L^2}.
$$
For $\xi\in\Rd$ with $|\angle(\xi,e_1)|\le \frac 12$ and $|s|\le 2$, the null bicharacteristic 
curve $\gamma(s)\in (\re^{d+1})^*$ of $\tau+p_k(y,\eta)$ that passes over $(x,\xi)$ at time $s=0$ satisfies $\frac 45\le |y_1'(s)|\le \frac 54$.
Consequently, if $|x_1|\le \frac 32$ and $|\angle(\xi,e_1)|\le \frac 12$ there is a unique value $s=s(x,\xi)$ in $\{s:|s|\le 2\}$ such that $\gamma(s(x,\xi))\in\{y_1=0\}$. We parameterize the cotangent bundle of $y_1=0$ by $(t,y',\tau,\eta')$, and let $T_k^0$ be the wave packet transform acting on this plane. Observe that
the integral kernel $\tK_k^{e_1}(t,y',\tau,\eta';x,\xi)$ of $T_k^0\bigl(\phi(t)\tE_k^{e_1}(t)T_k^*\bigr)$ vanishes unless $|\angle(\xi,e_1)|\le \frac 12$. 

We show that if $|x_1|\le \frac 32$, then
\begin{multline}\label{sideways}
\bigl|\tK_k^{e_1}(t,y',\tau,\eta';x,\xi)\bigr|\\
\le C_N\,\bigl(1+2^{\frac k2}|(t,y')-\Pi_{t,y'}\gamma(s(x,\xi))|
+2^{-\frac k2}|(\tau,\eta')-\Pi_{\tau,\eta'}\gamma(s(x,\xi))|\bigr)^{-N}.
\end{multline}
The Schur test, and the fact that $(x,\xi)\rightarrow \gamma(s(x,\xi))\bigr|_{y_1=0}$ is a bilipschitz symplectic map, shows $L^2$ boundedness of 
$T_k^0\,\phi(t)\tE_k^{e_1}(t)T_k^*\,\one_{|x_1|<\frac 32}$. We consider the case $|x_1|>\frac 32$ afterwards.

To prove \eqref{sideways}, we use Lemma \ref{lem:tEkernel} to express $\tK_k^{e_1}(t,y',\tau,\eta';x,\xi)$ as
\begin{multline*}
2^{\frac{kd}2}\int e^{i\la \xi_s,(0,z')-x_s\ra-i\tau (s-t)-i\la\eta',z'-y'\ra}\\
\times h\bigl(s,2^{\frac k2}((0,z')-x_s)\bigr)\,g\bigl(2^{\frac k2}(s-t,z'-y')\bigr)\,ds\,dz'.
\end{multline*}

Since $\gamma$ is null, we have $\Pi_\tau\gamma(s)=-p(x_s,\xi_s)=-\la\xi_s,\partial_sx_s\ra$.
We then note that
\begin{align*}
&\Bigl(\partial_{z'}+i(\eta'-\xi_s')\Bigr) e^{i\la \xi_s,(0,z')-x_s\ra-i\tau (s-t)-i\la\eta',z'-y'\ra}=0=\\
&\Bigl(\partial_s+i(\tau+p_k(x_s,\xi_s))-i\la \partial_s\xi_s,(0,z')-x_s\ra\Bigr)
 e^{i\la \xi_s,(0,z')-x_s\ra-i\tau (s-t)-i\la\eta',z'-y'\ra}.
\end{align*}
Applying each of $2^{-\frac k2}\partial_{z'}$, $2^{-\frac k2}\partial_s$, or $\la \partial_s\xi_s,(0,z')-x_s\ra$ to the amplitude term $h(\cdots)g(\cdots)$ preserves its form.
An integration by parts argument, together with Schwartz bounds on $h$ and $g$, then shows that the integral is dominated in absolute value by 
\begin{multline}\label{Ke1est}
C_N\,2^{\frac {kd}2}\int \Bigl(1+2^{-\frac k2}|\eta'-\xi_s'|+2^{-\frac k2}|\tau+p_k(x_s,\xi_s)|\\
+2^{\frac k2}|s-t|
+2^{\frac k2}|z'-y'|+2^{\frac k2}|(0,z')-x_s|\Bigr)^{-N}\,ds\,dz'.
\end{multline}
Note that $|(x_s)_1|\ge \frac 45|s-s(x,\xi)|$ as $|\partial_s(x_s)_1|\ge \frac 45$. Since $2^{-k}\xi_s$, $2^{-k}p_k(x_s,\xi_s)$, and $x_s$ are all uniformly Lipschitz in $s$, the integral is in turn bounded by
\begin{multline*}
C_{N+2n+1}\,2^{\frac {kd}2}\int \bigl(1+2^{\frac k2}|s-t|+2^{\frac k2}|z'-y'|\bigr)^{-2n-1}\,ds\,dz'\\
\times\Bigl(1+2^{-\frac k2}|(\tau,\eta')-\Pi_{\tau,\eta'}\gamma(s(x,\xi))|
+2^{\frac k2}|(t,y')-\Pi_{t,y'}\gamma(s(x,\xi))|\Bigr)^{-N}
\end{multline*}
which yields the estimate \eqref{sideways} for $|x_1|\le\frac 32$.

If $|x_1|\ge\frac 32$, $|t|\le 1$, we have $|(x_t)_1|\ge \frac 16|x_1|\ge\frac 14|t|$.
By a similar proof to above, \eqref{Ke1est} then leads to the following bounds,
\begin{multline*}
\bigl|\tK_k^{e_1}(t,y',\tau,\eta';x,\xi)\,\one_{|x_1|\ge \frac 32}\bigr|\\
\le
C_N\bigl(1+2^{-\frac k2}|\tau+p_k(x,\xi)|+2^{-\frac k2}|\eta'-\xi'|
+2^{\frac k2}|x_1|+2^{\frac k2}|y'-x'|\bigr)^{-N}.
\end{multline*}
Here we use, for example, that 
$$
|x_1|+|y'-x'|\lesssim |(x_1)_t|+|y'-x'|\lesssim |(x_1)_s|+|y'-x'_s|+|s-t|
$$
by the above. The Schur test, and the fact that $(x,\xi_1,\xi')\rightarrow (x,p_k(x,\xi),\xi')$ is a diffeomorphism on $|\angle(\xi,e_1)|\le\frac 12$, proves $L^2$ boundedness of the operator
$T_k^0\,\phi(t)\tE_k^{e_1}(t)T_k^*\,\one_{|x_1|\ge\frac 32}$.
\end{proof}


We now turn to the proof of \eqref{fundest} for the operator $E(t)$, that is
\begin{align*}
\|\la D\ra^{-s}E(t)f\|_{L^q_t L^r_x([0,1]\times\Rd)}&\le C\,\|f\|_{L^2(\Rd)}\\
\Bigl\|\la D\ra^{-s}\!\int_0^tE(t-s)F(s,\cdot)\,ds\Bigr\|_{L^q_t L^r_x([0,1]\times\Rd)}&\le 
C\,\|\la D\ra^{1-s} F\|_{L^{\tq'}_t L^{\tr'}_x([0,1]\times\Rd)}
\end{align*}
for $s,q,\tq,r,\tr$ satisfying the conditions of Theorem \ref{thm:strichartz}.
A consequence of Corollary \ref{cor:eprop} is that 
$$
a_\omega(D)\psi_k(D) E(t)=a_\omega(D)\psi_k(D)E(t)a'_\omega(D)\psi'_k(D)+a_\omega(D)\psi_k(D)R(t),
$$
with $R(t)$ a smoothing operator, and $a'_\omega(\eta)\psi'_k(\eta)$ a $S^0_{1,0}$ cutoff to a $\delta 2^{k}$ neighborhood of the support of $a_\omega(\eta)\psi_k(\eta)$. 
Since $q,r\ge 2\ge \tq',\tr'$, it suffices
by Littlewood-Paley theory to prove that, for a constant $C$ independent of $k$,
$$
\|a_\omega(D)\psi_k(D)E(t)f\|_{L^q_t L^r_x([0,1]\times\Rd)}\le C\,2^{ks}\|f\|_{L^2(\Rd)},
$$
and that
\begin{multline*}
\Bigl\|\int_0^ta_\omega(D)\psi_k(D)E(t-s)a'_\omega(D)\psi'_k(D)F(s,\cdot)\,ds\Bigr\|_{L^q_t L^r_x([0,1]\times\Rd)}\\
\le 
C\,2^k\|F\|_{L^{\tq'}_t L^{\tr'}_x([0,1]\times\Rd)}.
\end{multline*}
Since $E(t)E^*(s)=E(t-s)$, we can apply \cite[Theorem 1.2]{KT} with a scaling of $(t,x)$ by $2^k$ to conclude that these are implied by the estimate
\begin{multline*}
\|a'_\omega(D)\psi'_k(D)E(t-s)a'_\omega(D)\psi'_k(D)f\|_{L^\infty(\Rd)}\\
\le C\,2^{kd}\bigl(1+2^k|t-s|\bigr)^{-\frac{d-1}2}\|f\|_{L^1(\Rd)}.
\end{multline*}
By Corollary \ref{cor:eprop} and the comments following it, this estimate in turn is implied by proving the same estimate with $E(t-s)$ replaced by $\tE_k^\omega(t-s)$. Letting $\tK_k^\omega(t,x,y)$ be the integral kernel of $\tE_k^\omega(t)$, we need show that
$$
|\tK_k^\omega(t,x,y)|\le C\,2^{kd}\bigl(1+2^k|t|\bigr)^{-\frac{d-1}2},\qquad |t|\le 1.
$$

We in fact prove a stronger estimate, which captures the decay of the fundamental solution away from the light cone.
We will show in Section \ref{sec:wavepackets} that, for all $N$, with $S_t(y)$ the geodesic sphere of radius $|t|$ centered at $y$, and $\dist(x,S_t(y))$ the geodesic distance in $\g_k$ of $x$ to the set $S_t(y)$,
\begin{equation}\label{eqn:kbound}
|\tK_k^\omega(t,x,y)|\le C_N\,2^{kd}(1+2^k|t|)^{-\frac{d-1}2}\bigl(1+2^k \bigl|\dist(x,S_t(y))\bigr|\,\bigr)^{-N},
\end{equation}
which will imply \eqref{fundest} by the above.

By similar steps and duality, estimate \eqref{fundsqfnest} reduces to proving that, for $q_d$ and $s_d$ as in Theorem \ref{thm:sqfn}, and $\phi\in C_c^\infty\bigl((-\hf,\hf)\bigr)$,
$$
\Bigl\|\phi(t)\int \tE_k^\omega(t-s)\phi(s)F(s,\cdot)\,ds\Bigr\|_{L^{q_d}_xL^2_t}
\le C\,2^{2ks_d}
\|F\|_{L^{q_d'}_yL^2_s}.
$$
It suffices to prove this for $\omega=e_1$.
We deduce from \eqref{eqn:kbound} that
$$
|\tK_k^\omega(t,x,y)|\le C_N\,2^{kd}(1+2^k|x-y|)^{-\frac{d-1}2}\bigl(1+2^k \bigl|t-\dist(x,y)\bigr|\,\bigr)^{-N},
$$
which uses that $\dist(x,S_t(y))\ge |t-\dist(x,y)\bigr|$, and $\dist(x,y)\approx |x-y|$. As a consequence, letting $x=(x_1,x')$, we have
\begin{multline*}
\Bigl\|\phi(t)\int \tK_k^{e_1}(t-s,x_1,x',y_1,y')\phi(s)F(s,y_1,y')\,ds\,dy'\Bigr\|_{L^\infty_{x'}L^2_t}\\
\le C\,2^{k(d-1)}(1+2^k|x_1-y_1|)^{-\frac{d-1}2}
\|F(\cdot,y_1,\cdot)\|_{L^1_{y'}L^2_s}.
\end{multline*}
On the other hand, writing $E(t-s)=E(t)E(s)^*$, Theorem \ref{thm:sideways} and the comments surrounding \eqref{eqn:energyloc'} show that
\begin{multline*}
\Bigl\|\phi(t)\int \tK_k^{e_1}(t-s,x_1,x',y_1,y')\phi(s)F(s,y_1,y')\,ds\,dy'\Bigr\|_{L^2_{x'}L^2_t}\\
\le C\,
\|F(\cdot,y_1,\cdot)\|_{L^2_{y'}L^2_s}.
\end{multline*}
Interpolation then yields
\begin{multline*}
\Bigl\|\phi(t)\int \tK_k^{e_1}(t-s,x_1,x',y_1,y')\phi(s)F(s,y_1,y')\,ds\,dy'\Bigr\|_{L^{q_d}_{x'}L^2_t}\\
\le C\,2^{2ks_d}|x_1-y_1|^{-1+\frac 1{q_d'}-\frac 1{q_d}}
\|F(\cdot,y_1,\cdot)\|_{L^{q_d'}_{y'}L^2_s},
\end{multline*}
and an application of the Hardy-Littlewood inequality yields the desired bound.

\section{Wave packets and dispersive estimates}\label{sec:wavepackets}
This section is devoted to the proof of \eqref{eqn:kbound} for $|t|\le 1$. Without loss of generality we assume $0\le t\le 1$ throughout to simplify notation.

To motivate the proof we recall Fefferman's analysis in \cite{F} of $\exp(-i|D|)$, the wave group for the Euclidean laplacian at $t=1$. Consider
$$
K_k(x)=(2\pi)^{-n}\int e^{i\la x,\eta\ra-i|\eta|}\,\psi_k(\eta)\,d\eta.
$$
Following \cite{F}, decompose $\psi_k(\eta)=\sum_{\nu}\psi_k^\nu(\eta)$, where $\psi_k^\nu$ equals $\psi_k$ multiplied by a homogeneous cutoff to a conic neighborhood of angle $2^{-\frac k2}$ about the direction $\nu\in\sph^{d-1}$, and $\nu$ varies over a discrete set of directions separated by distance $2^{-\frac k2}$. The function $\psi_k^\nu$ behaves like a scaled cutoff to a rectangle of dimension $2^{k}\times (2^{\frac k2})^{d-1}$, in that
$$
\bigl|\la\nu,\partial_\eta\ra^m\partial_\eta^\alpha\psi_k^\nu(\eta)\bigr|\le 
C_{m,\alpha}2^{-k(m+\frac{|\alpha|}2)},
$$
with constants independent of $k$.
The angular width is selected since one can write
$$
e^{-i|\eta|}\,\psi_k^\nu(\eta)=e^{-i\la \nu,\eta\ra}\,a_k^\nu(\eta),
$$
where $a_k^\nu$ satisfies the same derivative estimates as $\psi_k^\nu$. This decomposes
$$
K_k(x)=\sum_\nu {f_k^\nu}(x-\nu),\quad\text{where}\;\; \widehat{f_k^\nu}(\eta)=a_k^\nu(\eta).
$$
The function $f_k^\nu(x-\nu)$ is concentrated in a rectangle centered at $\nu$, of dimension $2^{-k}$ along the $\nu$ direction and $2^{-\frac k2}$ in perpendicular directions. By the spacing of the indices $\nu$ these rectangles are essentially disjoint, and simple geometry shows that, for all $N$,
$$
|K_k(x)|\le C_N\,2^{k(\frac{d+1}2)}\bigl(1+2^k\bigl||x|-1\bigr|\bigr)^{-N}.
$$
If $2^{-k}\le t\le 1$, the above argument can be scaled by $t$ to decompose the kernel of $\exp(-it|D|)$. This gives a $t$-dependent splitting $\psi_k=\sum_\nu\psi_{k,t}^\nu$, where now $\psi_{k,t}^\nu$ is localized to a cone of angle $t^{-\frac 12} 2^{-\frac k2}$, and the $f_{k,t}^\nu(x-t\nu)$ are concentrated in a rectangle of dimensions $2^{-k}$ and $t^{\frac 12}2^{-\frac k2}$, centered at $t\nu$.
These rectangles are again mutually disjoint, leading to bounds
$$
|K_k(t,x)|\le C_N\,2^{k(\frac{d+1}2)}t^{-(\frac{d-1}2)}\bigl(1+2^k\bigl||x|-t\bigr|\bigr)^{-N}.
$$
For $0\le t\le 2^{-k}$, the symbol $e^{-it|\eta|}$ is a classical symbol, and the kernel has the same size as $\widehat \psi_k(-x)$, or as $K_k(t,x)$ at $t=2^{-k}$.

The decomposition of \cite{F} was used in Seeger-Sogge-Stein \cite{SSS} to estimate the kernel of oscillatory integral operators with nondegenerate phase functions, for example $\exp(-iP)$ for a smooth metric. The key ingredient is that the phase function $\varphi(x,\eta)$ can be linearized in $\eta$ over the support of each $\psi_k^\nu$, up to an error that behaves like an appropriate amplitude function.

To get the correct kernel estimates for $t\ll 1$ requires better estimates on the phase function for it to linearize over the support of $\psi_{k,t}^\nu$. The needed estimates are precisely those of \eqref{linphase}, and the corresponding estimates for amplitudes are those of \eqref{linsymbol}.

The proof of the estimates in \eqref{eqn:kbound} for a single term $\tW_k^\omega(t)$ or $\tB_k^\omega(t)$ would follow along the lines of \cite{SSS}, using the decomposition $\psi_{k,t}^\nu$, together with \eqref{linphase}--\eqref{linsymbol}. 
We need, however, prove these estimates for a product of arbitrarily many terms $\prod_j \tB_k^\omega(tr_j)$, where $\sum r_j=1$. It is still appropriate to use the partition $\psi_{k,t}^\nu$
for each term; however, we need a function space argument in order to handle a product of terms since there is no hope for controlling the operator product using a symbol calculus. 
We therefore work with a wave packet frame and function spaces using weighted norms in that frame that grow with the distance to a given point $(x_0,\nu_0)$ on the cosphere bundle. We prove that the operator $\tB_k^\omega(s)$ is bounded from the space weighted at $(x_0,\nu_0)$ to the space weighted at its time-$s$ flowout $(x_s,\nu_s)$. These function space estimates iterate and yield a convergent sum, which is sufficient to prove the bounds in \eqref{eqn:kbound}.


\subsection{The wave packet frame}
We will establish \eqref{eqn:kbound} for $2^{-k} \le t \le 1$; the proof for $0\le t\le 2^{-k}$ follows by using the same proof as for $t=2^{-k}$. We consider $t$ to be fixed for this section and suppress the dependence of the frame on $t$; however, we note that all constants are uniform over $t\in [0,1]$.

We prove the estimate by studying the behavior of $\tilde{E}_k^\omega(t)$ in a family of wave packets that form a frame for functions that are frequency localized at scale $2^k$. The wave packet frame that we use at scale $2^k$ is essentially a spatial dilation by $t^{-1}$ of the scale $t\ts 2^k$ parabolic wave packets of Smith \cite{Sm0}. The only difference is that our frame covers more than one dyadic region, but we provide the details here for completeness.

We will be expanding functions with Fourier transform supported in the annulus
$$
A_k = \{ \eta: \tfrac{4}{5}\,2^{k-1} \le |\eta| \le \tfrac{5}{4}\,2^{k+2}\}.
$$ 
Let 
$A_k' = \{\eta : \frac{2}{3} 2^{k-1} \le |\eta| \le \frac{3}{2}2^{k+2}\}$. 
We construct a partition of unity on $A_k$, supported in $A_k'$, of the form
$$
1= \sum_{\nu \in \Upsilon_{k,t}} \beta_{k,t}^{\nu}(\eta)^2 \text{ when } \eta \in A_k,\quad  \supp (\beta_{k,t}^{\nu}) \subset \Omega_{k,t}^{\nu},
$$
where $\Upsilon_{k,t}$ is a collection of unit vectors separated by $t^{-\frac{1}{2}}2^{-\frac{k}{2}}$, and $\beta_{k,t}^{\nu}(\eta)$ satisfies the following estimates
\begin{equation}\label{est:betakj}
\left| \la \nu, \partial_{\eta} \ra^j \partial_{\eta}^{\alpha} \beta_{k,t}^{\nu}(\eta) \right| \le C_{j,\alpha}\,2^{-kj}\,\bigl(t^{-\frac 12}2^{\frac k2}\bigr)^{-|\alpha|}.
\end{equation}
Observe that $\Omega_{k,t}^{\nu}$, defined in \eqref{rknudef}, is contained in a rectangle of dimension $2^{k+3}$ along the direction $\nu$, and $t^{-\frac 12}2^{\frac k2}$ along the directions orthogonal to $\nu$. For each $\nu$, let $\Xi_{k,t}^{\nu}$ be a rectangular lattice in $\mathbb{R}^n$ with spacing $2\pi \cdot 2^{-k-3}$ along the $\nu$ direction and spacing $2\pi\cdot t^{\frac{1}{2}} 2^{-\frac{k}{2}}$ in directions orthogonal to $\nu$. 
Let $\Gamma_{k,t} = \bigl\{(x,\nu): x \in \Xi_{k,t}^{\nu}\,,\, \nu \in \Upsilon_{k,t} \bigr\}$, which is a discrete subset of the cosphere bundle $S^*(\Rd)$. We use
$\gamma=(x,\nu)$ to denote a variable in $S^*(\Rd)$, and for $\gamma\in\Gamma_{k,t}$ we set 
$$
\hat{\phi}_{\gamma}(\eta) = 2^{-\frac 32}\,2^{-k(\frac{d+1}4)}t^{\frac{d-1}4}e^{-i\la x, \eta\ra} \beta_{k,t}^{\nu}(\eta).
$$
Then, with $\la\nu^\perp,\partial_y\ra$ denoting derivatives in directions perpendicular to $\nu$,
\begin{multline}\label{phigammaest}
\bigl|\la\nu^\perp,\partial_y\ra^\alpha\partial_y^\beta\phi_\gamma(y)\bigr|\le
C_{N,\alpha,\beta}\,2^{k(\frac{d+1}4)}\,t^{-\frac{d-1}4}\times
\\
2^{k|\beta|}\,\bigl(t^{-\hf}2^{\frac k2}\bigr)^{|\alpha|}\,\bigl(1+2^k|\la\nu,y-x\ra|+t^{-1}2^k\,|y-x|^2\bigr)^{-N}.\rule{0pt}{13pt}
\end{multline}
Functions $f\in L^2(\mathbb{R}^n)$ with $\supp(\hat{f}) \subset A_k$ admit an expansion in 
$\{\phi_\gamma\}_{\gamma \in \Gamma_{k,t}}$,
$$
f = \sum_{\gamma\in\Gamma_{k,t}} c_\gamma \phi_\gamma,\qquad
c_\gamma = \int \overline{\phi_\gamma(y)}f(y)\, dy.
$$

We define a pseudodistance function on the cosphere bundle $S^*(\Rd)$ by
$$
d_t(x,\nu; x', \nu') = |\la \nu, x-x'\ra| + |\la \nu', x-x'\ra| + t\ts|\nu-\nu'|^2 + t^{-1}|x-x'|^2.
$$
This is the parabolic pseudodistance of Smith \cite{Sm0} scaled like the wave packet frame, and satisfies, for all $t>0$,
\begin{equation}\label{pseudist}
d_t(\gamma;\gamma'')\le 4\ts d_t(\gamma; \gamma')+4\ts d_t(\gamma';\gamma'').
\end{equation}
It is also approximately invariant under the Hamiltonian flow $\chi_s$ for $s\le t$. This was proven for $C^{1,1}$ metrics in \cite{Sm0}, we provide the proof here for metrics of bounded curvature.
\begin{lemma}\label{distinvar}
For some $C$ and all $0\le s\le t\le 1$, and $\chi_s$ the projected Hamiltonian flow map for any metric $\g_M$ satisfying \eqref{cond0}--\eqref{cond2}. Then
$$
C^{-1}\, d_t(\gamma;\gamma')\le d_t(\chi_s(\gamma);\chi_s(\gamma'))\le C\, d_t(\gamma;\gamma').
$$
\end{lemma}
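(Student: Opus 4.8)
The plan is to follow Smith's argument for $C^{1,1}$ metrics in \cite{Sm0}, the only new ingredient being that the first-order regularity of the geodesic and Hamiltonian flows used there is now supplied — uniformly in the parameter $M$ — by the Jacobi-equation estimates of Section \ref{sec:Hamflow}. First I would reduce to the upper bound: since $\chi_s^{-1}=\chi_{-s}$ and the flow estimates invoked below hold for all $|s|\le 1$, the lower bound follows by applying the upper bound $d_t(\chi_s(\gamma);\chi_s(\gamma'))\le C\,d_t(\gamma;\gamma')$ to $\chi_{-s}$ at the points $\chi_s(\gamma),\chi_s(\gamma')$; no use of \eqref{pseudist} is needed for this. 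So it suffices to bound $d_t(\chi_s(\gamma);\chi_s(\gamma'))$ from above for $0\le s\le t\le 1$.

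Write $\gamma=(y,\eta)$, $\gamma'=(y',\eta')$ with $|\eta|=|\eta'|=1$, and $\chi_s(\gamma)=(x_s,\nu_s)$ with $x_s$ the geodesic position and $\nu_s=\xi_s/|\xi_s|$ the normalized cotangent component, in the notation of Section \ref{sec:Hamflow}. Corollary \ref{cor:geodflow} and Theorem \ref{thm:yest} give, with constants independent of $M$,
$$
\partial_y x_s=\I+O(c_d),\quad \partial_\eta x_s=s\,\Pi_\eta^\perp+O(c_ds),\quad \partial_y\xi_s=O(c_d),\quad \partial_\eta\xi_s=\I+O(c_d),
$$
together with $|\nu_s(y,\eta)-\eta|\lesssim c_ds$ and $\bigl|\,|\xi_s|-1\,\bigr|\lesssim c_d$; the reason for working through the Jacobi variation formula rather than differentiating Hamilton's equations directly is precisely that these bounds do not see the factors of $M$ carried by $\partial_x^2\g_M$ and $\partial_x\Gamma$. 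To compare the flow at $\gamma$ and $\gamma'$ I would integrate the Jacobian along a path $\rho\mapsto\gamma_\rho$ joining $\gamma'$ to $\gamma$ ($y$-component affine, $\eta$-component along a great circle on the unit sphere) — crucially avoiding any Taylor expansion of $\chi_s$ about a point, since it is the higher variations that carry powers of $M$. This yields
$$
x_s(\gamma)-x_s(\gamma')=(y-y')+s(\eta-\eta')+O(c_d)\bigl(|y-y'|+s|\eta-\eta'|\bigr),
$$
$$
\nu_s(\gamma)-\nu_s(\gamma')=(\eta-\eta')+O(c_d)\bigl(|y-y'|+|\eta-\eta'|\bigr),
$$
exhibiting $\chi_s$ as an $O(c_d)$-perturbation of the Euclidean shear $\chi_s^0(x,\nu)=(x+s\nu,\nu)$, for which a direct computation using only $0<s\le t\le 1$ and $\langle\eta,\eta-\eta'\rangle=\hf|\eta-\eta'|^2$ gives $d_t(\chi_s^0\gamma;\chi_s^0\gamma')\le 2\,d_t(\gamma;\gamma')$.

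It then remains to dominate each of the four terms of $d_t(\chi_s\gamma;\chi_s\gamma')$ by $C\,d_t(\gamma;\gamma')$. The parabolic terms $t^{-1}|x_s(\gamma)-x_s(\gamma')|^2$ and $t\,|\nu_s(\gamma)-\nu_s(\gamma')|^2$ follow immediately from the displayed formulas together with the elementary inequalities $s^2|\eta-\eta'|^2\le t|\eta-\eta'|^2$ and $t|y-y'|^2\le t^{-1}|y-y'|^2$. For the longitudinal terms $|\langle\nu_s(\gamma),x_s(\gamma)-x_s(\gamma')\rangle|$ and $|\langle\nu_s(\gamma'),x_s(\gamma)-x_s(\gamma')\rangle|$ I would not substitute the perturbative expansion directly but exploit that, for the Jacobi field $J(s)$ describing the variation along the geodesic through $\gamma$, the quantity $\g_M(\dot\gamma(s),J(s))$ is affine in $s$, since $\tfrac{d^2}{ds^2}\g_M(\dot\gamma,J)=\g_M(\dot\gamma,R(\dot\gamma,J)\dot\gamma)=0$. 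Its value at $s=0$ is $p_M(y,\eta)^{-1}\langle\eta,y-y'\rangle$, and its $s$-coefficient is the variation of the initial velocity paired against $\dot\gamma(0)$ — which the unit-speed normalization of $w(y,\eta)$ renders $\g_M$-orthogonal to $\dot\gamma(0)$ in the $\eta$-direction, leaving an $O(c_d)$ multiple of $|y-y'|$. Since $\nu_s\propto\g_M\dot\gamma$ and $|\xi_s|=1+O(c_d)$, this identifies $\langle\nu_s(\gamma),x_s(\gamma)-x_s(\gamma')\rangle$ with $\langle\eta,y-y'\rangle$ up to terms absorbed into $d_t(\gamma;\gamma')$ by the same elementary inequalities, and likewise for $\gamma'$.

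The main obstacle throughout is uniformity in $M$: every estimate must be arranged so that the family parameter enters only through the curvature bound $\|\R_{ijkl}\|_{L^\infty}\le c_d$ and never through pointwise control of $\partial_x^2\g_M$, which is exactly why one works with first-order flow information from Section \ref{sec:Hamflow} and with the affine-in-$s$ identity for $\g_M(\dot\gamma,J)$ rather than a symbol- or Taylor-type expansion — this is the point at which having $\R\in L^\infty$ (rather than merely $\partial_x^2\g\in\BMO$) is essential. A minor secondary issue, the non-differentiability of $d_t$ in its absolute-value terms, is harmless since we only ever bound $d_t(\chi_s\gamma;\chi_s\gamma')$ from above, termwise.
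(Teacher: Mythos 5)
Your strategy---path-integrated first-order flow bounds for the parabolic terms, and an exact structural identity for the longitudinal component---is a geometric counterpart of the paper's proof, which instead writes $x=\nabla_\eta\varphi(s,x_s,\eta)$, $\xi_s=\nabla_x\varphi(s,x_s,\eta)$ and Taylor-expands the generating function using $|\partial_x^2\varphi_k|\lesssim 1$, $|\partial_x\partial_\eta\varphi_k|\lesssim 1$, $|\partial_\eta^2\varphi_k|\lesssim s$ to obtain the quadratic bound $|\la\eta,x'-x\ra-\la\xi_s,x'_s-x_s\ra|\lesssim t^{-1}|x'_s-x_s|^2+t\,|\eta'-\eta|^2$. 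However, your longitudinal step fails as written. You allow the $s$-coefficient of $\g_M(\dot\gamma,J)$ to be ``an $O(c_d)$ multiple of $|y-y'|$'' and assert that the resulting term is absorbed ``by the same elementary inequalities.'' It is not: its contribution is of size $c_d\,s\,|y-y'|$, linear in the separation, whereas $d_t(\gamma;\gamma')$ is only quadratic in transverse separations. Concretely, take $\eta=\eta'$, $y-y'\perp\eta$ with $|y-y'|=\delta$, and $s=t$: then $d_t(\gamma;\gamma')=t^{-1}\delta^2$, while $c_d\,t\,\delta\gg t^{-1}\delta^2$ as soon as $\delta\ll c_d t^2$. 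The whole content of the lemma (compare the paper's quadratic right-hand side above) is that the longitudinal pairing changes only by amounts quadratic in the separation; an error linear in $|y-y'|$, no matter how small its constant, destroys the estimate. The repair lies inside your own mechanism: for a variation through $\g_M$-unit-speed geodesics the tangential component is exactly \emph{constant} in $s$, since $\tfrac{d}{ds}\g_M(\dot\gamma_\rho,J_\rho)=\g_M(\dot\gamma_\rho,D_\rho\dot\gamma_\rho)=\tfrac12\partial_\rho\,\g_M(\dot\gamma_\rho,\dot\gamma_\rho)=0$; the unit normalization of $w(y_\rho,\eta_\rho)$ kills the $y$-variation as well as the $\eta$-variation, provided the variation of the initial velocity is taken covariantly---there is no leftover $O(c_d)|y-y'|$ term.

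Two further points need to be made explicit for the argument to close. First, to keep uniformity in $M$ you cannot represent the finite difference $x_s(\gamma)-x_s(\gamma')$ by a single Jacobi field with $J(0)=y-y'$, as your phrasing suggests: that is a first-order Taylor expansion of $\chi_s$ whose remainder involves second variations of the flow, which by Theorem \ref{thm:geodflow} carry a factor $M$---precisely what you said you would avoid. You must keep the family $J_\rho(s)=\partial_\rho x_s(\gamma_\rho)$ along your interpolating path and integrate in $\rho$. Second, having done so, the conserved quantity is $\g_M(\dot\gamma_\rho(s),J_\rho(s))$, paired with the $\rho$-dependent conormal, whereas the term you must control is $\la\nu_s(\gamma),J_\rho(s)\ra$ with the fixed covector $\nu_s(\gamma)$; the discrepancy is bounded by $C(|y-y'|+|\eta-\eta'|)(|y-y'|+s|\eta-\eta'|)$ using the first-order flow bounds, and each such product \emph{is} absorbable into $d_t(\gamma;\gamma')$ (via $s\le t\le 1$ and the arithmetic--geometric mean inequality with weights $t^{\pm1}$), but this step is absent from your sketch. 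With these corrections your Jacobi-field route does yield the lemma, by a mechanism equivalent to, though packaged differently from, the paper's generating-function computation.
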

\begin{proof}
Let $\eta=\nu$ and $\eta'=\nu'$. If $(x_s,\xi_s)$ is the (non-projected) Hamiltonian flow of $(x,\eta)$, then $\bigl||\xi_s|-1\bigr|\lesssim c_d$, so we can replace $\nu_s$ by $\xi_s$ in the distance function.
From Corollary \ref{cor:geodflow}, when $|\eta|=1$ we have the bound $|\partial_\eta x_s|\lesssim s$, 
$|\partial_x x_s|+|\partial_x \xi_s|+|\partial_\eta\xi_s|\lesssim 1$, and we deduce
$$
|x'_s-x_s|+t\ts|\xi'_s-\xi_s|\lesssim |x'-x|+t\ts|\eta'-\eta|.
$$
Applying this also to $\chi_{-s}$ we obtain
$$
t^{-1}|x'_s-x_s|^2+t\ts|\xi'_s-\xi_s|^2\approx t^{-1}|x'-x|^2+t\ts|\eta'-\eta|^2.
$$
By symmetry it thus suffices to show that
\begin{equation}\label{eqn:taylorerror}
|\la \eta,x'-x\ra-\la\xi_s,x'_s-x_s\ra|\lesssim t^{-1}|x'_s-x_s|^2+t\ts|\eta'-\eta|^2.
\end{equation}
Let $\varphi$ be the phase function for $\g_M$, and write $x=\nabla_\eta\varphi(s,x_s,\eta)$ and $\xi_s=\nabla_x\varphi(s,x_s,\eta)$. By homogeneity,
\begin{align*}
\la \eta,x'-x\ra-&\la\xi_s,x'_s-x_s\ra\\
&=\la\eta,\nabla_\eta\varphi(x,x'_s,\eta')-\nabla_\eta\varphi(s,x_s,\eta)\ra-
\la\nabla_x\varphi(s,x_s,\eta),x'_s-x_s\ra
\rule{0pt}{11pt}\\
&=\varphi(s,x'_s,\eta')-\varphi(s,x_s,\eta)-\la x'_s-x_s,\nabla_x\varphi(s,x_s,\eta)\ra
\rule{0pt}{11pt}\\
&\hspace{2.4in}-\la\eta'-\eta,\nabla_\eta\varphi(s,x'_s,\eta')\ra.
\end{align*}
Observe that, by Theorem \ref{thm:yest}, 
\begin{align*}
\bigl|\la\eta'-\eta,\nabla_\eta\varphi(s,x'_s,\eta')-\nabla_\eta\varphi(s,x_s,\eta)\ra\bigr|
&\lesssim |\eta'-\eta|\bigl(\ts|x'_s-x_s|+t\ts|\eta'-\eta|\ts\bigr)\\
&\lesssim t^{-1}|x'_s-x_s|^2+t\ts|\eta'-\eta|^2.
\end{align*}
Consequently, it suffices to show that the error bound for the first order Taylor expansion of $\varphi(s,x'_s,\eta')-\varphi(s,x_s,\eta)$ is bounded by the right hand side of \eqref{eqn:taylorerror}. The estimates \eqref{dxphi}--\eqref{dxdetaphi} give $|\partial_x^2\varphi_k|\lesssim 1$, $|\partial_x\partial_\eta\varphi_k|\lesssim 1$, $|\partial_\eta^2\varphi_k|\lesssim |s|$, and hence the remainder is dominated by
$$
|x'_s-x_s|^2+|x'_s-x_s|\,|\eta'-\eta|+t\ts|\eta'-\eta|^2\le \tfrac 32\,t^{-1}|x'_s-x_s|^2+\tfrac 32\,t\ts|\eta'-\eta|^2
$$
giving the desired bound.
\end{proof}

For any given integer $M \geq 0$ and point $\gamma_0\in S^*(\Rd)$, we define a weighted norm space
$$
\|f\|_{M,\gamma_0}^2 = \sum_{\gamma} \big( 1+2^k d_t(\gamma;\gamma_0)\big)^{2M} |c_\gamma(f)|^2,\qquad
c_\gamma(f) = \int \overline{\phi_\gamma(y)}f(y)\, dy.
$$
For dyadically localized $f$, this norm roughly measures how far $f$ is from being a wave packet centered at $\gamma_0$. In the next subsection we will prove the following theorem. 

\begin{theorem}\label{thm:fundest}
Suppose that $0\le s\le t\le 1$, $\gamma_0\in\Gamma_{k,t}$, and $\chi_s(\gamma_0)=(x_s,\nu_s)$, where $\chi_s$ is the projected Hamiltonian flow for $\g_k$. Then for all $l,\beta,N$, there are constants $C_{l,\beta,N}$ so that
\begin{multline}\label{derivativeestoff}
\bigl| \la \nu_s^\perp, \partial_x \ra ^{\alpha}\partial_x^{\beta}
\bigl(\tB_k^\omega(s)\phi_{\gamma_0} \bigr)(x)\bigr| 
\le C_{N,\alpha,\beta} \,2^{k(\frac{d+1}4)}t^{-\frac{d-1}4} \\
\times 2^{k|\beta|}\tkhfinv^{|\alpha|}\bigl(1+2^k|\la \nu_s,x - x_s  \ra| + t^{-1}2^{k}|x-x_s|^2\bigr)^{-N}. 
\end{multline}
\end{theorem}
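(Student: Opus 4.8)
The plan is to realize $\tB^\omega_k(s)\phi_{\gamma_0}$ as an oscillatory integral with phase $\varphi_k(s,x,\eta)-\la x_0,\eta\ra$, discard the cutoffs $\ta_\omega(D)$ and $\tpsi_k(D)$ as harmless, linearize the phase in $\eta$ about the base codirection $\nu_0$ using the eikonal estimates of Section~\ref{sec:phasefunction}, and then read off \eqref{derivativeestoff} by integrating by parts in $\eta$ after rescaling anisotropically so that $\Omega^{\nu_0}_{k,t}$ becomes a ball of radius $\approx1$.

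\emph{Reduction to a single oscillatory integral.} Writing $\hat\phi_{\gamma_0}(\eta)=C\,2^{-\frac k4(d+1)}t^{\frac{d-1}4}e^{-i\la x_0,\eta\ra}\beta^{\nu_0}_{k,t}(\eta)$, the rightmost $\ta_\omega(D)$ only replaces $\beta^{\nu_0}_{k,t}$ by $\ta_\omega\beta^{\nu_0}_{k,t}$, which obeys the same bounds \eqref{est:betakj} (and vanishes unless $\nu_0$ is close to $\omega$). For $c_d$ small the Hamiltonian flow keeps the output frequency in the region where $\ta_\omega\equiv1$, and $\tpsi_k$, $\ta_\omega$ are slowly varying on the frequency scale $t^{-\hf}2^{\frac k2}$ of the relevant wave packets; hence the left $\tpsi_k(D)$ and $\ta_\omega(D)$ act as the identity on the image of $\phi_{\gamma_0}$ up to errors bounded by $C_N2^{-kN}\|\cd\|_{H^{-N}}$. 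By \eqref{twkdef}--\eqref{twomegakdef} it then suffices to estimate $B_j(s)\phi_{\gamma_0}$ for $j=k-1,k,k+1$; all three have the same form, with $\varphi_{k\pm1}$ obeying the estimates of Theorems~\ref{thm:yest} and \ref{phikest} and with the $\g_{k\pm1}$--flow--out within $O(2^{-k})$ of $\chi_s(\gamma_0)$ since $\|\g_k-\g_{k\pm1}\|_{L^\infty}\lesssim2^{-k}$, so we treat $j=k$ and set $u(x)=(B_k(s)\phi_{\gamma_0})(x)$. By \eqref{bkdef},
$$
u(x)=C\,2^{-\frac k4(d+1)}t^{\frac{d-1}4}\!\int e^{\,i(\varphi_k(s,x,\eta)-\la x_0,\eta\ra)}\,a(s,x,\eta)\,d\eta\,,\qquad a=b_k\,\psi_k\,\beta^{\nu_0}_{k,t}\,,
$$
with $a$ supported in $\eta\in\Omega^{\nu_0}_{k,t}$. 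Combining \eqref{est:betakj}, \eqref{eqn:pkest} and Corollary~\ref{cor:linsymbol} applied at time $s$ — legitimate since $\Omega^{\nu_0}_{k,t}\subset\Omega^{\nu_0}_{k,\max(s,2^{-k})}$ and $\max(s,2^{-k})^{\hf}2^{-\frac k2}\le\tkhf$ — gives $|\la\nu_0,\partial_\eta\ra^j\partial_\eta^\alpha\partial_x^\beta a|\le C_{j,\alpha,\beta}\,2^{-kj}\,\tkhf^{|\alpha|}\,2^{\frac k2|\beta|}$.

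\emph{Linearization, rescaling, and integration by parts.} By \eqref{linphase'} (again at time $s$) write $\varphi_k(s,x,\eta)-\la x_0,\eta\ra=\la\eta,Y(s,x)\ra+\rho(s,x,\eta)$, $Y(s,x):=y(s,x,\nu_0)-x_0$, where $\rho$ is bounded and satisfies the bounds above, so $e^{i\rho}a$ does too. Since $y(s,x_s,\nu_0)=x_0$ we have $Y(s,x_s)=0$; $\partial_xY=\partial_xy(s,x,\nu_0)=\I+O(c_d)$ by Theorem~\ref{thm:yest}, so $Y(s,x)=(x-x_s)+O(c_d|x-x_s|)$; and $\nabla_x\la\nu_0,Y\ra=\nabla_x\varphi_k(s,x,\nu_0)=\xi(s,x,\nu_0)$, whose component transverse to $\nu_s$ vanishes at $x=x_s$ because $\nu_s$ is the normalization of $\xi_s=\xi(s,x_s,\nu_0)$. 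Rotating so $\nu_0=e_1$ and substituting $\eta_1=2^k\zeta_1$, $\eta'=t^{-\hf}2^{\frac k2}\zeta'$ turns the integral into
$$
u(x)=C\,2^{\frac k4(d+1)}t^{-\frac{d-1}4}\!\int e^{\,i\la\Theta(s,x),\zeta\ra}\,\ta(s,x,\zeta)\,d\zeta\,,\qquad \Theta=\bigl(2^k\la\nu_0,Y\ra,\;t^{-\hf}2^{\frac k2}\Pi_{\nu_0^\perp}Y\bigr)\,,
$$
with $\ta$ supported in $|\zeta|\approx1$ and $|\partial_\zeta^\theta\partial_x^\beta\ta|\le C_{\theta,\beta}2^{\frac k2|\beta|}$; the prefactor now matches \eqref{derivativeestoff}. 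Integrating by parts in $\zeta$, separately in $\zeta_1$ and in $\zeta'$ since the two components of $\Theta$ are of disparate size, yields $|u(x)|\le C_N\,2^{\frac k4(d+1)}t^{-\frac{d-1}4}\bigl(1+2^k|\la\nu_0,Y(s,x)\ra|+t^{-\hf}2^{\frac k2}|\Pi_{\nu_0^\perp}Y(s,x)|\bigr)^{-N}$; passing from this linear transverse weight to a quadratic one via $(1+a)^{-2N}\lesssim(1+a^2)^{-N}$ and invoking the flow--invariance of the pseudodistance $d_t$ (Lemma~\ref{distinvar}, together with $Y(s,x)=(x-x_s)+O(c_d|x-x_s|)$ and $|\nu_s-\nu_0|\lesssim c_d$) converts it to the factor $\bigl(1+2^k|\la\nu_s,x-x_s\ra|+t^{-1}2^k|x-x_s|^2\bigr)^{-N}$ in \eqref{derivativeestoff}. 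For the $x$--derivatives: $\partial_x$ on the phase brings down $\la\zeta,\partial_x\Theta\ra$, whose $\nu_0$--block $2^k\zeta_1\,\xi(s,x,\nu_0)$ has size $\approx2^k$, yielding the $2^{k|\beta|}$; but its transverse--to--$\nu_s$ part combines $2^k\zeta_1\la\nu_s^\perp,\xi(s,x,\nu_0)\ra$ — which is $O(c_d|x-x_s|)$, since it vanishes at $x_s$ — with a term of size $t^{-\hf}2^{\frac k2}$, so on $\{|x-x_s|\lesssim t^{\hf}2^{-\frac k2}\}$ it is $O(t^{-\hf}2^{\frac k2})$, while $\partial_x$ on $\ta$ (which carries $e^{i\rho}$) costs only $2^{\frac k2}\le\tkhfinv$, and away from the concentration region the loss is absorbed by the decay. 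Iterating, this gives \eqref{derivativeestoff}.

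\emph{Main obstacle.} The substantive difficulty is obtaining \eqref{derivativeestoff} with constants uniform in $k$, $s$, and $t$, especially for small $s$ and small $t$, where $\Omega^{\nu_0}_{k,t}$ is extremely thin and the available eikonal bounds are the borderline ones of Theorem~\ref{phikest} and Corollary~\ref{phikest'}. This forces a careful accounting of how the small perturbations — the rotation $\nu_0\mapsto\nu_s$ of size $O(c_d)$, the $x$--dependence of $\xi(s,x,\nu_0)$ and of $Y(s,x)$, and the remainder $\rho$ — interact with the two anisotropic scales, showing that each either stays within the parabolic concentration region (so it costs at most $\tkhfinv$, not $2^k$, per transverse derivative) or is absorbed by the polynomial decay. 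Equivalently, the content of the theorem is that $B_k(s)$ transports the parabolic packet $\phi_{\gamma_0}$ to a rapidly decaying perturbation of a packet of the same scale centered at the Hamiltonian flow--out $\chi_s(\gamma_0)$, uniformly in all parameters.
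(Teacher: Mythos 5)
Your proposal follows essentially the same route as the paper's proof: drop the outer cutoffs $\ta_\omega(D)\tpsi_k(D)$ as a harmless mollification, linearize $\varphi_k$ in $\eta$ over $\Omega^{\nu_0}_{k,t}$ via \eqref{linphase'} and Corollary \ref{cor:linsymbol}, integrate by parts in the anisotropically rescaled frequency variable, and control the $x$-derivatives through the chain rule using that the transverse-to-$\nu_s$ derivative of $y_1(s,x)=\varphi_k(s,x,\nu_0)$ vanishes at $x_s$, so it is $O(|x-x_s|)$ and the loss is absorbed by the rapid decay. The only differences are cosmetic: the paper writes the composition as $F(x,y(s,x))$ and converts the weight by the Taylor argument \eqref{eqn:xtoy} rather than by Lemma \ref{distinvar}, and it treats the left cutoffs as a scale-$2^{-k}$ mollifier that preserves the weighted pointwise bounds rather than as a smoothing error.
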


In the remainder of this subsection we deduce \eqref{eqn:kbound} from Theorem \ref{thm:fundest}. First we deduce $\|\cdot\|_{M,\chi_s(\gamma)}$ mapping properties for $\tB_k^\omega(s)$ from \eqref{derivativeestoff}. The left hand side of \eqref{derivativeestoff} vanishes unless $\angle(\omega,\gamma)\le \frac 14$, so
we may assume $\angle(\omega,\gamma_s)\le \frac 12$.

\begin{lemma}\label{lem:derivtoweight}
Suppose that $\hatf$ is supported in the set $\{n:\angle(\eta,\nu_0)\le \frac 12\}$, and for all $N,\alpha,\beta$ we have
\begin{multline*}
\bigl|\la\nu_0^\perp,\partial_y\ra^\alpha\partial_y^\beta f(y)\bigr|\le
C_{N,\alpha,\beta}\,2^{k(\frac{d+1}4)}\,t^{-\frac{d-1}4}
\\
\times 2^{k|\beta|}\,\bigl(t^{-\hf}2^{\frac k2}\bigr)^{|\alpha|}\,
\bigl(1+2^k|\la\nu_0,y-x_0\ra|+t^{-1}2^k\,|y-x_0|^2\bigr)^{-N}.\rule{0pt}{13pt}
\end{multline*}
Let $\gamma_0=(x_0,\nu_0)$. Then for all $M\ge 0$ we have $\|f\|_{M,\gamma_0}\le C_M$, where $C_M$ depends on only a finite number of the $C_{N,\alpha,\beta}$.
\end{lemma}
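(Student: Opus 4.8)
The plan is to reduce the Lemma to a pointwise decay estimate on the frame coefficients $c_\gamma(f)$ and then sum. Concretely, I will first show that for every $L$ there is a constant $C_L$, depending on only finitely many of the $C_{N,\alpha,\beta}$, such that
$$
|c_\gamma(f)|\le C_L\bigl(1+2^k d_t(\gamma;\gamma_0)\bigr)^{-L}\qquad\text{for every }\gamma\in\Gamma_{k,t}.
$$
Granting this, the Lemma follows from the standard parabolic packing estimate (cf.\ \cite{Sm0}), namely $\#\{\gamma\in\Gamma_{k,t}:2^k d_t(\gamma;\gamma_0)\le R\}\le C R^d$ for $R\ge1$ (the $t$–dependences in the parabolic spatial box and in the angular cap cancel, and each cell of $\Gamma_{k,t}$ has volume $\approx 2^{-kd}$). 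Choosing $L=M+d$ and grouping $\gamma$ into the dyadic shells $2^j\le 1+2^k d_t(\gamma;\gamma_0)<2^{j+1}$ gives
$$
\|f\|_{M,\gamma_0}^2=\sum_\gamma\bigl(1+2^k d_t(\gamma;\gamma_0)\bigr)^{2M}|c_\gamma(f)|^2\le C_L^2\sum_{j\ge0}2^{jd}\,2^{j(2M-2L)}\le C_M.
$$
(Only the part of $f$ with Fourier support in $A_k$ enters $c_\gamma(f)=\int\overline{\phi_\gamma}\,f$, since $\widehat{\phi_\gamma}$ is supported in $\Omega^\nu_{k,t}$, so there is no loss in treating $\widehat f$ as supported in $A_k$.)

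To prove the coefficient bound I write $\gamma=(x,\nu)$ and estimate $c_\gamma(f)=\int\overline{\phi_\gamma(y)}\,f(y)\,dy$ by two complementary integrations by parts. For the parabolic spatial decay: by \eqref{phigammaest} with $\alpha=\beta=0$ and by the hypothesis with $\alpha=\beta=0$, both $\phi_\gamma$ and $f$ are bounded by $C\,2^{k(d+1)/4}t^{-(d-1)/4}$ times a parabolic envelope centered at $x$, resp.\ $x_0$, and oriented along $\nu$, resp.\ $\nu_0$; integrating the product of these two envelopes in $y$ and invoking the quasi–triangle inequality \eqref{pseudist} for $d_t$ produces, after differentiating the envelopes finitely many times (allowed by the $\beta$–derivative bounds in \eqref{phigammaest} and in the hypothesis), the factor $\bigl(1+2^k(|\la\nu,x-x_0\ra|+|\la\nu_0,x-x_0\ra|+t^{-1}|x-x_0|^2)\bigr)^{-L}$, which is the spatial part of $d_t(\gamma;\gamma_0)$. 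For the angular decay: let $w$ be the unit vector perpendicular to $\nu_0$ in the direction of $\nu-\la\nu,\nu_0\ra\nu_0$. On $\Omega^\nu_{k,t}=\supp\widehat{\phi_\gamma}$ one has $|\la w,\eta\ra|\ge c\,2^k|\nu-\nu_0|$ once $|\nu-\nu_0|\ge C\,t^{-\hf}2^{-\frac k2}$ (for smaller $|\nu-\nu_0|$ the angular term $t|\nu-\nu_0|^2$ is comparable to $2^{-k}$ and one just uses the $L=0$ bound $|c_\gamma(f)|\le C$). Since $w$ is a $\nu_0^\perp$ direction, the hypothesis supplies $|\la w,\partial_y\ra^{\alpha}f|\le C_\alpha(t^{-\hf}2^{\frac k2})^{|\alpha|}$ times the same envelope, while $\la w,\eta\ra$–differentiation of $\widehat{\phi_\gamma}$ costs at most a bounded amount relative to the lower bound $|\la w,\eta\ra|\ge c\,2^k|\nu-\nu_0|$; iterating integration by parts in $y$ along $w$ therefore gains a factor $(t^{\hf}2^{\frac k2}|\nu-\nu_0|)^{-1}$ per step and leaves the parabolic envelope intact, yielding the extra factor $\bigl(1+t^{\hf}2^{\frac k2}|\nu-\nu_0|\bigr)^{-L}\approx\bigl(1+2^k t|\nu-\nu_0|^2\bigr)^{-L}$ — the angular part of $d_t(\gamma;\gamma_0)$. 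Since these two decays are transverse, their product gives the claimed bound $|c_\gamma(f)|\le C_L(1+2^k d_t(\gamma;\gamma_0))^{-L}$, with $C_L$ controlled by the $C_{N,\alpha,\beta}$ for $N$ and $|\alpha|,|\beta|$ bounded in terms of $L$.

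The main technical point, and the only place requiring care, is carrying out the two integrations by parts at once without losing powers: the parabolic envelope of $\phi_\gamma$ is adapted to directions orthogonal to $\nu$, whereas the improved derivative bounds on $f$ and the integration-by-parts direction $w$ are attached to $\nu_0$. These geometries must be reconciled, which is possible because for the terms that matter $|\nu-\nu_0|$ is at most a slowly growing multiple of the angular scale $t^{-\hf}2^{-\frac k2}$, so the $\nu$–adapted and $\nu_0$–adapted boxes are comparable and the envelopes may be interchanged at the cost of harmless constants. Everything else is the bookkeeping of the normalizing powers of $2^k$ and $t$, which cancel exactly as in the verification of the pointwise size of $\phi_\gamma$ in \eqref{phigammaest}.
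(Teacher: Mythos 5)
Your proposal is correct and follows the same two-step strategy as the paper: first prove the coefficient decay $|c_\gamma(f)|\le C_L\bigl(1+2^k d_t(\gamma;\gamma_0)\bigr)^{-L}$, then sum over $\Gamma_{k,t}$. The only genuine divergence is in how the angular factor is produced. The paper works on the Fourier side: the hypothesis gives $|\hatf(\eta)|\le C_N\,2^{-k\frac{d+1}4}t^{\frac{d-1}4}\bigl(1+2^{-k}|\eta_1|+2^{-k}t|\eta'|^2\bigr)^{-N}$, and since $\widehat{\phi}_\gamma$ is supported where $|\eta'|\ge 2^{k-4}|\nu-\nu_0|$, Plancherel yields the factor $\bigl(1+2^kt|\nu-\nu_0|^2\bigr)^{-N}$ directly; the spatial part then comes from the envelope-product integral and \eqref{pseudist}, exactly as in your argument. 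You instead obtain the angular factor in physical space, in effect writing $\phi_\gamma=\la w,\partial_y\ra^L g_L$ with $\widehat{g_L}=\widehat{\phi}_\gamma/(i\la w,\eta\ra)^L$ and moving the derivatives onto $f$; this is legitimate because in the regime $|\nu-\nu_0|\gtrsim t^{-\hf}2^{-\frac k2}$ the loss $|\la w,\eta\ra|^{-1}\lesssim(2^k|\nu-\nu_0|)^{-1}$ per transverse $\eta$-derivative is within the tolerance of \eqref{est:betakj}, so $g_L$ retains the envelope \eqref{phigammaest}, and each integration by parts gains $\bigl(t^{\hf}2^{\frac k2}|\nu-\nu_0|\bigr)^{-1}$. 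The two mechanisms are dual readings of the same hypothesis and give the same bound. Likewise your dyadic-shell packing count $\#\{\gamma:2^kd_t(\gamma;\gamma_0)\le R\}\lesssim R^{d}$ replaces the paper's appeal to \eqref{eqn:schurbound} (estimate $(2.3)$ of \cite{Sm0} after rescaling) and is equivalent. Two small points to tighten: in the near-diagonal regime $|\nu-\nu_0|\lesssim t^{-\hf}2^{-\frac k2}$ you should say that no angular gain is needed but the spatial envelope decay must still be kept — the literal ``$L=0$ bound $|c_\gamma(f)|\le C$'' would lose summability over the spatial lattice $\Xi^{\nu}_{k,t}$; and the worry about reconciling the $\nu$- and $\nu_0$-adapted boxes is unnecessary, since the angular gain and the spatial envelope integral act on independent factors and the envelope-product integral is valid for arbitrary $\nu,\nu_0$.
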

\begin{proof}
Without loss of generality we assume that $\nu_0=e_1$. By the derivative estimates we have
$$
|\hatf(\eta)|\le C_N\,2^{-k(\frac{d+1}4)}\,t^{\frac{d-1}4}\bigl(1+2^{-k}|\eta_1|+2^{-k}t|\eta'|^2\bigr)^{-N},
$$
where for each $N$ the value of $C_N$ depends on only a finite number of $C_{N,\alpha,\beta}$.
Since $\widehat\phi_\gamma$ is supported where $|\eta'|\ge 2^{k-4}|\nu-e_1|$, by Plancherel's theorem we obtain for all $N$, and similar $C_N$,
\begin{equation}\label{eqn:anglebound}
|c_\gamma(f)|\le C_N\,\bigl(1+2^kt|\nu-e_1|^2\bigr)^{-2N},\qquad c_\gamma(f)=\int\overline{\phi_\gamma(y)}\,f(y)\,dx.
\end{equation}
By the pointwise estimates on $f(y)$ and $\phi_\gamma(y)$, we have
\begin{multline*}
|c_\gamma(f)|\le C_N \,2^{k(\frac{d+1}2)}t^{-\frac{d-1}2}\int
\bigl(1+2^kd_t(y,e_1;\gamma_0)\bigr)^{-2N-d}\\
\times
\bigl(1+2^kd_t(y,\nu;\gamma)\bigr)^{-2N-d}\,dy.
\end{multline*}
By \eqref{pseudist}, noting that $d_t(y,e_1;y,\nu)=t|\nu-e_1|^2$, we have
$$
\tfrac 1{16} d_t(\nu,\gamma_0)\le d_t(y,e_1;\gamma_0)+d_t(y,\nu;\gamma)+t|\nu-e_1|^2.
$$
Together with \eqref{eqn:anglebound}, this implies 
$|c_\gamma(f)|\le C_N\,\bigl(1+2^k d_t(\gamma;\gamma_0)\bigr)^{-N}$. The lemma then follows from
the bound
\begin{equation}\label{eqn:schurbound}
\sup_{\gamma'}\sum_{\gamma\in\Gamma_{k,t}}\bigl(1+2^k\,d_t(\gamma;\gamma')\bigr)^{-d-1}\le C_d,
\end{equation}
which follows from estimate $(2.3)$ in \cite{Sm0} after rescaling.
\end{proof}

The converse to Lemma \ref{lem:derivtoweight} also holds; we need it only for $\alpha=\beta=0$, and prove that version in the proof of Corollary \ref{cor:tekest} below.

An immediate consequence of Theorem \ref{thm:fundest} and Lemma \ref{lem:derivtoweight} is decay estimates on the matrix coefficients of $\tB_k^\omega(s)$. Precisely, for all $N$ we have
\begin{equation}\label{tBkcoeffest}
\Bigl|\int\overline{\phi_{\gamma}(y)}\bigl(\tB_k^\omega(s)\phi_{\gamma'}\bigr)(y)\,dy\Bigr|\le
C_N\bigl(1+2^k d_t(\gamma;\chi_s(\gamma'))\bigr)^{-N}.
\end{equation}
We then use this to prove boundedness of $\tB_k^\omega(s)$ in the weighted norm spaces via the following lemma.

\begin{lemma}\label{lem:matrixbound}
Suppose that $M\ge 0$, $0\le s\le t\le 1$, and $T:\mathcal{S}(\Rd)\rightarrow \mathcal{S}'(\Rd)$ is a linear map such that the matrix coefficients
$$
a(\gamma,\gamma')=\int\overline{\phi_\gamma(y)}\bigl(T\phi_{\gamma'})(y)\,dy
$$
satisfy the bound
$$
|a(\gamma,\gamma')|\le \bigl(1+2^k\,d_t(\gamma;\chi_s(\gamma'))\bigr)^{-(M+d+1)}.
$$
Then, uniformly over $\gamma_0\in S^*(\Rd)$, we have $\|Tf\|_{M,\chi_s(\gamma_0)}\le C_M\|f\|_{M,\gamma_0}$.
\end{lemma}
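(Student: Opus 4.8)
The plan is to realize the action of $T$ as a matrix in the frame $\{\phi_\gamma\}_{\gamma\in\Gamma_{k,t}}$ and to control that matrix between the weighted norms by a Schur test. There is nothing to prove unless $\|f\|_{M,\gamma_0}<\infty$, and we may assume $f$ lies in the span of the frame; then $c_{\gamma'}(f)$ decays at the rate $(1+2^kd_t(\gamma';\gamma_0))^{-M}$, so expanding $f=\sum_{\gamma'}c_{\gamma'}(f)\,\phi_{\gamma'}$ and pairing against $\phi_\gamma$ yields
$$
c_\gamma(Tf)=\sum_{\gamma'}a(\gamma,\gamma')\,c_{\gamma'}(f),
$$
the interchange being justified by the rapid decay of $a$. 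Hence $\|Tf\|_{M,\chi_s(\gamma_0)}^2=\sum_\gamma\bigl(1+2^kd_t(\gamma;\chi_s(\gamma_0))\bigr)^{2M}\bigl|\sum_{\gamma'}a(\gamma,\gamma')\,c_{\gamma'}(f)\bigr|^2$, and the whole issue is to absorb the weight built from the flowed-out center $\chi_s(\gamma_0)$.

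First I would distribute that weight. Inserting the intermediate point $\chi_s(\gamma')$ into the quasi-triangle inequality \eqref{pseudist}, and then using Lemma \ref{distinvar} to bound $d_t(\chi_s(\gamma');\chi_s(\gamma_0))\le C\,d_t(\gamma';\gamma_0)$, gives
$$
\bigl(1+2^kd_t(\gamma;\chi_s(\gamma_0))\bigr)^M\le C_M\bigl(1+2^kd_t(\gamma;\chi_s(\gamma'))\bigr)^M\bigl(1+2^kd_t(\gamma';\gamma_0)\bigr)^M.
$$
Using the hypothesis $|a(\gamma,\gamma')|\le(1+2^kd_t(\gamma;\chi_s(\gamma')))^{-(M+d+1)}$ to absorb the first factor, one is reduced to showing that the kernel $K(\gamma,\gamma')=(1+2^kd_t(\gamma;\chi_s(\gamma')))^{-(d+1)}$ is bounded on $\ell^2(\Gamma_{k,t})$, uniformly in $k,s,t$, applied to the sequence $e_{\gamma'}=(1+2^kd_t(\gamma';\gamma_0))^M|c_{\gamma'}(f)|$, whose $\ell^2$ norm is $\|f\|_{M,\gamma_0}$.

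For the Schur test I need both of $\sup_{\gamma'}\sum_\gamma K(\gamma,\gamma')$ and $\sup_\gamma\sum_{\gamma'}K(\gamma,\gamma')$ to be finite. The first is immediate: for fixed $\gamma'$, $\sum_{\gamma\in\Gamma_{k,t}}(1+2^kd_t(\gamma;\chi_s(\gamma')))^{-(d+1)}\le C_d$ is exactly \eqref{eqn:schurbound} evaluated at the base point $\chi_s(\gamma')\in S^*(\Rd)$. For the second, I would use Lemma \ref{distinvar} in the other direction (the lower bound $d_t(\chi_s\gamma;\chi_s\gamma')\ge C^{-1}d_t(\gamma;\gamma')$, applied with $\gamma$ replaced by $\chi_{-s}\gamma$) to get $d_t(\gamma;\chi_s(\gamma'))\ge C^{-1}d_t(\chi_{-s}(\gamma);\gamma')$, so that $K(\gamma,\gamma')\le C^{d+1}(1+2^kd_t(\chi_{-s}(\gamma);\gamma'))^{-(d+1)}$ and $\sum_{\gamma'\in\Gamma_{k,t}}$ of the right side is again $\le C_d$ by \eqref{eqn:schurbound}, now at the base point $\chi_{-s}(\gamma)$. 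The Schur test then gives $\|Ke\|_{\ell^2}\le C_{M,d}\|e\|_{\ell^2}$, which is the claim, with constants uniform because $\chi_{\pm s}$ are bilipschitz on the cotangent bundle uniformly over $k$ and $0\le s\le t\le 1$ (Corollary \ref{cor:geodflow}, Lemma \ref{distinvar}).

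The two Schur sums and the weight algebra are routine; the one point to get right, and the main obstacle such as it is, is the bidirectional use of Lemma \ref{distinvar}: one inequality to split the weight onto $\chi_s(\gamma')$, the other to convert the $\gamma'$-summation, which runs over $\{\chi_s(\gamma'):\gamma'\in\Gamma_{k,t}\}$ rather than the lattice, back to the standard lattice estimate \eqref{eqn:schurbound}.
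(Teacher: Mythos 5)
Your argument is correct and follows essentially the same route as the paper: split the weight $\bigl(1+2^k d_t(\gamma;\chi_s(\gamma_0))\bigr)^M$ via the quasi-triangle inequality \eqref{pseudist} together with the flow-invariance $d_t(\chi_s(\gamma');\chi_s(\gamma_0))\approx d_t(\gamma';\gamma_0)$ from Lemma \ref{distinvar}, absorb one factor into the matrix-coefficient bound, and run a Schur test whose row and column sums are both controlled by \eqref{eqn:schurbound}, the column sum after converting $d_t(\gamma;\chi_s(\gamma'))\approx d_t(\chi_{-s}(\gamma);\gamma')$. The paper's proof is the same argument stated more compactly (Schur's lemma for the unweighted case, then the weight-splitting inequality for $M\ge 1$), so no substantive difference.
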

\begin{proof}
It follows from \eqref{eqn:schurbound} that
$$
\sup_{\gamma'}\sum_{\gamma}|a(\gamma,\gamma')|\le C,
$$
and, since $d_t(\gamma;\chi_s(\gamma'))\approx d_t(\chi_{-s}(\gamma);\gamma')$ by Lemma \ref{distinvar}, we also have
$$
\sup_{\gamma}\sum_{\gamma'}|a(\gamma,\gamma')|\le C,
$$
where $C$ is independent of $s, t$ and $k$. By Schur's lemma we conclude
$$
\|\tB_k^\omega(s)f\|_{0,\chi_s(\gamma_0)} \leq C\,\|f\|_{0,\gamma_0}.
$$
The weighted case $M\ge 1$ follows by noting that
$$
\bigl(1+2^kd_t(\gamma;\chi_s(\gamma_0))\bigr)\lesssim 
\bigl(1+2^kd_t(\gamma;\chi_s(\gamma'))\bigr)\bigl(1+2^kd_t(\gamma';\gamma_0) \bigr),
$$
which follows from 
$$
d_t(\gamma;\chi_s(\gamma_0)) \le 
4\ts d_t(\gamma;\chi_s(\gamma'))+4\ts d_t(\chi_s(\gamma'); \chi_s(\gamma_0)),
$$
and the fact that $d_t(\chi_s(\gamma');\chi_s(\gamma_0)) \approx d_t(\gamma',\gamma_0)$.
\end{proof}

\begin{corollary}\label{cor:tekest}
Let $\chi_s$ denote the time $s$ projected Hamiltonian flow for $\g_k$. Then for $0\le s \le t \le 1$, and all $M\ge 0$,
$$
\| \tE_k^\omega(s)f \|_{M,\chi_s(\gamma_0)} \leq C_M \|f\|_{M,\gamma_0}
$$
with constant $C_M$ independent of $s$, $t$, $\gamma_0$, $\omega$, and $k$.
\end{corollary}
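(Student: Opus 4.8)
The plan is to reduce the corollary to the matrix‑coefficient decay of the single building blocks of the iteration \eqref{angloc}, and then to use the algebraic structure of that iteration so that the volume of the simplex $\Lambda^m$ controls the number of compositions. First I would record the one–factor input. By Theorem \ref{thm:fundest}, Lemma \ref{lem:derivtoweight}, and the definition of the weighted norm, the frame matrix coefficients of $\tB_k^\omega(s')$ decay faster than any power of $1+2^k d_t(\gamma;\chi_{s'}(\gamma'))$, uniformly in $s',t,k,\omega$ — this is precisely estimate \eqref{tBkcoeffest}. The same bound holds for $\tW_k^\omega(s')$, since the proof of Theorem \ref{thm:fundest} applies with the amplitude $b_k$ taken identically $1$ (as already noted for Lemma \ref{lem:FBI}); and the trailing multiplier $a_\omega(D)\psi_k(D)$ satisfies the analogous bound with $\chi_0=\mathrm{id}$, because $a_\omega(D)\psi_k(D)\phi_{\gamma_0}$ obeys the pointwise estimates \eqref{phigammaest} about $\gamma_0$ — by integration by parts in the defining oscillatory integral, using \eqref{psikjest} and \eqref{est:betakj} — so Lemma \ref{lem:derivtoweight} with $s=0$ yields $\|a_\omega(D)\psi_k(D)\phi_{\gamma_0}\|_{M,\gamma_0}\le C_M$ and hence the matrix‑coefficient decay. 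Along the way I would also record the $\alpha=\beta=0$ converse of Lemma \ref{lem:derivtoweight}: for $f$ with $\hatf$ supported in $A_k$ and $\|f\|_{M,\gamma_0}<\infty$ for $M$ large, $f$ obeys the pointwise bound in that lemma's hypothesis, obtained by expanding $f=\sum_\gamma c_\gamma(f)\phi_\gamma$, inserting $|c_\gamma(f)|\le(1+2^k d_t(\gamma;\gamma_0))^{-M}\|f\|_{M,\gamma_0}$, and summing \eqref{phigammaest} with the help of \eqref{pseudist} and \eqref{eqn:schurbound}.

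Next I would feed these matrix bounds into Lemma \ref{lem:matrixbound}, which then gives, for every $M\ge0$ and uniformly in all parameters,
$$
\|\tB_k^\omega(s')g\|_{M,\chi_{s'}(\gamma_1)}\le A_M\|g\|_{M,\gamma_1},\qquad
\|\tW_k^\omega(s')g\|_{M,\chi_{s'}(\gamma_1)}\le A_M\|g\|_{M,\gamma_1},
$$
and $\|a_\omega(D)\psi_k(D)g\|_{M,\gamma_1}\le A_M\|g\|_{M,\gamma_1}$, with $A_M$ depending only on $M$ and $d$. Since the Hamiltonian flow of $p_k$ is autonomous, $\chi_a\circ\chi_b=\chi_{a+b}$ for $a,b\ge0$ with $a+b\le t$, so for a fixed $m$ and $\bfr=(r_1,\dots,r_{m+1})\in\Lambda^m$, composing along the chain and using $r_1+\cdots+r_{m+1}=1$ yields
$$
\bigl\|\tW_k^\omega(sr_{m+1})\tB_k^\omega(sr_m)\cdots\tB_k^\omega(sr_1)\,a_\omega(D)\psi_k(D)\,f\bigr\|_{M,\chi_s(\gamma_0)}\le A_M^{m+2}\|f\|_{M,\gamma_0},
$$
uniformly over $\bfr\in\Lambda^m$, $\gamma_0\in S^*(\Rd)$, $0\le s\le t\le1$, $k$, and $\omega$.

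Finally I would sum the series \eqref{angloc}: its $m$‑th term carries the scalar $(-s)^m$ with $|(-s)^m|\le1$ for $0\le s\le1$, and $\int_{\Lambda^m}d\bfr=1/m!$, so Minkowski's integral inequality bounds the $\|\cdot\|_{M,\chi_s(\gamma_0)}$‑norm of the $m$‑th term of $\tE_k^\omega(s)f$ by $(A_M^{m+2}/m!)\,\|f\|_{M,\gamma_0}$; summing over $0\le m\le 2^{\frac k4}$ gives $\|\tE_k^\omega(s)f\|_{M,\chi_s(\gamma_0)}\le A_M^{2}e^{A_M}\|f\|_{M,\gamma_0}=:C_M\|f\|_{M,\gamma_0}$, which is the claim. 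The hard part is exactly this last step, and it is the raison d'\^etre of the weighted‑norm machinery: the per‑factor constant $A_M$ grows geometrically in the number of factors, but the factor $1/m!$ from the volume of $\Lambda^m$ absorbs it, so the final constant is \emph{independent of $k$} even though the number of compositions runs up to $2^{\frac k4}$ — a symbol calculus could not achieve this, since there the error would accumulate multiplicatively with nothing to compensate. The genuine technical labor sits upstream, in Theorem \ref{thm:fundest} (to be proved in the next subsection) and in the composition/Schur estimate that makes Lemma \ref{lem:matrixbound} iterate with a uniform constant, but that estimate is the same almost‑orthogonality computation already carried out for Lemma \ref{lem:tEkernel}.
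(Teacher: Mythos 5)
Your proof is correct and follows essentially the same route as the paper: apply Lemma \ref{lem:matrixbound} to the matrix-coefficient decay \eqref{tBkcoeffest} (and its $\tW_k^\omega$ analogue) to get a uniform per-factor constant $A_M$, compose along the chain using the group property of $\chi_s$, and let the simplex volume $\int_{\Lambda^m}d\bfr=1/m!$ absorb the geometric growth $A_M^{m+2}$. You spell out a couple of points the paper leaves implicit — the bound for the trailing multiplier $a_\omega(D)\psi_k(D)$ and the $\alpha=\beta=0$ converse of Lemma \ref{lem:derivtoweight} — but the structure and the key ingredients are the same.
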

\begin{proof}
By Lemma \ref{lem:matrixbound} and the estimate \eqref{tBkcoeffest}, which holds also for $\tW_k^\omega(s)$ by the same proof, we have
$$
\|\tB_k^\omega(s)f\|_{M,\chi_s(\gamma_0)}\le C_M\|f\|_{M,\gamma_0}.
$$
The formula \eqref{angloc} for $\tE_k^\omega(t)$ and the group property of $\chi_s$ then show that
\begin{align*}
\| \tE_k^\omega(s)f \|_{M, \chi_s(\gamma)}
&\le\; \sum_{m=0}^{\infty}\frac{s^m\,C_M^{m+1}}{m!}\,\|f\|_{M,\gamma}\\
&=\; C_M\,e^{sC_M}\,\|f\|_{M,\gamma}\,.
\end{align*}
\end{proof}

We conclude this section by deriving the bound \eqref{eqn:kbound} from Corollary \ref{cor:tekest}.
Write $\tK_k^\omega(t,x,y)=\bigl(\tE_k^\omega(t)\delta_y\bigr)(x)$. Since $\tE_k^\omega(t)$ has the factor $\psi_k(D)$ on the right, we may write
$$
\bigl(\tE_k^\omega(t)\delta_y\bigr)(x)=\sum_{\nu \in \Upsilon_{k,t}} \bigl(\tE_k^\omega(t)\beta_{k,t}^\nu(D)^2\delta_y\bigr)(x).
$$
The function $\beta_{k,t}^\nu(D)^2\delta_y$ has Fourier transform $e^{-i\la y,\eta\ra}\beta_{k,t}^\nu(\eta)^2$. Up to a normalization factor, this behaves like the frame element $\phi_\gamma$ at $\gamma=(y,\nu)$, and it is easy to verify that for all $M$
$$
\|\beta_{k,t}^\nu(D)^2\delta_y\|_{M,\gamma}\le C_M\,2^{k(\frac{d+1}4)}t^{-\frac{d-1}4}.
$$
By Theorem \ref{cor:tekest}, letting $\gamma_t\equiv (x_t,\nu_t)=\chi_t(y,\nu)$ we have
\begin{equation}\label{termest}
\|\tE_k^\omega(t)\beta_{k,t}^\nu(D)^2\delta_y\|_{M,\gamma_t}\le 
C_M\,2^{k(\frac{d+1}4)}t^{-\frac{d-1}4}.
\end{equation}
This implies that, for all $N$,
\begin{multline*}
\bigl|\bigl(\tE_k^\omega(t)\beta_{k,t}^\nu(D)^2\delta_y\bigr)(x)\bigr|
\\
\le 
C_N2^{k(\frac{d+1}2)}t^{-\frac{d-1}2}
\bigl(1+2^k\ts|\la\nu_t,x-x_t\ra|+2^k\ts t^{-1}\ts|x-x_t|^2\bigr)^{-N}.
\end{multline*}
We see this using \eqref{termest}, that the frame coefficients $\{c_{\gamma'}\}$ of $\tE_k^\omega(t)\beta_{k,t}^\nu(D)^2\delta_y$ satisfy for all $M$
$$
|c_{\gamma'}|\le C_M\,2^{k(\frac{d+1}4)}t^{-\frac{d-1}4}\,\bigl(1+2^k d_t(\gamma';\gamma_t)\bigr)^{-M}.
$$
From estimates \eqref{phigammaest} on $|\phi_\gamma(x)|$, we follow the proof of \cite[Lemma 2.5]{Sm0} with $\gamma'=(x',\nu')$ to bound
$\bigl|\bigl(\tE_k^\omega(t)\beta_{k,t}^\nu(D)^2\delta_y\bigr)(x)\bigr|$ by
\begin{align*}
C_M2^{k(\frac{d+1}2)}&t^{-\frac{d-1}2}\!\!\sum_{\gamma'\in\Gamma_{k,t}}
\bigl(1+2^k d_t(\gamma';\gamma_t)\bigr)^{-M}\bigl(1+2^k d_t((x,\nu');\gamma')\bigr)^{-M}\\
&\le C_M 2^{k(\frac{d+1}2)}t^{-\frac{d-1}2}\sum_{\nu'\in\Upsilon_{k,t}}\bigl(1+2^k d_t((x,\nu');\gamma_t)\bigr)^{-M}\\
&\le C_M 2^{k(\frac{d+1}2)}t^{-\frac{d-1}2}
\bigl(1+2^k\ts|\la\nu_t,x-x_t\ra|+2^k\ts t^{-1}\ts|x-x_t|^2\bigr)^{-M+\frac d2}.
\end{align*}

Deriving \eqref{eqn:kbound} from Corollary \ref{cor:tekest} then reduces to showing that
\begin{multline*}
\sum_{\nu\in\Upsilon_{k,t}}\bigl(1+2^k\ts|\la\nu_t,x-x_t\ra|+2^k\ts t^{-1}\ts|x-x_t|^2\bigr)^{-N-d}\\
\le C_N\bigl(1+2^k\,\dist(x,S_t(y))\bigr)^{-N}.
\end{multline*}
If $(\tilde x_t,\tilde\nu_t)=\chi_t(y,\tilde\nu)$ and $(x_t,\nu_t)=\chi_t(y,\nu)$,
then by Corollary \ref{cor:geodflow}
\begin{equation*}
\tfrac 45 \,t\le\frac{|\tilde x_t-x_t|}{|\tilde\nu-\nu|}\le \tfrac 54\,t\,.
\end{equation*}
Consequently, the points $x_t$ are separated by $t^{\frac 12}2^{-\frac k2}$ for $\nu\in\Upsilon_{k,t}$, and thus
$$
\sum_{\nu\in\Upsilon_{k,t}}\bigl(1+2^k\ts t^{-1}\ts|x-x_t|^2\bigr)^{-d}\le C.
$$
It therefore suffices to show that, for $c_d$ small enough, and for each $\nu\in \sph^{d-1}$,
\begin{equation}\label{distbound}
|\la\nu_t,x-x_t\ra|+ t^{-1}\ts|x-x_t|^2\ge \tfrac 1 4\,\dist(x,S_t(y)).
\end{equation}
Here, $x_t\in S_t(y)$ for each $\nu$, and $\nu_t$ is the unit normal to $S_t(y)$ at the point $x_t$, in that $\la\nu_t,\partial_{\eta_j} x_t|_{\eta=\nu}\ra=0$, which follows by homogeneity.

We observe that, by scaling, it suffices to prove \eqref{distbound} in the case $t=1$.
Precisely, $(t^{-1}x_t,\nu_t)$ is the image at time 1 of $(t^{-1}y,\nu)$ under the projected Hamiltonian flow for the metric $\g_k(t\,\cdot)$, 
and $t^{-1}S_t(y)$ is the corresponding unit geodesic sphere centered at $t^{-1}y$, hence the two sides of \eqref{distbound} dilate by the same factor $t$. 
Furthermore, the metric $\g_k(t\,\cdot)$ satisfies conditions \eqref{cond0}--\eqref{cond2} with $M=t\ts 2^{\frac k2}\le 2^{\frac k2}$. 

Without loss of generality we assume $\nu=e_1$ and $y=0$.
We introduce the notation $(x(\omega),n(\omega))=\chi_1(0,\omega)$ to denote the mapping of the 
unit sphere $\sph^{d-1}$ onto the unit conormal bundle of $S_1(0)$.
By Corollary \ref{cor:geodflow}, this map is $C^1$-close to the map 
$\omega\rightarrow(\omega,\omega)$; precisely
$$
|x(\omega)-\omega|+|\nabla_\omega x(\omega)-\Pi^\perp_\omega|+|n(\omega)-\omega|
+|\nabla_\omega n(\omega)-\Pi^\perp_\omega|\lesssim c_d.
$$

As a consequence we may parameterize $S_1(0)\cap\{x_1>0,\,|x'|\le \frac 12\}$ as a graph $x_1=F(x')$, where
\begin{equation}\label{C2close}
\bigl|\partial_x^\alpha\bigl(F(x')-\sqrt{1-|x'|^2}\;\bigr)\bigr|\lesssim c_d,\qquad |\alpha|\le 2,
\quad|x'|\le\frac 12.
\end{equation}
This holds for $|\alpha|\le 1$ by $C^1$ closeness of $x(\omega)$ to $\omega$, and for $|\alpha|=2$ since 
$\nabla_{x'} F(x')=-n'(\omega(x'))/n_1(\omega(x'))$ is $C^1$ close to $-x'/\sqrt{1-|x'|^2}$.

The bound \eqref{distbound} is equivalent to proving, for $x=(x_1,x')\in\re^d$,
$$
\min_\omega|x-x(\omega)|\le 4 \Bigl(|\la n(e_1),x-x(e_1)\ra|+|x-x(e_1)|^2\Bigr).
$$
We assume that $|x-x(e_1)|\le \frac 14$, hence $|x'|\le\frac 12$, as the bound is immediate otherwise. The left hand side is bounded above by $|x_1-F(x')|$, and the bound then follows by the Taylor expansion of $F(x')$ about $x'(e_1)$,
\begin{align*}
|x_1-F(x')|&\le \bigl|x_1-F(x'(e_1))-\la x'-x'(e_1),\nabla_{x'}F(x'(e_1))\ra\bigr|+|x'-x'(e_1)|^2\\
&= n_1(e_1)^{-1}\,|\la n(e_1),x-x(e_1)\ra|+|x'-x'(e_1)|^2
\end{align*}
where we use that $\|\nabla_{x'}^2 F\|\le 2$ for $|x'|\le\frac 12$ by \eqref{C2close} if $c_d$ is small, and $F(x'(e_1))=x_1(e_1)$.


\subsection{Proof of Theorem \ref{thm:fundest}}
We follow the key idea of \cite{SSS}, that the action of a Fourier integral operator on a function $f$ whose Fourier transform is suitably localized can be decomposed as a pseudodifferential operator acting on $f$, followed by a change of coordinates. Suitably localized means that the phase function can be written as a phase that is linear in $\eta$ plus a term that satisfies the estimates of a zero-order symbol on the support of $\hatf(\eta)$. Here we take $f=\phi_\gamma$, with $\hatf$ supported in the set $\Omega_{k,t}^\nu$ defined by \eqref{rknudef}, and the zero-order symbol estimates are those of Corollary \ref{cor:linsymbol}. The estimates of Corollary \ref{phikest'} will be used to establish the linearization of $\varphi_k$ on $\Omega_{k,t}^\nu$.

We prove Theorem \ref{thm:fundest} with $\tB_k^\omega(s)\varphi_\gamma$ replaced by $B_k(s)$; recall the definition \eqref{twomegakdef} and \eqref{twkdef}. The operators $\ta_\omega(D)\tpsi_k(D)$ is a mollifier on spatial scale $2^{-k}$ and commutes with differentiation, hence preserves the estimates of Theorem \ref{thm:fundest}, and $\ta_\omega(D)\phi_\gamma$ satisfies the same conditions as $\phi_\gamma$. The terms $B_{k\pm 1}(s)$ will follow the same proof as for $B_k(s)$.

Without loss of generality we assume $\gamma_0=(0,e_1)$. We need establish the bounds of Theorem \ref{thm:fundest} for the function
$$
\bigl(B_k(s)\phi_{\gamma_0}\bigr)(x) = 2^{-\frac 32} 2^{-k(\frac{d+1}4)} t^{\frac{d-1}4}
\int e^{i\varphi_k(s,x,\eta)} b_k(s,x,\eta) \beta_{k,t}^{e_1}(\eta)\,d\eta.
$$
We can express this in the form
$$
\bigl(B_k(s)\phi_{\gamma_0}\bigr)(x) = 2^{-\frac 32} 2^{-k(\frac{d+1}4)} t^{\frac{d-1}4} 
\int e^{i\la y(s,x),\eta\ra} e^{ih(s,x,\eta)} b_k(s,x,\eta) \beta_{k,t}^{e_1}(\eta)\,d\eta
$$
where $y(s,x) =\nabla_{\eta} \varphi_{k}(s,x,e_1)$, and where by \eqref{linphase'} on the support of $\beta_{k,t}^{e_1}$ the function $h(s,x,\eta) =\varphi_k(s,x,\eta)-\eta\cdot\partial_{\eta} \varphi_{k}(s,x,e_1)$
satisfies
$$
\bigl|\partial_{\eta_1}^j\partial_{\eta'}^\alpha\partial_x^\beta
h(s,x,\eta)\bigr|\le C_{j,\alpha,\beta}\,2^{-kj}\,\tkhf^{|\alpha|}\,2^{\frac k2|\beta|}.
$$
This, together with Corollary \ref{cor:linsymbol} and \eqref{est:betakj}, leads to the estimates
\begin{equation}\label{eqn:linbumpest}
\Bigl|\partial_{\eta_1}^j\partial_{\eta'}^\alpha\partial_x^\beta
\Bigl(e^{ih(s,x,\eta)} b_k(s,x,\eta) \beta_{k,t}^{e_1}(\eta)\Bigr)\Bigr|
\le C_{j,\alpha,\beta}\,2^{-kj}\,\tkhf^{|\alpha|}\,2^{\frac k2 |\beta|}.
\end{equation}
We now express
$$
\bigl(B_k(s)\phi_{\gamma_0}\bigr)(x)=2^{k(\frac{d+1}4)} t^{-\frac{d-1}4}F\bigl(x,y(s,x)\bigr),
$$
where
$$
F(x,y)=2^{-\frac 32} 2^{-k(\frac{d+1}2)} t^{\frac{d-1}2} \int e^{i\la y,\eta \ra} e^{ih(s,x,\eta)} b_k(s,x,\eta) \beta_{k,t}^{e_1}(\eta)\,d\eta.
$$
The estimates \eqref{eqn:linbumpest} and integration by parts leads to the bounds
$$
\bigl|\partial_{y_1}^j\partial_{y'}^\alpha\partial_x^\beta F(x,y)\bigr|
\le 
C_{N,j,\alpha,\beta}\,2^{\frac k2|\gamma|}\,2^{k|\beta|}\,(t^{-\frac 12}2^{\frac k2})^{|\alpha|}
\bigl(1+2^k|y_1| + t^{-1}2^k|y|^2\bigr)^{-N}.
$$

We now use the chain rule to express $x$-derivatives of the composition of $F(x,y)$ with $y=y(s,x)$ as a sum of terms,
$$
\partial_{x_i}F(x,y(s,x))=(\partial_{x_i}F)(x,y(s,x))+(\nabla_yF)(x,y(s,x))\cdot\partial_{x_i}y(s,x).
$$
The $\partial_{x_i}$ in first term on the right counts as a factor of $2^{\frac k2}$ in the derivative estimates, which is better than the conclusion of Theorem \ref{thm:fundest}. Similar considerations apply to terms in the expansion of higher order derivatives. Since we will estimate individually each term arising in such an expansion, we therefore can consider functions $F$ that are functions of only $y$. That is, we assume for all $N$ that
\begin{equation}\label{eqn:Fbound}
\bigl|\partial_{y_1}^j\partial_{y'}^\alpha F(y)\bigr|
\le 
C_{N,\alpha,\beta}\,2^{kj}\,(t^{-\frac 12}2^{\frac k2})^{|\alpha|}
\bigl(1+2^k|y_1| + t^{-1}2^k|y|^2\bigr)^{-N},
\end{equation}
and prove that the composition with $y(s,x)$ satisfies for all $N$
\begin{multline}\label{eqn:Fcomp}
\bigl|\parsl_x^\alpha\partial_x^\beta F(y(s,x))\bigr|\\
\le
C_{N,\alpha,\beta}\, 2^{k|\beta|}\tkhfinv^{|\alpha|}\bigl(1+2^k|\la \nu_s,x - x_s  \ra| + t^{-1}2^{k}|x-x_s|^2\bigr)^{-N},
\end{multline}
where $\parsl_x\equiv\la\nu_s^\perp,\partial_x\ra$ denotes derivatives in directions perpendicular to $\nu_s$.

Since $y(s,x_s)=0$, and the map $x\rightarrow y(s,x)$ is a globally bi-Lipschitz map of $\Rd$, with uniform bounds on the map and its inverse, we have 
$$
|y(s,x)|^2\approx |x-x_s|^2,
$$
with the ratio of the two sides close to 1 for $c_d$ small. For a constant $c$ close to $1$, we also have
$$
c\ts\nu_s=(\nabla_x\varphi_k)(s,x_s,e_1)=(\nabla_x\partial_{\eta_1}\varphi_k)(s,x_s,e_1)=(\nabla_x y_1)(s,x_s).
$$
We also have the equality $y_1(s,x)=\varphi_k(s,x,e_1)$ by homogeneity, which by \eqref{dxphi} implies
\begin{equation}\label{eqn:dxy1}
|\partial_x^\beta y_1(s,x)|\le 
\begin{cases}C,&|\beta|=1\\ C\,2^{\frac k2(|\beta|-2)},&|\beta|\ge 2.\end{cases}
\end{equation}
Together with a first order Taylor expansion these imply that, for $0<t\le 1$,
\begin{equation}\label{eqn:xtoy}
|y_1(s,x)|+t^{-1}|y(s,x)|^2\approx |\la\nu_s,x-x_s\ra|+t^{-1}|x-x_s|^2
\end{equation}
with uniform bounds on the ratios. Together with \eqref{eqn:Fbound} this gives \eqref{eqn:Fcomp} for $j=\alpha=0$.

To bound derivatives, we use the chain rule to express $\displaystyle\parsl_x^\alpha\partial_x^\beta F(y(s,x))$ as a sum of terms of the form
$$
(\partial_{y_1}^m\partial_{y'}^\theta F)
(\parsl_x^{\alpha_1}\partial_x^{\beta_1}y_1)\cdots
(\parsl_x^{\alpha_m}\partial_x^{\beta_m}y_1)
(\parsl_x^{\alpha_{m+1}}\partial_x^{\beta_{m+1}}y')\cdots
(\parsl_x^{\alpha_{m+|\theta|}}\partial_x^{\beta_{m+|\theta|}}y')
$$
where
$$
\alpha=\sum_{j=1}^{m+|\theta|}\alpha_j\,,\qquad
\beta=\sum_{j=1}^{m+|\theta|}\beta_j\,,\qquad
m+|\theta|\le |\alpha|+|\beta|.
$$
The estimate \eqref{eqn:Fcomp} then follows from \eqref{eqn:Fbound} and \eqref{eqn:xtoy}, together with the following bounds for the derivatives of $y(s,x)$ for $|\alpha|+|\beta|\ge 1$, and where $2^{-k}\le t\le 1$,
\begin{align*}
|\parsl_x^\alpha\partial^\beta_x y_1(s,x)|&\le C_{\alpha,\beta}\,2^{k(|\beta|-1)}\tkhfinv^{|\alpha|}
\bigl(\ts 1+ t^{-\hf}2^{\frac k2}|x-x_s|\ts\bigr)\\
|\parsl_x^\alpha\partial^\beta_x y'(s,x)|&\le C_{\alpha,\beta}\,2^{k|\beta|}\tkhfinv^{|\alpha|-1}.
\end{align*}
The second of these holds by the stronger bound of $C_{\alpha,\beta}\,2^{\frac k2(|\alpha|+|\beta|-1)}$ from Theorem \ref{thm:yest}, where if $|\alpha|=0$ we use that $2^{-k}\le\tkhfinv^{-1}$ and $|\beta|\ge 1$.
For the first, if $|\alpha|=1$ and $|\beta|=0$, we use \eqref{eqn:dxy1} and that $(\parsl_x y_1)(s,x_s)=0$ to see that $|\parsl_x y_1(s,x)|\le C\,|x-x_s|$. If $|\alpha|\ge 2$ or $|\beta|\ge 1$ then the estimate follows directly from \eqref{eqn:dxy1}.
\qed

\bibliographystyle{plain}
\bibliography{BCpaper}

\end{document}